\documentclass[reqno]{amsart}

\usepackage{amsmath,amssymb,amscd,accents,enumitem}
\usepackage{graphicx}
\usepackage{microtype}
\usepackage[utf8]{inputenc} 

\usepackage[T2A,T1]{fontenc}

\usepackage{etoolbox}
\pretocmd{\section}{\numberwithin{Cor}{section}}{}{}
\pretocmd{\subsection}{\numberwithin{Cor}{subsection}}{}{}
\pretocmd{\subsubsection}{\numberwithin{Cor}{subsubsection}}{}{}

\DeclareSymbolFont{cyrillic}{T2A}{cmr}{m}{n}
\SetSymbolFont{cyrillic}{bold}{T2A}{cmr}{bx}{n}
\DeclareMathSymbol{\B}{\mathord}{cyrillic}{193}

\DeclareGraphicsExtensions{.eps} \usepackage{mathrsfs}
\usepackage[mathcal]{eucal} 
\usepackage{esint}
\usepackage[numbers,sort]{natbib}

\usepackage[pdfencoding=unicode, psdextra, pdfusetitle]{hyperref}
\hypersetup{
  colorlinks   = true,
  linkcolor    = [RGB]{240,50,240},
  citecolor    = [RGB]{100,0,200}
}
\usepackage{bookmark}
\renewcommand{\paragraph}[1]{\medskip\noindent#1}
\renewcommand{\subparagraph}[1]{\medskip\noindent\textit{#1}}
\newcommand{\mycite}[2]{\cite[#1]{#2}}

\newcommand{\iref}[2][]{%
  \def\tempA##1 ##2\relax{%
    \ifstrequal{##1}{Condition}%
        {Condition~(\ref{#1#2})}{%
    \ifstrequal{##1}{Assumption}%
        {Assumption~(\ref{#1#2})}{%
    \ifstrequal{##1}{Property}%
        {Property~(\ref{#1#2})}{%
    \ifstrequal{##1}{Case}%
        {Case~(\ref{#1#2})}{%
    \ifstrequal{##1}{Equation}%
        {\eqref{#1#2}}{%
    \ifstrequal{##1}{Section}
        {\S\ref{#1#2}}{%
    \ifstrequal{##1}{Subsection}
        {\S\ref{#1#2}}{%
    \ifstrequal{##1}{Subsubsection}
        {\S\ref{#1#2}}%
        {##1~\ref{#1#2}}%
  }}}}}}}}%
  \expandafter\tempA\string#2\relax
}

\theoremstyle{plain}
\newtheorem{Thm}{Theorem}
\newtheorem{CorThm}[Thm]{Corollary}
\newtheorem{PropThm}[Thm]{Proposition}
\newtheorem{Cor}{Corollary}
\newtheorem{ThmCor}[Cor]{Theorem}
\newtheorem{Prop}[Cor]{Proposition}
\newtheorem{Construction}[Cor]{Construction}
\newtheorem{Lem}[Cor]{Lemma}

\theoremstyle{definition}
\newtheorem{Def}[Cor]{Definition}

\newtheorem{Notation}[Cor]{Notation}

\theoremstyle{remark}
\newtheorem{Rem}[Cor]{Remark}

\theoremstyle{plain}

\begin{document}
\title{Sharp zero estimates for trajectories of polynomial vector fields}
\author{Gal Binyamini, Yuval Salant} 

\address{Weizmann Institute of Science, Rehovot, Israel}
\email{gal.binyamini@weizmann.ac.il}
 \thanks{
  Funded by the European Union (ERC, SharpOS, 101087910), and by the
  ISRAEL SCIENCE FOUNDATION (grant No. 2067/23).}

\date{\today}

\begin{abstract}
    If $f$ is a tuple of functions satisfying an algebraic ODE and $P\in{\mathbb C}(x)[f]$, it is common in applications to transcendental number theory to consider upper bounds for the order of zero of $P(x,f)$ at a given point in terms of $\deg_x P,\deg_f P$. Nesterenko introduced a condition known as the \emph{D-property}, which holds in many applications, and proved essentially optimal bounds under this condition.

    We prove a global form of this result, where the field ${\mathbb C}(x)$ is replaced by a number field $K$ and $\deg_x P$ is replaced by the logarithmic height $\operatorname{h}(P)$. Under an analogous \emph{global D-property}, we prove essentially optimal bounds for the \emph{number of zeroes}, counted with multiplicities, in a fixed compact set. Applications of this result to point-counting theorems are developed in a separate joint paper with Hirata-Kohno and Kawashima.
\end{abstract}

\maketitle

\section{Introduction}

Let $p\in\mathbb C[x_1,...,x_n]$ and let $\xi$ be a polynomial vector field in ${\mathbb C}^n$ defined over $\overline{\mathbb Q}$. Let $D_1\subset\mathbb C$ denote the unit disc and let $\gamma:\overline{D_1}\to\mathbb C^n$ be a trajectory of $\xi$. We assume throughout that $\gamma$ is not contained in any proper algebraic variety.
We recall Nesterenko's D-property and introduce a global analogue.

\begin{Def}[D-property and global D-property, cf.~\protect{\cite[p.1346]{Nesterenko1996ModularFunctionsAndTranscendenceQuestions}}]\label{Definition D property and global D-property}
We say that $\xi$ satisfies the D-property at a point $\omega\in\mathbb C^n$ if all $\xi$-invariant prime ideals in the local ring at $\omega$ have a common nonzero element.
Similarly, we say that $\xi$ satisfies the \emph{global} D-property if all $\xi$-invariant prime ideals have a common nonzero element.
\end{Def}

\begin{Rem}
Our main theorems hold under a weaker assumption on $\gamma$, the \emph{arithmetic D-property}; see \iref{Subsection the arithmetic D-property}.
\end{Rem}

Denote
\[ \operatorname{ord}_{\gamma,z_0} p:=\operatorname{ord}_{z_0}p\circ\gamma.
\]

Nesterenko proves the following theorem:

\begin{ThmCor}[\mycite{Chapter 3, Theorem 2.3}{NesterenkoPhilippon2001IntroductionToAlgebraicIndependenceTheory}]\label{Theorem introduction; bound on nesterenko's ord}
For any $z_0\in D_1$, if $\xi$ satisfies the D-property at $z_0$, then there exists a constant $C$ depending only on $\xi,\gamma(z_0)$ such that for any $p\in\mathbb C[x_1,...,x_n]$ we have
\[
    \operatorname{ord}_{\gamma,z_0} p
        \leq
    C\cdot(\deg p)^n
.\]
\end{ThmCor}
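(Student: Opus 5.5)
The plan is to follow Nesterenko's multiplicity-estimate strategy: reduce the order of vanishing of $p$ along $\gamma$ to an intersection-theoretic statement about $\xi$-stable ideals, and then use the D-property to rule out the degenerate case. We work in the local ring $\mathcal O=\mathbb C[x]_{\gamma(z_0)}$, where $\xi$ acts as a derivation. Set $\omega=\gamma(z_0)$ and $M=\operatorname{ord}_{\gamma,z_0}p$, and suppose $M$ is large compared with $(\deg p)^n$.

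\textbf{Step 1 (producing a unit-free ideal with many vanishing derivatives).} Let $p_j=\xi^j p$. Since $\frac{d}{dz}(q\circ\gamma)=(\xi q)\circ\gamma$, we have $\operatorname{ord}_{\gamma,z_0}p_j\ge M-j$, and $\deg p_j\le \deg p+j(\deg\xi-1)$. Consider the ideal $I_k=(p,\xi p,\dots,\xi^k p)$ for $k$ of order $\deg p$. All its elements vanish at $\omega$ to order at least $M-k$ along $\gamma$. Since $\gamma$ is Zariski dense, $p\circ\gamma\not\equiv0$.

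\textbf{Step 2 (inductive descent on dimension).} This is Nesterenko's key lemma. Start from the hypersurface $p=0$ and intersect successively, producing unmixed ideals $J_r$ of codimension $r$ and degree at most $c\,(\deg p)^r$, each with an order function $\operatorname{ord}_\gamma(J_r)$ (defined via Chow forms or Hilbert–Samuel data of $\gamma$ near $\omega$). The descent rests on two facts. First, if $J$ is not $\xi$-stable at $\omega$, then some $\xi q$ with $q\in J$ does not lie in one of the associated primes, and intersecting with it drops the dimension while losing at most $O(1)$ in order per unit of degree; this gives the estimate $\operatorname{ord}_\gamma(J_{r+1})\ge \operatorname{ord}_\gamma(J_r)-O(\deg J_r)$. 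Second, components not passing through $\omega$ contribute nothing. Iterating $n$ times yields a zero-dimensional ideal containing $\omega$ whose order along $\gamma$ is at least $M-C'(\deg p)^n$. That order is $0$, because $\gamma$ is not contained in any variety through $\omega$ except as a point. Hence $M\le C(\deg p)^n$, unless the descent stops at some stage.

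\textbf{Step 3 (the obstruction and the use of the D-property).} The descent can stop only when some prime component $\mathfrak p$ of $J_r$ through $\omega$ is $\xi$-invariant in the local ring. The D-property provides a fixed nonzero $Q$ lying in every such $\mathfrak p$, with $\deg Q$ and $\operatorname{ord}_{\gamma,z_0}Q=:m_Q<\infty$ depending only on $\xi$ and $\omega$. A $\xi$-invariant prime containing $J_r$ must then contain $Q$, so the order of $J_r$ along $\gamma$ is controlled by that of $Q$, up to a factor of $\deg J_r$. This bounds $\operatorname{ord}_\gamma(J_r)\le m_Q\deg J_r\le C''(\deg p)^r$, which again gives $M\le C(\deg p)^n$.

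I expect the main obstacle to be Step 2: defining a suitable order of an ideal along $\gamma$ that is additive over primary components, and proving the one-step loss bound when intersecting with $\xi q$. My first attempt would use Nesterenko's Chow-form construction, evaluating the Chow form at a generic linear family centred at $\gamma(z)$ and taking the order in $z$ at $z_0$. This choice makes additivity automatic and turns the loss estimate into a statement about the order of a resultant.
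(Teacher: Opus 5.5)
There is a genuine gap in Step 2. It lies in the choice of the polynomial you intersect with, which is where the real difficulty of the theorem sits.

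Knowing that a component $\mathfrak p$ of $J_r$ through $\omega$ is not $\xi$-stable only gives you some $q\in\mathfrak p$ with $\xi q\notin\mathfrak p$. It says nothing about $\deg q$, yet your bookkeeping $\deg J_r\le c(\deg p)^r$ needs $\deg \xi q=O(\deg p)$ at every step. Neither natural choice of $q$ delivers this:
\begin{itemize}
\item Taking $q$ among the derivatives of $p$ does not work. The first $j$ with $\xi^j p\notin\mathfrak p$ is the $\xi$-multiplicity of $p$ along $V(\mathfrak p)$. That is itself a multiplicity at generic points of $V(\mathfrak p)$, and it is not $O(\deg p)$ in general. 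For example, when $n\ge 3$ and $\xi$ is nonlinear, a dimension count gives a nonzero $p$ of degree $d$ with $p,\xi p,\dots,\xi^{j}p$ all vanishing on a fixed non-invariant curve for $j\approx d^{1+\epsilon}$. Meanwhile the degree bound for $\xi^j p$ grows linearly in $j$.
\item Taking $q$ from general-position arguments gives a degree bound only in terms of $\deg\mathfrak p$. Then $\deg J_{r+1}\lesssim(\deg J_r)^2$, and you end with a bound of the shape $(\deg p)^{2^{n-1}}$, not $(\deg p)^n$.
\end{itemize}
Also, a single $\xi q$ cannot serve the whole unmixed $J_r$; the cut has to be chosen prime component by prime component.

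What you flag as the main obstacle is comparatively routine. The Chow-form order is additive over primary components, since Chow forms multiply. The one-step estimate is the local analogue of Corollary~\ref{Corollary Nesterenko corollary intersection of ideal and polynomial}. You do already have the mechanism that makes that estimate apply: the cut is $\xi F$ with $F$ vanishing on $V(\mathfrak p)$, so it is small along $\gamma$ relative to the distance to $V(\mathfrak p)$.

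The missing ingredient is Nesterenko's key lemma on minimal-degree elements. It is also the heart of the paper's global argument (Proposition~\ref{Proposition existence of polynomial of small t and small mult in ideal}, used in Construction~\ref{Construction of G}). The argument runs as follows.
\begin{itemize}
\item Run the induction over all unmixed ideals, with hypothesis $\operatorname{ord}I\le C(\deg I)^{n/\operatorname{codim}I}$, and reduce to a prime $\mathfrak p$ of codimension $c$.
\item Split according to whether $\mathfrak p$ contains $\{Q,\xi Q,\xi^2Q,\dots\}$ for some $Q\neq 0$. Your dichotomy ``$\mathfrak p$ invariant or not'' is too weak for this purpose.
\item If it does, $\mathfrak p$ contains a $\xi$-invariant prime, namely a minimal prime of that differential ideal, which passes through $\omega$. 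Hence $\mathfrak p$ contains the common element given by the D-property, and this bounds $\operatorname{ord}\mathfrak p$ by $O(\deg\mathfrak p)$.
\item If it does not, take $P\in\mathfrak p$ of minimal degree. Hilbert-function estimates give $\deg P\lesssim(\deg\mathfrak p)^{1/c}$. The key lemma gives $\xi^iP\notin\mathfrak p$ for some $i\le C(n,\xi)$.
\item Let $\mu$ be the first index with $\xi^{\mu}P\notin\mathfrak p$, so $\xi^{\mu-1}P\in\mathfrak p$. Cut $\mathfrak p$ by a bounded power of $G=\xi^{\mu}P$. This gives $J$ of codimension $c+1$ with $\operatorname{ord}J\ge\operatorname{ord}\mathfrak p-O\bigl((\deg\mathfrak p)^{1+1/c}\bigr)$ and $\deg J\lesssim(\deg\mathfrak p)^{1+1/c}$.
\item The exponents then telescope: $\bigl((\deg\mathfrak p)^{(c+1)/c}\bigr)^{n/(c+1)}=(\deg\mathfrak p)^{n/c}$.
\end{itemize}

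A minor slip as well: a zero-dimensional ideal whose locus contains $\omega$ has positive order at $z_0$, not order $0$. The correct base case is $\operatorname{ord}\le C\deg$, and that is all you need.
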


We prove an analogue of \iref{Theorem introduction; bound on nesterenko's ord}, replacing the local order at a point with a global notion of order, as follows.

\begin{Def}[order]
Let $f:\mathbb C^n\to\mathbb C$ be a function that is not identically zero. The order of $f$ with respect to $\gamma$ is defined as
\[
    \operatorname{ord}_{\gamma}f:=-\ln\max_{z\in\overline{D_1}}|f(\gamma(z))|
.\]
\end{Def}

Our main theorem is the following. For $p\in\mathbb Z[x_1,...,x_n]-\{0\}$, denote by $\operatorname{h}(p)$ the logarithm of the largest absolute value of a coefficient of $p$, and set $\operatorname{t}(p):=\deg p+\operatorname{h}(p)$.

\begin{Thm}[see \iref{Corollary the lower bound main theorem; main remark}]\label{Theorem introduction; main theorem}
Suppose $\xi$ satisfies the global D-property. Then there exists a constant $C$ depending only on $\xi,\gamma$ such that for any $p\in\mathbb Z[x_1,...,x_n]-\{0\}$, we have 
\[
    \operatorname{ord}_\gamma p \leq
    C\cdot\operatorname{t}(p)^n.
\]
\end{Thm}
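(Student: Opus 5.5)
We plan to follow Nesterenko's proof of \iref{Theorem introduction; bound on nesterenko's ord}, replacing local orders by global ``arithmetic orders'' and degrees by $\operatorname{t}=\deg+\operatorname{h}$. The first step is to extend $\operatorname{ord}_\gamma$ from polynomials to homogeneous ideals and cycles. For an unmixed ideal $I\subset\mathbb Z[x_0,\dots,x_n]$ we set $\operatorname{ord}_\gamma I$ to be (minus the log of) the size on $\gamma(\overline{D_1})$ of a Chow form of $I$ evaluated at a generic-in-a-controlled-way point built from $\gamma(z)$, maximized over $z\in\overline{D_1}$. We also use Philippon's height $\operatorname{h}(I)$ and write $\operatorname{t}(I)=\operatorname{h}(I)+\deg I$. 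We then need the standard properties:
\begin{itemize}[label={}]
\item \emph{additivity over primary decompositions};
\item \emph{an intersection inequality} in the spirit of Philippon/Nesterenko: if $p\notin\mathfrak p$ for a prime $\mathfrak p$ of dimension $r$, then either $\operatorname{ord}_\gamma(\mathfrak p,p)$ is large or $\operatorname{ord}_\gamma\mathfrak p\lesssim \operatorname{ord}_\gamma(\mathfrak p,p)+\operatorname{t}(\mathfrak p)\operatorname{t}(p)$-type terms;
\item \emph{a base case}: for $0$-dimensional cycles $\operatorname{ord}_\gamma$ is bounded by $C\operatorname{t}$ (Liouville-type, since $\gamma$ is transcendental and defined by a vector field over $\overline{\mathbb Q}$).
\end{itemize}

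The second ingredient is a multiplicity/zero lemma replacing local differentiation. Since $\gamma$ is a trajectory of $\xi$, we have $\xi^k p\circ\gamma=(d/dz)^k(p\circ\gamma)$. By Cauchy estimates on a slightly smaller disc and the maximum principle (after a harmless rescaling of $\overline{D_1}$), smallness of $p\circ\gamma$ propagates to $\xi^kp$ for $k\lesssim \operatorname{ord}_\gamma p/\operatorname{t}(p)$, with $\operatorname{t}(\xi^kp)\le \operatorname{t}(p)+O(k\log k)$. The induction runs from dimension $n$ down. Start with the ideal generated by $p$. At each step take a prime component $\mathfrak p$ with large order and small $\operatorname{t}$. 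If $\mathfrak p$ is not $\xi$-invariant, some $\xi^k q$ with $q\in\mathfrak p$ of controlled $\operatorname{t}$ is not in $\mathfrak p$. Intersecting with it then drops the dimension while keeping the order large and multiplying $\operatorname{t}$ by a factor $O(\operatorname{t}(p))$. After $n$ steps we reach dimension $0$, where the base case gives $\operatorname{ord}\le C\operatorname{t}(p)^n$, which contradicts largeness unless the original bound holds.

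The main obstacle is the case where the chosen $\mathfrak p$ is $\xi$-invariant, and this is where the global D-property enters. All $\xi$-invariant primes contain a fixed nonzero $Q$, and $Q\circ\gamma\not\equiv0$ since $\gamma$ is Zariski dense. Hence $\operatorname{ord}_\gamma\mathfrak p$ is bounded by a multiple of $\operatorname{ord}_\gamma Q\cdot\deg\mathfrak p$ plus $\operatorname{t}(\mathfrak p)$-terms, which is $O(\operatorname{t}(\mathfrak p))$ with a constant depending only on $\xi,\gamma$. I would first prove a lemma comparing $\operatorname{ord}_\gamma$ of an ideal with that of any element it contains: roughly $\operatorname{ord}_\gamma\mathfrak p\le \deg\mathfrak p\cdot\operatorname{ord}_\gamma Q+c\,\operatorname{t}(\mathfrak p)$, via the Chow form representation. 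Making this uniform is delicate, because global order, unlike local order at a point, is a $\max$ over the disc and is not additive. One must also ensure that each transition from a polynomial inequality to an ideal inequality loses only constants, so that the exponent $n$ is preserved.
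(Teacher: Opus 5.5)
There is a genuine gap, and it is the one you flag at the end but do not resolve. Nothing in your plan makes the max-based order $\operatorname{ord}_\Gamma I=-\ln\max_{\omega\in\Gamma}|I(\omega)|$ behave like a local order, and both load-bearing steps depend on this. For primary decompositions, the inequality $\sum_i n_i\ln|\mathfrak p_i(\omega)|\le\ln|I(\omega)|+O(\deg I)$ is pointwise. Maximizing it over $\Gamma$ goes the wrong way, since $\max_\Gamma\prod_i|\mathfrak p_i|^{n_i}\le\prod_i\max_\Gamma|\mathfrak p_i|^{n_i}$. So ``additivity'' is not a standard property you can quote. The paper proves it (Proposition~\ref{Proposition I = intersection of pi}) by value distribution. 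Each $|\mathfrak p_i|\circ\gamma$ is a maximum of holomorphic functions with $\B_{\frac1e,1}(|\mathfrak p_i|\circ\gamma)\le\operatorname{ord}_\Gamma\mathfrak p_i+C\deg\mathfrak p_i$. For such functions, $\mathbb E_{z\in\overline{D_{1/e}}}\ln|f(z)|\ge\ln\max_{\overline{D_{1/e}}}|f|-C\,\B_{\frac1e,1}(f)$ (Lemma~\ref{Lemma average is like maximum minus Bernstein index}). Averaging the pointwise inequality, instead of maximizing it, then gives $\operatorname{ord}_\Gamma I\le C\sum_i n_i\operatorname{ord}_\Gamma\mathfrak p_i+C\deg I$.

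The intersection step fails more seriously. Cauchy estimates give only a sup bound on a smaller piece $\Gamma'$: $\max_{\omega\in\Gamma'}\|\xi^kq\|_\omega\lesssim e^{C\operatorname{t}(q)}\max_{\omega'\in\Gamma}\rho_{\omega',\mathfrak p}$. Lemma~\ref{Lemma Nesterenko zero of ideal close to omega} only bounds $\max_{\omega\in\Gamma}\rho_{\omega,\mathfrak p}$ by roughly $e^{-\operatorname{ord}_\Gamma\mathfrak p/((1+\dim\mathfrak p)\deg\mathfrak p)}$. But Nesterenko's intersection estimate (Corollary~\ref{Corollary Nesterenko corollary intersection of ideal and polynomial}) is pointwise. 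It requires $-\eta\ln\|G\|_\omega\ge2\min(S,\ln\frac1{\rho_{\omega,\mathfrak p}})$ at the same point $\omega$, e.g.\ $\|G\|_\omega\le\rho_{\omega,\mathfrak p}^{1/2}$ with $\eta=4$. A sup bound cannot give this where $\Gamma$ passes much closer to $V(\mathfrak p)$ than elsewhere. The fallbacks are taking $\eta\sim\deg\mathfrak p$, or using Lemma~\ref{Lemma Nesterenko proposition intersection of ideal and polynomial} to get only $\operatorname{ord}_\Gamma J\gtrsim\operatorname{ord}_\Gamma\mathfrak p/\deg\mathfrak p$. Both lose a factor $\deg\mathfrak p$, possibly of size $\operatorname{t}(\mathfrak p)$, at every step, which destroys the exponent $n$. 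Moreover, smallness of $|J|$ at many points of $\Gamma$ says nothing about $\operatorname{ord}_\Gamma J$ without a value-distribution estimate.

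The paper's mechanism runs as follows.
\begin{enumerate}
\item Differentiate only once, from $F=\xi^{\mu-1}P\in\mathfrak p$, where $\mu\le C$ is the number of derivatives needed to leave $\mathfrak p$. Then $\|F\|_\omega\le e^{C\deg F}\rho_{\omega,\mathfrak p}$ pointwise.
\item Use the pointwise bound $|f'|\le C\,\B_{\frac1e,1}(f)\,|f|$ on a set $A$ of area $\ge0.56\operatorname{Area}(\overline{D_{1/e}})$ (Proposition~\ref{Proposition derivative-function ratio}). This gives $\|\xi F\|_\omega\le\rho_{\omega,\mathfrak p}^{1/2}$ on $\gamma(A)$.
\item Apply the corollary with $\eta=4$ at each point of $\gamma(A)$.
\item Use the lower bound for $|J|\circ\gamma$ off a union of small discs (Lemma~\ref{Lemma lower bound on I in general}) at some $z_0\in A$. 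This yields $\operatorname{ord}_\Gamma\mathfrak p\le C\operatorname{ord}_\Gamma J+C\operatorname{t}(\mathfrak p)^{1+1/\operatorname{codim}\mathfrak p}$.
\end{enumerate}

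Smaller points:
\begin{itemize}
\item The bounded-multiplicity lemma needs $\mathfrak p$ to contain no nonzero $\{Q,\xi Q,\dots\}$, not merely to be non-invariant. Your $Q_0$ argument also covers the remaining case.
\item That $Q_0$ argument is not delicate. It is the paper's single-point argument via Lemma~\ref{Lemma Nesterenko zero of ideal close to omega}.
\item You need the arithmetic version $\operatorname{t}(q)\lesssim\operatorname{t}(\mathfrak p)^{1/\operatorname{codim}\mathfrak p}$ with bounded $k$.
\item The zero-dimensional case is an averaging argument along $\Gamma$, not a Liouville estimate.
\end{itemize}
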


We note two corollaries. First, we show that the order bound implies a bound on the number of zeroes, thus generalizing Nesterenko's local result in this context.

\begin{CorThm}[see \iref{Corollary number of zeroes with multiplicity}]
  Suppose $\xi$ satisfies the global D-property. Then there is a constant $C$ depending only on $\xi,\gamma$ such that for any $p\in\mathbb Z[x_1,...,x_n]$ we have
  \begin{equation*}
    \# \text{zeroes of }p\circ\gamma\text{ in }\overline{D_1}\text{ counted with multiplicities}  \le C\cdot\operatorname{t}(p)^n.
  \end{equation*}
\end{CorThm}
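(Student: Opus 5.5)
We derive the zero count from Theorem~\ref{Theorem introduction; main theorem} with a Jensen-type argument on a slightly larger disc. The point is that a holomorphic function on a disc which is not too small on a smaller concentric disc, yet bounded above on the larger one, has few zeroes in the smaller disc. The lower bound comes from the order estimate and the upper bound from a trivial height estimate. Since $\gamma$ is a trajectory of a polynomial vector field defined on $\overline{D_1}$, it extends holomorphically to a neighbourhood, so to some closed disc $\overline{D_R}$ with $R>1$. To apply the order bound on $\overline{D_R}$, I would use the version of Theorem~\ref{Theorem introduction; main theorem} for the rescaled trajectory $\tilde\gamma(z)=\gamma(Rz)$. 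This is again a trajectory, now of the polynomial vector field $R\xi$; it is not contained in any proper algebraic variety, and it has the same invariant ideals, so the global D-property is preserved. One could equally prove the estimate on $\overline{D_1}$ and count zeroes on a smaller disc, by rescaling in the other direction.

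\emph{Upper bound.} Set $M:=\max_{\overline{D_R}}\max_i|\gamma_i|$, which depends only on $\gamma$. Then for $p\in\mathbb Z[x]$ we have
\[
\max_{\overline{D_R}}|p\circ\gamma|\le \binom{n+\deg p}{n} e^{\operatorname{h}(p)}\max(1,M)^{\deg p}\le e^{C_1\operatorname{t}(p)}.
\]

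\emph{Lower bound.} Applied to $\tilde\gamma$, Theorem~\ref{Theorem introduction; main theorem} gives
\[
\max_{\overline{D_1}}|p\circ\gamma|\ge\max_{|z|\le r}|p\circ\gamma(z)|\ge e^{-C_2\operatorname{t}(p)^n}
\]
for a fixed intermediate radius $r$ with $1<r<R$. Concretely, apply the theorem to $\gamma(rz)$, a trajectory of $r\xi$.

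\emph{Counting zeroes.} Let $f=p\circ\gamma$; we may assume $p\ne0$, and $f\not\equiv0$ because $\gamma$ is not contained in a proper variety. Choose $z_1$ with $|z_1|\le r$ and $|f(z_1)|=\max_{|z|\le r}|f|$. Apply Jensen's formula on the disc $D(z_1,\rho)$, where $\rho=R-r$ is chosen so that this disc sits inside $D_R$ and contains $\overline{D_1}$. For that I need $r<1$ in the lower bound, so I should really take $1<r<R$ for the ball of the order estimate only after rescaling. Fix the radii so that
\[
\overline{D_1}\subset D(z_1,\rho')\subset D(z_1,\rho)\subset D_R \quad\text{with } \rho'<\rho \text{ fixed}
\]
for every $|z_1|\le r$. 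This is achieved by taking $r$ small relative to the gap, for example by first rescaling so that the order theorem is applied on a disc of radius $r$ and $\gamma$ extends to radius $R\gg r+1$. This geometric bookkeeping is the only fiddly point.

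Jensen's formula then bounds the number of zeroes in $D(z_1,\rho')$ by
\[
\frac{\ln\max_{D(z_1,\rho)}|f|-\ln|f(z_1)|}{\ln(\rho/\rho')}\le \frac{C_1\operatorname{t}(p)+C_2\operatorname{t}(p)^n}{\ln(\rho/\rho')}\le C\operatorname{t}(p)^n,
\]
using $\operatorname{t}(p)\ge1$ for nonconstant $p$; constant $p$ has no zeroes. This bound covers the zeroes in $\overline{D_1}$ with multiplicities.

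The main obstacle is arranging the radii consistently with the order theorem as stated on the unit disc. I expect to resolve it by applying the theorem to the rescaled trajectory $\gamma(rz)$ and noting that the constant then depends only on $\xi,\gamma$ and the fixed choice of $r$.
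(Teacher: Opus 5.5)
Your proposal is correct and is essentially the paper's argument. The paper bounds $\B_{\frac1e,1}(p\circ\gamma)$ by combining the order theorem (a lower bound for the maximum on the small disc) with the trivial height bound on the large disc, then applies Lemma~\ref{Lemma bound on number of zeroes of f given bernstein index}; your Jensen step is exactly what that lemma packages. The paper counts zeroes in $\overline{D_{1/e}}$ and reaches other radii through its remark on general radii $r_1<r_2$, which your rescaling $\gamma(rz)$ makes explicit.

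Do tidy the radii. Your first lower-bound display is false for $1<r<R$, since the maximum over the larger disc is the bigger one. Also, $R\gg r+1$ cannot be arranged by rescaling, because $R$ is fixed by $\gamma$ and may be close to $1$. Neither is needed: any fixed $r<\frac{R-1}{2}$ works, for example $r=\frac{R-1}{4}$, $\rho'=1+\frac{R-1}{2}$, $\rho=1+\frac{3(R-1)}{4}$.
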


While generalizing Nesterenko's result, this bound on the number of zeroes does not suffice for our intended applications to transcendental number theory, developed in our joint work with Hirata-Kohno and Kawashima in~\cite{V1BinyaminiHirata_KohnoKawashimaSalant2026CountingTheoremsForAlgebraicRelations}. For these applications, we require a more quantitative statement, showing that a polynomial is not only nonzero but also bounded from below outside a small union of discs. A result of this type follows from our order bound, as stated below.

\begin{CorThm}[see \iref{Corollary the lower bound main theorem; main corollary}]\label{Corollary introduction; main corollary}
Suppose $\xi$ satisfies the global D-property. Then there exists a constant $C$ depending only on $\xi,\gamma$ such that the following holds.
For any $p\in\mathbb Z[x_1,...,x_n]-\{0\}$ and any $0<\varepsilon<\frac1e$, there exists a collection of discs $\{D^i\}_{i=1}^a$ such that
\begin{enumerate}
    \item The sum of their diameters is less than $\varepsilon$.
    \item The number of discs is at most $C\cdot\operatorname{t}(p)^n$.
    \item For every $\omega\in\gamma(\overline{D_\frac1e}-\bigcup_i D^i)$ we have
    \[
    \ln|p(\omega)|
        \geq
    -C\cdot\ln\frac1\varepsilon\cdot\operatorname{t}(p)^n
    .\]
\end{enumerate}
\end{CorThm}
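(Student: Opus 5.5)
We deduce the corollary from the order bound of \iref{Theorem introduction; main theorem} by a Cartan-type lemma for holomorphic functions on the disc. Set $f:=p\circ\gamma$, which is holomorphic on a neighbourhood of $\overline{D_1}$ and not identically zero, since $\gamma$ is not contained in a proper algebraic variety. \iref{Theorem introduction; main theorem} gives a point $z^*\in\overline{D_1}$ with
\[
\ln|f(z^*)|\ge -C\operatorname{t}(p)^n.
\]
Since $\gamma$ is fixed, $|\gamma|$ is bounded on $\overline{D_1}$ by a constant depending only on $\xi,\gamma$. The polynomial $p$ has at most $(\deg p+1)^n$ monomials with coefficients of modulus at most $e^{\operatorname{h}(p)}$. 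Hence
\[
\ln\max_{\overline{D_1}}|f|\le C'\operatorname{t}(p)\le C'\operatorname{t}(p)^n.
\]
So $f$ has a controlled maximum and a controlled value at one point of the disc.

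The core step is the classical Cartan (Boutroux--Cartan) estimate, in its standard form for discs. If $g$ is holomorphic on a disc of radius $R$, and $|g|\le M$ there with $|g(z^*)|\ge m$ at some point $z^*$ of a smaller concentric disc, then $g$ has $N\lesssim\ln(M/m)$ zeroes in the intermediate disc by Jensen's formula. Writing $g=B\cdot h$ with $B$ the monic polynomial of these zeroes, $\ln|h|$ is harmonic and bounded above. Harnack's inequality then bounds $\ln|h|$ from below by $-O(\ln(M/m))$ on a smaller disc. Cartan's lemma applied to $B$ gives at most $N$ discs with total diameter less than $\varepsilon$, outside of which $|B|\ge(\varepsilon/(2e))^N$.

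The only subtlety is that $z^*$ may lie anywhere in $\overline{D_1}$, possibly near the boundary, whereas Harnack needs some room. I would handle this by applying \iref{Theorem introduction; main theorem} to a slightly larger disc, as the paper's general version presumably allows (the trajectory is defined on a neighbourhood of $\overline{D_1}$). Alternatively one uses a chain of overlapping discs from $z^*$ toward $D_{1/e}$, each step losing a constant factor in the exponent; this is the reason the statement restricts to $D_{1/e}$. I expect this propagation of the lower bound from the unknown point $z^*$ inward to be the main technical nuisance.

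Combining the estimates, the number of discs is at most $N\le C\operatorname{t}(p)^n$. Outside their union in $\overline{D_{1/e}}$ we have
\[
\ln|f|\ge N\ln\frac{\varepsilon}{2e}-O(\operatorname{t}(p)^n)\ge -C\ln\frac1\varepsilon\cdot\operatorname{t}(p)^n,
\]
where the last step uses $\varepsilon<1/e$, so that $\ln(1/\varepsilon)>1$ and the additive constants are absorbed. Since $\ln|p(\omega)|=\ln|f(z)|$ for $\omega=\gamma(z)$, this gives items (1)--(3).
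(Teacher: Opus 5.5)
Your argument is correct and is essentially the paper's. In both, the order bound together with the trivial upper bound $\ln\max_{\overline{D_1}}|p\circ\gamma|\le\mathrm{const}\cdot\operatorname{t}(p)$ controls the Bernstein index $\B_{\frac1e,1}$, and a Cartan-type lemma produces the discs and the lower bound. The paper cites Ilyashenko--Yakovenko's Lemma~3, with the disc count read off from its proof, and routes the argument through the ideal $(p)$ and Nesterenko's estimate for principal ideals. You instead work with the holomorphic function $p\circ\gamma$ directly and reprove the Cartan step via Jensen, Harnack and Boutroux--Cartan.

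The propagation nuisance you anticipate does not arise in the paper. Its main theorem bounds the order over $\Gamma=\gamma(\overline{D_{1/e}})$, so the good point already lies in $\overline{D_{1/e}}$. Moreover, the Ilyashenko--Yakovenko lemma, being phrased via $\B_{\frac1e,1}$, already contains the Harnack argument on the whole domain. That is exactly what your chain of discs reproduces by hand, and the chain remains valid since $\gamma$ is holomorphic on a neighbourhood of $\overline{D_1}$.
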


Our main theorem and its corollaries in \iref{Section main theorem} are formulated for polynomials over any number field $K$ rather than $\mathbb Z$. To simplify the introduction, we have restricted it to the case of $\mathbb Z$-coefficients, but no substantial issues arise in passing to this more general case.

\subsection*{AI Disclosure}

ChatGPT was used to preform basic English and typographic edits and minor formatting. No substantive mathematical work was preformed with the help of AI systems.

\section{Preliminaries}

\subsection{Overview of the Bernstein index}\label{Subsection bernstein overview}

The Bernstein index is a useful complex-analytic notion that measures the growth rate of functions on a compact set. For basic results on the Bernstein index and its properties, we refer the reader to \cite{NovikovYakovenko1997AComplexAnalogOfRolle'sTheoremAndPolynomialEnvelopesOfIrreducibleDifferentialEquationsInTheComplexDomain,IlyashenkoYakovenko1996CountingRealZerosOfAnalyticFunctionsSatisfyingLinearOrdinaryDifferentialEquations}.
The Bernstein index generalizes the $\deg$ property of polynomials to larger classes of functions (usually holomorphic functions).

In general, the Bernstein index is defined for a pair of sets $K\subseteq U\subseteq\mathbb C$ where $U$ is a bounded, connected, and simply connected open set with a nice boundary and $K$ is a compact subset. In our setting, we work with $K:=\overline{D_\frac1e},U:=D_1$.
Let $f$ be a continuous function $\overline{D_1}\to\mathbb C$. We define the Bernstein index of $f$ as follows:

\begin{Def}[Bernstein index]
\[
    \B_{\frac1e,1}(f)
        :=
    \ln\frac{\max_{z\in\overline{D_1}}|f(z)|}{\max_{z\in\overline{D_\frac1e}}|f(z)|}
.\]
\end{Def}

As we shall see here and in \iref{Section bernstein index theorems}, the Bernstein index behaves well under products, maxima, zero counting, and differentiation.
Let $f,f_1,...,f_k$ be functions holomorphic in $U$ and continuous on $\overline{U}$. We recall some important results concerning their Bernstein indices:

\begin{Lem}[\mycite{Lemma 1}{IlyashenkoYakovenko1996CountingRealZerosOfAnalyticFunctionsSatisfyingLinearOrdinaryDifferentialEquations}]\label{Lemma bound on number of zeroes of f given bernstein index}
The number of zeroes of $f$ in $K$ (counted with their multiplicities), denoted by $N_K(f)$, satisfies
\[
    N_K(f)\leq C_{K,U}\cdot\B_{K,U}(f)
,\]
where $C_{K,U}$ is some constant depending only on $K,U$.
\end{Lem}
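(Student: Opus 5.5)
The plan is the classical Jensen-formula argument combined with a Blaschke-product factorization. With $K=\overline{D_{1/e}}$ and $U=D_1$, it suffices to prove
\[
N_K(f)\le \B_{\frac1e,1}(f),
\]
so that one may take $C_{K,U}=1$, or any larger constant. By continuity we may shrink the outer radius slightly. The standard way to handle zeros on the boundary circle is to work on $D_r$ with $r<1$, where $f$ is holomorphic on a neighbourhood of $\overline{D_r}$, and then let $r\to1$. We also assume $f\not\equiv0$; otherwise the Bernstein index is undefined.

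First, let $a_1,\dots,a_N$ be the zeros of $f$ in $\overline{D_{1/e}}$, counted with multiplicity, and fix $r\in(1/e,1)$. Form the Blaschke product
\[
B(z)=\prod_{j=1}^N \frac{r(z-a_j)}{r^2-\overline{a_j}z}.
\]
This has modulus $1$ on $|z|=r$, so $g=f/B$ is holomorphic on $\overline{D_r}$ and satisfies $|g|=|f|$ on $|z|=r$. The maximum principle then gives, for every $|z|\le 1/e$,
\[
|f(z)|=|B(z)|\,|g(z)|\le |B(z)|\max_{|w|=r}|f(w)|.
\]

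Second, I bound $|B|$ on $\overline{D_{1/e}}$. Each factor is a disc automorphism of $D_r$ (rescaled). For $|z|,|a_j|\le 1/e$, its modulus equals
\[
\frac{r|z-a_j|}{|r^2-\overline{a_j}z|}\le \frac{r\cdot 2/e}{r^2-1/e^2}.
\]
As $r\to1$ this tends to $\frac{2/e}{1-1/e^2}=\frac{2e}{e^2-1}\approx0.851<1$. Call this limit $\rho$. Then
\[
\max_{K}|f|\le \rho_r^N\max_{|w|=r}|f| \le \rho_r^N\max_{\overline{D_1}}|f|,
\]
where $\rho_r$ is the bound for radius $r$. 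Letting $r\to1$ gives $N\ln(1/\rho)\le \B_{\frac1e,1}(f)$, so $C_{K,U}=1/\ln(1/\rho)$ works.

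The only delicate point is the estimate of the Blaschke factors. The crude bound $|z-a_j|\le 2/e$ has to give a quotient strictly below $1$ uniformly. It does here because $2e<e^2-1$. For a general pair $K\subset U$ one would instead use that the pseudo-hyperbolic diameter of $K$ inside $U$ is less than $1$, passing through a Riemann map $U\to D_1$. The Riemann map is continuous up to the boundary for nice $U$, and $K$ is compact in $U$. That argument yields a constant depending only on $K$ and $U$. Zeros on $\partial U$ cause no trouble, because we work on $D_r$ and pass to the limit.
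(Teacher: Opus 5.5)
The paper gives no proof of this lemma. It is quoted from Ilyashenko--Yakovenko, and only the pair $K=\overline{D_{1/e}}$, $U=D_1$ is ever used. Your argument is a correct, self-contained proof for that pair. The general case is only sketched, but the pseudo-hyperbolic diameter of $K$ after a Riemann map is the right way to extend it.

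Compare this with the usual route through Jensen's formula, centred at a point where $\max_K|f|$ is attained. That route would need a chain of discs here: $K$ does not fit inside a disc about such a point that stays in $U$, since $2/e>1-1/e$. Your approach instead divides out the zeros with a Blaschke product and applies the maximum principle to $f/B$ on $\overline{D_r}$. This compares $\max_K|f|$ with $\max_{|w|=r}|f|$ directly; it is the Green-function form of the Jensen argument.

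One correction: drop the opening claim that $N_K(f)\le\B_{\frac1e,1}(f)$, i.e.\ that $C_{K,U}=1$ works. It is false. For $f(z)=(z-1/e)^N$ one has $N_K(f)=N$, but $\B_{\frac1e,1}(f)=N\ln\frac{e+1}{2}\approx 0.62\,N$.

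What your Blaschke-factor estimate actually proves is $N_K(f)\ln\frac{e^2-1}{2e}\le\B_{\frac1e,1}(f)$, so $C_{K,U}=1/\ln\frac{e^2-1}{2e}\approx 6.2$. This is all the lemma requires, and your final sentence does reach it. You could sharpen the constant to about $2.3$ by bounding each factor with the exact pseudo-hyperbolic diameter $\frac{2e}{e^2+1}$ of $K$. That would replace the crude bounds $|z-a_j|\le 2/e$ and $|r^2-\overline{a_j}z|\ge r^2-e^{-2}$.
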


\begin{Lem}[\mycite{Lemma 3}{IlyashenkoYakovenko1996CountingRealZerosOfAnalyticFunctionsSatisfyingLinearOrdinaryDifferentialEquations}]\label{Lemma bernstein index lower bound classic}
For any $0<h<\frac1e$, there is a collection of discs $\{D^i\}_{i=1}^a$ whose diameters sum to less than $h$ such that
\[
    \forall x\in K-\bigcup_i D^i:|f(x)|\geq\frac{\max_{z\in K}|f(z)|}{e^{C_{K,U}\cdot\ln\frac1h\cdot\B_{K,U}(f)}}
,\]
where $C_{K,U}$ is some constant depending only on $K,U$.
\end{Lem}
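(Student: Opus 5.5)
The plan is the classical Cartan-type argument: normalize $f$, factor out its zeroes near $K$ as a polynomial, bound the zero-free quotient from below by a Harnack inequality, and bound the polynomial from below off small discs. Set $M:=\max_{\overline{D_1}}|f|$ and $m:=\max_K|f|$, so $\B_{K,U}(f)=\ln(M/m)$. Fix an intermediate radius $r$ with $\frac1e<r<1$, say $r=\frac{1+1/e}{2}$.

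First, apply \iref{Lemma bound on number of zeroes of f given bernstein index}, with $K$ replaced by $\overline{D_r}$ and the same $U$. This shows that the number $N$ of zeroes of $f$ in $\overline{D_r}$ satisfies $N\le C\cdot\B(f)$. Enlarging $K$ to $\overline{D_r}$ only changes the constant, since $\max_{\overline{D_r}}|f|\ge m$. Alternatively, one can use Jensen's formula on a slightly larger disc directly.

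Next, let $a_1,\dots,a_N$ be these zeroes and write $f=P\cdot g$ with $P(z)=\prod_j(z-a_j)$. Then $g$ is holomorphic and zero-free on $D_r$.

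Bound $g$ from both sides. On $|z|=r$ we have $|z-a_j|\ge r-|a_j|$, which could be small. To avoid this, I would instead choose a radius $\rho\in(r',r)$ for which the circle $|z|=\rho$ stays at distance $\gtrsim 1/N$ from all the $a_j$; pigeonholing gives such a $\rho$. On that circle $|P|\ge (c/N)^N$, so by the maximum principle $\max_{D_\rho}|g|\le M(N/c)^N$. In the other direction, $\max_K|g|\ge m/2^N$, since $|P|\le 2^N$ on $K$.

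Then $\ln|g|$ is harmonic on $D_\rho$, and $\ln(\max|g|)-\ln|g|$ is a positive harmonic function there. Harnack's inequality on $K\subset D_{\rho'}$ therefore gives, for all $x\in K$,
\[
\ln|g(x)|\ \ge\ \ln\max_K|g| - C\bigl(\ln\max_{D_\rho}|g|-\ln\max_K|g|\bigr)\ \ge\ \ln m - C'(\B(f)+N\ln N).
\]
The $N\ln N$ term is a nuisance, since the target has no logarithmic loss in $\B$. I would remove it by choosing $\rho$ more carefully, using the Cartan estimate below on the circle rather than pigeonholing. Averaging $\ln|z-a_j|$ over $\rho$ in an interval gives a circle on which $\ln|P|\ge -CN$. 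This is the main technical point.

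Finally, for the polynomial part apply Cartan's lemma, the Boutroux--Cartan estimate. For any $h$, outside a union of at most $N$ discs whose radii sum to at most $2eh$, one has $|P(x)|\ge (h/e)^N$. Rescaling $h$ so that the diameters sum to less than $h$, this gives $\ln|P|\ge -CN\ln\frac1h$ off the discs. Combining the two bounds, for $x\in K-\bigcup D^i$ we get
\[
\ln|f(x)|\ge \ln m - C\,\B(f)-CN\ln\tfrac1h\ge \ln m - C\ln\tfrac1h\cdot\B(f),
\]
using $N\le C\,\B(f)$ and $\ln\frac1h\ge1$ for $h<\frac1e$.

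The degenerate case $\B(f)$ small is covered because $\B(f)\ge0$, and the constant can absorb everything once $\B(f)\ge$ some fixed amount. When $\B(f)$ is near zero, $N=0$ and Harnack alone suffices. The main obstacle is the circle-selection step, which must yield the $O(\B)$ bound without the $\ln N$ loss.
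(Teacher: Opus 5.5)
Your argument is correct and follows the same scheme as the Ilyashenko--Yakovenko proof that the paper cites (the paper gives no proof of its own, but Remark~\ref{Remark number of discs} describes that structure): fix an intermediate disc $\overline{D_r}$, bound the number of zeros there by $N\le C\,\B_{K,U}(f)$ via the counting lemma, factor $f=Pg$, apply Harnack to the zero-free factor $g$, and apply the Boutroux--Cartan lemma to $P$, which also yields the bound on the number of discs in Remark~\ref{Remark number of discs}. The step you flag as the main obstacle is not a real one: the removed zeros lie in $\overline{D_r}$ with $r<1$, so the maximum principle on the outer circle already gives $\max_{\overline{D_1}}|g|\le M(1-r)^{-N}$, and Harnack on $D_r$ then yields the $O(\B_{K,U}(f)+N)$ bound with no $N\ln N$ loss (your averaging fix also works, because $|z-a_j|\ge\bigl||z|-|a_j|\bigr|$ depends only on $|z|$, so a good average of $\sum_j\ln\bigl|\rho-|a_j|\bigr|$ over $\rho$ produces an entire good circle).
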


\begin{Rem}\label{Remark number of discs}
The proof of \cite[Lemma 3]{IlyashenkoYakovenko1996CountingRealZerosOfAnalyticFunctionsSatisfyingLinearOrdinaryDifferentialEquations} implies that the number of discs, $a$, is at most $\gamma(U,\overline V)\B_{K,U}(f)$, where $\gamma(U,\overline V)$ is a constant depending only on $U$ and on $\overline V$. Here $V$ can be chosen at the beginning of the proof depending only on $K,U$ and not on $f$ (so we can treat $\gamma(U,\overline V)$ as a constant depending only on $K,U$). Thus, we can choose $C_{K,U}$ large enough that
\[
    a\leq C_{K,U}\cdot\B_{K,U}(f)
.\]
\end{Rem}

\begin{PropThm}[see \iref{Corollary bernstein index of product}]
\[
    \B_{K,U}(\prod_{i=1}^k f_i)
        \leq
    C_{K,U}\cdot\sum_{i=1}^k \B_{K,U}(f_i)
\]
where $C_{K,U}$ is some constant depending only on $K,U$ (but not on $k$).
\end{PropThm}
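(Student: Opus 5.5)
Since $\max_{\overline{D_1}}|\prod f_i|\le\prod_i\max_{\overline{D_1}}|f_i|$, the numerator of $\B_{\frac1e,1}(\prod f_i)$ is at most the product of the numerators of the $\B(f_i)$. The whole difficulty is therefore a \emph{lower} bound for $\max_K|\prod f_i|$ in terms of $\prod_i\max_K|f_i|$, which must cost a factor like $e^{-C\sum_i\B(f_i)}$ with $C$ independent of $k$. Naive pointwise arguments fail, because the $f_i$ may attain their maxima on $K$ at different points. The plan is to show that each $f_i$ is not too small on \emph{most} of a fixed circle or disc, in a quantitative sense uniform in $i$, and then to use a common point where all the $f_i$ are simultaneously not too small.

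First, normalize $\max_{\overline{D_1}}|f_i|=1$; this does not change any Bernstein index. Write $B_i=\B(f_i)$, so $\max_K|f_i|=e^{-B_i}$. The main step uses subharmonicity of $\ln|f_i|$. Choose a fixed intermediate radius, say $r=1/2$. Then $|f_i|\le1$ on $D_1$, and at a point $z_i\in K$ we have $\ln|f_i(z_i)|=-B_i$.

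Apply Harnack's inequality to the nonnegative superharmonic function $-\ln|f_i|$ on $D_1$ (or, equivalently, the Poisson–Jensen formula). This shows that the average of $\ln|f_i|$ over the circle $|z|=r$ is at least $-C_1B_i$, where $C_1$ depends only on $r$ and $K$. Concretely, for $z_i$ with $|z_i|\le1/e$, the Poisson kernel of the disc of radius $r$ at $z_i$ is bounded above and below by constants. Since $\ln|f_i|$ is subharmonic,
\[
-B_i=\ln|f_i(z_i)|\le\int_{|\zeta|=r}P_r(z_i,\zeta)\ln|f_i(\zeta)|\,d\theta(\zeta).
\]
All integrand values are nonpositive and $P_r\ge c>0$, so $\frac1{2\pi}\int_{|\zeta|=r}\ln|f_i|\,d\theta\ge -B_i/c$.

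Summing over $i$, the circle average of $\ln|\prod_i f_i|$ on $|z|=r$ is at least $-c^{-1}\sum_iB_i$. Hence there is a point $\zeta_0$ on $|z|=r$ with $\ln|\prod f_i(\zeta_0)|\ge -c^{-1}\sum B_i$. This point lies on the circle of radius $1/2$ rather than in $K$, so one more step is needed.

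To transfer the bound from radius $r$ to $K$, I would choose $r<1/e$ in the first place, for instance $r=1/(2e)$, so that the circle lies inside $K$. The Poisson kernel on the disc of radius $r$ is then only needed at points of $K$ that lie inside that disc. The point $z_i$ could lie outside it, however. To handle this, I would instead apply Harnack on $D_1$ itself: for the nonnegative superharmonic $u_i=-\ln|f_i|$ on $D_1$, the Poisson representation over $|\zeta|=\rho$ with $\rho\to1$ gives, for $|z|\le1/e$,
\[
u_i(z_i)\ge c\cdot\frac{1}{2\pi}\int u_i\ \text{(boundary average)},
\]
so the average of $u_i$ over $|\zeta|=\rho$ is at most $B_i/c$. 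By superharmonicity, the circle averages of $u_i$ increase as the radius decreases. This bound must instead be transferred to the small circle $|z|=1/e$, and the natural tool is Harnack's inequality in the comparison form $u_i(w)\le C_2\,u_i(z_i)$ for all $w,z_i$ in $K$. This holds for positive harmonic functions; for superharmonic $u_i$ it holds after splitting $u_i$ into a harmonic part plus a Green potential of the zeros $\sum_j G(\cdot,a_j)$. I would apply Harnack to the harmonic part, and control the potential part by averaging over the circle $|w|=1/e$: the circle average of $G(\cdot,a)$ is bounded by a constant times $G(z_i,a)$, uniformly in $a\in D_1$.

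This comparison of circle averages of Green functions with point values, uniformly in the zero location, is the step I expect to be the main obstacle. The Jensen-type estimate seems to handle it: the average of $\ln\frac{|1-\bar a w|}{|w-a|}$ over $|w|=1/e$ equals $\min(\ln\frac{1}{|a|},\ln e)$ up to a bounded term, and this is comparable to $G(z_i,a)$. Granting that, the average of $u_i$ over $|w|=1/e$ is at most $C\,B_i$. Summing over $i$ then yields a point of $K$ where $\ln|\prod f_i|\ge -C\sum B_i$, which gives the proposition with $C_{K,U}=C$ independent of $k$.
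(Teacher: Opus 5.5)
Your argument is correct, and it differs from the paper's in how the key averaged estimate is obtained. Both proofs share the same skeleton. Normalize $\max_{\overline{D_1}}|f_i|=1$. Show that some \emph{average} of $\ln|f_i|$ over a fixed subset of $\overline{D_{\frac1e}}$ is at least $-C\,\B_{\frac1e,1}(f_i)$. Sum over $i$, and use that the maximum of $\ln|\prod_i f_i|$ dominates its average.

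The paper averages over the area of $\overline{D_{\frac1e}}$ and gets the estimate from Lemma~\ref{Lemma average is like maximum minus Bernstein index}. That lemma integrates (layer-cake) the Cartan-type sublevel bound $\operatorname{\mathbb P}(|\hat f|\le\varepsilon)\le\mathrm{const}\cdot\varepsilon^{1/(\mathrm{const}\cdot\B_{\frac1e,1}(f))}$ of Ilyashenko--Yakovenko and Novikov--Yakovenko. You instead average over the circle $|w|=\frac1e$ and use only classical potential theory: the Riesz/Poisson--Jensen splitting of $-\ln|f_i|$, Harnack for the nonnegative harmonic part, and Jensen's formula for the Green potential.

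Your route is self-contained, gives explicit constants, and sidesteps the misquoted lemma that the paper must repair in Remark~\ref{Remark issue with the lemma}. The paper's route reuses sublevel-set machinery it needs anyway for its disc-exclusion lemmas. Its lemma also applies verbatim to maxima of holomorphic functions, as required in Proposition~\ref{Proposition I = intersection of pi}. Yours extends to that case too, since $\ln\max_j|g_j|$ is subharmonic and the Green-function comparison integrates against any positive Riesz measure.

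\medskip
\noindent\textbf{The step you leave as ``granted''.} It does hold, in the one-sided form you need. The average of $G(\cdot,a)$ over $|w|=\frac1e$ is exactly $\min(1,\ln\frac1{|a|})$, and this is at most $2(1-|a|)$. On the other side, write $\phi_a(z)=\frac{z-a}{1-\bar a z}$. The identity $1-|\phi_a(z)|^2=\frac{(1-|a|^2)(1-|z|^2)}{|1-\bar a z|^2}$ gives $G(z,a)\ge\frac12(1-|\phi_a(z)|^2)\ge c\,(1-|a|)$ for $|z|\le\frac1e$, with $c$ absolute. Only this averaged, one-sided comparison is true. The pointwise form $u_i(w)\le C_2\,u_i(z_i)$ you mention fails at zeros of $f_i$, and the reverse comparison fails as $a\to z_i$.

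\medskip
\noindent\textbf{Your first attempt was already complete.} The version with $r=\frac12$ finishes with one more line. For $f=\prod_i f_i$ with $\max_{\overline{D_1}}|f|\le 1$, Hadamard's three-circle theorem gives $\ln\max_{|z|=\frac1e}|f|\ge\frac{1}{\ln 2}\ln\max_{|z|=\frac12}|f|$. So the detour through the Green potential is unnecessary.
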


\begin{Rem}
This corollary improves \cite[Lemma 3]{NovikovYakovenko1997AComplexAnalogOfRolle'sTheoremAndPolynomialEnvelopesOfIrreducibleDifferentialEquationsInTheComplexDomain}, which has an additional multiplicative $\ln k$ factor.
\end{Rem}

\begin{PropThm}[see \iref{Proposition derivative-function ratio}]
Let $g\not\equiv 0$ be a holomorphic function defined on $D_{1+\varepsilon}$ for some $\varepsilon>0$, and let $h<1$. Then there exists $A\subseteq\overline{D_\frac1e}$ with $\operatorname{Area}(A)\geq h\cdot\operatorname{Area}\left(\overline{D_\frac1e}\right)$ such that
\[
    \forall z\in A:|g'(z)|\leq C_h\cdot\B_{\frac1e,1}(g)|g(z)|
.\]
where $C_h$ is some constant depending only on $h$.
\end{PropThm}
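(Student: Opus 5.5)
We prove the statement by factoring $g$ into a polynomial part carrying its zeros near $\overline{D_{1/e}}$ and a zero-free part, and bounding $g'/g$ for each part separately. Concretely, fix an intermediate radius $r$ with $\frac1e<r<1$, say $r=\frac12(1+\frac1e)$. Let $a_1,\dots,a_N$ be the zeros of $g$ in $\overline{D_r}$, counted with multiplicity.

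\textbf{Step 1: counting zeros.} Applying \iref{Lemma bound on number of zeroes of f given bernstein index} to the pair $(\overline{D_r},D_1)$, together with the standard comparison of Bernstein indices for nested pairs of compacts (via Hadamard three circles), gives
\[
N\le C\cdot\B_{\frac1e,1}(g).
\]

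\textbf{Step 2: factorization.} Write $g=P\cdot u$ with $P(z)=\prod_{j}(z-a_j)$, so that $u$ is holomorphic and zero-free on $D_r$. Then
\[
\frac{g'}{g}=\sum_j\frac1{z-a_j}+\frac{u'}{u}.
\]

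\textbf{Step 3: the zero-free part.} Since $u$ has no zeros, $\ln|u|$ is harmonic on $D_r$. We need an upper bound for $\max_{\partial D_r}\ln|u|$ minus $\max_{\overline{D_{1/e}}}\ln|u|$ that is $O(\B+1)$. For the upper bound on $\partial D_r$, divide by $P$ on a circle of radius $r'\in(r,1)$ chosen to avoid the zeros; there $|P|$ is bounded below by $e^{-O(N)}$ by a Cartan-type estimate. The lower comparison on $\overline{D_{1/e}}$ uses $|P|\le 2^N$. This shows that the harmonic function $\ln|u|$ has oscillation $O(\B)$ on a slightly larger disc. A Harnack/Borel–Carathéodory estimate then gives
\[
|u'/u|=\bigl|\nabla(\ln|u|)\bigr|=O(\B+1)\quad\text{on }\overline{D_{1/e}}.
\]
Finally, $\B\ge0$, and we may absorb the additive constant; the degenerate case $\B\approx0$ forces $g$ to be nearly constant in modulus, and is handled by the same estimate with $\B$ replaced by $\max(\B,1)$. (If the statement is meant literally with $\B=0$, then $g$ is constant by the maximum principle, so $g'=0$ and the claim is trivial.)

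\textbf{Step 4: the polynomial part.} This is where the area set $A$ comes from. Each function $1/|z-a_j|$ is integrable on $\overline{D_{1/e}}$ with $L^1$ norm bounded by an absolute constant $c$. Hence
\[
\int_{D_{1/e}}\Bigl|\sum_j\frac1{z-a_j}\Bigr|\,dA\le cN.
\]
By Markov's inequality, the set where $\bigl|\sum_j 1/(z-a_j)\bigr|>\frac{cN}{(1-h)\operatorname{Area}(D_{1/e})}$ has area at most $(1-h)\operatorname{Area}(D_{1/e})$. Let $A$ be its complement intersected with $\overline{D_{1/e}}$; then $\operatorname{Area}(A)\ge h\cdot\operatorname{Area}(\overline{D_{1/e}})$.

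\textbf{Conclusion.} On $A$, combining Steps 1–4 gives
\[
|g'(z)|\le\Bigl(\tfrac{c}{(1-h)\operatorname{Area}(D_{1/e})}\,C\B+O(\B)\Bigr)|g(z)|,
\]
which is the claim with $C_h$ depending only on $h$.

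The main obstacle is Step 3: controlling the harmonic part with a bound linear in $\B$. This requires the Cartan-type lower bound for $|P|$ on a well-chosen circle, so that dividing by $P$ does not inflate the growth beyond $O(N)=O(\B)$. The hypothesis that $g$ is holomorphic on $D_{1+\varepsilon}$ is used only to ensure that the relevant maxima are finite and that the zeros near $\partial D_1$ are finitely many.
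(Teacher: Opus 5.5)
Your proof is correct, and it takes a genuinely different route from the paper's. The paper factors out only the zeros of $g$ in $\overline{D_{1/e}}$, so its zero-free factor $u$ may vanish just outside $\overline{D_{1/e}}$, and a logarithm $\ln u$ exists only on $\overline{D_{1/e}}$. It then does three things: it bounds $\operatorname{re}\ln u$ on a circle $|z|=r_0\in[0.35,\frac1e]$ chosen to avoid the exceptional discs of the Cartan-type Lemma~\ref{Lemma bernstein index lower bound classic}; it bounds $\operatorname{im}\ln u$ through the Khovanskii--Yakovenko estimate for the Voorhoeve index; and it applies Cauchy's formula. This controls $u'/u$ only on $\overline{D_{0.3}}$. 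Intersecting with the Markov set for $p'/p$ then gives the fixed fraction $0.56$ of Proposition~\ref{Proposition derivative-function ratio}, with $\B_{\frac1e,1}(u)$ compared to $\B_{\frac1e,1}(g)$ via Lemma~\ref{Lemma Bernstein index of product with polynomial}.

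You instead remove all zeros in the larger disc $\overline{D_r}$, $r>\frac1e$, which makes $\ln|u|$ harmonic on a neighbourhood of $\overline{D_{1/e}}$. Harnack's inequality plus the interior gradient estimate, together with $|u'/u|=|\nabla\ln|u||$, then replace both the Cartan lemma and the Voorhoeve index. This is more elementary and controls $u'/u$ on all of $\overline{D_{1/e}}$. The only exceptional set is therefore the Markov set for $P'/P$, whose threshold you can tune to $1-h$. So you obtain the statement for every $h<1$, as claimed, whereas the paper's argument as written only delivers $0.56$ (which is all that is used later). The only cost is counting zeros in $\overline{D_r}$ rather than $\overline{D_{1/e}}$. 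That is immediate from the zero-counting lemma for $(\overline{D_r},D_1)$ and the trivial inequality $\B_{r,1}(g)\le\B_{\frac1e,1}(g)$; no three-circles argument is needed.

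Three small points to tidy up:
\begin{enumerate}
\item No Cartan-type estimate is needed in Step 3. On $|z|=r'>r$ every zero of $P$ is at distance at least $r'-r$, so $|P|\ge(r'-r)^N$ there.
\item Make explicit how the one-sided bound $\max_{\overline{D_r}}\ln|u|-\max_{\overline{D_{1/e}}}\ln|u|=O(\B_{\frac1e,1}(g))$ becomes a two-sided oscillation bound. Apply Harnack on $D_r$ to the nonnegative harmonic function $v:=\max_{\overline{D_r}}\ln|u|-\ln|u|$, which is $O(\B_{\frac1e,1}(g))$ at the point of $\overline{D_{1/e}}$ where $|u|$ is maximal.
\item Every estimate you use is linear in $N$ and $\B_{\frac1e,1}(g)$, with no additive term, so you get $|u'/u|=O(\B_{\frac1e,1}(g))$ outright. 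Drop the $O(\B+1)$ and the $\max(\B,1)$ hedge: as written, that hedge would not give the claimed inequality when $0<\B_{\frac1e,1}(g)<1$.
\end{enumerate}
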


\subsection{Our setting}

We fix $n\in\mathbb N^+$ and work with objects in $\mathbb C^{n+1}-\{0\}$ (which we think of as the projective space $\mathbb{CP}^n$).

We fix a number field $K$. We think of $K$ as a subset of $\mathbb C$ (i.e., we fix a specific embedding of $K$ into $\mathbb C$). We work mainly with homogeneous ideals contained in $K[x_0,...,x_n]$. We call these ideals over $K$ and their zero loci in $\mathbb C^{n+1}-\{0\}$ varieties defined over $K$.

We fix a homogeneous polynomial vector field $\xi$ on $\mathbb C^{n+1}-\{0\}$ with coefficients in $K$. That is, $\xi=p_0(x_0,...,x_n)\frac{\partial}{\partial x_0}+...+p_n(x_0,...,x_n)\frac{\partial}{\partial x_n}$ for some homogeneous polynomials of equal degree $p_1,...,p_n\in K[x_0,...,x_n]$.

\begin{Notation}
$D_r$ is the open disc $\{z\in\mathbb C:|z|<r\}$ and $\overline{D_r}$ is its closure.
\end{Notation}

We fix a small compact holomorphic trajectory $\gamma$ of $\xi$. We assume:
\begin{enumerate}
    \item $\gamma$ is a holomorphic map $\gamma:\overline{D_1}\rightarrow\mathbb C^{n+1}-\{0\}$ (this means that $\gamma$ is defined and holomorphic on some open neighborhood of $\overline{D_1}$).
    \item $\gamma$ is tangent to $\xi$ at every point. That is, $\forall z\in\overline{D_1}\ \exists a_z\in\mathbb C:\quad a_z\cdot\frac{d\gamma}{dz}(z)=\xi_{\gamma(z)}$. We note that this implies that $a_z$ is holomorphic as a function of $z\in\overline{D_1}$.
    \item \label{Assumption gamma non degenerate derivative}$\forall z\in\overline{D_1}:\ \frac{d\gamma}{dz}(z)\neq 0$.
    \item \label{Assumption gamma lands in small affine ball}We require that $\forall z\in\overline{D_1}:\ \gamma(z)\in\{1\}\times\overline{D_\frac12}^n$ (that is, the first coordinate is $1$ and the rest are at most $\frac12$ in absolute value).
\end{enumerate}

\begin{Rem}
The theorems in this article would hold even without \iref{Assumption gamma non degenerate derivative}
 and \iref{Assumption gamma lands in small affine ball}; we assume them for simplicity.
\end{Rem}

\begin{Notation}
For any $r\in[0,1]$, we define $\Gamma_r:=\gamma(\overline{D_r})$.
We use the notation $\Gamma:=\Gamma_\frac1e$.
\end{Notation}

\begin{Rem}[constants]
Throughout the article, we establish various bounds. Our goal is to find an upper bound for $\operatorname{ord}_{\Gamma}I$ that depends on $\operatorname{t}(I)$ and on the fixed parameters listed below. We seek the best dependence on $\operatorname{t}(I)$ that we can achieve. We do not track the dependence on the fixed parameters. For this reason, whenever a parameter depends only on
\[
     K,\xi,\gamma,\Gamma,n
,\]
we denote it by $\mathrm{const}$. Each occurrence of $\mathrm{const}$ refers to a parameter depending only on the fixed parameters (the value of $\mathrm{const}$ may vary between occurrences, as with $O(1)$, with dependence on the fixed parameters allowed).
Each $\mathrm{const}$ is nonnegative.
\end{Rem}

\begin{Rem}
The theorems in this article would also hold in the following more general context:
For any $r_1<r_2\in\mathbb R_{>0}$, we could have assumed $\gamma$ to be defined on $\overline{D_{r_2}}$ and worked with $\Gamma_{r_1}$ instead of $\Gamma$ and with $\B_{r_1,r_2}$ instead of $\B_{\frac1e,1}$. In this case, all the resulting constants would depend on $r_1,r_2$ as well.
\end{Rem}

\subsection{Notions and notation from intersection theory}

\subsubsection{Basic notation and definitions}

In this subsection, we introduce notation and definitions from elimination theory (see \cite[Chapter 3, \S4, Algebraic fundamentals]{NesterenkoPhilippon2001IntroductionToAlgebraicIndependenceTheory}).

\begin{Rem}
Throughout, we work with a fixed number field $K$. For simplicity, we regard number fields as embedded in $\mathbb C$. Equivalently, we fix a specific embedding of each number field into $\mathbb C$.
\end{Rem}

\begin{Notation}
Let:
\begin{itemize}
    \item $\mathcal O_ K$ be the ring of integers of the number field $K$.
    \item $\operatorname{Gal}(K,\mathbb Q)$ be the Galois group of $K/\mathbb Q$.
    \item $\mathfrak P_{ K}$ be the set of prime ideals of $\mathcal O_ K$.
    \item $k\neq 0$ be an element of $K$.
    \item $|k|$ be the standard absolute value on $\mathbb C$ applied to $k$.
\end{itemize}

We define $\mathcal V$ to be the collection of absolute values on $K$ as in \cite[Chapter 3, \S4, Algebraic fundamentals]{NesterenkoPhilippon2001IntroductionToAlgebraicIndependenceTheory}, including both non-archimedean and archimedean absolute values. We note that:
\begin{itemize}
    \item Any absolute value applied to $0$ returns $0$.
    \item $\prod_{v\in\mathcal V}|k|_v=1$.
    \item \resizebox{.99\linewidth}{!}{$|\cdot|:=|\cdot|_{\operatorname{id}}:=\text{the absolute value that comes from the standard absolute value in }\mathbb C$}.
\end{itemize}
\end{Notation}

\begin{Def}\label{Definition basic definitions}
Let:
\begin{itemize}
\item $\omega:=(\omega_0,...,\omega_n),a:=(a_0,...,a_n),b:=(b_0,...,b_n)\in\mathbb C^{n+1}-\{0\}$.
    \item $p:=\sum_{i_0,...,i_n}a_{i_0,...,i_n}x_0^{i_0}\cdots x_n^{i_n}\in K[x_0,...,x_n]-\{0\}$ be a nonzero homogeneous polynomial.
    \item $v\in\mathcal V$ be an absolute value on $K$.
\end{itemize}

We define:
\begin{enumerate}
\item $|\omega|_v:=\max_i|\omega_i|_v$.
    \item $|p|_v:=\max_{i_0,...,i_n}|a_{i_0,...,i_n}|_v$.
    \item $\operatorname{H}(p):=\prod_v|p|_v$.
    \item $\operatorname{h}(p):=\ln\operatorname{H}(p)$.
    \item $\operatorname{t}(p):=\deg p+\operatorname{h}(p)$.
    \item $\|p\|_\omega:=|p(\omega)|\cdot|p|^{-1}\cdot|\omega|^{-\deg p}$.
    \item $\|a-b\|:=\max_{i\neq j}|a_i b_j-a_j b_i|\cdot|a|^{-1}\cdot|b|^{-1}$.
\end{enumerate}
\end{Def}

\begin{Rem}
\begin{align*}  
        \forall\lambda_1,\lambda_2\in\mathbb C:
        &\\&\|\lambda_1 p\|_{\lambda_2 \omega}=\|p\|_\omega,\ \|\lambda_1 a-\lambda_2 b\|=\|a-b\|,\operatorname{h}(\lambda_1 p)=\operatorname{h}(p)
.\end{align*}
\end{Rem}

\begin{Rem}\label{Remark projective vs affine distance}
    If $a:=(1,a_1,\ldots,a_n), b:=(1,b_1,\ldots,b_n)$ and each $|a_i|$ is $\leq\frac12$ for $i\geq 1$, then $\|a-b\|=\|a-b\|_\infty\cdot|b|^{-1}$.
\end{Rem}

\begin{proof}
Choosing $j:=0$ gives
\[
    \|a-b\|\cdot|b|
        =
    \max_{i\neq j}|a_i b_j-a_j b_i|
        \geq
    \max_{i\neq 0}|a_i\cdot 1 - 1\cdot b_i|
        =
    \|a-b\|_\infty
.\]

Let $i\neq 0,j\neq 0$. Let $\varepsilon_1:=a_i-b_i,\varepsilon_2:=a_j-b_j$. Since $a_i,a_j\leq\frac12$, we have
\begin{align*}
    |a_i b_j - a_j b_i| 
    &= |a_i (a_j - \varepsilon_2) - a_j (a_i - \varepsilon_1)| 
    = |-a_i \varepsilon_2 + a_j \varepsilon_1| \\
    &\leq \frac{|\varepsilon_1| + |\varepsilon_2|}{2} 
    \leq \max(|\varepsilon_1|, |\varepsilon_2|)
    \leq \|a - b\|_\infty
.\end{align*}
\end{proof}

\subsubsection{Definitions related to ideals and the Chow form}

\begin{Notation}[zero locus of an ideal]
Given a homogeneous ideal $I\subseteq R[x_0,...,x_n]$ over any ring $R\subseteq\mathbb C$, we denote its zero locus by
\[
    \operatorname{V}(I):=\{\omega\in\mathbb C^{n+1}-\{0\}:\forall p\in I(p(\omega)=0)\}
.\]
\end{Notation}

\begin{Def}[dimension of a homogeneous ideal]
The dimension $\dim$ of a homogeneous ideal is defined as the dimension of its zero locus, interpreted as a projective variety (i.e., one less than the affine dimension).
\end{Def}

\begin{Def}[unmixed ideal]
 A homogeneous ideal $I\subseteq K[x_0,...,x_n]$ is said to be unmixed if, in its reduced primary decomposition $I=Q_1\cap...\cap Q_k$, all the radicals $\sqrt{Q_i}$ have the same dimension.
\end{Def}

\begin{Def}[Chow form]\label{Definition chow form}
Let:
\begin{enumerate}
\item
\begin{itemize}
    \item $I\subseteq K[x_0,...,x_n]$ be a homogeneous unmixed ideal.
    \item $\{u_{1,0},...,u_{\dim I+1,n}\}$ be a collection of $(\dim I+1)\cdot(n+1)$ variables.
    \item $\{L_i:=u_{i,0}x_0+...+u_{i,n}x_n\}_i$ be a collection of bilinear forms in the $u_{i,j}'s$ and $x_{k}$'s.
\end{itemize}
\end{enumerate}

We define:
\begin{enumerate}[resume]
    \item $\bar I\subseteq K[u_{i,j}]_{i,j}$ is the set of all polynomials $p(u_{1,0},...,u_{\dim I+1,n})\in K[u_{i,j}]_{i,j}$ for which $\exists M_p\in\mathbb N$ such that $\forall i:\ x_i^{M_p}p\in(I,L_1,...,L_{\dim I+1})\subseteq K[u_{i,j}]_{i,j}[x_k]_k$.
\end{enumerate}

By \cite[Lemma 5]{Nesterenko1977estimatesForTheOrdersOfZerosOfFunctionsOfACertainClassAndApplicationsInTheTheoryOfTranscendentalNumbers}, $\bar I$ is principal.
\begin{enumerate}[resume]
\item The Chow form of $I$ is defined as the generator of $\bar I$. We denote it by $\operatorname{Chow}(I)$ and note that it is a homogeneous polynomial in $K[u_{i,j}]_{i,j}$, defined only up to multiplication by a scalar in $K-\{0\}$.
\end{enumerate}
\end{Def}

\begin{Notation}
Let:
\begin{itemize}
    \item \[S^{(i)}:=
    \begin{pmatrix}
    0 & s^{(i)}_{0,1} & \cdots & s^{(i)}_{0,n} \\
    -s^{(i)}_{0,1} & 0 & \ddots & \vdots \\
    \vdots & \ddots & 0 & s^{(i)}_{n-1,n} \\
    -s^{(i)}_{0,n} & \cdots & -s^{(i)}_{n-1,n} & 0
    \end{pmatrix}
    \]
be a collection (over $i$) of skew-symmetric $(n+1)\times(n+1)$ matrices whose entries are variables.
    \item Let $\omega:=
    \begin{pmatrix}
    \omega_0 \\
    \vdots \\
    \omega_n
    \end{pmatrix}
    $ be a column vector of variables.
    \item $p(u_{1,0},...,u_{\dim I+1,n})\in K[u_{i,j}]_{i,j}$ be a polynomial.
\end{itemize}

We define $\chi(p)$ to be the following polynomial in $K[s_{j,k}^{(i)}]_{i,j<k}[\omega_l]_l$:
\[
    \chi(p)
        :=
    p(S^{(1)}\cdot\omega,...,S^{(\dim I+1)}\cdot\omega)
    \in
     K[s_{j,k}^{(i)}]_{i,j<k}[\omega_l]_l
.\]

If, in addition, we are given a specific $\omega_0\in\mathbb C^{n+1}-\{0\}$, we use the following notation for substituting $\omega_0$ into $\chi(p)$:
\[
    \chi_{\omega_0}(p)
        :=
    \chi(p)(\omega_0)
        :=
    p(S^{(1)}\cdot\omega_0,...,S^{(\dim I+1)}\cdot\omega_0)
    \in
     K[s_{j,k}^{(i)}]_{i,j<k}
.\]
\end{Notation}

\begin{Def}\label{definition of deg I, h(I), t(I), I(omega)}
Let $I\subseteq K[x_0,...,x_n]$ be a homogeneous unmixed ideal, and let $\omega_0\in\mathbb C^{n+1}-\{0\}$. We define:
\begin{enumerate}
    \item $\deg I:=\deg_{u_{1,0},...,u_{1,n}}\operatorname{Chow}(I)$.
    \item $\operatorname{h}(I):=\operatorname{h}(\operatorname{Chow}(I))$.
\item $\operatorname{t}(I):=\deg I+\operatorname{h}(I)$.
    \item $|I|(\omega_0):=|I(\omega_0)|:=|\chi_{\omega_0}(\operatorname{Chow}(I))|\cdot|\operatorname{Chow}(I)|^{-1}\cdot|\omega_0|^{-(\dim I + 1)\deg I}$,
where $|\chi_\omega(\operatorname{Chow}(I))|$ means the absolute value as a polynomial in $K[s_{j,k}^{(i)}]_{i,j<k}$.
\end{enumerate}
\end{Def}

\begin{Rem}
\leavevmode
\begin{itemize}
    \item $\operatorname{h}(I)$ and $\deg I$ do not depend on the choice of $\operatorname{Chow}(I)$ and so are well defined.
    \item The normalization in the definition of $I(\omega)$ ensures that it does not depend on the choice of the Chow form or of a representative of $\omega_0$ (up to scalar multiplication). That is, it is well defined and $\forall\lambda\in\mathbb C-\{0\}:\ |I(\lambda\omega_0)|=|I(\omega_0)|$.
    \item When $I$ is radical, $\deg I$ is the degree of $V(I)$ in the usual sense.
\end{itemize}
\end{Rem}

\begin{Rem}
We sometimes work with varieties instead of ideals. We extend these notions to a variety by considering its corresponding radical ideal.
\end{Rem}

\begin{Def}[distance between a point and an ideal]
Let $\omega\in\mathbb C^{n+1}-\{0\}$ and let $I\subseteq K[x_0,...,x_n]$ be a homogeneous ideal. We define the distance between $\omega$ and $\operatorname{V}(I)$ as
\[
    \rho_{\omega,I}
        :=
    \rho_{\omega,\operatorname{V}(I)}
        :=
    \min_{x\in\operatorname{V}(I)}\|\omega-x\|
.\]
\end{Def}

\subsubsection{Important theorems from intersection theory}

In this subsubsection, we quote important results from \cite[Chapter 3, \S4, Algebraic fundamentals]{NesterenkoPhilippon2001IntroductionToAlgebraicIndependenceTheory}, which we will use throughout.

Let $I\subseteq K[x_0,...,x_n]$ be a homogeneous unmixed ideal, let $\mathfrak p\subseteq K[x_0,...,x_n]$ be a homogeneous prime ideal, let $Q\in K[x_0,...,x_n]$ be homogeneous, and let $\omega\in\mathbb C^{n+1}-\{0\}$. Then there exists a constant $C$ depending only on $K,n$ such that the following lemmas hold:

\begin{Lem}[see \mycite{Chapter 3, Proposition 4.7}{NesterenkoPhilippon2001IntroductionToAlgebraicIndependenceTheory}]\label{Lemma Nesterenko primary decomposition of ideal}
 Consider the reduced primary decomposition of $I$, with prime radicals $\mathfrak p_i$ of multiplicity $k_i$. Then
\begin{enumerate}
 \item $\sum k_i\deg\mathfrak p_i=\deg I$.
 \item $\sum k_i\operatorname{h}(\mathfrak p_i)\leq\operatorname{h}(I)+C\cdot\deg I$.
 \item $\sum k_i\ln|\mathfrak p_i(\omega)|\leq\ln|I(\omega)|+n^3\cdot\deg I$.
\end{enumerate}
\end{Lem}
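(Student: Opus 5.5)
The plan is to reduce all three statements to one structural fact about Chow forms: multiplicativity over the primary decomposition. Concretely, I would first show that for an unmixed $I=Q_1\cap\dots\cap Q_k$ with $\sqrt{Q_i}=\mathfrak p_i$ of multiplicity (length) $k_i$, we have
\[
  \operatorname{Chow}(I)=c\cdot\prod_i\operatorname{Chow}(\mathfrak p_i)^{k_i},\qquad c\in K-\{0\}.
\]
To prove this, I would work with the ideal $\bar I$ of \iref{Definition chow form}. Saturating $(I,L_1,\dots,L_{\dim I+1})$ with respect to the $x_i$ and eliminating $x$ is compatible with localization at the generic point of each hypersurface $\{\operatorname{Chow}(\mathfrak p_i)=0\}$ in $u$-space. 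The order of vanishing of the generator of $\bar I$ along that hypersurface is then the length of $K[x]_{\mathfrak p_i}/I_{\mathfrak p_i}$, which is $k_i$. Components $\mathfrak p_j$ with $j\neq i$ do not contribute there, because the hypersurfaces $\{\operatorname{Chow}(\mathfrak p_j)=0\}$ are distinct irreducible hypersurfaces (Nesterenko's Lemma 5 gives irreducibility for primes). I expect this multiplicativity step, with multiplicities equal to lengths, to be the main obstacle. If it becomes too messy, I would first handle a single primary component, showing $\operatorname{Chow}(Q)=\operatorname{Chow}(\mathfrak p)^{\ell(Q)}$ via a composition series of $K[x]_{\mathfrak p}/Q_{\mathfrak p}$ whose factors are $\kappa(\mathfrak p)$. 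I would then show that the Chow form of an unmixed intersection is the product of the Chow forms of the pieces, by comparing the $u$-resultant divisors.

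Given multiplicativity, part (1) is immediate, since $\deg_{u_1}$ is additive on products.

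For part (2), I would use Gelfond-type product inequalities at every place. For non-archimedean $v$, Gauss's lemma gives $|fg|_v=|f|_v|g|_v$. For archimedean $v$, comparing $|\cdot|_v$ with the Mahler measure, which is multiplicative, gives
\[
  e^{-N\cdot\deg f}\,|f|_v\le M_v(f)\le (\#\text{monomials})\,|f|_v ,
\]
where $N$ is the number of variables and $\deg f$ is the total degree. Here $N=(\dim I+1)(n+1)\le(n+1)^2$, and the total degree of $\operatorname{Chow}(\mathfrak p_i)$ is $(\dim I+1)\deg\mathfrak p_i$. Multiplying over all places, with $c$ absorbed by the product formula, yields
\[
  \sum k_i\operatorname{h}(\mathfrak p_i)\le\operatorname{h}(I)+C\deg I ,
\]
where $C$ depends on $n$ and on $[K:\mathbb Q]$ through the number of archimedean places.

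For part (3), I would substitute $u_j=S^{(j)}\omega$ into the product identity to obtain
\[
  \chi_\omega(\operatorname{Chow} I)=c\prod_i\chi_\omega(\operatorname{Chow}\mathfrak p_i)^{k_i}.
\]
This is a product of polynomials in the $\binom{n+1}{2}(\dim I+1)\le n^3$-ish skew variables, of total degree $(\dim I+1)\deg$. Applying the archimedean Gelfond lower bound to this product gives
\[
  \prod_i|\chi_\omega(\operatorname{Chow}\mathfrak p_i)|^{k_i}\le e^{(\text{vars})\cdot(\text{total degree})}\,|\chi_\omega(\operatorname{Chow} I)|/|c| .
\]
In the other direction, the trivial upper bound $|\operatorname{Chow}(I)|\le |c|\,(\#\text{monomials})\prod_i|\operatorname{Chow}\mathfrak p_i|^{k_i}$ controls the normalizing denominators. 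The powers $|\omega|^{-(\dim+1)\deg}$ match exactly because $\deg$ is additive. Taking logarithms, the two error terms are each $O(n^{3})\cdot\deg I$. Tracking the counts of variables and monomials carefully should give the stated constant $n^3\deg I$; if not, the weaker bound $C\deg I$ is all that later applications need.
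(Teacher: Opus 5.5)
Your overall route, which factors the Chow form over the primary decomposition and then applies Gauss/Gelfond--Mahler product estimates at every place for (2) and to the specialized forms $\chi_\omega$ for (3), is the standard one. The paper gives no proof of its own, only a citation to Nesterenko, whose argument has this shape.

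\textbf{The gap.} The problem is the step you single out as the main obstacle. With the paper's definition of $\operatorname{Chow}$, the exponent of $\operatorname{Chow}(\mathfrak p_i)$ in $\operatorname{Chow}(I)$ is \emph{not} the length $\ell_i$ of $K[x]_{\mathfrak p_i}/I_{\mathfrak p_i}$. The ideal $\bar I=K[u]\cap(I,L_1,\dots,L_{\dim I+1})^{\mathrm{sat}}$ is a scheme-theoretic image ideal. Put $\pi=\operatorname{Chow}(\mathfrak p_i)$ and $R=K[u]_{(\pi)}$, and let $B$ be the local ring of the incidence scheme over the generic point of $\{\pi=0\}$. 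Localizing at $\pi$ gives $\bar I_{(\pi)}=\ker(R\to B)=\operatorname{Ann}_R(B)$. So the order of $\operatorname{Chow}(I)$ along $\{\pi=0\}$ is the least $e$ with $\pi^eB=0$, whereas $\operatorname{length}_R B=\ell_i$. These agree only when $B$ is cyclic over $R$.

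A concrete counterexample is $I=(x_1,x_2)^2\subset K[x_0,x_1,x_2]$.
\begin{itemize}
\item After setting $x_0=1$, one has $u_0\equiv-(u_1x_1+u_2x_2)$ modulo $(I,L_1)$.
\item Hence $u_0^2\in\bar I$, but $u_0\notin\bar I$, since $u_1x_1+u_2x_2\neq 0$ in $K[u_1,u_2][x_1,x_2]/(x_1,x_2)^2$.
\item So $\operatorname{Chow}(I)=u_0^2$ and $\deg I=2$, while the length is $3$ and $\deg\mathfrak p=1$.
\end{itemize}
With $k_i$ read as lengths, part (1) is therefore false. Your composition-series fallback fails for the same reason: an annihilator-type ideal is not multiplicative along a filtration. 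A norm or Fitting ideal would be, but $\operatorname{Chow}$ is not defined that way here.

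\textbf{The fix.} Take $k_i$ to be the exponent of $\mathfrak q_i$ in Nesterenko's sense. Since $\bar{\mathfrak q_i}$ is principal with the same zero set as the prime $\bar{\mathfrak p_i}=(\operatorname{Chow}\mathfrak p_i)$, one has $\operatorname{Chow}(\mathfrak q_i)=c\,\operatorname{Chow}(\mathfrak p_i)^{k_i}$ for a unique $k_i\le\ell_i$. The statement's ``multiplicity'' must be read this way for (1) to hold with $\deg I=\deg_{u_1}\operatorname{Chow}(I)$. What then remains is only $\operatorname{Chow}(I)=c\prod_i\operatorname{Chow}(\mathfrak q_i)$. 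Your localization argument does give this, because the contribution along $\{\operatorname{Chow}(\mathfrak p_i)=0\}$ depends only on $\mathfrak q_i$ and these hypersurfaces are distinct. With that identity, your derivations of (1)--(3) go through.

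\textbf{Constants.} Two small corrections, neither affecting the structure.
\begin{itemize}
\item The upper bound for $|\operatorname{Chow}(I)|$ needs the factor $\prod_i(\#\text{monomials of }\operatorname{Chow}\mathfrak p_i)^{k_i}=e^{O(\deg I)}$, not a single monomial count. The form $(u_0+u_1)^d$ shows a single count is too small.
\item Gelfond's inequality with (number of variables)$\times$(total degree) gives an error of order $n^4\deg I$ in (3). Using instead the sum of partial degrees, where each $s^{(j)}_{kl}$ occurs to degree at most $\deg I$, gives order $n^3\deg I$.
\end{itemize}
The paper only ever uses (3) in the form $\mathrm{const}\cdot\deg I$, so the second point is harmless.
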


\begin{Lem}[see \mycite{Chapter 3, Proposition 4.8}{NesterenkoPhilippon2001IntroductionToAlgebraicIndependenceTheory}]\label{Lemma Nesterenko principal ideal}
If $I=(Q)$, then:
\begin{enumerate}
 \item $\deg I=\deg Q$.
 \item $\operatorname{h}(I)\leq\operatorname{h}(Q)+C\cdot\deg Q$.
 \item $\ln|I(\omega)|\leq\ln\|Q\|_\omega+C\cdot\deg Q$.
\end{enumerate}
\end{Lem}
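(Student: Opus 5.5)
The plan is to write down $\operatorname{Chow}((Q))$ explicitly and read off all three statements from that formula. Here $\dim I=n-1$, so there are $n$ linear forms $L_1,\dots,L_n$. Let $U$ be the $n\times(n+1)$ matrix of the $u_{i,j}$ and let $\Delta=(\Delta_0,-\Delta_1,\dots,(-1)^n\Delta_n)$ be its vector of signed maximal minors. For generic $U$ this vector spans the line $\{L_1=\dots=L_n=0\}$.

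\textbf{Step 1: the Chow form.} I claim $\operatorname{Chow}((Q))=Q(\Delta)$ up to a scalar in $K-\{0\}$.
\begin{itemize}
\item First take $Q$ irreducible. Then $Q(\Delta)$ vanishes exactly when the line $\{L=0\}$ meets $V(Q)$, so it lies in $\bar I$ (clear denominators via Cramer's rule, using the powers $x_i^{M}$).
\item Conversely, the incidence variety is irreducible of codimension one in $u$-space, so $Q(\Delta)$ is a power of the generator of $\bar I$. A degree count in $u_1$ shows the power is $1$, since the generic line meets $V(Q)$ in $\deg Q$ points.
\item For general $Q=\prod Q_i^{k_i}$, use multiplicativity of the Chow form along the primary decomposition (Lemma \ref{Lemma Nesterenko primary decomposition of ideal}(1) is the degree shadow of this). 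The substitution $Q\mapsto Q(\Delta)$ is also multiplicative, so the claim follows.
\end{itemize}

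\textbf{Step 2: parts (1) and (2).} Each $\Delta_j$ is linear in the row $u_1$. Hence $\deg_{u_1}Q(\Delta)=\deg Q$ exactly; the top part does not cancel because $\Delta$ is a dominant map. This gives (1).

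For (2), bound the height of the particular representative $Q(\Delta)$.
\begin{itemize}
\item At non-archimedean $v$: $\Delta$ has coefficients $\pm1$, so the ultrametric inequality gives $|Q(\Delta)|_v\le|Q|_v$.
\item At archimedean $v$: each $\Delta_j$ has $n!$ terms. A monomial of degree $d=\deg Q$ in the $\Delta_j$ therefore has coefficients at most $(n!)^d$. The number of monomials of $Q$ is at most $\binom{d+n}{n}\le 2^{d+n}$.
\item Together these give $|Q(\Delta)|_v\le e^{Cd}|Q|_v$. Taking the product over $v$ yields $\operatorname{h}(I)\le\operatorname{h}(Q)+Cd$.
\end{itemize}

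\textbf{Step 3: part (3).} Substitute $u_i=S^{(i)}\omega$. Skew-symmetry gives $\omega^TS^{(i)}\omega=0$, so every row of the substituted matrix is orthogonal to $\omega$. Hence its vector of signed minors is proportional to $\omega$:
\[
\Delta(S^{(1)}\omega,\dots,S^{(n)}\omega)=\lambda(s,\omega)\,\omega,
\]
with $\lambda$ a polynomial of degree $n$ in $s$ and $n-1$ in $\omega$. I would verify this identity directly, e.g.\ via Laplace expansion and a Pfaffian-type identity. It follows that
\[
\chi_\omega(\operatorname{Chow}(I))=\lambda(s,\omega)^d\,Q(\omega).
\]
Its total $\omega$-degree is $(n-1)d+d=nd$, which matches the normalization $|\omega|^{-(\dim I+1)\deg I}$.

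Now bound the two factors. The polynomial $\lambda$ has coefficients bounded by a constant times $|\omega|^{n-1}$. The coefficient norm of a $d$-th power is at most $e^{Cd}$ times the $d$-th power of the norm. Together these give $|\lambda^d|\le e^{Cd}|\omega|^{(n-1)d}$.

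It remains to bound $|\operatorname{Chow}(I)|=|Q(\Delta)|$ from below by $e^{-Cd}|Q|$. I would do this with Mahler measures. $M$ is multiplicative and comparable to the coefficient norm up to $e^{O(\deg)}$ (Gelfond's inequality). The map $Q\mapsto Q(\Delta)$ can be specialized to a setting where $\Delta$ becomes a monomial or linear change of variables, and there $M(Q(\Delta))$ controls $M(Q)$. Combining the bounds gives
\[
|I(\omega)|\le \frac{|Q(\omega)|}{|Q|\,|\omega|^d}\,e^{Cd}=\|Q\|_\omega\, e^{Cd}.
\]

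\textbf{Main obstacle.} I expect the main obstacle to be this lower bound $|Q(\Delta)|\ge e^{-Cd}|Q|$, i.e.\ ruling out cancellation in the composition. Step 1's irreducibility and multiplicity bookkeeping is standard by comparison. My first attempt would be to specialize $U$ to $\bigl(t\mid I_n\bigr)$ with $t$ a column of free variables (plus a homogenizing variable in the first entry). Then the minors become $(\pm t_0,\pm t_1,\dots)$ up to sign, so $Q(\Delta)$ specializes to $Q$ with signed variables. Specialization does not increase the coefficient norm by more than $e^{Cd}$, which gives the lower bound.
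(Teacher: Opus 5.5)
The paper gives no proof of this lemma; it is quoted from Nesterenko--Philippon (Ch.~3, Prop.~4.8). Your proposal is a correct reconstruction of the standard argument behind that citation. You identify $\operatorname{Chow}((Q))$ with $Q(\Delta)$ up to a scalar, get (1) and (2) from multihomogeneity and coefficient bounds, and get (3) from the identity $\Delta(S^{(1)}\omega,\dots,S^{(n)}\omega)=\lambda\,\omega$ together with $|Q|\le\binom{d+n}{n}^{n}\,|Q(\Delta)|$.

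Two steps can be simplified:
\begin{itemize}
\item $\lambda$ is a polynomial simply because the entries satisfy $\Delta_j\omega_k=\Delta_k\omega_j$, which forces $\omega_j\mid\Delta_j$. No Pfaffian identity is needed.
\item Your specialization $U=(t\mid I_n)$ turns $Q(\Delta)$ into $Q(1,\pm t_1,\dots,\pm t_n)$, which has exactly the coefficients of $Q$. This alone proves the lower bound, so you can drop the Mahler-measure detour in your ``main obstacle'' paragraph.
\end{itemize}

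Step~1 relies on multiplicativity of Chow forms, which the paper's quoted version of Nesterenko's Proposition~4.7 does not state. You can avoid it and handle arbitrary $Q$ directly:
\begin{itemize}
\item Invert $\Delta_j$. Then $K[u,\Delta_j^{-1}][x]/(L)\cong K[u,\Delta_j^{-1}][x_j]$, and $Q\mapsto\pm Q(\Delta)\,\Delta_j^{-d}x_j^{d}$.
\item Hence every $G\in\bar I$ satisfies $Q(\Delta)\mid\Delta_j^{N}G$ for each $j$.
\item The maximal minors are pairwise non-associate irreducibles, so $Q(\Delta)\mid G$.
\end{itemize}
Combined with your Cramer-rule inclusion $Q(\Delta)\in\bar I$, this gives $\bar I=(Q(\Delta))$. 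It uses neither irreducibility of Chow forms of primes nor the identification of $\deg_{u_1}$ with the geometric degree.
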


\begin{Lem}[see \mycite{Chapter 3, Proposition 4.11}{NesterenkoPhilippon2001IntroductionToAlgebraicIndependenceTheory}]\label{Lemma Nesterenko proposition intersection of ideal and polynomial}
Assume that $\dim\mathfrak p\geq 1$ and that $Q\not\in\mathfrak p$. Then there exists a homogeneous unmixed ideal $J$ of dimension $\dim\mathfrak p -1$, whose zero locus is the intersection of those of $\mathfrak p$ and $Q$, satisfying:
\begin{enumerate}
\item $\deg J\leq \deg\mathfrak p\cdot\deg Q$.
\item $\operatorname{h}(J)\leq\operatorname{h}(\mathfrak p)\cdot\deg Q+\deg\mathfrak p\cdot\operatorname{h}(Q)+C\cdot\deg\mathfrak p\deg Q$.
\item $
    \ln|J(\omega)|
        \leq
    \ln\delta+\deg\mathfrak p\cdot\operatorname{h}(Q)+\operatorname{h}(\mathfrak p)\cdot\deg Q+C\cdot\deg\mathfrak p\deg Q
,$
\end{enumerate}

where
\[
    \delta
        :=
\begin{cases}
    \|Q\|_\omega&\text{if }\;\rho_{\omega,\mathfrak p}<\|Q\|_\omega\\
    |\mathfrak p(\omega)|&\text{if }\;\rho_{\omega,\mathfrak p}\geq \|Q\|_\omega
\end{cases}
\]
\end{Lem}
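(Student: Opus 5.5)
The plan is to build $J$ explicitly through an eliminant (norm) construction and then read off the three estimates from a product formula for its Chow form. Set $r:=\dim\mathfrak p\geq 1$ and introduce generic linear forms $L_1,\dots,L_r$ with indeterminate coefficients $u_{i,j}$. Over the field $\overline{K(u)}$, the variety $\operatorname{V}(\mathfrak p)\cap\{L_1=\dots=L_r=0\}$ consists of $\deg\mathfrak p$ points $\beta_1,\dots,\beta_{\deg\mathfrak p}$. The key object is the norm
\[
N(u):=\prod_{j}Q(\beta_j).
\]
After normalising the $\beta_j$, for instance through the factorisation of $\operatorname{Chow}(\mathfrak p)$ into linear factors in $u_{r+1}$ (Nesterenko's approach), $N(u)$ is a polynomial in the $u$'s. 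Since $Q\notin\mathfrak p$, it is not identically zero. I would define $J$ as the unmixed part of dimension $r-1$ of $(\mathfrak p,Q)$, so that $\operatorname{V}(J)=\operatorname{V}(\mathfrak p)\cap\operatorname{V}(Q)$.

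The next step is to identify $N$, up to a scalar, with $\operatorname{Chow}(J)$ in the variables $u_1,\dots,u_r$. The vanishing of $N$ means that some point of $\operatorname{V}(\mathfrak p)\cap\{L_1=\dots=L_r=0\}$ lies on $\operatorname{V}(Q)$, equivalently that $r$ generic hyperplanes meet $\operatorname{V}(J)$. Multiplicities are matched using the decomposition of \iref{Lemma Nesterenko primary decomposition of ideal}.

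\emph{Degree and height.} Counting degrees in one group $u_i$ gives $\deg J\leq\deg\mathfrak p\cdot\deg Q$; this is bullet (1). For bullet (2), I would bound the height of $N$ place by place, via Mahler measures at archimedean places and Gauss's lemma at finite places. Each factor $Q(\beta_j)$ contributes $\operatorname{h}(Q)$, which produces the term $\deg\mathfrak p\cdot\operatorname{h}(Q)$. The normalisation of the $\beta_j$, taken through $\operatorname{Chow}(\mathfrak p)$, raised to the power $\deg Q$ produces $\operatorname{h}(\mathfrak p)\deg Q$. The comparison of Mahler measure with the height of coefficients costs only $\mathrm{const}\cdot\deg\mathfrak p\deg Q$.

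\emph{Local estimate.} For bullet (3), substitute $u_i=S^{(i)}\omega$. The $L_i$ then vanish at $\omega$, so the points $\beta_j(S\omega)$ are the intersections of $\operatorname{V}(\mathfrak p)$ with generic linear spaces through $\omega$. I split into the two cases of the definition of $\delta$.
\begin{itemize}
\item If $\rho_{\omega,\mathfrak p}<\|Q\|_\omega$, pick $\beta_{j_0}$ close to $\omega$ (distance about $\rho_{\omega,\mathfrak p}$). Then $|Q(\beta_{j_0})|\lesssim\|Q\|_\omega$, since $Q$ is Lipschitz in the projective metric with constant $e^{\mathrm{const}\cdot\deg Q}$. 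The remaining factors are bounded trivially.
\item If $\rho_{\omega,\mathfrak p}\geq\|Q\|_\omega$, I use the standard relation that $|\mathfrak p(\omega)|$ controls the product of distances $\prod_j\|\omega-\beta_j\|$ (the Liouville/Chow-form comparison). I would then bound $|N|$ by expressing it through the resultant of $\chi_\omega(\operatorname{Chow}(\mathfrak p))$ and $Q$, and estimate that resultant as a polynomial in the $s$-variables by $|\mathfrak p(\omega)|$ times heights.
\end{itemize}

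The main obstacle is this local estimate: making the passage from pointwise values at the $\beta_j$ to the norm $|\chi_\omega(\operatorname{Chow}(J))|$, which is taken as a polynomial in $s$, rigorous. My first attempt would be to average over $s$ on the unit torus, comparing the Mahler measure in $s$ with the coefficient norm at a cost of $e^{\mathrm{const}\cdot\deg}$, and to control uniformly how far the $\beta_j$ move. This is exactly Nesterenko's argument, and it is where the constant $C$ depending only on $K,n$ arises.
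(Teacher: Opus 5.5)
The paper gives no proof of this lemma; it quotes Proposition~4.11 of Nesterenko's chapter in Nesterenko--Philippon. Your global construction is the standard one behind that result. $J$ is the $(r-1)$-dimensional unmixed part of $(\mathfrak p,Q)$, and its Chow form is the norm $N=c^{\deg Q}\prod_j Q(\beta_j)$. Then (1) follows by counting degrees, and (2) by bounding $N$ place by place with Mahler measures and Gauss's lemma.

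The genuine gap is in (3), in the case $\rho_{\omega,\mathfrak p}<\|Q\|_\omega$. After substituting $u_i=S^{(i)}\omega$, the points $\beta_j=\beta_j(s)$ are the intersections of $\operatorname{V}(\mathfrak p)$ with the codimension-$r$ linear space $\Lambda_s\ni\omega$ determined by $s$. What you must bound is the coefficient norm of $\chi_\omega(\operatorname{Chow}J)$ as a polynomial in $s$. Nothing forces some $\beta_j(s)$ to lie within distance $\approx\rho_{\omega,\mathfrak p}$ of $\omega$. Even if $\operatorname{V}(\mathfrak p)$ is linear near the nearest point $x$, the point of $\Lambda_s\cap\operatorname{V}(\mathfrak p)$ near $x$ is at distance about $\rho_{\omega,\mathfrak p}/\sin\theta(s)$ from $\omega$. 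Here $\theta(s)$ is the angle between $\Lambda_s$ and the tangent space, so this distance is unbounded as $\Lambda_s$ approaches tangency. In general you would also need effective control of the curvature and singularities of $\operatorname{V}(\mathfrak p)$ near $x$ at scale $\rho_{\omega,\mathfrak p}$, and nothing in your argument supplies it. Averaging over $s$ while ``controlling how far the $\beta_j$ move'' is exactly the step you leave open, and it is not needed.

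The missing idea is to move the evaluation point rather than the intersection points. Take $x\in\operatorname{V}(\mathfrak p)$ with $\|\omega-x\|=\rho_{\omega,\mathfrak p}$.
\begin{itemize}
\item Every linear space cut out by $S^{(1)}x,\dots,S^{(r)}x$ contains $x$. So the linear form $(x,u_{r+1})$ divides $\operatorname{Chow}(\mathfrak p)(S^{(1)}x,\dots,S^{(r)}x,u_{r+1})$, and $Q(x)$ splits off from $\chi_x(\operatorname{Chow}J)$ exactly, for every $s$.
\item Bounding the cofactor with Gelfond's inequality gives $|J(x)|\le\|Q\|_x\,e^{\deg\mathfrak p\operatorname{h}(Q)+\operatorname{h}(\mathfrak p)\deg Q+C\deg\mathfrak p\deg Q}$.
\item The coefficients of $\chi_\omega(\operatorname{Chow}J)$ are forms in $\omega$. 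The argument of the paper's lemma bounding $\|f\|_\omega$ by $e^{C\deg f}\|\omega-x\|$ therefore yields $|J(\omega)|\le|J(x)|+e^{C\deg J}\rho_{\omega,\mathfrak p}$. Likewise $\|Q\|_x\le\|Q\|_\omega+e^{C\deg Q}\rho_{\omega,\mathfrak p}$.
\item The hypothesis $\rho_{\omega,\mathfrak p}<\|Q\|_\omega$ absorbs both error terms.
\end{itemize}

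Two further ingredients of (3) are also missing from your sketch.
\begin{itemize}
\item In the case $\rho_{\omega,\mathfrak p}\ge\|Q\|_\omega$, the hypothesis has to enter as $\|Q\|_\beta\le 2e^{C\deg Q}\|\omega-\beta\|$ for every $\beta\in\operatorname{V}(\mathfrak p)$. This is what allows a factor-by-factor comparison of $\prod_jQ(\beta_j)$ with $\chi_\omega(\operatorname{Chow}\mathfrak p)=c\prod_j(\beta_j,S^{(r+1)}\omega)$ via Mahler measures in $s$. Your text never says where this hypothesis is used.
\item $|J(\omega)|$ contains the factor $|\operatorname{Chow}J|^{-1}$, so you need a lower bound for the archimedean norm of $N$. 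It comes from the product formula: $\operatorname{H}(N)\ge 1$, while $|N|_v\le e^{C\deg\mathfrak p\deg Q}|\operatorname{Chow}\mathfrak p|_v^{\deg Q}|Q|_v^{\deg\mathfrak p}$ at every other place. This normalisation is what produces the terms $\deg\mathfrak p\operatorname{h}(Q)+\operatorname{h}(\mathfrak p)\deg Q$ in (3).
\end{itemize}
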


\begin{Cor}[see \mycite{Chapter 3, Corollary 4.12}{NesterenkoPhilippon2001IntroductionToAlgebraicIndependenceTheory}]\label{Corollary Nesterenko corollary intersection of ideal and polynomial}
Let $S>0$ be a real number and $\eta>0$ be an integer. Assume that $\dim\mathfrak p\geq1$, that $Q\not\in\mathfrak p$ and that:
\begin{enumerate}
\item $\ln|\mathfrak p(\omega)|\leq-S$.
\item $\ln\|Q\|_\omega\leq-2n[K:\mathbb Q]\cdot\deg Q$.
\item $-\eta\ln\|Q\|_\omega\geq2\min(S,\ln\frac1{\rho_{\omega,\mathfrak p}})$.
\end{enumerate}

Then there exists a homogeneous unmixed ideal $J$ of dimension $\dim\mathfrak p -1$, whose zero locus is the intersection of those of $\mathfrak p$ and $Q$, satisfying:
\begin{enumerate}
\item $\deg J\leq\eta\cdot\deg\mathfrak p\cdot\deg Q$.
\item $\operatorname{h}(J)\leq\eta\cdot\left(\operatorname{h}(\mathfrak p)\cdot\deg Q+\deg\mathfrak p\cdot\operatorname{h}(Q)+C\cdot\deg\mathfrak p\deg Q\right)$.
\item $
    \ln|J(\omega)|
        \leq
    -S+\eta\cdot\left(\deg\mathfrak p\cdot\operatorname{h}(Q)+\operatorname{h}(\mathfrak p)\cdot\deg Q+C\cdot\deg\mathfrak p\deg Q\right)
.$
\end{enumerate}
\end{Cor}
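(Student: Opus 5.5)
The plan is to apply Lemma~\ref{Lemma Nesterenko proposition intersection of ideal and polynomial} to the single polynomial $Q^\eta$ in place of $Q$. Since $\mathfrak p$ is prime and $Q\notin\mathfrak p$, we have $Q^\eta\notin\mathfrak p$. Moreover $\operatorname{V}(\mathfrak p)\cap\operatorname{V}(Q^\eta)=\operatorname{V}(\mathfrak p)\cap\operatorname{V}(Q)$. So the lemma yields a homogeneous unmixed ideal $J$ of dimension $\dim\mathfrak p-1$ with the required zero locus. It remains to compare the invariants of $Q^\eta$ with those of $Q$, and then to check that the quantity $\delta$ attached to $Q^\eta$ is at most $e^{-S}$.

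\emph{Invariants of $Q^\eta$.} Clearly $\deg Q^\eta=\eta\deg Q$. At non-archimedean places $|Q^\eta|_v\le|Q|_v^\eta$. At archimedean places, $|Q^\eta|_v\le (n+1)^{\eta\deg Q}|Q|_v^\eta$, by counting monomials. Hence $\operatorname{h}(Q^\eta)\le\eta\operatorname{h}(Q)+C\eta\deg Q$. Plugging these into items (1)--(3) of Lemma~\ref{Lemma Nesterenko proposition intersection of ideal and polynomial} gives items (1), (2) of the corollary and the error term of (3), after enlarging $C$.

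For $\|Q^\eta\|_\omega$ I would use Gelfond's inequality $|Q|^\eta\le e^{c\,\eta\deg Q}|Q^\eta|$, with $c$ depending only on $n$. It gives
\[
\|Q^\eta\|_\omega=\frac{|Q(\omega)|^\eta}{|Q^\eta|\,|\omega|^{\eta\deg Q}}\le \|Q\|_\omega^\eta\, e^{c\eta\deg Q}.
\]
Hypothesis (2), $\ln\|Q\|_\omega\le-2n[K:\mathbb Q]\deg Q$, is exactly what lets the factor $e^{c\eta\deg Q}$ be absorbed. If $c\le n[K:\mathbb Q]$, this yields $\|Q^\eta\|_\omega\le\|Q\|_\omega^{\eta/2}$. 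If the constant in Gelfond's inequality is worse, I would instead absorb the excess into the $C\eta\deg\mathfrak p\deg Q$ term of (3). This constant bookkeeping is the step I expect to be the main (though technical) obstacle. Combined with hypothesis (3), this gives $\|Q^\eta\|_\omega\le e^{-\min(S,\ln\frac1{\rho_{\omega,\mathfrak p}})}$, or the same bound up to an admissible error.

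\emph{Bounding $\delta$.} If $\rho_{\omega,\mathfrak p}\ge\|Q^\eta\|_\omega$, then $\delta=|\mathfrak p(\omega)|\le e^{-S}$ by hypothesis (1). Otherwise $\rho_{\omega,\mathfrak p}<\|Q^\eta\|_\omega$, and then $\min(S,\ln\frac1\rho)$ cannot equal $\ln\frac1\rho$. Indeed, in that case $\|Q^\eta\|_\omega\le\rho$ (using the squared version $\le\rho^2\le\rho$ when the error allows), which is a contradiction. Hence the minimum is $S$, and $\delta=\|Q^\eta\|_\omega\le e^{-S}$. In both cases $\ln\delta\le -S$. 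Substituting into item (3) of Lemma~\ref{Lemma Nesterenko proposition intersection of ideal and polynomial} for $Q^\eta$ gives item (3) of the corollary.
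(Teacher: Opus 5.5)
Your argument is correct, and it is the standard proof behind the cited Nesterenko--Philippon corollary: apply Lemma~\ref{Lemma Nesterenko proposition intersection of ideal and polynomial} to $Q^\eta$, then compare $\|Q^\eta\|_\omega$ with $\|Q\|_\omega^\eta$ by Gelfond's inequality. This matches how the paper uses the corollary later, where with $\eta=4$ it describes the resulting $J$ as the intersection of $\mathfrak p$ with $(G^4)$.

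The constant you left open is fine, so you can drop the fallback. Dehomogenize and let $M$ denote the Mahler measure. Then $|Q|\le 2^{n\deg Q}M(Q)$ and $M(Q)^\eta=M(Q^\eta)\le(\eta\deg Q+1)^{n/2}|Q^\eta|$. Hence $|Q|^\eta\le e^{n\eta\deg Q}|Q^\eta|$ (trivially so when $\eta=1$), which is within the allowance $c\le n[K:\mathbb Q]$ that hypotheses (2)--(3) are calibrated for.

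Your fallback as written would in fact fail in the subcase $e^{-S}<\rho_{\omega,\mathfrak p}<\|Q^\eta\|_\omega$. There the contradiction needs $\|Q^\eta\|_\omega\le\rho_{\omega,\mathfrak p}$ with no loss at all. Absorbing an excess factor $e^{(c-n[K:\mathbb Q])\eta\deg Q}$ would additionally require Lemma~\ref{Lemma Nesterenko zero of ideal close to omega}, to bound $S$ by $O\bigl(\deg\mathfrak p\,(1+\ln\frac1{\rho_{\omega,\mathfrak p}})+\operatorname{h}(\mathfrak p)\bigr)$.
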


\begin{Lem}[see \mycite{Chapter 3, Proposition 4.13}{NesterenkoPhilippon2001IntroductionToAlgebraicIndependenceTheory}]\label{Lemma Nesterenko zero of ideal close to omega}
There exists $x\in\operatorname{V}(I)$ such that
\[
    \deg I\cdot\ln\|\omega-x\|\leq\frac1{1+\dim I}\ln|I(\omega)|+\frac1{1+\dim I}\operatorname{h}(I)+([ K:\mathbb Q]+3)n^3\cdot\deg I
.\]
\end{Lem}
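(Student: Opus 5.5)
The plan is to reduce to the case of a prime ideal and then read off a nearby zero from a factorization of the Chow form along the linear sections cut out by the substitution $u_i=S^{(i)}\omega$. Since this is \cite[Chapter 3, Proposition 4.13]{NesterenkoPhilippon2001IntroductionToAlgebraicIndependenceTheory}, I would follow the standard elimination-theoretic route.

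\emph{Reduction to a prime.} Write the reduced primary decomposition of $I$ with primes $\mathfrak p_i$ of multiplicities $k_i$. By \iref{Lemma Nesterenko primary decomposition of ideal} we have $\sum k_i\deg\mathfrak p_i=\deg I$, $\sum k_i\operatorname{h}(\mathfrak p_i)\le\operatorname{h}(I)+C\deg I$ and $\sum k_i\ln|\mathfrak p_i(\omega)|\le\ln|I(\omega)|+n^3\deg I$. Suppose the claimed inequality holds for each $\mathfrak p_i$ with some $x_i\in\operatorname{V}(\mathfrak p_i)\subseteq\operatorname{V}(I)$. Multiply the inequality for $\mathfrak p_i$ by $k_i$ and sum over $i$. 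Then the smallest $\ln\|\omega-x_i\|$ satisfies $\deg I\cdot\ln\|\omega-x_i\|\le\sum k_i\deg\mathfrak p_i\ln\|\omega-x_i\|$, which gives the bound for $I$ with an adjusted constant. So it suffices to treat $I=\mathfrak p$ prime, of dimension $r$, degree $D$ and Chow form $F$.

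\emph{Factorization of the Chow form.} Fix generic values of $u_1,\dots,u_r$. Then $F$ factors, as a form in $u_{r+1}$, as
\[
F=c\prod_{k=1}^D\langle u_{r+1},\theta_k\rangle ,
\]
where the $\theta_k$ are the $D$ points of $\operatorname{V}(\mathfrak p)\cap\{L_1=\dots=L_r=0\}$. Substitute $u_i=S^{(i)}\omega$. By skew-symmetry, every hyperplane $\langle S^{(i)}\omega,x\rangle=0$ passes through $\omega$, and $|\langle S\omega,\theta\rangle|$ is controlled by $\|\omega-\theta\|\,|\omega|\,|\theta|$. In the reverse direction, for suitable bounded choices of the $s$-entries, $|\langle S\omega,\theta\rangle|$ is at least a constant times $\|\omega-\theta\|\,|\omega|\,|\theta|$. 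Each $\theta_k$ lies in $\operatorname{V}(\mathfrak p)$, so $\|\omega-\theta_k\|\ge\rho:=\rho_{\omega,\mathfrak p}$. To relate $|c|\prod_k|\theta_k|$ to $|F|$ up to a factor $e^{O(D)}$, I would use Gelfond/Mahler-measure comparisons.

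\emph{Iterating over the $r+1$ blocks.} The aim is the lower bound
\[
\ln|\chi_\omega(F)|\ \ge\ \ln|F|+(r+1)D\ln\rho-\operatorname{h}(\mathfrak p)-\mathrm{const}\cdot D .
\]
Rearranged, this is exactly the claim with $x$ a point realizing $\rho$. I would prove it by induction on the blocks, specializing the skew matrices one block at a time. At each stage one needs a lower bound for the sup-norm (in the $s$-variables) of a polynomial in terms of its values at well-chosen points of a polydisc. This costs $e^{O(D)}$ per block, and each block contributes a factor $\rho^D$.

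\emph{Main obstacle and the height term.} The hardest step is making the generic choice of $u_1,\dots,u_r$ compatible with the specialization $u_i=S^{(i)}\omega$. After specialization the sections through $\omega$ are no longer generic, so the factorization can degenerate. The height term $\operatorname{h}(\mathfrak p)/(r+1)$ enters when controlling $|F|$ against coefficients at this degenerate specialization; I would use the product formula $\prod_v|F|_v=\operatorname{H}(F)$ to bound the archimedean size of the relevant nonvanishing coefficient from below. First I would try to deform $S^{(i)}$ slightly, bounding the loss by Cauchy estimates on a polydisc of radius comparable to $1$. This is what produces the $([K:\mathbb Q]+3)n^3\deg I$ error term.
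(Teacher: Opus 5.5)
The paper does not prove this statement; it quotes it from Nesterenko--Philippon, so your proposal has to stand on its own. Your reduction to a prime is fine up to the constant: pick the $x_i$ minimizing $\ln\|\omega-x_i\|$ and sum with weights $k_i$. The one-block ingredients you list are also the right ones: the factorization $F=c\prod_k\langle u_{r+1},\theta_k\rangle$ at generic $u_1,\dots,u_r$, the bound $\|\omega-\theta_k\|\ge\rho$, and Gelfond's inequality.

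There is a genuine gap, though. The step that carries all the content is a lower bound for $\chi_\omega(F)$ when \emph{every} block is specialized to $S^{(i)}\omega$. This is exactly what you leave open as your ``main obstacle''. The fixes you propose (deforming $S^{(i)}$, bounding ``the relevant nonvanishing coefficient'' via the product formula) are not yet an argument: no coefficient or inequality is specified. Moreover, the height is not what resolves the degeneration.

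The missing idea is to induct on $r$ through hyperplane sections, using the symmetry of the Chow form in its blocks. Let $Z$ be the cycle of $\mathfrak p$.

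\emph{Hyperplane sections.} For generic skew $S$, the form $F(u_1,\dots,u_r,S\omega)$ is, up to a scalar, the Chow form of the $(r-1)$-cycle $Z\cdot H$, where $H=\{\langle S\omega,x\rangle=0\}\ni\omega$. Its support lies in $\operatorname{V}(\mathfrak p)$, so it is at distance $\ge\rho$ from $\omega$. Since $H$ is not defined over $K$, the induction must run over all complex cycles. The natural statement is purely archimedean: for $|\omega|=1$,
\[
|\chi_\omega(F)|\ge e^{-cD}\rho^{(r+1)D}|F|.
\]

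\emph{Base case $r=0$.} This is your factorization plus Gelfond.

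\emph{Inductive step.} Use the factorization only in the last block, with $u=(u_1,\dots,u_r)$ generic and \emph{not} specialized. Set $Q_u(S):=F(u,S\omega)=c(u)\prod_k\langle S\omega,\theta_k\rangle$. Then
\[
\sup_{|S|\le1}|Q_u(S)|\ge|Q_u|\ge e^{-cD}|c(u)|\prod_k\|\omega-\theta_k\|\,|\theta_k|\ge e^{-c'D}\rho^D|F(u,\cdot)|.
\]
Take the sup over $u$ on the unit torus. Coefficients are bounded by sups on the torus, and sups on the polydisc are at most $\mathrm{poly}(D)$ times coefficient norms. This gives $\sup_{|S|\le1}|F(\cdot,S\omega)|\ge e^{-cD}\rho^D|F|$. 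Now choose a nearly maximizing generic $S^*$ and apply the induction hypothesis to $Z\cdot H_{S^*\omega}$. Finally use $|\chi_\omega(F)|\ge\mathrm{poly}(D)^{-1}|\chi_\omega(F)(\cdot,S^*)|$. Non-generic $S$ are handled by continuity, so no deformation is needed.

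\emph{Conclusion.} This yields $\ln|I(\omega)|\ge(1+\dim I)\deg I\cdot\ln\rho_{\omega,I}-c\deg I$ directly for any unmixed $I$, so the prime reduction is not even needed. The stated inequality follows because $\operatorname{h}(I)\ge0$, by the product formula applied to any nonzero coefficient. Without this inductive structure, the block-by-block iteration in your plan has no justification.
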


\subsection{Order}

We now introduce our notion of order.

\begin{Def}[Order]\label{Definition order}
Let $f:\Gamma\rightarrow\mathbb C$ be a continuous function that is not identically zero, and let $I\subseteq K[x_0,...,x_n]$ be a homogeneous unmixed ideal. We define the orders of $f$ and $I$ as follows:
\begin{enumerate}
    \item $\operatorname{ord}_{\Gamma}f:=-\ln\max_{\omega\in\Gamma}|f(\omega)|$.
    \item $\operatorname*{ord}_{\Gamma}I:=\operatorname*{ord}_{\Gamma}|I|:=-\ln\max_{\omega\in\Gamma}|I(\omega)|$.
\end{enumerate}
\end{Def}

\section{Sketch of proof of the main theorem}

Our main theorem is \iref{Theorem lower bound main theorem}. \iref{Theorem introduction; main theorem} is obtained as a special case of \iref{Theorem lower bound main theorem} by substituting $I:=(p)$ (see \iref{Corollary the lower bound main theorem; main remark}). Our proof of \iref{Theorem lower bound main theorem} is inspired by Nesterenko's
approach in \cite{nesterenko:mult-nonlinear}. The main idea is to
replace the non-archimedean order of zero at a single point used by
Nesterenko by our archimedean order defined in
\iref{Definition order}. We prove
\iref{Theorem lower bound main theorem} by induction on $\dim I$ as follows:
\begin{enumerate}
\item We express $I$ by its reduced primary decomposition $I=\bigcap_i\mathfrak q_i$, where the radical of each $\mathfrak q_i$ is some prime $\mathfrak p_i$ with multiplicity $n_i$. Then we show that
  \begin{equation}\label{Equation sketch-ord-multlin}
    \operatorname*{ord}_\Gamma I \lesssim \sum n_i \operatorname*{ord}_\Gamma\mathfrak p_i
  ,\end{equation}
  up to some error terms. This allows us to reduce the claim for $I$
  to the claim for each prime component $\mathfrak p_i$; thus, without loss
  of generality, $I$ is prime.
\item We show that one can choose a polynomial $p\in I$ with
  $\operatorname{t}(p) \lesssim \operatorname{t}(I)^\frac{1}{\operatorname{codim} I}$. This uses Nesterenko's estimates
  for Hilbert functions of prime ideals in a fairly standard way.
\item Next, we show that taking $p$ as above with the minimal possible
  $\operatorname{t}(p)$, one of the first $\ell$ derivatives
  $\xi p,\ldots,\xi^\ell p$ does not belong to $I$, where
  $\ell$ is some constant independent of $I$. This is modeled after a
  crucial lemma of Nesterenko \cite{nesterenko:mult-nonlinear}, with
  some modifications needed since we work with $\operatorname{t}(I)$ rather than
  $\deg U$. Denote this derivative by $Q$.
\item If $\operatorname*{ord}_\Gamma I$ is large, then $p$ is identically small on
  $\Gamma$. The Cauchy estimates then imply that $Q$ is identically
  small as well. We deduce that for the ideal $J$, whose locus corresponds to the cycle-theoretic
  intersection of the locus of $I$ with the locus of $Q$, we have
  \begin{equation*}
    \operatorname*{ord}_\Gamma I \lesssim \operatorname*{ord}_\Gamma J.
  \end{equation*}
\item The claim now follows by induction since one easily checks that
  with our choice of $Q$ we have
  \begin{equation*}
    \operatorname{t}(J)^\frac{n}{\operatorname{codim} J} \lesssim \operatorname{t}(I)^\frac{n}{\operatorname{codim} I}.
  \end{equation*}
\end{enumerate}

At various steps in this proof, the more complicated nature of our
$\operatorname*{ord}_\Gamma I$ compared to Nesterenko's local order introduces
technical difficulties. For example, in \iref{Equation sketch-ord-multlin},
since our order is defined by means of the maximum over $\Gamma$,
which may be attained at a different point for each $\mathfrak p_i$, it is not
immediately obvious that this multilinearity should hold. Establishing
the bound requires the use of some value distribution theory of
holomorphic functions to show roughly that the maximum of a product of
holomorphic functions is well approximated by the product of their
maxima. We carry this out using the notion of \emph{Bernstein index};
see the proof of \iref{Proposition I = intersection of pi} for details.

\section{Theorems relating to the Bernstein index}\label{Section bernstein index theorems}

\subsection{Additional lemmas}

In this subsection, we let $f_1,...,f_k$ be a collection of functions holomorphic in $D_1$ and continuous in $\overline{D_1}$.
The following lemma generalizes \iref{Lemma bernstein index lower bound classic} to functions defined as the maximum of holomorphic functions.

\begin{Lem}\label{Lemma Bernstein index lower bound for max of holomorphic functions}
Let $f(z):=\max(|f_1(z)|,...,|f_k(z)|)$. Then, for any $0<h<\frac1e$, there is a collection of discs $\{D^i\}_{i=1}^a$ whose diameters sum to less than $h$ such that
\[
    \forall x\in K-\bigcup_i D^i:|f(x)|\geq\frac{\max_{z\in K}|f(z)|}{e^{C_{K,U}\cdot\ln\frac1h\cdot\B_{K,U}(f)}}
\]
and
\[
    a
        \leq
    C_{K,U}\cdot\B_{K,U}(f)
,\]
where $C_{K,U}$ is a constant depending only on $K,U$.
\end{Lem}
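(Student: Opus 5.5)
Define $M_K:=\max_{z\in K}f(z)$ and $M_U:=\max_{z\in\overline U}f(z)$, so $\B_{K,U}(f)=\ln(M_U/M_K)$. The idea is to reduce to \iref{Lemma bernstein index lower bound classic} (together with \iref{Remark number of discs}) applied to a single holomorphic function. Fix a point $z^*\in K$ where $M_K$ is attained, and choose $j$ with $|f_j(z^*)|=M_K$. We then have $\max_K|f_j|=M_K$, because $|f_j|\le f$ everywhere and equality holds at $z^*$. We also have $\max_{\overline U}|f_j|\le M_U$. Hence
\[
    \B_{K,U}(f_j)=\ln\frac{\max_{\overline U}|f_j|}{\max_K|f_j|}\le\ln\frac{M_U}{M_K}=\B_{K,U}(f).
\]

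Now apply \iref{Lemma bernstein index lower bound classic} to the holomorphic function $f_j$ with the given $h$. This produces discs $\{D^i\}_{i=1}^a$ whose diameters sum to less than $h$. By \iref{Remark number of discs}, their number satisfies
\[
    a\le C_{K,U}\B_{K,U}(f_j)\le C_{K,U}\B_{K,U}(f).
\]
The lemma also gives, for every $x\in K-\bigcup_iD^i$,
\[
    f(x)\ge|f_j(x)|\ge\frac{\max_K|f_j|}{e^{C_{K,U}\ln\frac1h\B_{K,U}(f_j)}}\ge\frac{M_K}{e^{C_{K,U}\ln\frac1h\B_{K,U}(f)}}.
\]
The last step uses $\max_K|f_j|=M_K$, the bound $\B_{K,U}(f_j)\le\B_{K,U}(f)$, and $\ln\frac1h>0$, which holds since $h<\frac1e$. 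This is exactly the claimed estimate, with the same constant $C_{K,U}$.

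The only step that needs care is selecting the right component. The index $j$ must be chosen by attaining the maximum over $K$, not over $\overline U$. With that choice, both the numerator and denominator comparisons for the Bernstein index go in the correct direction. Choosing $j$ to realize the maximum over $\overline U$ instead would not give any control on $\max_K|f_j|$.

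There is also a degenerate case: if some $f_i$ vanish identically, the construction is unaffected as long as $f\not\equiv0$ on $K$. If $f\equiv0$ on $K$, the Bernstein index is infinite and the statement is vacuous.
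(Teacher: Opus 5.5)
Your proposal is correct and is essentially the paper's own proof. You pick the component $f_{i_0}$ that realizes $\max_K f$, deduce $\B_{K,U}(f_{i_0})\le\B_{K,U}(f)$, apply Lemma~\ref{Lemma bernstein index lower bound classic} together with Remark~\ref{Remark number of discs} to $f_{i_0}$, and conclude using $f\ge|f_{i_0}|$.
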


\begin{proof}
Let $i_0$ be an index such that
\[
    \max_{z\in K}|f_{i_0}(z)|=\max_i\max_{z\in K}|f_{i}(z)|=\max_{z\in K}f(z)
.\]
Observe that
\begin{align*}
    \B_{K,U}(f)
    = \ln \frac{\max_{z \in \overline{U}} |f(z)|}{\max_{z \in K} |f(z)|} 
    &= \ln \frac{\max_i \max_{z \in \overline{U}} |f_i(z)|}{\max_{z \in K} |f_{i_0}(z)|} 
    \\&\geq \ln \frac{\max_{z \in \overline{U}} |f_{i_0}(z)|}{\max_{z \in K} |f_{i_0}(z)|}
    = \B_{K,U}(f_{i_0})
.\end{align*}
That is, $\B_{K,U}(f_{i_0})\leq\B_{K,U}(f)$.

By \iref{Lemma bernstein index lower bound classic} and \iref{Remark number of discs}, since $f_{i_0}$ is holomorphic, there is a collection of discs $\{D^i\}_{i=1}^a$ whose diameters sum to less than $h$ such that
\[
    \forall x\in K-\bigcup_i D^i:|f_{i_0}(x)|\geq\frac{\max_{z\in K}|f_{i_0}(z)|}{e^{C_{K,U}\cdot\ln\frac1h\cdot\B_{K,U}(f_{i_0})}}
.\]
and
\[
    a
        \leq
    C_{K,U}\cdot\B_{K,U}(f_{i_0})
        \leq
    C_{K,U}\cdot\B_{K,U}(f)
.\]
Thus, for any such $x\in K-\bigcup_i D^i$,
\begin{align*}
    |f_{i_0}(x)|
    &\geq \frac{\max_{z \in K} |f_{i_0}(z)|}{e^{C_{K,U}\cdot\ln\frac1h\cdot\B_{K,U}(f_{i_0})}} 
    \\&= \frac{\max_{z \in K} |f(z)|}{e^{C_{K,U}\cdot\ln\frac1h\cdot\B_{K,U}(f_{i_0})}}
    \geq \frac{\max_{z \in K} |f(z)|}{e^{C_{K,U}\cdot\ln\frac1h\cdot\B_{K,U}(f)}}
.\end{align*}
Since $|f(x)|\geq|f_{i_0}(x)|$, this completes the proof.
\end{proof}

\begin{Lem}\label{Lemma average is like maximum minus Bernstein index}
Let $f(z):=\max(|f_1(z)|,...,|f_k(z)|)$ and assume that $f\not\equiv 0$. Then, regarding $\overline{D_\frac1e}$ as a probability space with the uniform measure, we have
\[
    \operatorname{\mathbb E}_{z\in\overline{D_\frac1e}}\ln|f(z)|
        \geq
\ln\max_{z\in\overline{D_\frac1e}}|f(z)|-C\cdot\B_{\frac1e,1}(f)
,\]
for some absolute constant $C$ (we define $\ln 0:=-\infty$ and the integral of $-\infty$ over a set of measure $0$ to be 0).
\end{Lem}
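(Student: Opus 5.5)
We reduce to a single holomorphic function, exactly as in the proof of \iref{Lemma Bernstein index lower bound for max of holomorphic functions}. Choose $i_0$ with $\max_{\overline{D_{1/e}}}|f_{i_0}|=\max_{\overline{D_{1/e}}} f$. That argument shows $\B_{\frac1e,1}(f_{i_0})\le\B_{\frac1e,1}(f)$. Since $\ln f\ge\ln|f_{i_0}|$ pointwise, it suffices to prove
\[
    \operatorname{\mathbb E}_{z\in\overline{D_\frac1e}}\ln|g(z)|
        \geq
    \ln\max_{\overline{D_\frac1e}}|g|-C\cdot\B_{\frac1e,1}(g)
\]
for a single holomorphic $g:=f_{i_0}\not\equiv0$. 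This $g$ is nonzero because $f\not\equiv0$ forces $\max_{\overline{D_{1/e}}} f>0$ (otherwise $f$ vanishes on the disc, hence every $f_i\equiv0$ by the identity theorem). Replacing $g$ by $g/\max_{\overline{D_1}}|g|$, we may normalize so that $\max_{\overline{D_1}}|g|=1$ and $\ln\max_{\overline{D_{1/e}}}|g|=-B$, where $B:=\B_{\frac1e,1}(g)$. The goal then becomes $\operatorname{\mathbb E}\ln|g|\ge -B-CB$.

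The main tool is a Jensen/Poisson--Jensen (Riesz) representation on a slightly smaller disc $D_r$ with $r$ fixed, say $r=1/2$ or $r=e^{-1/2}$. Pick $z_0\in\overline{D_{1/e}}$ with $|g(z_0)|=e^{-B}$. Poisson--Jensen at $z_0$ on $D_r$ gives
\[
    \ln|g(z_0)|=\int_{\partial D_r}P_{z_0}\ln|g|\,d\theta+\sum_{a\in D_r,\ g(a)=0}\ln\Bigl|\tfrac{r(z_0-a)}{r^2-\bar a z_0}\Bigr|.
\]
Since $\ln|g|\le0$ on $\partial D_r$ and the Poisson kernel is comparable to $1$ uniformly for $|z_0|\le1/e$, we get $\int_{\partial D_r}(-\ln|g|)\,d\theta\le C B$. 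By \iref{Lemma bound on number of zeroes of f given bernstein index}, applied with $K=\overline{D_r}$ (this changes only the constant), the number $N$ of zeroes in $D_r$ satisfies $N\le CB$. Each Blaschke factor is $\le0$ and is dropped in this direction.

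Next we apply Poisson--Jensen at a general $z\in\overline{D_{1/e}}$:
\[
    \ln|g(z)|=\int P_z\ln|g|+\sum_a\ln\Bigl|\tfrac{r(z-a)}{r^2-\bar a z}\Bigr|.
\]
The Poisson kernel for $|z|\le 1/e$ is bounded above by a constant, so the first term is $\ge -C\int(-\ln|g|)\ge -CB$. For the second term, $|r^2-\bar a z|\le 2r^2$, so each summand is $\ge\ln|z-a|-C$. Averaging over $z$ uniform in $\overline{D_{1/e}}$, we use that $\operatorname{\mathbb E}_z\ln|z-a|\ge -c$ uniformly in $a$, since $\ln|\cdot|$ is locally integrable. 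The zero sum therefore contributes at least $-C'N\ge -C''B$. Combining, $\operatorname{\mathbb E}\ln|g|\ge -CB\ge -B-CB$, which is the claim.

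The main obstacle is the step bounding $\int_{\partial D_r}|\ln|g||$ by $O(B)$. It needs a lower bound for $\ln|g(z_0)|$ against a harmonic average. This must be done carefully so that the positive-part bound ($\ln|g|\le0$) together with the Jensen identity at $z_0$ controls the negative part. I would handle it by using the harmonic-measure comparability of Poisson kernels, Harnack's inequality, on $\overline{D_{1/e}}\Subset D_r$. The zero-counting bound itself is imported directly from \iref{Lemma bound on number of zeroes of f given bernstein index}.
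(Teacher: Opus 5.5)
Your proof is correct, but it takes a different route from the paper. The paper works with $f$ directly. It invokes a (corrected) Cartan-type sublevel-set estimate of Novikov--Yakovenko, $\operatorname{\mathbb P}(|\hat f|\le\varepsilon)\le\mathrm{const}\cdot\varepsilon^{1/(\mathrm{const}\cdot\B_{\frac1e,1}(f))}$ for the normalized $\hat f$. It extends this estimate to maxima of holomorphic functions by appeal to the preceding lemma on such maxima, and then integrates the resulting exponential tail with the layer-cake formula.

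You instead first reduce to the single holomorphic function $g=f_{i_0}$. Here $\ln f\ge\ln|g|$ pointwise, the maxima on $\overline{D_{1/e}}$ agree, and $\B_{\frac1e,1}(g)\le\B_{\frac1e,1}(f)$. This makes the passage to maxima fully transparent. You then apply Poisson--Jensen on an intermediate disc $D_r$ with $\frac1e<r<1$:
\begin{itemize}
\item at the maximizer $z_0$ it gives $\frac1{2\pi}\int_{\partial D_r}(-\ln|g|)\,d\theta\le\frac{r+1/e}{r-1/e}B$;
\item the number of zeroes in $\overline{D_r}$ is $O(B)$, because $\B_{r,1}(g)\le\B_{\frac1e,1}(g)$;
\item averaging the formula over $\overline{D_{1/e}}$ costs $O(1)$ per zero, since $\operatorname{\mathbb E}_{z}\ln|z-a|\ge\ln\frac1e-\frac12$ uniformly in $a$.
\end{itemize}

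Your version needs only classical Jensen and the Ilyashenko--Yakovenko zero-counting lemma. It avoids the misquoted corollary entirely and has no degeneracy at $B=0$, where a tail of the form $e^{x/(\mathrm{const}\cdot B)}$ is meaningless. The paper's route yields a stronger intermediate fact, an exponential tail for the distribution of $\ln|f|$, which is more than this lemma needs.

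The step you single out as the ``main obstacle'' is already finished by your own argument. The Blaschke terms are nonpositive, and so is $\ln|g|$ on $\partial D_r$. Combined with the explicit kernel bound $P_{z_0}\ge\frac{r-|z_0|}{r+|z_0|}$, this gives the boundary estimate in one line, and no further Harnack argument is required.
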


\begin{proof}
Consider the corollary of \cite[Lemma 1]{NovikovYakovenko1997AComplexAnalogOfRolle'sTheoremAndPolynomialEnvelopesOfIrreducibleDifferentialEquationsInTheComplexDomain}, which we state below.

\begin{Rem}\label{Remark issue with the lemma}
This Lemma 1 was copied incorrectly from \cite[Lemma 3]{IlyashenkoYakovenko1996CountingRealZerosOfAnalyticFunctionsSatisfyingLinearOrdinaryDifferentialEquations}. The correct statement has an additional multiplicative factor $m=\operatorname*{M}_K(f)=\max_{z\in \overline{D_\frac1e}}|f(z)|$. Thus, Lemma 1 and its corollary are correct only when $\max_{z\in\overline{D_\frac1e}}|f(z)|=1$.
\end{Rem}

In our setting, the corrected version of the corollary implies that for any holomorphic function $h\not\equiv 0$ on $\overline{D_1}$ such that $\max_{z\in\overline{D_\frac1e}}|\operatorname{h}(z)|=1$ and for any $\varepsilon\in\mathbb R_{>0}$, the set $\{t\in \overline{D_\frac1e}:|\operatorname{h}(t)|\leq\varepsilon\}$ can be covered by discs whose diameters sum to at most $\mathrm{const}\cdot\varepsilon^{\frac1{\mathrm{const}\cdot\B_{\frac1e,1}(h)}}$. This also implies that the measure of this set is at most $\mathrm{const}\cdot\varepsilon^{\frac1{\mathrm{const}\cdot\B_{\frac1e,1}(h)}}$.

We wish to apply this corollary to $\hat f:=\frac{f}{\max_{z\in\overline{D_\frac1e}}|f(z)|}$ (we normalized $f$ in accordance with \iref{Remark issue with the lemma}).
We must still justify applying this corollary to our function, which is the maximum of holomorphic functions (divided by a constant). This follows because \iref{Lemma Bernstein index lower bound for max of holomorphic functions} generalizes the underlying lemma to functions such as $\hat f$ that are the maximum of holomorphic functions. Hence the corollary applies to $\hat f$ as well, yielding
\begin{align*}
    \forall\varepsilon>0:&
        \\&\operatorname{\mathbb P}_{z\in\overline{D_\frac1e}}
        \left(
            |\hat f(z)|\leq\varepsilon
        \right)
        \leq
    \mathrm{const}\cdot\varepsilon^{\frac1{\mathrm{const}\cdot\B_{\frac1e,1}\left(\hat f\right)}}
        =
    \mathrm{const}\cdot\varepsilon^{\frac1{\mathrm{const}\cdot\B_{\frac1e,1}(f)}}
,\end{align*}
where $\mathbb P$ denotes probability with respect to the standard probability measure on $\overline{D_\frac1e}$.
Thus, setting $x:=\ln\varepsilon$ gives
\begin{equation}\label{Equation bound on probability}
\forall x\in\mathbb R:\quad
    \operatorname{\mathbb P}_{z\in\overline{D_\frac1e}}
        \left(
            \ln|\hat f(z)|\leq x
        \right)
        \leq
    \mathrm{const}\cdot e^{\frac{x}{\mathrm{const}\cdot\B_{\frac1e,1}(f)}}
.\end{equation}

We now find a lower bound for $\operatorname{\mathbb E}_{z\in\overline{D_\frac1e}}\ln|\hat f(z)|$, where $\mathbb E$ denotes expectation with respect to the standard probability measure on $\overline{D_\frac1e}$.
Observe that $\ln|\hat f|\leq 0$ on $\overline{D_\frac1e}$. We use the following identity for a negative measurable function $h$:
\[
    \operatorname{\mathbb E}_{x\in X} \operatorname{h}(x)
         =
    -\int_{-\infty}^0\operatorname{\mathbb P}_{x\in X}\left(\operatorname{h}(x)\leq y\right)dy
,\]
together with \iref{Equation bound on probability}, to obtain
\begin{align*}
    \operatorname{\mathbb{E}}_{z \in \overline{D_{\frac1e}}} \ln |\hat{f}(z)|
    &= - \int_{-\infty}^0 \operatorname{\mathbb{P}}_{z \in \overline{D_{\frac1e}}} \left( \ln |\hat{f}(z)| \leq x \right) \, dx 
    \geq - \int_{-\infty}^0 \mathrm{const} \cdot e^{\frac{x}{\mathrm{const} \cdot \B_{\frac1e,1}(f)}} \, dx \\
    &= - \mathrm{const} \cdot \B_{\frac1e,1}(f) \cdot e^{\frac{x}{\mathrm{const} \cdot \B_{\frac1e,1}(f)}} \bigg|_{-\infty}^0 
    = - \mathrm{const} \cdot \B_{\frac1e,1}(f)
.\end{align*}
Since $\hat f:=\frac{f}{\max_{z\in\overline{D_\frac1e}}|f(z)|}$, it follows that
\[
    \operatorname{\mathbb E}_{z\in\overline{D_\frac1e}}\ln|f(z)|
        \geq
\ln\max_{z\in\overline{D_\frac1e}}|f(z)|-\mathrm{const}\cdot\B_{\frac1e,1}(f)
.\]
\end{proof}

\begin{Cor}\label{Corollary bernstein index of product}
\[
    \B_{\frac1e,1}(\prod_{i=1}^k f_i)
        \leq
    C\cdot\sum_{i=1}^k \B_{\frac1e,1}(f_i)
\]
for some absolute constant $C$ (independent of $k$).
\end{Cor}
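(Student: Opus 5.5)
We bound the numerator and the denominator in the definition of $\B_{\frac1e,1}(\prod_i f_i)$ separately, working with logarithms. Write $F:=\prod_{i=1}^k f_i$ and $M_i:=\max_{\overline{D_\frac1e}}|f_i|$. We may assume no $f_i$ vanishes identically, since otherwise the left-hand side is not defined. For the numerator, the trivial bound gives
\[
\ln\max_{\overline{D_1}}|F|\le\sum_i\ln\max_{\overline{D_1}}|f_i|=\sum_i\bigl(\ln M_i+\B_{\frac1e,1}(f_i)\bigr).
\]
It therefore suffices to prove the reverse-type inequality for the denominator:
\[
\ln\max_{\overline{D_\frac1e}}|F|\ \ge\ \sum_i\ln M_i-C\sum_i\B_{\frac1e,1}(f_i).
\]
Subtracting the two bounds then gives the claim with constant $1+C$, which is independent of $k$.

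For the denominator we pass through the average of $\ln|F|$ over $\overline{D_\frac1e}$. The maximum dominates the average, so
\[
\ln\max_{\overline{D_\frac1e}}|F|\ \ge\ \mathbb E_{z\in\overline{D_\frac1e}}\ln|F(z)|=\sum_{i=1}^k\mathbb E_{z\in\overline{D_\frac1e}}\ln|f_i(z)|.
\]
The equality uses linearity of expectation. Each term is finite because $\ln|f_i|$ is integrable for a holomorphic $f_i\not\equiv0$; in any case the lower bound below makes each summand finite. We now apply \iref{Lemma average is like maximum minus Bernstein index} to each $f_i$ separately, in the case of a single function (so the maximum there is just $|f_i|$). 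This gives
\[
\mathbb E\ln|f_i|\ge\ln M_i-C\,\B_{\frac1e,1}(f_i),
\]
with $C$ an absolute constant. Summing over $i$ yields the desired denominator bound.

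The key point, and the only real content, is that the error incurred by replacing the maximum with the average is additive over the factors. The average of $\ln|F|$ splits exactly into the sum of the averages of $\ln|f_i|$, and the per-factor loss is linear in $\B_{\frac1e,1}(f_i)$, with no dependence on $k$. This is precisely what removes the $\ln k$ factor of Novikov--Yakovenko, who instead compare maxima directly via exceptional-disc arguments whose union must be controlled over $k$ functions. The delicate ingredient, namely the correctly normalized sublevel-set estimate, is already packaged in \iref{Lemma average is like maximum minus Bernstein index}, so I expect no further obstacle.
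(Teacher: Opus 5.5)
Your proposal is correct and is essentially the paper's proof. The paper normalizes each factor so that $\max_{\overline{D_1}}|f_i|=1$ rather than carrying $\ln M_i$ explicitly. It then uses the same chain: maximum $\ge$ average, additivity of $\mathbb E\ln|\cdot|$ over the factors, and the per-factor bound from the lemma on averages, which gives the constant $1+C$.
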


\begin{proof}
We define
\[
    f:=\prod_{i=1}^k f_i
.\]
By normalization, we may assume that
\[
    \forall i:\quad\max_{z\in\overline{D_1}}|f_i(z)|=1 
.\]
This implies that
\[
    \max_{z\in\overline{D_1}}|f(z)|
        \leq
    1
\]
and that
\[
    \forall i:\quad\B_{\frac1e,1}(f_i)=-\max_{z\in\overline{D_\frac1e}}\ln|f(z)|
.\]
Using these bounds, \iref{Lemma average is like maximum minus Bernstein index}, and the fact that the average is $\leq$ the maximum, we obtain
\begin{align*}
    \B_{\frac1e,1}(f)
            &\leq
    -\max_{z\in\overline{D_\frac1e}}\ln|f(z)|
            \leq
    -\mathbb E_{z\in\overline{D_\frac1e}}\ln|f(z)|\\
            &=
    -\sum_i\mathbb E_{z\in\overline{D_\frac1e}}\ln|f_i(z)|
        \leq
    \sum_i
        \mathrm{const}\cdot\B_{\frac1e,1}(f_i)
            -
        \max_{z\in\overline{D_\frac1e}}\ln|f_i(z)|\\
            &=
        (1+\mathrm{const})\sum_i
            \B_{\frac1e,1}(f_i)
.\end{align*}
\end{proof}

\subsection{Derivative bounds}

In this subsection, we prove the following proposition.

\begin{Prop}\label{Proposition derivative-function ratio}
Let $f\not\equiv 0$ be a holomorphic function defined on $D_{1+\varepsilon}$ for some $\varepsilon>0$. Then there exists $A\subseteq\overline{D_\frac1e}$ with $\operatorname{Area}(A)\geq 0.56\cdot\operatorname{Area}\left(\overline{D_\frac1e}\right)$ such that
\[
    \forall z\in A:|f'(z)|\leq C\cdot\B_{\frac1e,1}(f)|f(z)|
.\]
where $C$ is an absolute constant.
\end{Prop}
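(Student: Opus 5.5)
The plan is to split $f$ on a slightly larger disc into a polynomial carrying its zeros and a zero-free factor. We then bound the logarithmic derivative of each part separately: the zero-free part is bounded everywhere on $\overline{D_\frac1e}$, and the polynomial part is bounded on a set of large area. Throughout, $C$ denotes an absolute constant that may change from line to line. Write $M_r:=\max_{\overline{D_r}}|f|$ and $\B:=\B_{\frac1e,1}(f)=\ln(M_1/M_{\frac1e})$.

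\emph{Step 1 (counting zeros).} Apply \iref{Lemma bound on number of zeroes of f given bernstein index} with $K:=\overline{D_{0.7}}$ and $U:=D_1$. This gives that the number $N$ of zeros $a_1,\dots,a_N$ of $f$ in $\overline{D_{0.7}}$ satisfies
\[
N\le C\cdot\B_{0.7,1}(f)\le C\cdot\B.
\]
The second inequality holds because $M_{0.7}\ge M_{\frac1e}$. Set $P(z):=\prod_i(z-a_i)$ and $g:=f/P$. Then $g$ is holomorphic on a neighbourhood of $\overline{D_1}$ and zero-free on $D_{0.7}$, and on $\overline{D_\frac1e}$ we have
\[
\frac{f'}{f}=\sum_{i=1}^N\frac1{z-a_i}+\frac{g'}{g}.
\]

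\emph{Step 2 (the zero-free part).} First we bound $\ln|g|$ from above. By the maximum principle, $\max_{\overline{D_{0.7}}}|g|\le\max_{|z|=0.8}|g|$. On $|z|=0.8$ we have $|z-a_i|\ge0.1$, so $\ln|g|\le\ln M_1+N\ln10$ there. Next we bound it from below at one point. Let $z_0\in\overline{D_\frac1e}$ be a point where $|f(z_0)|=M_{\frac1e}$. Since $|z_0-a_i|\le 2$, we get $\ln|g(z_0)|\ge\ln M_{\frac1e}-N\ln2$. Now set $u:=\ln|g|-\ln M_1-N\ln 10$. This function is harmonic and nonpositive on $D_{0.7}$, and
\[
-u(z_0)\le \B+N\ln 20\le C\B.
\]
Harnack's inequality on $D_{0.7}$ gives $-u(z)\le C(-u(z_0))$ for all $z\in\overline{D_{0.5}}$. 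The gradient estimate for positive harmonic functions then gives $|\nabla u(z)|\le C(-u(z))$ on $\overline{D_\frac1e}$. Since $|g'/g|=|\nabla\ln|g||=|\nabla u|$, we conclude that $|g'/g|\le C\B$ everywhere on $\overline{D_\frac1e}$. I expect this step to be the main obstacle. The delicate points are choosing the radii so that the Harnack constants are absolute, and making sure that zeros of $f$ lying near $|z|=0.7$ (but not in the disc) cause no trouble. The latter is why we use the circle $|z|=0.8$ for the upper bound; if needed, I would pick an intermediate radius in $[0.7,0.8]$ avoiding the zeros.

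\emph{Step 3 (the polynomial part).} For any $a\in\mathbb C$, $\int_{D_\frac1e}|z-a|^{-1}\,dA\le C$, uniformly in $a$. Hence
\[
\mathbb E_{z\in\overline{D_\frac1e}}\sum_i|z-a_i|^{-1}\le C_0N.
\]
By Chebyshev's inequality, the set where $\sum_i|z-a_i|^{-1}>C_0N/0.44$ has measure at most $0.44$. Let $A$ be its complement in $\overline{D_\frac1e}$, so $\operatorname{Area}(A)\ge0.56\cdot\operatorname{Area}(\overline{D_\frac1e})$. On $A$, combining with Steps 1 and 2,
\[
|f'(z)|/|f(z)|\le C(N+\B)\le C\B.
\]
At zeros of $f$ the sum above is infinite, so these points are automatically excluded from $A$.

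Finally, if $\B=0$, then $|f|$ attains its maximum over $\overline{D_1}$ at an interior point of $D_1$ (a point of $\overline{D_\frac1e}$), so $f$ is constant by the maximum principle and $f'\equiv0$. The claim then holds trivially.
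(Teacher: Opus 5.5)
Your proof is correct, but it handles the zero-free factor by a genuinely different, and more elementary, route than the paper.

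\textbf{The paper's route.} The paper divides out only the zeros in $\overline{D_{\frac1e}}$, so its quotient $u$ may vanish arbitrarily close to $\partial D_{\frac1e}$. It therefore controls $\log u$ in three pieces:
\begin{itemize}
\item a Cartan-type lower bound (Lemma~\ref{Lemma bernstein index lower bound classic}) on a good circle of radius in $[0.35,\frac1e]$, for the real part;
\item the Khovanskii--Yakovenko Voorhoeve-index estimate, for the imaginary part;
\item Cauchy's formula, which only yields $|u'/u|\le C\B_{\frac1e,1}(u)$ on $\overline{D_{0.3}}$ (Lemma~\ref{Lemma derivative-function ratio unit}).
\end{itemize}
It then needs Lemma~\ref{Lemma Bernstein index of product with polynomial} to pass from $\B_{\frac1e,1}(u)$ to $\B_{\frac1e,1}(f)$. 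The constant $0.56$ comes from intersecting $\overline{D_{0.3}}$ (area fraction about $0.665$) with the $0.9$-Markov set.

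\textbf{Your route.} By dividing out all zeros in $\overline{D_{0.7}}$, you make $\ln M_1+N\ln 10-\ln|g|$ a nonnegative harmonic function on a disc that contains $\overline{D_{\frac1e}}$ with a uniform margin. Harnack's inequality and the interior gradient estimate then give $|g'/g|\le C\B_{\frac1e,1}(f)$ on all of $\overline{D_{\frac1e}}$ at once. Normalizing at the maximum point $z_0$ replaces the paper's comparison of Bernstein indices.

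\textbf{What it costs and what it buys.} The price is small. You apply the zero-counting lemma with $K=\overline{D_{0.7}}$, which is legitimate since it holds for any compact $K\subset U$. Because some $a_i$ now lie outside $\overline{D_{\frac1e}}$, you need a bound on $\int_{D_{1/e}}|z-a|^{-1}\,dA$ uniform in $a\in\mathbb C$ in place of Lemma~\ref{Lemma derivative-function ratio polynomial}, and you correctly use one. A side benefit is that your Markov step yields any area fraction $1-\delta$, with a constant of order $1/\delta$.

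Your worry about zeros near $|z|=0.7$ is unnecessary. The maximum-principle bound on $|z|=0.8$ is insensitive to zeros of $g$. Harmonicity is only needed on $D_{0.7}$, where $g$ is zero-free by construction, so no intermediate radius is needed.
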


We begin by proving special cases and an auxiliary lemma (cf. \cite{IlyashenkoYakovenko1996CountingRealZerosOfAnalyticFunctionsSatisfyingLinearOrdinaryDifferentialEquations}).

\subsubsection{The nonzero case}

\begin{Lem}\label{Lemma derivative-function ratio unit}
Let $u\not\equiv 0$ be a holomorphic function defined on $D_{1+\varepsilon}$ for some $\varepsilon>0$. Assume that $u$ has no zeroes in $\overline{D_\frac1e}$. Then
\[
    \forall z\in\overline{D_{0.3}}:|u'(z)|\leq C\cdot\B_{\frac1e,1}(u)|u(z)|
.\]
for some absolute constant $C$.
\end{Lem}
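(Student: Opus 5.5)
The plan is to work with the nonnegative superharmonic function
\[
v(z):=\ln\max_{w\in\overline{D_1}}|u(w)|-\ln|u(z)|
\]
on $D_1$ and to convert the bound on $|u'/u|$ into a gradient estimate for a positive harmonic function. Since $u$ is holomorphic on a neighbourhood of $\overline{D_1}$, $\ln|u|$ is subharmonic there (with value $-\infty$ at zeroes). Hence $v$ is superharmonic and $v\geq 0$ on $\overline{D_1}$. Because $u$ has no zeroes in $\overline{D_\frac1e}$, $v$ is harmonic on a neighbourhood of $\overline{D_\frac1e}$. Moreover $|u'(z)/u(z)|=|\nabla\ln|u|(z)|=|\nabla v(z)|$ there. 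Let $z^*\in\overline{D_\frac1e}$ be a point where $|u|$ attains its maximum over $\overline{D_\frac1e}$. Then $v(z^*)=\B_{\frac1e,1}(u)=:B$.

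\textbf{Step 1 (transfer from $z^*$ to $0$).} Since $|z^*|+0.63<1$, the disc $D(z^*,0.63)$ lies in $D_1$. The super-mean-value property of $v$ gives that the area average of $v$ over $D(z^*,0.63)$ is at most $v(z^*)=B$. Since $\frac1e+0.26<0.63$, we have $D(0,0.26)\subset D(z^*,0.63)$, and also $D(0,0.26)\subset D_\frac1e$, where $v$ is harmonic. By the mean-value property for $v$ on $D(0,0.26)$ and the nonnegativity of $v$,
\[
v(0)=\frac{1}{\operatorname{Area}D(0,0.26)}\int_{D(0,0.26)}v\leq\Big(\frac{0.63}{0.26}\Big)^2 B .
\]

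\textbf{Step 2 (Harnack inside $D_\frac1e$).} $v$ is nonnegative and harmonic on $D_R$ with $R=\frac1e$. Harnack's inequality gives, for $|z|\leq 0.33$,
\[
v(z)\leq\frac{R+|z|}{R-|z|}\,v(0)\leq c_1 v(0)\leq c_2 B,
\]
with absolute constants $c_1,c_2$. Here $R-0.33\approx 0.038>0$, so the constants are harmless.

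\textbf{Step 3 (gradient estimate).} For $|z|\leq 0.3$ the disc $D(z,0.03)$ lies in $D_{0.33}$, where $v$ is nonnegative and harmonic. The standard interior gradient bound for positive harmonic functions, obtained by differentiating the Poisson kernel or from Harnack, gives $|\nabla v(z)|\leq\frac{2}{0.03}\,v(z)$. Combining this with Step 2 yields
\[
|u'(z)|\leq C\cdot B\cdot|u(z)|\quad\text{for all } z\in\overline{D_{0.3}},
\]
with an absolute constant $C$.

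The main obstacle is Step 1, that is, getting from the single point $z^*$ to a uniform bound on $D_{0.3}$. The difficulty is that $z^*$ may lie on the boundary circle $|z|=\frac1e$, where Harnack inside $D_\frac1e$ gives nothing. This is why I use the superharmonicity of $v$ on the larger disc $D_1$, which holds despite the zeroes of $u$ outside $\overline{D_\frac1e}$, to average around $z^*$ over a disc reaching almost to $\partial D_1$. The radii must be chosen so that $D(z^*,0.63)\subset D_1$ and $D(0,0.26)\subset D(z^*,0.63)\cap D_\frac1e$; the numerical margins above are tight but valid.
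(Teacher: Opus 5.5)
Your proof is correct, and it takes a genuinely different route from the paper's.

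\textbf{The paper's route.} The paper normalizes $u(z_0)=1$ at a boundary maximum point and takes a holomorphic branch $g=\ln u$ on $\overline{D_{\frac1e}}$. It then bounds $|g|$ on a circle $|z|=r_0$ with $0.35\le r_0\le\frac1e$, in two parts.
\begin{enumerate}
\item The real part is controlled by the Ilyashenko--Yakovenko exceptional-disc lower bound, choosing $r_0$ so that the circle avoids the discs.
\item The imaginary part is controlled by the Khovanskii--Yakovenko estimate of the Voorhoeve index along $\partial D_{\frac1e}$ in terms of $\B_{\frac1e,1}(u)$, followed by the maximum principle.
\end{enumerate}
Cauchy's formula on that circle then bounds $g'=u'/u$ on $\overline{D_{0.3}}$.

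\textbf{Your route.} You work only with the real quantity $v=\ln\max_{\overline{D_1}}|u|-\ln|u|$. Superharmonicity on $D_1$ absorbs the zeroes outside $\overline{D_{\frac1e}}$ with the favorable sign, and so transfers the single value $v(z^*)=\B_{\frac1e,1}(u)$ to a bound on $v(0)$. Harnack spreads this bound to $D_{0.33}$. The interior gradient estimate for nonnegative harmonic functions finishes, via $|u'/u|=|\nabla\ln|u||$.

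\textbf{What each buys.} That identity is the key economy: it lets you ignore the imaginary part of the logarithm entirely. So you need neither the Voorhoeve index, nor the exceptional-disc lemma, nor a choice of branch. Your argument is self-contained, using only the (super-)mean-value property, Harnack's inequality and the Poisson kernel. It also yields an explicit absolute constant, roughly $(0.63/0.26)^2\cdot 18.5\cdot \frac{2}{0.03}\approx 7\cdot 10^3$ with your radii. The paper's approach stays inside the Bernstein-index toolkit it imports and reuses elsewhere, but it depends on those external theorems as black boxes.

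Your numerical margins all check out: $\frac1e+0.63<1$, $\frac1e+0.26<0.63$ and $0.33<\frac1e$. The degenerate case $\B_{\frac1e,1}(u)=0$ is handled automatically.
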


\begin{proof}
Multiplying $u$ by any scalar does not change the desired inequality. Consider
\[
    \max_{z\in\overline{D_\frac1e}}|u(z)|
.\]
Let $z_0\in\overline{D_\frac1e}$ be a point where this maximum is attained. By the maximum principle for holomorphic functions, we can choose $z_0$ to lie on the boundary of $\overline{D_\frac1e}$. Thus, by multiplying $u$ by a scalar, we may assume that
\[
    \max_{z\in\overline{D_\frac1e}}|u(z)|=|u(z_0)|,u(z_0)=1
.\]

Define another holomorphic function:
\[
\begin{cases}
    g:\overline{D_\frac1e}\rightarrow\mathbb C
    \\g(z):=\ln u(z)
\end{cases}
\]
by choosing $g(z_0)$ to be $0$ (we lift $u(\overline{D_\frac1e})$ to the universal cover of $\mathbb C-\{0\}$ and then apply $\ln$ on the universal cover, where it is well defined. Thus $g(z)$ may have a large imaginary component).

We now seek upper bounds for $|g(z)|$.

\paragraph{Upper bound $|\operatorname{re}(g(z))|\leq\mathrm{const}\cdot\B_{\frac1e,1}(u)$ on some circle centered at $0$ of radius at least $0.35$:}
\[
    \forall z\in\mathbb C-\{0\}:\operatorname{re}(\ln(z))=\ln|z|
.\]
Hence,
\begin{equation}\label{Equation real part of ln}
    |\operatorname{re}(g(z))|=|\ln |u(z)\|
.\end{equation}
Thus, by the behaviour of $\ln$, to bound $|\operatorname{re}(g(z))|$ from above, we need bounds for $\min|u(z)|$ and $\max|u(z)|$.
\begin{enumerate}
    \item $\max|u(z)|$ over $\overline{D_\frac1e}$:
By our earlier normalization,
\[
    \max_{z\in\overline{D_\frac1e}}|u(z)|=1
.\]
\item $\min|u(z)|$ over some circle centered at $0$ of radius at least $0.35$:
By \iref{Lemma bernstein index lower bound classic}, with $h:=0.01$ and $f:=u$, there is a finite collection of discs $\{D^i\}_{i=1}^a$ whose diameters sum to less than $0.01$ such that
\[
    \forall z\in\overline{D_\frac1e}-\bigcup_i D^i:\quad
    |u(z)|
        \geq
    \left(\frac1e\right)^{\mathrm{const}\cdot\B_{\frac1e,1}(u)}
.\]
Since the sum of the diameters of the $D^i$ is less than $0.01$, there exists some $0.35\leq r_0\leq \frac1e$ such that the circle $\partial\overline{D_{r_0}}$, centered at the origin, does not intersect any of the $D^i$. Therefore,
\[
    \min_{z\in \partial\overline{D_{r_0}}}|u(z)|\geq\left(\frac1e\right)^{\mathrm{const}\cdot\B_{\frac1e,1}(u)}
.\]
\end{enumerate}

We have therefore found some $0.35\leq r_0\leq\frac1e$ such that
\[
    \forall z\in \partial\overline{D_{r_0}}:\left(\frac1e\right)^{\mathrm{const}\cdot\B_{\frac1e,1}(u)}\leq|u(z)|\leq 1
.\]
Hence, by \iref{Equation real part of ln},
\[
    \forall z\in \partial\overline{D_{r_0}}:|\operatorname{re}(g(z))|\leq \mathrm{const}\cdot\B_{\frac1e,1}(u)
.\]

\paragraph{Upper bound $|\operatorname{im}(g(z))|\leq \mathrm{const}\cdot\B_{\frac1e,1}(u)$ on $\overline{D_\frac1e}$:}

Since $\operatorname{im}(g(z))$ is harmonic on $\overline{D_\frac1e}$, the maximum principle for harmonic functions implies that it attains its maximum and minimum on $\partial\overline{D_\frac1e}$. Therefore, $|\operatorname{im}(g(z))|$ attains its maximum on $\partial\overline{D_\frac1e}$. Recall that we fixed a point $z_0\in \partial\overline{D_\frac1e}$ and defined $g(z_0)$ to be $0$. We have
\[
    \forall z\in \partial\overline{D_\frac1e}:\operatorname{im}(g(z))=\int_{\partial\overline{D_\frac1e}:z_0\rightarrow z}\frac{d}{dt}\arg(u(t))dt
\]
(where the integral is taken over the counterclockwise path from $z_0$ to $z$ along $\partial\overline{D_\frac1e}$).
Hence,
\[
    \forall z\in \partial\overline{D_\frac1e}:|\operatorname{im}(g(z))|\leq\int_{\partial\overline{D_\frac1e}:z_0\rightarrow z}\left|\frac{d}{dt}\arg(u(t))dt\right|\leq\oint_{\partial\overline{D_\frac1e}}\left|\frac{d}{dt}\arg(u(t))dt\right|
.\]

The quantity $\frac1{2\pi}\int_{\partial\overline{D_\frac1e}}\left|\frac{d}{dt}\arg(u(t))dt\right|$ is the Voorhoeve index of $u$ along $\partial\overline{D_\frac1e}$, $\operatorname{\mathbf{V}}_{\partial\overline{D_\frac1e}}(u)$ (see \cite{KhovanskiiYakovenko1996GeneralizedRolleTheoremInRnAndC}), and \cite[Theorem 3]{KhovanskiiYakovenko1996GeneralizedRolleTheoremInRnAndC} gives
\[
    \operatorname{\mathbf{V}}_\Gamma(u)\leq\alpha(K,U,\Gamma)\B_{K,U}(u)
,\]
where $\alpha(K,U,\Gamma)$ is a constant depending only on $K,U,\Gamma$.
Substituting $K=\overline{D_\frac1e},U=D_1,\Gamma=\partial\overline{D_\frac1e}$ gives
\[
    \oint_{\partial\overline{D_\frac1e}}\left|\frac{d}{dt}\arg(u(t))dt\right|=2\pi\operatorname{\mathbf{V}}_{\partial\overline{D_\frac1e}}(u)\leq \mathrm{const}\cdot\B_{\frac1e,1}(u)
.\]
Hence,
\[
    \forall z\in \partial\overline{D_\frac1e}:|\operatorname{im}(g(z))|\leq \mathrm{const}\cdot\B_{\frac1e,1}(u)
,\]
and by the maximum (and minimum) principle applied to the harmonic function $\operatorname{im}\circ\,g$,
\[
    \forall z\in\overline{D_\frac1e}:|\operatorname{im}(g(z))|\leq \mathrm{const}\cdot\B_{\frac1e,1}(u)
.\]

\paragraph{Conclusion:}

Combining the bounds for the real and imaginary parts, we find that for some $0.35\leq r_0\leq\frac1e$,
\[
    \forall z\in \partial\overline{D_{r_0}}:|g(z)|\leq \mathrm{const}\cdot\B_{\frac1e,1}(u)
.\]

Cauchy's differentiation formula gives
\[
    \forall z\in\overline{D_{0.3}}:g'(z)=\frac1{2\pi i}\oint_{\partial\overline{D_{r_0}}}\frac{g(t)}{(t-z)^2}dt
.\]
Hence,
\[
    \forall z\in\overline{D_{0.3}}:|g'(z)|\leq\oint_{\partial\overline{D_{r_0}}}\frac{|g(t)|}{|t-z|^2}dt
.\]
Since $|t-z|\geq 0.05$ and $|g(t)|\leq \mathrm{const}\cdot\B_{\frac1e,1}(u)$, we obtain
\[
    \forall z\in\overline{D_{0.3}}:|g'(z)|\leq \mathrm{const}\cdot\B_{\frac1e,1}(u)
.\]
Since $g'=\left(\ln(u)\right)'=\frac{u'}{u}$, it follows that
\[
    \forall z\in\overline{D_{0.3}}:|u'(z)|\leq \mathrm{const}\cdot\B_{\frac1e,1}(u)|u(z)|
.\]
\end{proof}

\subsubsection{The polynomial case}

\begin{Lem}\label{Lemma derivative-function ratio polynomial}
Let $p(z)=\prod_i(z-z_i)$ be a polynomial where all the ${z_i}$ are inside $\overline{D_r}$ for some $r>0$ (the $z_i$ may repeat). Then there is a subset of $\overline{D_r}$ of measure at least $0.9\operatorname{Area}\left(\overline{D_r}\right)$ on which
\[
    |p'(z)|\leq\frac{40}{r}\deg p\cdot|p(z)|
.\]
\end{Lem}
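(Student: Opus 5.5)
The plan is to use the logarithmic derivative and a first-moment (Markov) argument. Write $d:=\deg p$. Away from the zeroes,
\[
    \frac{p'(z)}{p(z)}=\sum_{i=1}^d\frac1{z-z_i},
    \qquad\text{so}\qquad
    \left|\frac{p'(z)}{p(z)}\right|\leq\sum_{i=1}^d\frac1{|z-z_i|}.
\]
It therefore suffices to show that the set of $z\in\overline{D_r}$ with $\sum_i|z-z_i|^{-1}>\frac{40}{r}d$ has measure at most $0.1\operatorname{Area}(\overline{D_r})$. The finitely many zeroes themselves form a null set and may be ignored.

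The key step is to average over $\overline{D_r}$ with the uniform probability measure. For each fixed $i$ we bound $\mathbb E_{z\in\overline{D_r}}|z-z_i|^{-1}$. The function $w\mapsto|z-w|^{-1}$ is locally integrable in the plane. For a fixed center $z_i$, the integral of $|z-z_i|^{-1}$ over a set of given area is maximized when the set is the disc of that area centered at $z_i$ (a rearrangement / bathtub argument). Hence
\[
    \int_{\overline{D_r}}\frac{dA(z)}{|z-z_i|}
        \leq
    \int_{D_r(z_i)}\frac{dA(z)}{|z-z_i|}
        =
    \int_0^{r}\frac{2\pi\rho}{\rho}\,d\rho
        =
    2\pi r .
\]
Dividing by $\pi r^2$ gives $\mathbb E|z-z_i|^{-1}\leq 2/r$, independently of where $z_i$ lies (in particular the hypothesis $z_i\in\overline{D_r}$ is not even needed for this bound). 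Summing over $i$ yields $\mathbb E\sum_i|z-z_i|^{-1}\leq 2d/r$.

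By Markov's inequality, the set where $\sum_i|z-z_i|^{-1}>\frac{40}{r}d$ has probability at most $\frac{2d/r}{40d/r}=\frac1{20}<0.1$. Its complement therefore has measure at least $0.95\operatorname{Area}(\overline{D_r})\geq 0.9\operatorname{Area}(\overline{D_r})$, and on it $|p'(z)|\leq\frac{40}{r}d\,|p(z)|$. This includes the degenerate case $d=0$, where $p'=0$.

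The only point needing care is the rearrangement bound for $\int_{\overline{D_r}}|z-z_i|^{-1}\,dA$. If one prefers to avoid it, a cruder alternative is to note that $\overline{D_r}\subseteq D_{2r}(z_i)$ when $z_i\in\overline{D_r}$. This gives $\mathbb E|z-z_i|^{-1}\leq 4\pi r/(\pi r^2)=4/r$, and Markov then yields exceptional probability at most $\frac1{10}$, which still suffices. So I expect no real obstacle; the constant $40$ leaves ample room.
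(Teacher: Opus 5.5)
Your proof is correct and is essentially the paper's argument: bound $|p'/p|$ by $\sum_i |z-z_i|^{-1}$, average over $\overline{D_r}$, and apply Markov. The paper uses exactly your ``cruder alternative'' $\overline{D_r}\subseteq D_{2r}(z_i)$, which gives $4/r$ and exceptional probability exactly $0.1$, while your rearrangement bound $2/r$ is a valid sharpening that improves the exceptional measure to $1/20$.
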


\begin{proof}
Observe that
\[
    \frac{p'}{p}(z)=\sum_{i=1}^{\deg p}\frac1{z-z_i}
.\]
Thus,
\[
    0\leq\left|\frac{p'}{p}(z)\right|\leq\sum_{i=1}^{\deg p}\frac1{|z-z_i|}
.\]
Regarding $\overline{D_r}$ as a probability space with the uniform measure, we obtain
\[
    \operatorname{\mathbb E}_{z\in \overline{D_r}}\left|\frac{p'}{p}(z)\right|\leq\sum_{i=1}^{\deg p}\operatorname{\mathbb E}_{z\in \overline{D_r}}\frac1{|z-z_i|}
.\]

We now derive an upper bound for $\operatorname{\mathbb E}_{z\in \overline{D_r}}\frac1{|z-z_i|}$:
\[
    \operatorname{\mathbb E}_{z\in \overline{D_r}}\frac1{|z-z_i|}=\frac1{\pi r^2}\int_{z\in \overline{D_r}}\frac1{|z-z_i|}dA
,\]
where $dA$ is the Euclidean area differential $2$-form of $\mathbb R^2$.
Consider a disc of radius $2r$ centered at $z_i$, denoted by $\overline{D_{z_i,2r}}$. Since $\overline{D_{z_i,2r}}\supseteq \overline{D_r}$, we obtain
\begin{align*}
    \frac1{\pi r^2}\int_{z\in \overline{D_r}}\frac1{|z-z_i|}dA
        &\leq
    \frac1{\pi r^2}\int_{z\in \overline{D_{z_i,2r}}}\frac1{|z-z_i|}dA
        \\&=
    \frac1{\pi r^2}\int_{z\in\overline{D_{2r}}}\frac1{|z|}dA
        =
    \frac4{r}
.\end{align*}
It follows that
\[
    \operatorname{\mathbb E}_{z\in \overline{D_r}}\left|\frac{p'}{p}(z)\right|\leq\frac{4\deg p}{r}
.\]
Thus, by Markov's inequality,
\[
    \operatorname{\mathbb P}_{z\in \overline{D_r}}\left(\left|\frac{p'}{p}(z)\right|>\frac{40\deg p}{r}\right)\leq 0.1
.\]
Thus, there is a subset of $\overline{D_r}$ of measure at least $0.9\operatorname{Area}\left(\overline{D_r}\right)$ on which
\[
|p'(z)|\leq\frac{40}{r}\deg p\cdot|p(z)|
.\]
\end{proof}

\subsubsection{The general case}

We begin with the following auxiliary lemma.

\begin{Lem}\label{Lemma Bernstein index of product with polynomial}
Let $p(z)=\prod_i(z-z_i)$ be a polynomial where all the ${z_i}$ are inside $\overline{D_\frac1e}$ (the $z_i$ may repeat), and let $g\not\equiv 0$ be a holomorphic function defined on $D_{1+\varepsilon}$ for some $\varepsilon>0$.
Define $f:=g\cdot p$. Then
\[
    \B_{\frac1e,1}(f)\geq\B_{\frac1e,1}(g)-\ln(\tfrac{2}{e-1})\deg p
.\]
\end{Lem}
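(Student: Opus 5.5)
The plan is to compare the two maxima defining $\B_{\frac1e,1}(f)$ with the corresponding maxima for $g$, bounding the polynomial factor $|p|$ from below on $\partial\overline{D_1}$ and from above on $\overline{D_\frac1e}$. By definition
\[
    \B_{\frac1e,1}(f)=\ln\max_{\overline{D_1}}|g p|-\ln\max_{\overline{D_\frac1e}}|g p|,
\]
so it suffices to prove two inequalities. We aim for
\[
    \max_{\overline{D_1}}|gp|\geq \max_{\overline{D_1}}|g|\cdot\min_{|z|=1}|p(z)|
\quad\text{and}\quad
    \max_{\overline{D_\frac1e}}|gp|\leq\max_{\overline{D_\frac1e}}|g|\cdot\max_{\overline{D_\frac1e}}|p|,
\]
and then to bound the ratio of the two $p$-factors.

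For the first inequality, we use the maximum principle to pick $w$ with $|w|=1$ at which $|g|$ attains its maximum over $\overline{D_1}$. Then $|f(w)|=|g(w)|\,|p(w)|$. Each factor satisfies $|w-z_i|\geq 1-\frac1e$, since $|z_i|\le\frac1e$. Hence $|p(w)|\geq(1-\frac1e)^{\deg p}$.

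For the second inequality, every $z\in\overline{D_\frac1e}$ satisfies $|z-z_i|\le\frac2e$. Hence $\max_{\overline{D_\frac1e}}|p|\leq(\frac2e)^{\deg p}$, and the second inequality follows immediately.

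Combining the two,
\[
    \B_{\frac1e,1}(f)\geq \B_{\frac1e,1}(g)+\deg p\left(\ln(1-\tfrac1e)-\ln\tfrac2e\right)
    =\B_{\frac1e,1}(g)-\ln\left(\tfrac{2}{e-1}\right)\deg p,
\]
since $\frac{2/e}{(e-1)/e}=\frac{2}{e-1}$. This is exactly the claimed bound. There is no real obstacle; the only point requiring care is taking the maximum of $|g|$ on the unit circle rather than inside, which is where the factor-by-factor lower bound $1-\frac1e$ applies.
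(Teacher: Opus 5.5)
Your proposal is correct and follows essentially the same route as the paper: on $\partial\overline{D_1}$ you bound $|p|$ below by $(1-\frac1e)^{\deg p}$, on $\overline{D_\frac1e}$ you bound it above by $(\frac2e)^{\deg p}$, and then you take the ratio. The only difference is cosmetic: the paper restricts both maxima to the boundary circles via the maximum principle, whereas you evaluate at a boundary point where $|g|$ attains its maximum, and this is equally valid.
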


\begin{proof}
Recall that
\[
    \B_{\frac1e,1}(f):=\ln\frac{\max_{z\in\overline{D_1}}|f(z)|}{\max_{z\in\overline{D_\frac1e}}|f(z)|}
.\]
By the maximum principle for holomorphic functions, these maxima are attained on the boundaries $\partial\overline{D_\frac1e},\partial\overline{D_1}$ of $\overline{D_\frac1e},\overline{D_1}$:
\[
    \B_{\frac1e,1}(f):=\ln\frac{\max_{z\in \partial\overline{D_1}}|f(z)|}{\max_{z\in \partial\overline{D_\frac1e}}|f(z)|}=\ln\frac{\max_{z\in \partial\overline{D_1}}|g(z)\|p(z)|}{\max_{z\in \partial\overline{D_\frac1e}}|g(z)\|p(z)|}
.\]

When $z\in \partial\overline{D_1}$, we have $|z-z_i|\geq1-\frac1e$, so $|p(z)|\geq\left(1-\frac1e\right)^{\deg p}$. When $z\in \partial\overline{D_\frac1e}$, we have $|z-z_i|\leq\frac2e$, so $|p(z)|\leq(\frac2e)^{\operatorname{deg}p}$.
Thus,
\[
    \max_{z\in \partial\overline{D_1}}|g(z)\|p(z)|\geq\left(1-\frac1e\right)^{\deg p}\max_{z\in \partial\overline{D_1}}|g(z)|
\]
and
\[
    \max_{z\in \partial\overline{D_\frac1e}}|g(z)\|p(z)|\leq\left(\frac2e\right)^{\operatorname{deg}p}\max_{z\in \partial\overline{D_\frac1e}}|g(z)|
.\]
\enlargethispage{\baselineskip}
Combining these bounds gives
\[
    \B_{\frac1e,1}(f)\geq\ln\frac{\left(1-\frac1e\right)^{\deg p}\max_{z\in \partial\overline{D_1}}|g(z)|}{(\frac2e)^{\operatorname{deg}p}\max_{z\in \partial\overline{D_\frac1e}}|g(z)|}\geq\B_{\frac1e,1}(g)-\ln(\tfrac{2}{e-1})\deg p
.\]
\end{proof}

We now prove the general case.

\begin{proof}[Proof of \iref{Proposition derivative-function ratio}]
Consider the zeroes of $f$ in $\overline{D_\frac1e}$. Define
\[
    p(z):=\prod_i(z-z_i)
\]
to be the product of these zeroes, with repetitions accounting for their multiplicities.
Define $u:=\frac{f}{p}$, or equivalently, $f:=up$. Then $u$ is a holomorphic function on $D_{1+\varepsilon}$ without zeroes in $\overline{D_\frac1e}$.

We use the identity
    \[
    \frac{f'}{f}=\frac{u'}{u}+\frac{p'}{p}
    .\]
To bound $\left|\frac{f'}{f}\right|$ from above, we use
\[
    \left|\frac{f'}{f}\right|\leq\left|\frac{u'}{u}\right|+\left|\frac{p'}{p}\right|
.\]

By \iref{Lemma derivative-function ratio unit},
\[
    \forall z\in\overline{D_{0.3}}:\left|\frac{u'}{u}\right|(z)\leq \mathrm{const}\cdot\B_{\frac1e,1}(u)
,\]
and by \iref{Lemma derivative-function ratio polynomial}, there is a subset $\tilde A$ of $\overline{D_\frac1e}$ with $\operatorname{Area}(\tilde A)\geq 0.9\operatorname{Area}\left(\overline{D_\frac1e}\right)$ on which
\[
    \forall z\in\tilde A:\left|\frac{p'}{p}\right|(z)\leq \mathrm{const}\cdot\deg p
.\]
Combining these bounds and defining $A:=\overline{D_{0.3}}\cap\tilde A$, we obtain
\[
    \operatorname{Area}(A)\geq 0.56\operatorname{Area}\left(\overline{D_\frac1e}\right)
\]
and
\[
    \forall z\in A:\left|\frac{f'}{f}\right|(z)\leq \mathrm{const}\cdot(\B_{\frac1e,1}(u)+\deg p)
.\]

To complete the proof, it remains to show that $\B_{\frac1e,1}(u)\leq \mathrm{const}\cdot\B_{\frac1e,1}(f)$ and that $\deg p\leq \mathrm{const}\cdot\B_{\frac1e,1}(f)$.
The bound $\deg p\leq \mathrm{const}\cdot\B_{\frac1e,1}(f)$ follows directly from \iref{Lemma bound on number of zeroes of f given bernstein index}, since $f$ has $\deg p$ zeroes on $\overline{D_\frac1e}$. For the other bound, \iref{Lemma Bernstein index of product with polynomial} gives
\[
    \B_{\frac1e,1}(f)\geq\B_{\frac1e,1}(u)-\deg p
.\]
Thus,
\[
    \B_{\frac1e,1}(u)\leq\B_{\frac1e,1}(f)+\deg p
    \leq \mathrm{const}\cdot\B_{\frac1e,1}(f)
,\]
which completes the proof.
\end{proof}

\section{Finding low-\texorpdfstring{$\operatorname{t}$}{t} polynomials with low order in an ideal}

Consider the result in \cite[Lemma 5.4]{Nesterenko1996ModularFunctionsAndTranscendenceQuestions}, which was presented again in \cite[Lemmas 12 and 13]{Binyamini2014MultiplicityEstimatesAnalyticCyclesAndNewtonPolytopesV2}:

\begin{Prop}[see \mycite{Lemmas 12 and 13}{Binyamini2014MultiplicityEstimatesAnalyticCyclesAndNewtonPolytopesV2}]
Let $\mathfrak p\subseteq\mathbb C[x_0,...,x_n]$ be a homogeneous prime ideal that does not contain any nonzero $\{Q,\xi(Q),\xi(\xi(Q)),...\}$ and let $P\in\mathfrak p-\{0\}$ be of minimal $\deg$. Then there exists a constant $C$ depending only on $n,\xi$ such that
\[
    \deg P
        \leq
    C\cdot(\deg\mathfrak p)^{\frac1{\operatorname{codim}\mathfrak p}}
\]
and
\[
    \xi^i(P)\not\in\mathfrak p
,\]
for some $i$-th derivative $\xi^i(P)$ with $i\leq C$.
\end{Prop}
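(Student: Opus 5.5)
We prove the two assertions separately; the degree bound comes from a Hilbert function estimate, and the derivative statement from minimality of $\deg P$.

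\textbf{Degree bound.} Let $r:=\operatorname{codim}\mathfrak p$ and $d:=\dim\mathfrak p=n-r$. Let $H_{\mathfrak p}(D)$ be the Hilbert function of $K[x]/\mathfrak p$ in degree $D$. We use two estimates.
\begin{itemize}
\item \emph{Upper bound:} $H_{\mathfrak p}(D)\le \deg\mathfrak p\binom{D+d}{d}$. This follows from a Noether normalization, or from cutting $\mathfrak p$ by $d$ generic linear forms and comparing with the zero-dimensional case.
\item \emph{Count of polynomials:} there are $\binom{D+n}{n}$ monomials of degree $D$.
\end{itemize}
If $\binom{D+n}{n}>\deg\mathfrak p\binom{D+d}{d}$, then $\mathfrak p$ contains a nonzero form of degree $D$. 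The ratio of binomials is $\gtrsim D^{r}/C$, so it suffices to take $D\approx C(\deg\mathfrak p)^{1/r}$. By minimality, $\deg P\le D$, which gives the first bound.

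\textbf{Derivative statement, first reduction.} Suppose $\xi^i(P)\in\mathfrak p$ for all $i\le\ell$, where $\ell$ is a large constant to be fixed. Every $\xi^i P$ has degree $\le \deg P+i(\deg\xi-1)$. We may assume $P$ is irreducible, since $\mathfrak p$ is prime and some irreducible factor of $P$ lies in $\mathfrak p$ with degree no larger. The invariance hypothesis rules out the trivial case: if $\xi P\in (P)$, then $(P)$ is a $\xi$-invariant ideal containing $P$, which contradicts the assumption that no invariant ideal meets $\mathfrak p$ nontrivially. Hence $\xi P\notin(P)$, and the variety $V(P)\cap V(\xi P)$ has codimension $2$. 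Iterating Nesterenko's construction, we consider the ideal $I_k$ generated by $P,\xi P,\dots,\xi^{k}P$. It is contained in $\mathfrak p$, so $V(I_k)\supseteq V(\mathfrak p)$.

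\textbf{Derivative statement, main argument.} This follows Nesterenko's lemma, reproduced as Lemmas 12 and 13 of Binyamini. Let $\mathfrak q_k$ be the minimal prime of $I_k$ containing $\mathfrak p$. The chain $\mathfrak q_1\subseteq\mathfrak q_2\subseteq\cdots$ has length at most $n$ in codimension. Therefore some consecutive stretch of $\ell/n$ steps keeps $\mathfrak q:=\mathfrak q_k$ constant. On such a stretch, $P,\xi P,\dots,\xi^{m}P$ all lie in $\mathfrak q$ and their vanishing is stable. From this we extract a $\xi$-invariant ideal. The ideal $\{f:\xi^j f\in\mathfrak q \text{ for all } j\}$ is invariant, and we want to show it equals $\mathfrak q$ once enough derivatives stabilize. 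The tool is a Noetherian/Hilbert-function argument: the degrees of the varieties $V(I_k)$ are bounded by Bézout by $C(\deg P)^{\operatorname{codim}}$, and this limits how many steps can pass without the invariant prime being reached.

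Once $\mathfrak q$ is shown to be $\xi$-invariant and nonzero, and since $\mathfrak q\subseteq\mathfrak p$ is not needed but $\mathfrak q\ni P$, we get a nonzero element of the form $P$ in an invariant prime. Here we read the hypothesis as "no invariant prime below $\mathfrak p$", and this yields the contradiction. It determines $\ell=C$.

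\textbf{Main obstacle.} The hard step is the quantitative stabilization: bounding the number of derivatives needed before an invariant prime appears by a constant independent of $\deg P$. My first attempt will be to copy Nesterenko's argument, which compares $\deg$ and multiplicity of $V(I_k)$ along $V(\mathfrak p)$ under Bézout.
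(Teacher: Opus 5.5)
\textbf{Degree bound.} This part is the Hilbert-function count behind Binyamini's Lemma~12, which is what the paper imports, and it is fine. You should cite the estimate $H_{\mathfrak p}(D)\le\deg\mathfrak p\binom{D+d}{d}$ (or a version with a constant) from Nesterenko's Hilbert-function estimates rather than derive it by hyperplane cuts. The cut ideal $\mathfrak p+(\ell)$ is not saturated, so its Hilbert function dominates that of the section, and the naive induction runs the wrong way.

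\textbf{The derivative assertion is not proved, and the proposed mechanism cannot work.} Your $\mathfrak q_k$ should be a minimal prime of $I_k$ \emph{contained in} $\mathfrak p$, and it need not be unique. That it stays constant over a long stretch only says $\xi^jP\in\mathfrak q$ for many $j$, i.e.\ that $P$ has large $\xi$-order along $V(\mathfrak q)$. This is perfectly compatible with $\mathfrak q$ being non-invariant. Your contradiction would need $\{f:\xi^jf\in\mathfrak q\ \forall j\}=\mathfrak q$, and stabilization of the chain gives no information of that kind.

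More fundamentally, apart from making $P$ irreducible you never use that $\deg P$ is minimal, and without minimality the conclusion is false. The $\mathfrak p$-primary ideals $\{Q:\xi^iQ\in\mathfrak p,\ i<k\}$ are nonzero for every $k$. When $\operatorname{codim}\mathfrak p\ge 2$, general members of large degree are irreducible by Bertini. So $\mathfrak p$ contains irreducible forms of arbitrarily large $\xi$-order along $V(\mathfrak p)$. Bézout upper bounds on $\deg V(I_k)$ alone cannot force an invariant prime to appear.

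\textbf{The missing idea.} What is needed is the quantitative use of minimality that drives Binyamini's Lemma~13, the argument the paper adapts with $\operatorname{t}$ in place of $\deg$.
\begin{itemize}
\item By the contrapositive of Lemma~12, every irreducible $W\supseteq V(\mathfrak p)$ of codimension $j$ satisfies $\deg W\ge (d/A)^j$, where $d=\deg P$. Otherwise $I_W\subseteq\mathfrak p$ would contain a nonzero form of degree $<d$.
\item One builds cycles $\Gamma^j$ of pure codimension $j$, keeping only the components through $V(\mathfrak p)$. Each such component $W$ is not $\xi$-invariant, since it contains $V(\mathfrak p)$. It is intersected with $V(\xi^aP)$ for a derivative $\xi^aP\notin I_W$ whose order $a$ is kept bounded (the paper quotes $a\le A_n2^nn$).
\item Bézout then gives $\deg\Gamma^j\le (2d)^j$ once $d$ is large. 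Comparing with the lower bound above, the components through $V(\mathfrak p)$ occur with total multiplicity at most $(2A)^j$, independently of $d$.
\item The multiplicity lemmas of that paper track how the $\xi$-order of $P$ along $V(\mathfrak p)$ passes to these cycles. Proposition~10 controls the generic $\xi$-multiplicity of the non-invariant $V(\mathfrak p)$. Together these turn the bounded component count into $\operatorname{mult}^\xi_{V(\mathfrak p)}P\le\mathcal N(n,\delta)$.
\end{itemize}
Your remark that Bézout controls degrees points in this direction. Without the lower bound supplied by minimality and an explicit multiplicity-tracking step, it yields no contradiction.
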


\begin{Rem}
In \cite{Binyamini2014MultiplicityEstimatesAnalyticCyclesAndNewtonPolytopesV2}, the author works with varieties and vector fields in affine space $\mathbb C^n$. The proofs also work in the projective case $\mathbb C^{n+1}-\{0\}$, so we describe the theorems and definitions from \cite{Binyamini2014MultiplicityEstimatesAnalyticCyclesAndNewtonPolytopesV2} over projective space for consistency with the rest of this article.
\end{Rem}

This theorem is a common ingredient in proofs of results similar to our main theorem. Our goal in this section is to modify its proof as presented in \cite{Binyamini2014MultiplicityEstimatesAnalyticCyclesAndNewtonPolytopesV2} to apply to ideals in $K[x_0,...,x_n]$ instead of $\mathbb C[x_0,...,x_n]$, using our function $\operatorname{t}=\deg+\operatorname{h}$ instead of $\deg$. We start by replacing \cite[Lemma 12]{Binyamini2014MultiplicityEstimatesAnalyticCyclesAndNewtonPolytopesV2} with a generalization of \cite[Corollary 3]{Nesterenko1985EstimatesForTheCharacteristicFunctionOfAPrimeIdeal}, and then explain the changes needed in the proof of \cite[Lemma 13]{Binyamini2014MultiplicityEstimatesAnalyticCyclesAndNewtonPolytopesV2}. The modified proposition is as follows:

\begin{PropThm}\label{Proposition existence of polynomial of small t and small mult in ideal}
Let $\mathfrak p\subseteq K[x_0,...,x_n]$ be a homogeneous prime ideal that does not contain any nonzero $\{Q,\xi(Q),\xi(\xi(Q)),...\}$ and let $P\in\mathfrak p-\{0\}$ be of minimal $\operatorname{t}$. Then there exists a constant $C$ depending only on $n,\xi,K$ such that
\[
    \operatorname{t}(P)
        \leq
    C\cdot\operatorname{t}(\mathfrak p^\mathrm{proj})^{\frac1{\operatorname{codim}\mathfrak p}}
\]
and
\[
    \xi^i(P)\not\in\mathfrak p
,\]
for some $i$-th derivative $\xi^i(P)$ with $i\leq C$.
\end{PropThm}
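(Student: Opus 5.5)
The plan is to follow the two-part structure of the complex case in \cite[Lemmas 12 and 13]{Binyamini2014MultiplicityEstimatesAnalyticCyclesAndNewtonPolytopesV2}, replacing $\deg$ by $\operatorname{t}=\deg+\operatorname{h}$ throughout, and to keep track of the height with a Siegel-type count of integer points.

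\emph{Part 1: the bound on $\operatorname{t}(P)$.} Set $T:=\operatorname{t}(\mathfrak p^{\mathrm{proj}})$ and $k:=\operatorname{codim}\mathfrak p$. I want to produce some nonzero $Q\in\mathfrak p$ with $\operatorname{t}(Q)\le C\,T^{1/k}$; since $P$ has minimal $\operatorname{t}$, this gives the first claim.
\begin{enumerate}
\item Choose a degree $D\approx c\,T^{1/k}$. The upper bound for the Hilbert function of a prime ideal (Nesterenko's estimate for the characteristic function \cite[Corollary 3]{Nesterenko1985EstimatesForTheCharacteristicFunctionOfAPrimeIdeal}) gives
\[
\dim_K (K[x]/\mathfrak p)_D\le \mathrm{const}\cdot \deg\mathfrak p\cdot D^{n-k}.
\]
Comparing with $\dim K[x]_D\sim D^n/n!$, the space $\mathfrak p_D$ has dimension at least $\tfrac12\dim K[x]_D$ once $D^k\gg \deg\mathfrak p$.
\item Realize $\mathfrak p_D$ as the kernel of a linear system over $\mathcal O_K$. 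Its coefficients come from a Chow-form (or elimination) description of $\mathfrak p$, so their heights are bounded by $\mathrm{const}\cdot(\operatorname{h}(\mathfrak p)+\deg\mathfrak p)\,D^{\mathrm{const}}$.
\item Apply Siegel's lemma over $K$, in the Bombieri--Vaaler form. Because the kernel has dimension at least half the number of unknowns, the exponent $\frac{\#\text{equations}}{\#\text{unknowns}-\#\text{equations}}$ is $O(1)$. This produces $Q\in\mathfrak p_D$ with $\operatorname{h}(Q)\le \mathrm{const}\cdot(\text{height of the system})/(\text{relative dimension})$.
\end{enumerate}
The point to get right is the balance: the height contribution per unknown should be roughly $\operatorname{h}(\mathfrak p)\cdot D^{n-k}/D^{n}\cdot D$, which is about $\operatorname{h}(\mathfrak p)/D^{k-1}$. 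With $D\sim T^{1/k}$ this gives $\operatorname{h}(Q)\lesssim T^{1/k}$.

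\emph{Main obstacle.} This height accounting in Part 1 is the real work. To get the exponent $1/k$ in the height, and not merely in the degree, I would first try the arithmetic version of Nesterenko's Hilbert-function argument: cut $V(\mathfrak p)$ by $n-k$ generic linear forms with small integer coefficients, reducing to a zero-dimensional situation. There, Lemma~\ref{Lemma Nesterenko primary decomposition of ideal} and Lemma~\ref{Lemma Nesterenko proposition intersection of ideal and polynomial} control the heights, and one can count via interpolation at the points of the zero-cycle. If that fails, a fallback is to use the arithmetic Hilbert--Samuel bound, which gives the arithmetic volume of $(K[x]/\mathfrak p)_D$ as $\lesssim \operatorname{h}(\mathfrak p)D^{n-k+1}+\deg\mathfrak p\, D^{n-k}\log D$, and then the geometry-of-numbers form of Siegel's lemma.

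\emph{Part 2: some derivative $\xi^iP\notin\mathfrak p$ with $i\le C$.} I would follow \cite[Lemma 13]{Binyamini2014MultiplicityEstimatesAnalyticCyclesAndNewtonPolytopesV2}. Suppose $P,\xi P,\dots,\xi^{\ell}P\in\mathfrak p$ for a large constant $\ell$.
\begin{enumerate}
\item Each derivative satisfies $\operatorname{t}(\xi^jP)\le \operatorname{t}(P)+j\cdot\mathrm{const}\cdot(1+\log\deg P)$, since differentiating raises the degree by $\deg\xi-1$ and the height by $O(\log\deg P)$.
\item By minimality of $\operatorname{t}(P)$, each $\xi^jP$ that is nonzero has $\operatorname{t}(\xi^jP)\ge\operatorname{t}(P)$. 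The hypothesis on $\mathfrak p$ forces them to be nonzero.
\item The $\xi$-invariance argument then shows that $\mathfrak p$ contains the ideal generated by $P,\dots,\xi^\ell P$. In Binyamini's argument this ideal has growing codimension until it becomes $\xi$-invariant, which is excluded. Concretely, the intersections of $V(P),\dots,V(\xi^\ell P)$ stabilize after boundedly many steps (at most $n$ strict drops in dimension, each witnessed within $O(1)$ derivatives). This yields a $\xi$-invariant component containing $\mathfrak p$, so some nonzero $\xi^iP$ lies in an invariant prime, contradicting the hypothesis.
\end{enumerate}
The changes from the complex case are purely bookkeeping: wherever $\deg$ was used for the minimality comparison, use $\operatorname{t}$. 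The stabilization argument in step 3 is insensitive to heights.
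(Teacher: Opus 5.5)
Your Part~2 has a genuine gap, and it sits exactly where the content of the proposition lies. That \emph{some} $\xi^iP\notin\mathfrak p$ is immediate from the hypothesis: otherwise $\mathfrak p\supseteq\{P,\xi P,\xi^2P,\dots\}$ with $P\neq 0$. What must be proved is that $i$ is bounded independently of $\mathfrak p$ and $P$. Step~3 only asserts this (``at most $n$ strict drops in dimension, each witnessed within $O(1)$ derivatives''), and Noetherianity only gives stabilization at a $P$-dependent index. Your only use of minimality, $\operatorname{t}(\xi^jP)\ge\operatorname{t}(P)$, carries no information, so step~3 would apply verbatim to $P=Q^m$ with $Q\in\mathfrak p$. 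There $\xi^jP\in(Q^{m-j})\subseteq\mathfrak p$ and $\operatorname{V}(P,\xi P,\dots,\xi^jP)=\operatorname{V}(Q)$ for every $j<m$, so the dimension does not drop for $m$ steps, with $m$ arbitrary. Hence minimality of $\operatorname{t}(P)$ must be used quantitatively, and the argument must be localized at the components containing $V=\operatorname{V}(\mathfrak p)$. For the same reason, your claim that this step is ``insensitive to heights'' is wrong. Since $P$ is minimal for $\operatorname{t}$ rather than $\deg$, a low-degree variety $W\supseteq V$ only yields a low-\emph{degree} polynomial in $\mathfrak p$, possibly of huge height, which contradicts nothing.

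The missing idea is the cycle-theoretic multiplicity estimate of Binyamini's Lemma~13, which the paper transports to $K$ and to $\operatorname{t}$.
One builds cycles $\Gamma^j$ over $K$ of codimension $j$ whose $K$-irreducible components $W^j_i$ all contain $V$. Each $W^j_i$ is intersected with $\operatorname{V}(\xi^aP)$ for some $\xi^aP\notin I(W^j_i)$ with $a$ bounded by a constant, and only the components through $V$ are kept.
The arithmetic B\'ezout bound for $[W]\cdot\operatorname{V}(Q)$, together with $\operatorname{t}(\xi^aP)\le\operatorname{t}(P)+Ba\ln(\deg P+Ba)$, gives $\operatorname{t}(\Gamma^{j+1})\le c\,t\cdot\operatorname{t}(\Gamma^j)$ with $t=\operatorname{t}(P)$, once $t$ exceeds a constant threshold.
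Minimality enters through the $\operatorname{t}$-version of Lemma~12. A $K$-irreducible $W\supseteq V$ of codimension $j$ with $\operatorname{t}(W)\le t^j/A'$ would contain a nonzero polynomial of $\operatorname{t}<t$, lying in $I(W)\subseteq\mathfrak p$, which contradicts minimality.
So the components through $V$ have total multiplicity at most $A'c^j$. The remaining auxiliary results of Binyamini's proof (Propositions~4 and~10, Lemma~8) then give $\operatorname{mult}^\xi_VP\le\mathcal N(n,\delta,F)$.

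Your Part~1 differs from the paper's route, and only your fallback works. Row heights of size $(\operatorname{h}(\mathfrak p)+\deg\mathfrak p)D^{\mathrm{const}}$ give nothing near $T^{1/k}$ in Siegel's lemma, whereas a sharp arithmetic Hilbert-function bound does. That is precisely the content of Nesterenko's Corollary~3, which is itself the small-$\operatorname{t}$-polynomial statement over $\mathbb Z$, not a Hilbert-function bound. The paper simply cites it after passing to $\mathfrak p^{\mathbb Z}=\bigcap_\sigma\sigma(\mathfrak p)\cap\mathbb Z[x]$. The Chow form of $\mathfrak p^{\mathbb Z}$ is the product of the conjugate Chow forms, so $\operatorname{t}(\mathfrak p^{\mathbb Z})\le\mathrm{const}\cdot\operatorname{t}(\mathfrak p)$.
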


\subsection{Replacing \texorpdfstring{\cite[Lemma 12]{Binyamini2014MultiplicityEstimatesAnalyticCyclesAndNewtonPolytopesV2}}{the first lemma}}

In this subsection, we prove the following lemma:

\begin{Lem}\label{Lemma existence of polynomial of small t in ideal}
Let $\mathfrak p\subseteq K[x_0,...,x_n]$ be any (nontrivial) homogeneous prime ideal. Then there exists some $P\in\mathfrak p-\{0\}$ such that:
\[
    \operatorname{t}(P)
        \leq
    C\cdot\operatorname{t}(\mathfrak p)^{\frac1{\operatorname{codim}(\mathfrak p)}}
,\]
where $C$ is some constant depending only on $K,n$.
\end{Lem}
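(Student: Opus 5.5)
We will prove the lemma by an arithmetic Siegel-lemma / Hilbert-function argument, in the spirit of \cite[Corollary 3]{Nesterenko1985EstimatesForTheCharacteristicFunctionOfAPrimeIdeal}. Write $k:=\operatorname{codim}\mathfrak p$, $d:=\dim\mathfrak p$ (projective), so $n=k+d$. The idea is to find a nonzero polynomial of degree $D$ with small coefficients vanishing on $\operatorname{V}(\mathfrak p)$. Such a polynomial is a nonzero vector in the kernel of the restriction map $K[x]_D\to (K[x]/\mathfrak p)_D$, whose dimension is $\binom{D+n}{n}-H_{\mathfrak p}(D)$. We take $D\approx C\operatorname{t}(\mathfrak p)^{1/k}$. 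For such $D$ the upper bound on the Hilbert function, $H_{\mathfrak p}(D)\le \deg\mathfrak p\binom{D+d}{d}\lesssim \deg\mathfrak p\, D^d$, is much smaller than $\binom{D+n}{n}\sim D^n/n!$, since $D^k\gg\deg\mathfrak p$. So the kernel is at least half the ambient space.

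To control heights, we do not work with the abstract quotient. Instead we use a geometric description coming from the Chow form. After a generic linear change of coordinates with small integer coefficients (costing $\mathrm{const}\cdot\deg\mathfrak p$ in height), the variety $\operatorname{V}(\mathfrak p)$ is finite over the coordinate subspace in $x_0,\dots,x_d$. For each remaining variable $x_j$, $j>d$, there is then a polynomial $G_j\in\mathfrak p\cap K[x_0,\dots,x_d,x_j]$ of degree $\deg\mathfrak p$. It can be obtained by specializing $\operatorname{Chow}(\mathfrak p)$, so $\operatorname{h}(G_j)\le \operatorname{h}(\mathfrak p)+\mathrm{const}\cdot\deg\mathfrak p$.

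This is the standard elimination-form construction of Nesterenko. Note that each $G_j$ already has $\operatorname{t}(G_j)\lesssim\operatorname{t}(\mathfrak p)$, which settles the case $k=1$.

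For $k\ge2$ the exponent $1/k$ requires the counting argument. We apply Siegel's lemma over $K$ to the linear system expressing that $P$ of degree $D$ vanishes on $\operatorname{V}(\mathfrak p)$. One way to set this up is to reduce $P$ modulo the $G_j$ (division in $x_j$) and require the remainder to lie in $\mathfrak p$. A better way is to impose vanishing on a generic fibre through the trace map. In either formulation the number of equations is about $\deg\mathfrak p\cdot D^d$, the number of unknowns is about $D^n$, and the equation coefficients have height $\lesssim D\operatorname{h}(\mathfrak p)/\deg\mathfrak p+D$, as in the reduction by the $G_j$. Siegel's lemma then yields a solution of height at most
\[
\frac{M}{N-M}\cdot(\text{equation height}) + \mathrm{const}\cdot\ln N,
\]
where $M$ is the number of equations and $N$ the number of unknowns. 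With $M/N\lesssim \deg\mathfrak p/D^k$, this is bounded by $\operatorname{t}(\mathfrak p)/D^{k-1}+\mathrm{const}\cdot D$. Choosing $D^k\asymp\operatorname{t}(\mathfrak p)$ makes both terms $\lesssim\operatorname{t}(\mathfrak p)^{1/k}$, and so $\operatorname{t}(P)=D+\operatorname{h}(P)\le C\operatorname{t}(\mathfrak p)^{1/k}$.

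\textbf{Main obstacle.} The hard step is giving a rigorous arithmetic bound on the heights of the linear equations defining $\mathfrak p$ in degree $D$, that is, an arithmetic analogue of the Hilbert-function bound. This is exactly where Nesterenko's estimates for the characteristic function are needed. My first attempt will be to generalize \cite[Corollary 3]{Nesterenko1985EstimatesForTheCharacteristicFunctionOfAPrimeIdeal} from $\mathbb Q$ to $K$ by passing to $\mathcal O_K$-lattices. The product formula over $\mathcal V$ handles the non-archimedean places, and the archimedean places are treated as in Siegel's lemma for number fields; this contributes only $\mathrm{const}\cdot D$-type errors.
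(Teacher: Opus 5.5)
Your proposal has a genuine gap, and it is exactly the step you flag as the ``main obstacle'': the arithmetic input is never supplied. Your fallback, generalizing \cite[Corollary 3]{Nesterenko1985EstimatesForTheCharacteristicFunctionOfAPrimeIdeal} from $\mathbb Q$ to $K$ via $\mathcal O_K$-lattices, is essentially the lemma itself, so as written the argument is circular.

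The Siegel-lemma sketch does not close the gap either.
\begin{itemize}
\item If you reduce $P$ modulo $G_{d+1},\dots,G_n$ and require the remainder to vanish, you force vanishing on the whole complete intersection $\operatorname{V}(G_{d+1},\dots,G_n)$, which has degree $(\deg\mathfrak p)^k$, not just on $\operatorname{V}(\mathfrak p)$. That is about $(\deg\mathfrak p)^kD^d$ conditions, not $\deg\mathfrak p\cdot D^d$.
\item For $D<\deg\mathfrak p$ the reduction does nothing at all. This is the relevant regime: for $k\ge2$ and $\operatorname{h}(\mathfrak p)\lesssim\deg\mathfrak p$ you have $D\asymp\operatorname{t}(\mathfrak p)^{1/k}\ll\deg\mathfrak p$. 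So this formulation gives at best exponent $1$.
\item Requiring the remainder merely to ``lie in $\mathfrak p$'' restates the problem.
\item Made explicit, the trace conditions $\operatorname{Tr}(y^iP)=0$ for $i<\deg\mathfrak p$ live in degrees up to $D+\deg\mathfrak p$. They vastly outnumber the unknowns when $D\ll\deg\mathfrak p$, and extracting an independent subsystem of controlled height is the whole difficulty.
\item The quantity $D\operatorname{h}(\mathfrak p)/\deg\mathfrak p$ is not a uniform bound on the height of each equation. Division by $G_j$ can produce coefficients of height comparable to $D\operatorname{h}(G_j)$ when one root in $x_j$ is large. It is only an average.
\end{itemize}
A rigorous version would need an aggregate bound of order $\operatorname{t}(\mathfrak p)D^{d+1}$ on the height of the subspace $\mathfrak p_D\subseteq K[x_0,\dots,x_n]_D$, fed into a Bombieri--Vaaler type Siegel lemma. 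That is precisely the kind of characteristic-function estimate Nesterenko proves, and reproving it over $\mathcal O_K$ is the real work your proposal leaves undone.

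The missing idea is that no generalization is needed: descend to $\mathbb Z$ and use Nesterenko's corollary as a black box, as the paper does. Put $\mathfrak q:=\mathfrak p\cap\mathbb Z[x_0,\dots,x_n]$. As a contraction of a prime, it is a homogeneous prime with $\mathfrak q\cap\mathbb Z=\{0\}$. Its extension to $K$ is the Galois-stable ideal $\bigcap_\sigma\sigma(\mathfrak p)$. Hence $\operatorname{codim}\mathfrak q=\operatorname{codim}\mathfrak p$, and up to a scalar $\operatorname{Chow}(\mathfrak q)$ is the product of $\operatorname{Chow}(\sigma(\mathfrak p))$ over the distinct conjugates. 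Conjugation preserves degree and height, and the height of a product exceeds the sum of the heights by at most $\mathrm{const}\cdot\deg$. Therefore $\operatorname{t}(\mathfrak q)\le\mathrm{const}\cdot\operatorname{t}(\mathfrak p)$. Nesterenko's corollary then gives $Q\in\mathfrak q\subseteq\mathfrak p$ with $\operatorname{t}(Q)\le C\operatorname{t}(\mathfrak p)^{1/\operatorname{codim}\mathfrak p}$. This needs no Siegel lemma over $K$ and no new arithmetic Hilbert-function estimate.
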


A straightforward computation gives the following corollary, which replaces \cite[Lemma 12]{Binyamini2014MultiplicityEstimatesAnalyticCyclesAndNewtonPolytopesV2}:

\begin{Cor}\label{Corollary existence of polynomial of specified t in ideal with small t}
There exists a constant $C$ depending only on $K,n$ such that for any $t$ and any (nontrivial) homogeneous prime ideal $\mathfrak p$ satisfying
\[
    \operatorname{t}(\mathfrak p)\leq\frac1C\cdot t^{\operatorname{codim}(\mathfrak p)}
,\]
there exists some $P\in\mathfrak p-\{0\}$ with
\[
    \operatorname{t}(P)<t
.\]
\end{Cor}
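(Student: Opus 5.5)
This is a direct consequence of \iref{Lemma existence of polynomial of small t in ideal}, which I take as given. Let $C_0$ be the constant from that lemma, so that every nontrivial homogeneous prime $\mathfrak p$ has some $P\in\mathfrak p-\{0\}$ with $\operatorname{t}(P)\le C_0\operatorname{t}(\mathfrak p)^{1/\operatorname{codim}\mathfrak p}$. I will take $C$ depending only on $K,n$, say $C:=(2C_0)^{n+1}$ enlarged if needed (for instance $C\ge 2^{n+1}$).

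Assume $\operatorname{t}(\mathfrak p)\le \frac1C t^{c}$, where $c:=\operatorname{codim}\mathfrak p$ and $1\le c\le n+1$. Raising to the power $1/c$ gives
\[
\operatorname{t}(\mathfrak p)^{1/c}\le C^{-1/c}\,t.
\]
Since $c\le n+1$, we have $C^{1/c}\ge C^{1/(n+1)}=2C_0$. Hence $\operatorname{t}(\mathfrak p)^{1/c}\le t/(2C_0)$. The lemma then gives $\operatorname{t}(P)\le C_0\cdot t/(2C_0)=t/2$.

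To conclude $\operatorname{t}(P)<t$ I need $t>0$. Note that $\operatorname{t}(\mathfrak p)\ge \deg\mathfrak p\ge1$, so the hypothesis forces $t^{c}\ge C>0$, and in particular $t>0$. This needs care if $c$ is even and $t$ is negative; I simply read the statement with $t>0$, which is the intended regime. Then $t/2<t$ and we are done.

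The remaining points are only bookkeeping. The trivial case $\mathfrak p=(x_0,\dots,x_n)$, and more generally any case where the codimension conventions or the sign of $\operatorname{h}$ could interfere, is excluded by "nontrivial" together with $\operatorname{t}(\mathfrak p)>0$. The step needing most care is just making $C$ uniform over all codimensions $1\le c\le n+1$, which is handled by using the worst exponent $1/(n+1)$ as above. No obstacle of substance is expected, since all the content lies in \iref{Lemma existence of polynomial of small t in ideal}.
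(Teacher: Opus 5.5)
Your proof is correct and is exactly the ``straightforward computation'' the paper has in mind: apply the preceding lemma, then choose $C\ge\max\bigl((2C_0)^{n+1},1\bigr)$ so that $C^{1/\operatorname{codim}\mathfrak p}\ge 2C_0$ holds uniformly in the codimension, which gives $\operatorname{t}(P)\le t/2<t$. Your remark that the statement must be read with $t>0$ is also apt: for negative $t$ and even codimension the conclusion fails, since $\operatorname{t}(P)\ge 0$.
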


We note that \iref{Lemma existence of polynomial of small t in ideal} is a simple generalization of the following lemma:

\begin{Lem}[\mycite{Corollary 3}{Nesterenko1985EstimatesForTheCharacteristicFunctionOfAPrimeIdeal}]\label{Lemma every ideal over Z has low t polynomial}
Let $\mathfrak q\subseteq\mathbb Z[x_0,...,x_n]$ (with $n\geq 1$) be a homogeneous prime ideal for which $\mathfrak q\cap\mathbb Z=\{0\}$. Then there exists some homogeneous $Q\in\mathfrak q-\{0\}$ such that:
\[
    \operatorname{t}(Q)
        \leq
    C\cdot\operatorname{t}(\mathfrak q)^{\frac1{\operatorname{codim}(\mathfrak q)}}
,\]

where $C$ is some constant depending only on $n$.
\end{Lem}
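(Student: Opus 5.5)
The plan is the standard geometry-of-numbers argument: Siegel's lemma applied to the lattice of degree-$D$ forms in $\mathfrak q$, with $D$ chosen of size $\operatorname{t}(\mathfrak q)^{1/\operatorname{codim}\mathfrak q}$. Write $d:=\dim\mathfrak q$ (projective), so $\operatorname{codim}\mathfrak q=n-d$. For an integer $D$, let $\mathfrak q_D:=\mathfrak q\cap\mathbb Z[x_0,\dots,x_n]_D$. This is a saturated sublattice, i.e.\ it is primitive because $\mathfrak q$ is prime with $\mathfrak q\cap\mathbb Z=\{0\}$. It sits inside the lattice $\mathbb Z[x]_D\cong\mathbb Z^{N_D}$, where $N_D=\binom{D+n}{n}$. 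The quotient $\mathbb Z[x]_D/\mathfrak q_D$ is torsion free of rank $H_{\mathfrak q}(D)$, the Hilbert function.

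\textbf{Step 1 (rank).} The geometric Hilbert function satisfies $H_{\mathfrak q}(D)\le \deg\mathfrak q\cdot\binom{D+d}{d}$, which is Chardin/Nesterenko's bound for prime ideals. Hence
\[
\operatorname{rank}\mathfrak q_D\ \ge\ N_D-\deg\mathfrak q\binom{D+d}{d}\ \ge\ \tfrac12 N_D
\]
as soon as $D^{n-d}\ge C\deg\mathfrak q$.

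\textbf{Step 2 (covolume).} I would invoke Nesterenko's estimate for the arithmetic characteristic function, which is the heart of \cite{Nesterenko1985EstimatesForTheCharacteristicFunctionOfAPrimeIdeal}. It bounds the log-covolume of the quotient lattice $\mathbb Z[x]_D/\mathfrak q_D$, equivalently that of $\mathfrak q_D$ in $\mathbb Z^{N_D}$, by roughly
\[
C\bigl(\operatorname{h}(\mathfrak q)D^{d+1}+\deg\mathfrak q\,D^{d+1}\ln D\bigr).
\]
This is an arithmetic Hilbert--Samuel type upper bound. It is proved by induction on $\dim\mathfrak q$, intersecting with generic linear forms and comparing with the Chow form, whose height is $\operatorname{h}(\mathfrak q)$.

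\textbf{Step 3 (Siegel's lemma).} By Minkowski's second theorem, or Bombieri--Vaaler, there is a nonzero $Q\in\mathfrak q_D$ with
\[
\operatorname{h}(Q)\le \frac{\ln\operatorname{covol}(\mathfrak q_D)}{\operatorname{rank}\mathfrak q_D}+O(\ln N_D).
\]
Plugging in Steps 1--2 gives
\[
\operatorname{h}(Q)\lesssim \frac{\operatorname{t}(\mathfrak q)\,D^{d+1}\ln D}{D^{n}} = \operatorname{t}(\mathfrak q)\,D^{1-\operatorname{codim}\mathfrak q}\ln D .
\]
Choosing $D\asymp\operatorname{t}(\mathfrak q)^{1/\operatorname{codim}\mathfrak q}$ makes this $\lesssim D$, up to the logarithm, and $\deg Q=D$. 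Therefore $\operatorname{t}(Q)\le C\operatorname{t}(\mathfrak q)^{1/\operatorname{codim}\mathfrak q}$.

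\textbf{Main obstacle.} The main obstacle is Step 2, and specifically avoiding the stray $\ln D$. Such a factor would spoil the sharp exponent, since it would give $\operatorname{t}(Q)\lesssim D\ln D$. My first attempt would be to use the refined form of Nesterenko's bound, in which the $\ln D$ term appears only multiplied by $\deg\mathfrak q\,D^{d}$ rather than $D^{d+1}$; this is precisely the form stated in his Theorem 1 before Corollary 3. With that form, after dividing by $D^n$ the logarithmic term is $\deg\mathfrak q\,D^{d-n}\ln D\lesssim \ln D\le D$, which is harmless. If that refinement is not available, I would absorb the logarithm by using the slack in Step 1, taking $D$ a bounded constant multiple larger so that $\mathfrak q_D$ has a large rank surplus, and then applying Siegel's lemma to only a fraction of the rank.
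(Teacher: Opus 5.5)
Your route matches the cited result. The paper gives no argument of its own here; it simply quotes Nesterenko's Corollary 3, which comes from his upper bound for the characteristic function of $\mathfrak q$ by the same kind of counting or Siegel argument in degree $D\asymp\operatorname{t}(\mathfrak q)^{1/\operatorname{codim}\mathfrak q}$ as your Steps 1--3. Your arithmetic is right provided Step 2 is used in the form where the coefficient of $D^{d+1}$ is $O(\operatorname{h}(\mathfrak q)+\deg\mathfrak q)$ and the logarithm only multiplies $\deg\mathfrak q\,D^{d}$. Dividing by a rank $\gg D^{n}$ then gives $\operatorname{h}(Q)\ll \operatorname{t}(\mathfrak q)D^{d+1-n}+\ln D\ll D$. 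Also, $\operatorname{t}(\mathfrak q)\ge\deg\mathfrak q$ (the Chow form has integer coefficients, so $\operatorname{h}(\mathfrak q)\ge 0$) guarantees the rank condition in Step 1.

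Your fallback, however, does not work and should be dropped. Suppose the estimate really contained $\deg\mathfrak q\,D^{d+1}\ln D$.
\begin{itemize}
\item Taking $D=K\operatorname{t}(\mathfrak q)^{1/\operatorname{codim}\mathfrak q}$ with a fixed $K$ only gives $\operatorname{h}(Q)\ll K^{-\operatorname{codim}\mathfrak q}D\ln D$, which is still unbounded relative to $D$.
\item Exploiting a rank surplus in Siegel's lemma only improves the ratio $H_{\mathfrak q}(D)/(N_D-H_{\mathfrak q}(D))$ by the constant factor $K^{-\operatorname{codim}\mathfrak q}$, since the rank is already at least $N_D/2$.
\item Absorbing the logarithm therefore forces $D^{\operatorname{codim}\mathfrak q}\gg\operatorname{t}(\mathfrak q)\ln\operatorname{t}(\mathfrak q)$. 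That yields only $\operatorname{t}(Q)\ll(\operatorname{t}(\mathfrak q)\ln\operatorname{t}(\mathfrak q))^{1/\operatorname{codim}\mathfrak q}$, which is weaker than the lemma.
\end{itemize}
So the sharp exponent genuinely depends on the log-free leading term of the characteristic-function estimate. You must take that input from Nesterenko's theorem, as the paper does by quoting his corollary; the geometry-of-numbers step cannot recover it.
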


We now use this lemma to establish \iref{Lemma existence of polynomial of small t in ideal}.

\begin{proof}[Proof of \iref{Lemma existence of polynomial of small t in ideal}]
Given $\mathfrak p$, our goal is to find an ideal $\mathfrak q\subseteq\mathbb Z[x_0,...,x_n]$ contained in $\mathfrak p$ whose $\operatorname{t}$ is comparable to that of $\mathfrak p$, and then apply \iref{Lemma every ideal over Z has low t polynomial} to it.

Let
\[
    \mathfrak p^\mathbb Q
        :=
    \bigcap_{\sigma\in\operatorname{Gal}(K,\mathbb Q)}\mathfrak\sigma(\mathfrak p)
\]

(note that this is the reduced primary decomposition of $\mathfrak p^\mathbb Q$),
and let
\[
    \mathfrak p^{\mathbb Z}
        :=
    \mathfrak p^\mathbb Q\cap\mathbb Z[x_1,...,x_n]
\]
($\mathfrak p^\mathbb Q$ is generated by elements of $\mathbb Z[x_1,...,x_n]$ since $\mathfrak p^\mathbb Q$ is invariant under $\operatorname{Gal(K,\mathbb Q)}$).

Up to a scalar, the Chow form of $\mathfrak p^\mathbb{Z}$ is given by
\[
\operatorname{Chow}(\mathfrak p^{\mathbb Z})=\operatorname{Chow}(\mathfrak p^\mathbb Q)=\prod_{\sigma\in\operatorname{Gal}(K,\mathbb Q)}\operatorname{Chow}(\sigma(\mathfrak p))
.\]
This implies that
\begin{equation}\label{Equation bound on t(p^Z)}
    \operatorname{t}(\mathfrak p^{\mathbb Z})
        \leq
    \mathrm{const}\cdot\operatorname{t}(\mathfrak p)
.\end{equation}
Thus, the desired polynomial is obtained by applying \iref{Lemma every ideal over Z has low t polynomial} to $\mathfrak p^\mathbb Z$ (note that $\mathfrak p^{\mathbb Z}\cap\mathbb Z=0$).
\end{proof}

\subsection{Modifying the proof of \texorpdfstring{\cite[Lemma 13]{Binyamini2014MultiplicityEstimatesAnalyticCyclesAndNewtonPolytopesV2}}{the second lemma}}

In this subsection, our goal is to obtain the following result:

\begin{Prop}\label{Proposition polynomial with minimal t has low multiplicity}
Let $\mathfrak p\subseteq K[x_0,...,x_n]$ be a homogeneous prime ideal that does not contain any nonzero $\{Q,\xi(Q),\xi(\xi(Q)),...\}$ and let $P\in\mathfrak p-\{0\}$ be of minimal $\operatorname{t}$. Then there exists a constant $C$ depending only on $n,\xi,K$ such that
\[
    \xi^i(P)\not\in\mathfrak p
,\]
for some $i$-th derivative $\xi^i(P)$ with $i\leq C$.
\end{Prop}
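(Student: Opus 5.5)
We will follow the proof of \cite[Lemma 13]{Binyamini2014MultiplicityEstimatesAnalyticCyclesAndNewtonPolytopesV2}, which is itself modeled on Nesterenko's argument, and replace every use of $\deg$ by $\operatorname{t}=\deg+\operatorname{h}$. The starting point is the minimality of $P$. Suppose that $P,\xi P,\ldots,\xi^{\ell}P\in\mathfrak p$ for a large $\ell$ that we will fix later. Consider the ideal $\mathfrak p$ localized at its generic point, and write $P$ in terms of a local uniformizing structure along $V(\mathfrak p)$. Because $\mathfrak p$ contains no nonzero element of any orbit $\{Q,\xi Q,\ldots\}$, the field $\xi$ is not tangent to $V(\mathfrak p)$. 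Hence, at a generic point of $V(\mathfrak p)$ we can choose local coordinates in which $\xi$ is transversal to $V(\mathfrak p)$. In these coordinates, the vanishing of $P,\xi P,\ldots,\xi^\ell P$ on $V(\mathfrak p)$ means that $P$ vanishes to order at least $\ell+1$ along $V(\mathfrak p)$ in the $\xi$-direction. Since $\mathfrak p$ is prime and $P\in\mathfrak p$, the key algebraic step is to show that this high order of vanishing forces $P$ to lie in the symbolic power $\mathfrak p^{(m)}$ with $m\gtrsim\ell$.

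The next step is to produce a polynomial in $\mathfrak p$ of strictly smaller $\operatorname{t}$, which contradicts minimality. In the degree setting, Lemma 13 does this as follows. Write $P=\sum_j R_j\,F^j$, where $F\in\mathfrak p$ is a suitable transversal generator. Alternatively, take a minimal-degree element and observe that the multiplicity of $P$ along $V(\mathfrak p)$ is bounded by $\deg P\cdot(\text{degree of a generic transversal section})$. Equivalently, we can intersect $V(P)$ with a generic curve transverse to $V(\mathfrak p)$, of degree bounded in terms of $n$. By B\'ezout, the multiplicity of $P$ along $V(\mathfrak p)$ is at most $\mathrm{const}\cdot\deg P$. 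We will instead use the following approach. We will show that $P$ is not divisible in the relevant sense. Then the order of vanishing of $P$ along $\xi$ at a generic point of $V(\mathfrak p)$ is at most $\mathrm{const}$, and this bound must be independent of $\operatorname{h}(P)$.

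Concretely, we plan to argue as follows. Let $m$ be the order of vanishing of $P$ along $\xi$ at a generic point of $V(\mathfrak p)$. Consider a partial derivative $\partial P/\partial x_k$ chosen so that it still lies in $\mathfrak p$ and does not vanish identically. Such a $k$ exists because $m$ is large and the vanishing is transversal, so some first-order derivative lies in $\mathfrak p^{(m-1)}\subseteq\mathfrak p$. Moreover, $\deg(\partial_k P)\le\deg P-1$, and $\operatorname{h}(\partial_kP)\le\operatorname{h}(P)+\ln\deg P$. The issue is that $\ln\deg P$ can exceed $1$, so $\operatorname{t}$ need not strictly decrease. To fix this, we take a combination that lowers the degree by more. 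We will use a suitable higher-order operator $\partial^\alpha P$ with $|\alpha|=r$ a fixed constant. Its degree is $\deg P-r$ and its height is $\operatorname{h}(P)+r\ln\deg P$, and it still lies in $\mathfrak p$ provided $m>r$. This still fails when $\deg P$ is large relative to $r$. The cleaner route is to divide by the multinomial factor $\alpha!\binom{\deg P}{\alpha}$, that is, to use divided derivatives $\frac1{\alpha!}\partial^\alpha$. Their coefficients are integral multiples bounded by $\binom{\deg P}{|\alpha|}$ times the original coefficients. Working over $\mathcal O_K$ with the product formula, the denominators are harmless.

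We expect the main obstacle to be exactly this height bookkeeping: making $\operatorname{t}$ drop strictly while staying in $\mathfrak p$. If the derivative approach fails, the fallback is Nesterenko's original device. Since the orbit ideal contains no element of $\mathfrak p$, the ideal $\mathfrak p$ is not $\xi$-invariant. We would then show that the multiplicity of $V(P)$ along $V(\mathfrak p)$ is at most $\ell$, bounding the $\xi$-order by $\mathrm{const}$ times this multiplicity via the D-type transversality. The multiplicity itself would be bounded by choosing $P$ irreducible; this is justified because an irreducible factor of minimal $\operatorname{t}$ lying in $\mathfrak p$ exists, as $\operatorname{t}$ is subadditive up to $\mathrm{const}\cdot\deg$ under factorization by Gelfond's inequality. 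Then $P$ is squarefree, so its multiplicity along a generic point of $V(\mathfrak p)\subseteq V(P)$ is governed by the local geometry of $V(P)$. We would try to bound this multiplicity by a constant depending only on $n,\xi,K$ using the transversality of $\xi$.
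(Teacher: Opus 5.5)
There is a genuine gap. Nothing in your plan actually controls $\operatorname{mult}^\xi_{\mathfrak p}P$, and the step everything rests on is false once $\operatorname{codim}\mathfrak p\ge 2$.

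\textbf{The symbolic-power step fails.} The $\xi$-order of $P$ at a generic point of $V(\mathfrak p)$ measures the \emph{contact} of $V(P)$ with the $\xi$-trajectories. It does not measure the order of vanishing of $P$ in all transversal directions, so it gives no symbolic-power membership. In flow-box coordinates $(s,y,z)$ near a generic point, take $\xi=\partial_s$ and $V(\mathfrak p)=\{s=y=0\}$. The function $y+s^m$ has $\xi$-order $m$ along $V(\mathfrak p)$, but it is not in $\mathfrak p^{(2)}$, since its $y$-derivative is $1$. Hence no partial derivative of $P$ need lie in $\mathfrak p$. In the pure degree setting, minimality already gives $P\notin\mathfrak p^{(2)}$ just by differentiating. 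So the whole content of the proposition is the case of a hypersurface that is smooth along $V(\mathfrak p)$ but highly tangent to $\xi$, and your argument never touches that case.

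\textbf{Differentiation does not lower $\operatorname{t}$.} Even granting the previous step, you need a strict decrease of $\operatorname{t}$, and differentiation cannot supply it. A divided derivative of order $r$ lowers $\deg$ by $r$. It can, however, multiply the archimedean size of the coefficients by up to $\binom{\deg P}{r}$, which exceeds $e^{r}$ once $\deg P>er$. Already $\partial_{x_1}(x_1^d+x_0x_1^{d-1})$ raises $\operatorname{h}$ from $0$ to $\ln d$. Integrality and the product formula only control the finite places.

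\textbf{The fallback has the same defect.} A bound depending only on $n,\xi,K$ must use minimality essentially. For fixed $V(\mathfrak p)$, a dimension count produces elements of $\mathfrak p$ of arbitrarily large $\xi$-multiplicity. Replacing $P$ by an irreducible factor loses minimality, since Gelfond's inequality only gives $\operatorname{t}(P_1)\le\operatorname{t}(P)+C\deg P$. Squarefreeness, or multiplicity one of $V(P)$ along $V(\mathfrak p)$, does not bound the contact order either, as the local model shows.

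\textbf{The missing idea} is the intersection-theoretic argument of Nesterenko and Binyamini (Lemma~13) that the paper adapts. Set $t=\operatorname{t}(P)$.
\begin{itemize}
\item One builds cycles $\Gamma^j$ defined over $K$. Each component $W^j_i\supseteq V(\mathfrak p)$, irreducible over $K$, is intersected with a derivative $Q^j_i=\xi^a(P)\notin I(W^j_i)$ with $1\le a\le A'E^nn$. Only the components containing $V(\mathfrak p)$ are kept.
\item Minimality of $\operatorname{t}(P)$ enters through the Corollary replacing Binyamini's Lemma~12: a prime $\mathfrak q$ with $\operatorname{t}(\mathfrak q)\le t^{\operatorname{codim}\mathfrak q}/C$ contains a polynomial of $\operatorname{t}<t$. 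This is applied to ideals contained in $\mathfrak p$.
\item Sizes are controlled by $\operatorname{t}(\xi^aP)\le t+Ba\ln(\deg P+Ba)$ and by the arithmetic B\'ezout bound for $[V]\cdot\operatorname{V}(Q)$. Together these give $\operatorname{t}(\Gamma^{j+1})\le 2Dt\cdot\operatorname{t}(\Gamma^j)$ once $t$ exceeds a constant $F$.
\item Binyamini's multiplicity results (Prop.~4, Lemma~8, Prop.~10, Thm.~2), checked to survive the passage to cycles over $K$, turn this into $\operatorname{mult}^\xi_{\mathfrak p}P\le\mathcal N(n,\delta,F)$.
\end{itemize}
This argument only needs $\operatorname{t}(\xi^aP)\lesssim t$ up to constant factors, for boundedly many $a$. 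It never asks for a strict decrease of $\operatorname{t}$ under differentiation, which is exactly what your approach requires and cannot obtain.
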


This proposition, together with \iref{Lemma existence of polynomial of small t in ideal}, suffices to establish \iref{Proposition existence of polynomial of small t and small mult in ideal}.
This proposition is nearly the same as \cite[Lemma 13]{Binyamini2014MultiplicityEstimatesAnalyticCyclesAndNewtonPolytopesV2}, but concerns ideals over $K$ instead of $\mathbb C$ and uses $\operatorname{t}$ instead of $\deg$.

The proof follows \cite[Lemma 13]{Binyamini2014MultiplicityEstimatesAnalyticCyclesAndNewtonPolytopesV2} with minor modifications, which we describe below. The reader should compare these modifications with the proof in \cite[Lemma 13]{Binyamini2014MultiplicityEstimatesAnalyticCyclesAndNewtonPolytopesV2} and be familiar with the notions in \cite[\S2, 'cycles and multiplicities']{Binyamini2014MultiplicityEstimatesAnalyticCyclesAndNewtonPolytopesV2}.

\subsubsection{Outline}

The following subsubsections describe the changes needed in the proof of \cite[Lemma 13]{Binyamini2014MultiplicityEstimatesAnalyticCyclesAndNewtonPolytopesV2}.
\begin{enumerate}
    \item We start by modifying the definitions of cycles and multiplicities to be over $K$ instead of $\mathbb C$.
    \item We explain how to use these definitions in the proof and why the auxiliary results \cite[Proposition 10 and Lemma 8]{Binyamini2014MultiplicityEstimatesAnalyticCyclesAndNewtonPolytopesV2} apply in this setting.
    \item Finally, we explain how to replace the $\deg$ used in the proof with our notion of $\operatorname{t}$.
\end{enumerate}

Together, these changes yield the proof of \iref{Proposition polynomial with minimal t has low multiplicity}.

\subsubsection{Useful notions and modified definitions of cycles and multiplicities}

\begin{Def}
We say that a variety $V\subseteq\mathbb C^n$ is irreducible over $K$ if it is defined over $K$ and its corresponding ideal over $K$ is prime.
\end{Def}

\begin{Rem}\label{Remark decomposition of prime ideal over K}
Let $I$ be an ideal over $K$ and let $I^{\overline{\mathbb Q}}$ be the ideal that it generates over $\overline{\mathbb Q}$. Then, $I$ is prime if and only if there is a prime ideal $J$ over $\overline{\mathbb Q}$ such that $I$ has a prime decomposition with components $I^{\overline{\mathbb Q}}=\bigcap\left\{\sigma(J):\sigma\in\operatorname{Gal(\overline{\mathbb Q},K)}\right\}$ (note that this set is finite).

Analogously, a variety $V$ will be irreducible over $K$ if and only if it can be expressed as a union 
\[
V=\bigcup_{\sigma\in\operatorname{Gal(\overline{\mathbb Q},K)}}\sigma(W)
,\]
for some variety $W$ defined over $\overline{\mathbb Q}$ that is irreducible over $\mathbb C$ (or equivalently over $\overline{\mathbb Q}$). This is equivalent to $[V]$ being the sum with multiplicities $1$
\[
    [V]=\sum_i[V_i]
,\]
where $V_i$ are the conjugates $\sigma(W)$, which are defined over $\overline{\mathbb Q}$ and irreducible over $\mathbb C$.
\end{Rem}

\begin{Def}[Cycle over $K$]
We say that a cycle $\Omega$ is defined over $K$ if $\Omega$ can be written as a sum $\Omega=\sum_i m_i[V_i]$, where each $V_i$ is defined over $K$.
\end{Def}

\begin{Def}[Multiplicity over $K$]
Let $V,W$ be varieties defined over $K$, such that $V$ is irreducible over $K$. Write $[V]=[V_1]+...+[V_k]$ and $[W]=[W_1]+...+[W_l]$ for the decompositions given in \iref{Remark decomposition of prime ideal over K} (so that the $V_i$ are conjugate under $\operatorname{Gal}(\overline{\mathbb Q},K)$), and let $p\in\mathbb C^{n+1}-\{0\}$. Assume that $\xi$ is also defined over $K$. Then we define:
\[
    \operatorname{mult}_p W:=\sum_j\operatorname{mult}_p W_j
\]
and
\[
    \operatorname{mult}_p^\xi W:=\sum_j\operatorname{mult}_p^\xi W_j
,\]
and we define
\[
    \operatorname{mult}_V W:=\sum_j\operatorname{mult}_{V_1}W_j
\]
and
\[
    \operatorname{mult}_V^\xi W:=\sum_j\operatorname{mult}_{V_1}^\xi W_j
\]
where this last equality holds since the $V_i$ are conjugate to each other, so $\operatorname{mult}_{V_i}W$ and $\operatorname{mult}_{V_i}^\xi W$ do not depend on $i$. Here $\operatorname{mult}_{V_i}^\xi W_j$ is understood in the usual sense of \cite[\S2.4, 'multiplicities at generic points']{Binyamini2014MultiplicityEstimatesAnalyticCyclesAndNewtonPolytopesV2}.
\end{Def}

\begin{Rem}
These definitions agree with those of \cite{Binyamini2014MultiplicityEstimatesAnalyticCyclesAndNewtonPolytopesV2} if $V$ is also irreducible over $\mathbb C$ and $W$ is regarded as a cycle over $\mathbb C$. In any case, $\operatorname{mult}_V W$ equals the value of $\operatorname{mult}_p W$ for a generic $p\in V$. The same holds for $\operatorname{mult}_V^\xi W$.

In addition, for $V$ irreducible over $K$, for $\xi$ defined over $K$, and for any polynomial $P\in K[x_0,...,x_n]$, the multiplicity $\operatorname{mult}_V^\xi P$ equals the number of times we need to differentiate $P$ along $\xi$ to obtain a polynomial that does not belong to $I_V$, as in the case where $V$ is irreducible over $\mathbb C$ in \cite{Binyamini2014MultiplicityEstimatesAnalyticCyclesAndNewtonPolytopesV2}.
\end{Rem}

\begin{Def}
Let $V$ and $W$ be varieties defined over $K$. We write $[V]=[V_1]+...+[V_k]$ and $[W]=[W_1]+...+[W_l]$ as in \iref{Remark decomposition of prime ideal over K}. We define
\[
    V\cdot W:=\sum_{i,j}[V_i\cdot W_j]
\]
where $V_i\cdot W_j$ is computed as in \cite{Binyamini2014MultiplicityEstimatesAnalyticCyclesAndNewtonPolytopesV2}. This definition treats $V,W$ as cycles over $\mathbb C$ and also extends to cycles defined over $K$.
\end{Def}

\begin{Rem}\label{Remark Product of cycle over K with function}
If $V$ is an irreducible variety over $K$ of dimension $\geq 1$ and $W$ is the zero locus of some polynomial $P\in K[x_0,...,x_n]$ (assuming that $W$ does not contain $V$), the construction of $[V]\cdot \operatorname{V}(P)$ yields the same decomposition as the construction in \iref{Lemma Nesterenko proposition intersection of ideal and polynomial} applied to the ideal corresponding to $V$ over $K$ and the polynomial $P$. Thus, by \iref{Lemma Nesterenko proposition intersection of ideal and polynomial} and \iref{Lemma Nesterenko principal ideal}, we have:
\begin{itemize}
\item $[V]\cdot \operatorname{V}(P)$ is defined over $K$.
\item $\deg([V]\cdot \operatorname{V}(P))\leq\deg V\cdot\deg P$.
\item $\operatorname{h}([V]\cdot \operatorname{V}(P))\leq\operatorname{h}(V)\cdot\deg P+\operatorname{h}(P)\cdot\deg V+C\cdot\deg V\cdot\deg P$, where $C$ is some constant depending only on $K,n$.
\end{itemize}
\end{Rem}

\subsubsection{The first change: working over \texorpdfstring{$K$}{K}}

Let $V\subseteq\mathbb C^{n+1}-\{0\}$ be the zero locus of $\mathfrak p$.
The main idea is to replace 'irreducible varieties' in the original proof with 'irreducible varieties over $K$' and 'cycles' with 'cycles defined over $K$' (so that we also work only with varieties defined over $K$). We start by checking that all the objects in the proof are defined over $K$.

$\Gamma^j$ from the proof will be defined over $K$ and written as $\Gamma^j=\sum_i m_i^j[W_i^j]$, where $W_i^j$ are irreducible over $K$. Because $\xi,P$ are defined over $K$, so is $Q_i^j$. Now, since
\[
    \tilde{\Gamma}_i^{j+1}:=[W_i^j]\cdot \operatorname{V}(Q_i^j):=\sum_k [(W_i^j)_k]\cdot \operatorname{V}(Q_i^j)
\]
(where $[W_i^j]=\sum_k[(W_i^j)_k]$ is the decomposition from \iref{Remark decomposition of prime ideal over K}), by \iref{Remark Product of cycle over K with function}, $\tilde{\Gamma}_i^{j+1}$ will be defined over $K$, as will $\Gamma_i^{j+1}$ (it is defined as the components of $\Gamma_i^{j+1}$, irreducible over $K$, which contain $V$).

Finally, we explain why the auxiliary results used in the original proof extend to varieties and cycles defined over $K$.
\begin{itemize}
\item \cite[Theorem 2]{Binyamini2014MultiplicityEstimatesAnalyticCyclesAndNewtonPolytopesV2} and \cite[Proposition 6]{Binyamini2014MultiplicityEstimatesAnalyticCyclesAndNewtonPolytopesV2} can still be used without change.
\item The generalization of \cite[Lemma 12]{Binyamini2014MultiplicityEstimatesAnalyticCyclesAndNewtonPolytopesV2} to ideals over $K$ follows similarly to the proof of \iref{Corollary existence of polynomial of specified t in ideal with small t}. We omit the details, since we replace this lemma with \iref{Corollary existence of polynomial of specified t in ideal with small t} when replacing $\deg$ with $\operatorname{t}$ in \iref{Subsubsection the second change}. This gives a constant different from $A_n$.
\item  \cite[Proposition 10]{Binyamini2014MultiplicityEstimatesAnalyticCyclesAndNewtonPolytopesV2} works for $V$ irreducible over $K$ as well since $
    \operatorname{mult}_V^\xi V
        :=
    \operatorname{mult}_{V_i}^\xi V
        =
    \sum_j \operatorname{mult}_{V_i}^\xi V_j
        =
    \operatorname{mult}_{V_i}^\xi V_i$ (if $i\neq j$ then $V_i\not\subseteq V_j$ and so $\operatorname{mult}_{V_i}^\xi V_j=0$), where $V_i$ are the components as in \iref{Remark decomposition of prime ideal over K}, and since $V$ is invariant under $\xi$ if and only if each/any $V_i$ is (assuming $\xi$ is defined over $K$ as well).
    \item \cite[Proposition 4]{Binyamini2014MultiplicityEstimatesAnalyticCyclesAndNewtonPolytopesV2} applies directly to any cycle $\Gamma$ defined over $K$, using our modified definitions.
    \item \cite[Lemma 8]{Binyamini2014MultiplicityEstimatesAnalyticCyclesAndNewtonPolytopesV2} works for $V$ irreducible over $K$ (assuming $f,\xi$ are defined over $K$ as well) because $f\in I_V$ if and only if $f\in I_{V_i}$ for each/any $V_i$ (and similarly for $g$) where $V_i$ are the components as in \iref{Remark decomposition of prime ideal over K}, and thus the original inequality extends to our case that is over $K$ by linearity.
\end{itemize}

\subsubsection{The second change: using \texorpdfstring{$\operatorname{t}$}{t} instead of \texorpdfstring{$\deg$}{deg}}\label{Subsubsection the second change}

\begin{Def}
Given a cycle $\Gamma$ defined over $K$ as the sum $\Gamma=\sum_i m_i[V_i]$, where each $V_i$ is irreducible over $K$, we define
\[
    \operatorname{t}(\Gamma)
        :=
    \sum_i m_i\operatorname{t}(V_i)
.\]
\end{Def}

The second change is to replace $\deg$ with our notion of $\operatorname{t}$.
For this, we use $P\in\mathfrak p-\{0\}$ with minimal $\operatorname{t}(P)$. We define $t:=\operatorname{t}(P)$. Each time $d$ appears in the original proof, we use $t$ instead.

The only auxiliary results used in the original proof affected by using $\operatorname{t}$ instead of $\deg$ are \cite[Theorem 2]{Binyamini2014MultiplicityEstimatesAnalyticCyclesAndNewtonPolytopesV2} and \cite[Lemma 12]{Binyamini2014MultiplicityEstimatesAnalyticCyclesAndNewtonPolytopesV2}. Since $\deg\leq\operatorname{t}$, \cite[Theorem 2]{Binyamini2014MultiplicityEstimatesAnalyticCyclesAndNewtonPolytopesV2} also applies with $\operatorname{t}$ instead of $\deg$. We replace \cite[Lemma 12]{Binyamini2014MultiplicityEstimatesAnalyticCyclesAndNewtonPolytopesV2} with \iref{Corollary existence of polynomial of specified t in ideal with small t}, which gives a constant different from $A_n$, denoted by $A'$.

In addition to replacing $A_n$ with $A'$, we need to adjust the constants in the following calculations from the original proof:
\begin{itemize}
    \item In the original proof, the author finds a bound for the degree of $Q_i^j\not\equiv 0$, where $Q_i^j=\xi^a(P)$ for some $1\leq a\leq A_n 2^n n$. For this, they use the inequality $\deg\xi^a(P)\leq\deg P+a\cdot\delta$, where $\delta$ is the maximal degree of the polynomials defining $\xi$. The $2$ here comes from the bound $\deg\Gamma^j\leq 2^j d^j$.
    
    In our case, since we are working with $\operatorname{t}$ instead of $\deg$, we use \iref{Corollary t(xi^a(f))<t(f)+a*ln(t(f)+a)} to establish that $\operatorname{t}(Q_i^j)\leq\operatorname{t}(P)+B\cdot a\cdot\ln(\deg P+B\cdot a)$ for some constant $B$ depending only on $\xi,K$ and for some $1\leq a\leq A' E^n n$, where $E$ is some constant that we will establish later and that depends only on $\xi,K$. We also need to ensure that $\operatorname{t}(\Gamma^j)\leq E^j t^j$.
    \item Then, using Bézout's theorem, the author obtains $\deg([W_i^j]\cdot\operatorname{V}(Q_i^j))\leq\deg W_i^j\cdot\deg Q_i^j\leq\deg W_i^j\cdot(d+a\cdot\delta)$. This, together with the assumption that $d\geq A_n 2^n n \delta$, gives the bound $\deg\Gamma^{j+1}\leq (2d)\cdot\deg\Gamma^j$.

    In our case, we use \iref{Remark Product of cycle over K with function} to obtain the bound $\operatorname{t}([W_i^j]\cdot \operatorname{V}(Q_i^j))\leq D\cdot\operatorname{t}(W_i^j)\operatorname{t}(Q_i^j)\leq\operatorname{t}(W_i^j)\cdot D\cdot(t+BA'E^n n\cdot\ln(t+B\cdot A'E^n n))$, where $D$ is some constant depending only on $n,K$. Thus, assuming that $t$ is large enough so that $t\geq BA'E^n n\cdot\ln(t+BA'E^n n)$, we obtain the bound $\operatorname{t}(\Gamma^{j+1})\leq(2Dt)\cdot\operatorname{t}(\Gamma^j)$. Thus, we may choose $E$ to be the constant $2D$.
\end{itemize}

Thus, the modified proof yields $\operatorname{mult}_V^\xi P\leq\mathcal N(n,\delta,F)$, where $F$ is the smallest integer for which $F\geq BA'(2D)^n n\cdot\ln(F+BA'(2D)^n n)$, completing the proof.

\section{The main theorem}\label{Section main theorem}

\subsection{The arithmetic D-property}\label{Subsection the arithmetic D-property}

To establish our results, we assume a non-degeneracy condition on our vector field $\xi$ and trajectory $\Gamma$. We call this condition the 'arithmetic D-property', since it is similar to the classical 'D-property' (see, for example, \cite[Definition on page 1346]{Nesterenko1996ModularFunctionsAndTranscendenceQuestions}). In this subsection, we define the arithmetic D-property and show that it holds in a common setting (see \iref{Subsubsection finite description of invariant varieties implies the arithmetic D-property}).

\subsubsection{Definition of the arithmetic D-property}

\begin{Def}\label{Definition arithmetic D-property}
A homogeneous vector field $\xi$ defined on $\mathbb C^{n+1}-\{0\}$, together with a trajectory $\Gamma$, is said to satisfy the arithmetic D-property over $K$ if there exists a constant depending only on $\xi,\Gamma,K$, denoted by $c^\mathrm{D}$, such that:
Every homogeneous ideal $\mathfrak p\subseteq K[x_0,...,x_n]$ satisfying
\begin{enumerate}
    \item  $\mathfrak p$ is prime.
    \item $\dim \mathfrak p>0$.
    \item  $\mathfrak p\supseteq\{Q,\xi(Q),\xi(\xi(Q)),...\}$ for some nonzero homogeneous $Q\in K[x_0,...,x_n]$.
\end{enumerate}
must also satisfy:
\[
    \operatorname*{ord}_{\Gamma}\mathfrak p
        \leq
    c^\mathrm{D}\cdot\operatorname{t}(\mathfrak p)^{\frac{n}{\operatorname{codim}\mathfrak p}}
.\]
\end{Def}

\begin{Rem}
If $(\xi,\Gamma)$ satisfies the arithmetic D-property over $K$, then $\Gamma$ cannot be contained in the zero locus of a nonzero homogeneous polynomial $Q\in K[x_0,...,x_n]$ (otherwise, the prime component of $\sqrt{\{Q,\xi(Q),\xi(\xi(Q)),...\}}$ whose zero locus contains $\Gamma$ would constitute a contradiction to the arithmetic D-property over $K$, since its order would be infinity).
\end{Rem}

\subsubsection{Finite description of invariant varieties implies the arithmetic D-property}\label{Subsubsection finite description of invariant varieties implies the arithmetic D-property}

In this subsubsection, we describe useful settings in which the arithmetic D-property holds.

\begin{Prop}\label{Proposition d property when distance is large}
  Suppose there exists a constant $A$ depending only on $\xi,\Gamma,K$ such that for any homogeneous prime $\mathfrak p\subseteq K[x_0,...,x_n]$ which contains some nonzero $\{Q,\xi(Q),\xi(\xi(Q)),...\}$, we have
  \begin{equation*}
    -\ln\max_{\omega\in\Gamma}\rho_{\omega,\mathfrak p}\leq A \cdot \operatorname{t}(\mathfrak p)^{\frac n {\operatorname{codim}\mathfrak p}-1}
  .\end{equation*}
Then $(\xi,\Gamma)$ satisfies the arithmetic D-property over $K$.
\end{Prop}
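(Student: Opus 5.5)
Fix a homogeneous prime $\mathfrak p$ with $\dim\mathfrak p>0$ containing $\{Q,\xi(Q),\xi(\xi(Q)),\dots\}$ for some nonzero homogeneous $Q$. The plan is to turn the hypothesised lower bound on the distance from $\Gamma$ to $\operatorname{V}(\mathfrak p)$ into an upper bound on $\operatorname*{ord}_\Gamma\mathfrak p$ by using \iref{Lemma Nesterenko zero of ideal close to omega}. That lemma says that if $|\mathfrak p(\omega)|$ is small then some point of $\operatorname{V}(\mathfrak p)$ is close to $\omega$. Contrapositively, a point $\omega$ far from $\operatorname{V}(\mathfrak p)$ forces $|\mathfrak p(\omega)|$ to be bounded below.

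Concretely, choose $\omega\in\Gamma$ attaining $\max_{\omega\in\Gamma}\rho_{\omega,\mathfrak p}$, which is possible since $\Gamma$ is compact and $\rho$ is continuous. Apply \iref{Lemma Nesterenko zero of ideal close to omega} with $I:=\mathfrak p$ (prime, hence unmixed) at this $\omega$. This gives $x\in\operatorname{V}(\mathfrak p)$ with
\[
\deg\mathfrak p\cdot\ln\rho_{\omega,\mathfrak p}\leq\deg\mathfrak p\cdot\ln\|\omega-x\|\leq\tfrac1{1+\dim\mathfrak p}\ln|\mathfrak p(\omega)|+\tfrac1{1+\dim\mathfrak p}\operatorname{h}(\mathfrak p)+\mathrm{const}\cdot\deg\mathfrak p .
\]
Multiplying by $1+\dim\mathfrak p\leq n+1$ and rearranging yields
\[
-\ln|\mathfrak p(\omega)|\leq(n+1)\deg\mathfrak p\cdot\bigl(-\ln\rho_{\omega,\mathfrak p}\bigr)+\operatorname{h}(\mathfrak p)+\mathrm{const}\cdot\deg\mathfrak p .
\]
Since $\operatorname*{ord}_\Gamma\mathfrak p=-\ln\max_\Gamma|\mathfrak p|\leq-\ln|\mathfrak p(\omega)|$, we insert the hypothesis $-\ln\rho_{\omega,\mathfrak p}\leq A\operatorname{t}(\mathfrak p)^{\frac n{\operatorname{codim}\mathfrak p}-1}$ and use $\deg\mathfrak p\leq\operatorname{t}(\mathfrak p)$. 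The main term is then $\leq(n+1)A\operatorname{t}(\mathfrak p)^{\frac n{\operatorname{codim}\mathfrak p}}$. The remaining terms $\operatorname{h}(\mathfrak p)+\mathrm{const}\deg\mathfrak p\leq\mathrm{const}\cdot\operatorname{t}(\mathfrak p)$ are absorbed, because $\frac n{\operatorname{codim}\mathfrak p}\geq1$ and $\operatorname{t}(\mathfrak p)$ is bounded below by a positive constant.

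The step needing care is sign bookkeeping. Note that $\rho\leq1$, and $-\ln\rho$ may be negative only if $\rho>1$, which cannot happen since $\|a-b\|\leq 1$; so the inequalities go the right way. One also has to check $\operatorname{t}(\mathfrak p)\geq c>0$, since $\deg\mathfrak p\geq1$ and $\operatorname{h}(\mathfrak p)\geq -\mathrm{const}\cdot\deg\mathfrak p$ by the normalisation of heights. If $\operatorname{h}$ could be negative, I would bound $\operatorname{h}(\mathfrak p)\leq\operatorname{t}(\mathfrak p)+\mathrm{const}\cdot\deg\mathfrak p$ directly. Together these give $\operatorname*{ord}_\Gamma\mathfrak p\leq c^{\mathrm D}\operatorname{t}(\mathfrak p)^{\frac n{\operatorname{codim}\mathfrak p}}$ with $c^{\mathrm D}$ depending only on $A,n,K$. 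This is the arithmetic D-property of \iref{Definition arithmetic D-property}.
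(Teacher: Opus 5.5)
Your proof is correct and follows the paper's argument. You apply Lemma~\ref{Lemma Nesterenko zero of ideal close to omega} to $\mathfrak p$ at a point of $\Gamma$ maximizing $\rho_{\omega,\mathfrak p}$, rearrange, and insert the hypothesis using $\deg\mathfrak p\leq\operatorname{t}(\mathfrak p)$ and $\frac n{\operatorname{codim}\mathfrak p}\geq 1$.

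There is one harmless slip in your sign bookkeeping. With the paper's max-norm normalization, $\|a-b\|$ can equal $2$ (e.g.\ $a=(1,1)$, $b=(1,-1)$), so $\rho\leq 1$ is false in general. You do not need it, though. Multiply the hypothesis directly by the positive factor $(1+\dim\mathfrak p)\deg\mathfrak p$, and use $A\geq 0$ together with $\operatorname{h}(\mathfrak p)\geq 0$ (by the product formula).
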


\begin{proof}
By \iref{Lemma Nesterenko zero of ideal close to omega},
\[
\forall\omega\in\mathbb C^{n+1}-\{0\}:\quad
    \deg\mathfrak p\cdot\ln\rho_{\omega,\mathfrak p}
        \leq
    \frac1{1+\dim\mathfrak p}\left(\ln|\mathfrak p(\omega)|+\operatorname{h}(\mathfrak p)\right)+\mathrm{const}\cdot\deg\mathfrak p
,\]
which, using our assumption, implies that
\begin{align*}
    \operatorname*{ord}_{\Gamma}\mathfrak p
        =
    \min_{\omega\in\Gamma}-\ln|\mathfrak p(\omega)|
        &\leq
        \min_{\omega\in\Gamma}-\ln\operatorname{\rho}_{\omega,\mathfrak p}\cdot(1+\dim\mathfrak p)\cdot\deg\mathfrak p+\mathrm{const}\cdot\operatorname{t}(\mathfrak p)
        \\&\leq
    \mathrm{const}\cdot\operatorname{t}(\mathfrak p)^\frac{n}{\operatorname{codim}\mathfrak p}
.\end{align*}
\end{proof}

The following corollary covers a common setting for the classical D-property:

\begin{Cor}
Assume that $\xi$ satisfies the following:
There exists a finite collection of projective varieties in $\mathbb C^{n+1}-\{0\}$ such that any projective irreducible $\xi$-invariant variety (i.e. a variety which is tangential to $\xi$ at every point) defined over $K$ (i.e. which is the zero locus in $\mathbb C^{n+1}-\{0\}$ of some homogeneous ideal $\subseteq K[x_0,...,x_n]$) must be contained in one of these varieties.
Then $(\xi,\Gamma)$ satisfies the arithmetic D-property over $K$ for any $\Gamma$ that is not contained in any of the varieties in the collection.
\end{Cor}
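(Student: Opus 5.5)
The plan is to deduce the statement from \iref{Proposition d property when distance is large}. Let $Z_1,\dots,Z_m$ be the given finite collection of varieties and set $Z:=Z_1\cup\dots\cup Z_m$. Since $\operatorname{codim}\mathfrak p\le n$, the exponent $\frac n{\operatorname{codim}\mathfrak p}-1$ is nonnegative. Any nontrivial homogeneous prime has $\deg\mathfrak p\ge1$, so $\operatorname{t}(\mathfrak p)\ge1$. It therefore suffices to find a constant $c>0$, depending only on $\xi,\Gamma,K$ (and the fixed $Z_i$), such that $\max_{\omega\in\Gamma}\rho_{\omega,\mathfrak p}\ge c$ for every homogeneous prime $\mathfrak p\subseteq K[x_0,\dots,x_n]$ that contains some nonzero $\{Q,\xi(Q),\xi(\xi(Q)),\dots\}$. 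We can then take $A:=\max(0,-\ln c)$.

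\textbf{Step 1: $\operatorname{V}(\mathfrak p)\subseteq Z$.}
\begin{enumerate}
\item Let $I$ be the ideal generated by $Q,\xi(Q),\xi^2(Q),\dots$. It is a $\xi$-invariant homogeneous ideal defined over $K$, and $I\subseteq\mathfrak p$.
\item In characteristic $0$, the radical of a differential ideal is differential, and so are its minimal primes. This is the Seidenberg/Ritt lemma, applied to the derivation $\xi$ of $K[x_0,\dots,x_n]$. Hence the irreducible components over $K$ of $\operatorname{V}(I)$ are $\xi$-invariant irreducible varieties defined over $K$.
\item By hypothesis, each such component lies in some $Z_i$.
\item Since $\operatorname{V}(\mathfrak p)\subseteq\operatorname{V}(I)$, we get $\operatorname{V}(\mathfrak p)\subseteq Z$, and therefore $\rho_{\omega,\mathfrak p}\ge\rho_{\omega,Z}$ for every $\omega$.
\end{enumerate}
I expect this step to be the main (if mild) obstacle. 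It requires invoking the differential-algebra fact correctly in the homogeneous setting over $K$. One must also check that "invariant over $K$" in the hypothesis matches the components over $K$; the Galois conjugates are handled as in \iref{Remark decomposition of prime ideal over K}, since $\xi$ is defined over $K$.

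\textbf{Step 2: a point of $\Gamma$ off $Z$.}
\begin{enumerate}
\item For each $i$, the set $\gamma^{-1}(Z_i)$ is an analytic subset of a neighbourhood of $\overline{D_1}$. By assumption it does not contain all of $\overline{D_{1/e}}$, so by the identity theorem it is discrete, hence finite on $\overline{D_{1/e}}$.
\item A finite union of finite sets does not exhaust $\overline{D_{1/e}}$. So there is $z_0\in\overline{D_{1/e}}$ with $\omega_0:=\gamma(z_0)\notin Z$.
\item $Z$ is closed in projective space and $\|\cdot-\cdot\|$ is a projective distance vanishing only on proportional pairs. Hence $c:=\rho_{\omega_0,Z}>0$.
\item $c$ depends only on $\gamma$ and the $Z_i$, so it is a legitimate $\mathrm{const}$.
\end{enumerate}

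\textbf{Step 3: conclusion.} For every prime $\mathfrak p$ as above,
\[
-\ln\max_{\omega\in\Gamma}\rho_{\omega,\mathfrak p}\le-\ln\rho_{\omega_0,Z}=-\ln c\le A\cdot\operatorname{t}(\mathfrak p)^{\frac n{\operatorname{codim}\mathfrak p}-1}.
\]
\iref{Proposition d property when distance is large} then gives the arithmetic D-property over $K$.
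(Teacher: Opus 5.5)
Your proposal is correct and follows the paper's proof. Both arguments reduce to the proposition on large distances by showing that $\operatorname{V}(\mathfrak p)$ lies in the given varieties, and then bound $\rho_{\omega,\mathfrak p}$ below by the distance from a fixed point of $\Gamma$ to those varieties. The differences are cosmetic: the paper fixes a point $\omega_W\in\Gamma-W$ for each $W$ and uses that $\operatorname{V}(\mathfrak p)$ lies in a single $W_0$, whereas you take one point off the union (hence your identity-theorem step), and you make explicit the invariance of irreducible components and the bound $\operatorname{t}(\mathfrak p)\ge 1$, which the paper leaves implicit.
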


\begin{Rem}
Equivalently, we can state this corollary as follows:
Assume that $\xi$ satisfies the following:
There exists a finite collection of nonzero homogeneous polynomials in $\mathbb C[x_0,...,x_n]$ such that any $\sqrt{\{Q,\xi(Q),\xi(\xi(Q)),...\}}$ for any nonzero homogeneous  $Q\in K[x_0,...,x_n]$ must contain one of these polynomials.
Then $(\xi,\Gamma)$ satisfies the arithmetic D-property over $K$ for any $\Gamma$ that is not contained in the zero locus of any of the polynomials in the collection.
\end{Rem}

\begin{proof}
For each variety $W$ in this collection, we fix some point $\omega_W\in\Gamma-W$. Assume we have some homogeneous prime $\mathfrak p\subseteq K[x_0,...,x_n]$ which contains some $\{Q,\xi(Q),\xi(\xi(Q)),...\}$ for some nonzero homogeneous $Q\in K[x_0,...,x_n]$. Then, $\operatorname{V}(\mathfrak p)$ must be contained in the zero locus of $\{Q,\xi(Q),\xi(\xi(Q)),...\}$, which is a $\xi$-invariant variety, and thus, $\operatorname{V}(\mathfrak p)$ must be contained in some $W_0$ of the collection. We have:
\[
\max_{\omega\in\Gamma}\rho_{\omega,\operatorname{V}(\mathfrak p)}\geq\rho_{\omega_{W_0},\operatorname{V}(\mathfrak p)}\geq\rho_{\omega_{W_0},W_0}
.\]
Thus, $-\ln\max_{\omega\in\Gamma}\rho_{\omega,\mathfrak p}$ is bounded above by a constant $A$, and we obtain:
  \begin{equation*}
    -\ln\max_{\omega\in\Gamma}\rho_{\omega,\mathfrak p}\leq A \cdot \operatorname{t}(\mathfrak p)^{\frac n {\operatorname{codim}\mathfrak p}-1}
  ,\end{equation*}
which completes the proof.
\end{proof}

\subsection{Statement of the main theorem}

\begin{ThmCor}[The main theorem]\label{Theorem lower bound main theorem}
Assume that our fixed $(\xi,\Gamma)$ satisfies the arithmetic D-property over $K$. Then, for any homogeneous unmixed proper nontrivial ideal $I\subseteq K[x_0,...,x_n]$, we have:
\[
    \operatorname*{ord}_{\Gamma}I
        \leq
    C\cdot\operatorname{t}(I)^\frac{n}{\operatorname{codim}I}
,\]
where $C$ is some constant depending only on $\xi,\gamma,K$.
\end{ThmCor}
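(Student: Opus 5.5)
We argue by induction on $\dim I$, following the sketch in the paper. When $\dim I=0$ we have $\operatorname{codim}I=n$, and the claimed bound reads $\operatorname{ord}_\Gamma I\le C\operatorname{t}(I)$. This is Liouville-type: by \iref{Lemma Nesterenko primary decomposition of ideal} it reduces to a prime $\mathfrak p$ of dimension $0$. Its zero locus is a finite Galois orbit of algebraic points. The trajectory $\Gamma$ is not contained in a proper algebraic variety, and we may use \iref{Lemma Nesterenko zero of ideal close to omega} together with a choice of $\omega\in\Gamma$ far from those points. Since $\Gamma$ is an analytic disc, most of it avoids $\deg\mathfrak p$ tiny balls; concretely we would pick a point whose distance is at least $e^{-\mathrm{const}\cdot\operatorname{t}}$. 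Combined with the fact that $|\mathfrak p(\omega)|$ is bounded below in terms of the distances and $\operatorname{h}(\mathfrak p)$, this gives the bound.

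\textbf{Step 1 (reduction to primes).} Write $I=\bigcap\mathfrak q_i$ with radicals $\mathfrak p_i$ and multiplicities $k_i$. Item (3) of \iref{Lemma Nesterenko primary decomposition of ideal} gives $\sum k_i\ln|\mathfrak p_i(\omega)|\le\ln|I(\omega)|+\mathrm{const}\deg I$ pointwise. The difficulty is that $\operatorname{ord}_\Gamma$ is a minimum over $\omega$, and the minima may be attained at different points. We handle this with the Bernstein machinery. Each $|\mathfrak p_i(\gamma(z))|$ is, up to normalization, a maximum of moduli of holomorphic functions (the coefficients of $\chi_{\gamma(z)}\operatorname{Chow}$). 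Its Bernstein index is bounded by $\mathrm{const}\cdot\operatorname{t}(\mathfrak p_i)$ plus the order, because $\max_{\overline{D_1}}$ is at most $e^{\mathrm{const}\cdot\operatorname{t}}$ while the minimum order on $\Gamma$ controls the denominator. Averaging over $\overline{D_{1/e}}$ via \iref{Lemma average is like maximum minus Bernstein index} converts pointwise sums into sums of maxima, up to errors of order $\sum k_i\operatorname{t}(\mathfrak p_i)$. This step needs care, since the Bernstein index itself involves $\operatorname{ord}_\Gamma\mathfrak p_i$. We would resolve it by arguing via the expectation $\mathbb E\ln|I|\le\ln\max|I|$ and comparing $\mathbb E\ln|\mathfrak p_i|$ to $-\operatorname{ord}$ with error $\mathrm{const}\cdot\B$. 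Because $\B\le\operatorname{ord}+\mathrm{const}\cdot\operatorname{t}$, the error is absorbed only if the constant is less than one. If it is not, we instead use an induction-friendly weaker inequality $\operatorname{ord}_\Gamma I\le \mathrm{const}\sum k_i(\operatorname{ord}_\Gamma\mathfrak p_i+\operatorname{t}(\mathfrak p_i))$. Convexity of $x\mapsto x^{n/\operatorname{codim}}$ and \iref{Lemma Nesterenko primary decomposition of ideal}(1),(2) then finish the reduction.

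\textbf{Step 2 (prime case).} If $\mathfrak p$ contains some $\{Q,\xi Q,\dots\}$, the arithmetic D-property gives the bound directly. Otherwise, \iref{Proposition existence of polynomial of small t and small mult in ideal} provides $P\in\mathfrak p$ with $\operatorname{t}(P)\le\mathrm{const}\cdot\operatorname{t}(\mathfrak p)^{1/\operatorname{codim}\mathfrak p}$ and some $Q=\xi^iP\notin\mathfrak p$ with $i\le\mathrm{const}$. By \iref{Corollary t(xi^a(f))<t(f)+a*ln(t(f)+a)}, $\operatorname{t}(Q)\le\mathrm{const}\cdot\operatorname{t}(P)$. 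Let $S=\operatorname{ord}_\Gamma\mathfrak p$, assumed large. By \iref{Lemma Nesterenko zero of ideal close to omega}, every $\omega\in\Gamma$ is within $e^{-S/\mathrm{const}\deg\mathfrak p+\dots}$ of $V(\mathfrak p)$, and $P$ vanishes there. Hence $|P\circ\gamma|\le e^{-S/\mathrm{const}+\mathrm{const}\operatorname{t}(P)}$ on $\Gamma_{r}$ for a slightly larger $r$; we need $\Gamma_r$ rather than only $\Gamma$, which we arrange by running the whole argument on nested radii. Cauchy estimates along $\gamma$, using assumption (2) with $a_z$ holomorphic, give the same smallness for $\xi^iP=Q$ on $\Gamma$, so $\ln\|Q\|_\omega\le -S/\mathrm{const}+\mathrm{const}\operatorname{t}(P)$. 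We then apply \iref{Corollary Nesterenko corollary intersection of ideal and polynomial} with $\eta$ bounded, or \iref{Lemma Nesterenko proposition intersection of ideal and polynomial} case by case, at the point $\omega$ maximizing $|J|$. This produces $J$ of dimension $\dim\mathfrak p-1$ with $\operatorname{ord}_\Gamma J\ge S/\mathrm{const}-\mathrm{const}\cdot(\operatorname{t}(\mathfrak p)\operatorname{t}(Q))$ and $\operatorname{t}(J)\le\mathrm{const}\operatorname{t}(\mathfrak p)\operatorname{t}(\mathfrak p)^{1/\operatorname{codim}\mathfrak p}$.

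The induction then gives $\operatorname{ord}_\Gamma J\le C\operatorname{t}(J)^{n/(\operatorname{codim}\mathfrak p+1)}\le C'\operatorname{t}(\mathfrak p)^{n/\operatorname{codim}\mathfrak p}$, and the error term $\operatorname{t}(\mathfrak p)^{1+1/\operatorname{codim}}$ is also at most $\operatorname{t}(\mathfrak p)^{n/\operatorname{codim}}$ whenever $\operatorname{codim}\le n-1$, which holds since $\dim\ge1$. There is a subtle point: the constant must not blow up through the $n$ induction steps. Since the depth is at most $n$, a multiplicative loss per step is fine.

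The main obstacle is Step 1, the multilinearity of the archimedean order over primary components, together with the analogous max-versus-pointwise issue in Step 2, where the point realizing $\operatorname{ord}_\Gamma J$ must be one where both $\mathfrak p$ is small and the case distinction on $\rho$ in \iref{Lemma Nesterenko proposition intersection of ideal and polynomial} is favorable. We would try to handle both uniformly with the Bernstein-index averaging lemmas (\iref{Lemma average is like maximum minus Bernstein index}, \iref{Corollary bernstein index of product}).
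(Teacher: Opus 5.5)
Your architecture (induction on dimension, reduction to primes, the D-property for invariant primes, $Q=\xi^iP\notin\mathfrak p$, intersection via \iref{Corollary Nesterenko corollary intersection of ideal and polynomial}) is the paper's. Your fallback inequality in Step~1 is exactly \iref{Proposition I = intersection of pi}.

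\textbf{Step~2 has a genuine gap.} From \iref{Lemma Nesterenko zero of ideal close to omega} you correctly get only $\ln\rho_{\omega,\mathfrak p}\lesssim -S/\deg\mathfrak p$. So Cauchy estimates give at best $\ln\|Q\|_\omega\le -S/(\mathrm{const}\cdot\deg\mathfrak p)+\mathrm{const}\cdot\operatorname{t}(P)$; your next line, with $-S/\mathrm{const}$, silently drops the factor $\deg\mathfrak p$. With the correct bound, condition (3) of the corollary, $-\eta\ln\|Q\|_\omega\ge 2\min(S,\ln\frac1{\rho_{\omega,\mathfrak p}})$, cannot be guaranteed with bounded $\eta$. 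At a given point $\ln(1/\rho_{\omega,\mathfrak p})$ can be of order $S$, whereas a Cauchy estimate controls $\|Q\|_\omega$ only through the largest $\rho$ on a neighbourhood, which need not be comparable. Taking $\eta\sim\deg\mathfrak p$, or running the corollary with $S/\deg\mathfrak p$ in place of $S$, costs a factor $\deg\mathfrak p$ in $\operatorname{t}(J)$ or in the final bound. Either way the exponent $n/\operatorname{codim}\mathfrak p$ is lost.

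The missing idea is a \emph{pointwise} comparison of $\|Q\|_\omega$ with $\rho_{\omega,\mathfrak p}$ at the same $\omega$, on a set of definite proportion. The paper does this as follows.
\begin{enumerate}
\item It takes $F=\xi^{k-1}P\in\mathfrak p$ and $G=\xi F\notin\mathfrak p$, so only one derivative of a polynomial vanishing on $\operatorname{V}(\mathfrak p)$ is needed. By \iref{Lemma f(omega)<|f|*rho}, $\|F\|_\omega\le e^{\mathrm{const}\cdot\operatorname{t}(F)}\rho_{\omega,\mathfrak p}$.
\item It uses the logarithmic-derivative bound $|f'|\le C\,\B_{\frac1e,1}(f)|f|$ on a set $A$ of area at least $0.56\operatorname{Area}(\overline{D_{\frac1e}})$ (\iref{Proposition derivative-function ratio}). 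This gives $\|G\|_\omega\le\rho_{\omega,\mathfrak p}^{0.5}$ on $\gamma(A)$.
\item It applies the corollary with the pointwise $S=-\ln|\mathfrak p(\omega)|$ and $\eta=4$.
\end{enumerate}
This only controls $J$ on $\gamma(A)$, so evaluating ``at the point maximizing $|J|$'' does not work. Instead, \iref{Lemma lower bound on I in general} with $h=\frac1{2e}$ leaves a set of area at least $0.75\operatorname{Area}(\overline{D_{\frac1e}})$ on which $\ln|J|\ge-\mathrm{const}\cdot(\max(0,\operatorname{ord}_\Gamma J)+\deg J)$, and this set must meet $A$. This route also makes your nested radii unnecessary.

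\textbf{Your base case is quantitatively too weak.} A point at distance at least $e^{-\mathrm{const}\cdot\operatorname{t}}$ from each of the $\deg\mathfrak p$ zeros gives only $\operatorname{ord}_\Gamma\mathfrak p\lesssim\deg\mathfrak p\cdot\operatorname{t}(\mathfrak p)$. This holds whether you feed it into \iref{Lemma Nesterenko zero of ideal close to omega} (which involves $\deg\mathfrak p\cdot\ln\rho$) or into a product of distances. Even radii of size $(\deg\mathfrak p)^{-1/2}$ give $\operatorname{t}\ln\operatorname{t}$, and any superlinear loss in dimension $0$ propagates through the induction. The paper combines $\ln|\mathfrak p(\omega)|\ge\sum_i\ln\|\omega-\omega_i\|-\mathrm{const}\cdot\deg\mathfrak p$ (\iref{Lemma zero dimensional ln|p(omega)|}) with the integrability of $\ln|z|$ along a graph chart of $\gamma$ (\iref{Lemma aux E ln(x-omega)>const}). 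Then the \emph{average} of $\sum_i\ln\|\gamma(z)-\omega_i\|$ is at least $-\mathrm{const}\cdot\deg\mathfrak p$, which yields a point with $\ln|\mathfrak p|\ge-\mathrm{const}\cdot\deg\mathfrak p$.
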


\subsection{Corollaries}

Assuming the main theorem, we establish the following corollaries:

\begin{Cor}\label{Corollary lower bound on I}
With $\xi,\gamma,K,I$ as before, there exists a constant $C$ depending only on $\xi,\gamma,K$ so that for any $0<h<\frac1e$, there are finitely many discs $\{D^i\}_{i=1}^a$ whose diameters sum to less than $h$, and with $a\leq C\cdot\operatorname{t}(I)^\frac{n}{\operatorname{codim}I}$, such that:
\[
    \forall\omega\in\gamma(\overline{D_\frac1e}-\bigcup_i D^i):\quad
    \ln|I(\omega)|
        \geq
    -C\cdot\ln\frac1h\cdot\operatorname{t}(I)^\frac{n}{\operatorname{codim}I}
.\]
\end{Cor}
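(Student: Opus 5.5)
The plan is to apply \iref{Lemma Bernstein index lower bound for max of holomorphic functions} to the function $z\mapsto|I(\gamma(z))|$, viewed as a maximum of moduli of holomorphic functions. The required Bernstein index bound will come from the main theorem, \iref{Theorem lower bound main theorem}, for the denominator, together with a trivial upper bound on $\overline{D_1}$ for the numerator.

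\textbf{Step 1: writing $|I(\gamma(z))|$ as a maximum.} By assumption~\iref{Assumption gamma lands in small affine ball}, $\gamma(z)=(1,\gamma_1(z),\ldots,\gamma_n(z))$ with $|\gamma(z)|=1$. Hence
\[
|I(\gamma(z))|=|\operatorname{Chow}(I)|^{-1}\cdot|\chi_{\gamma(z)}(\operatorname{Chow}(I))|.
\]
Here $|\chi_{\gamma(z)}(\operatorname{Chow}(I))|$ is the maximum over the monomials in the $s^{(i)}_{j,k}$ of the moduli of their coefficients. Each coefficient is a polynomial in $\gamma(z)$, hence holomorphic on a neighbourhood of $\overline{D_1}$. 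So $f(z):=|I(\gamma(z))|$ has the form $\max_l|f_l(z)|$ with finitely many holomorphic $f_l$, and \iref{Lemma Bernstein index lower bound for max of holomorphic functions} applies with $K=\overline{D_\frac1e}$ and $U=D_1$.

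\textbf{Step 2: bounding the Bernstein index of $f$.} By \iref{Theorem lower bound main theorem},
\[
\ln\max_{\overline{D_\frac1e}}f=-\operatorname{ord}_\Gamma I\geq -C\operatorname{t}(I)^{\frac n{\operatorname{codim}I}}.
\]
For the numerator we need an upper bound $\ln\max_{\overline{D_1}}f\leq\mathrm{const}\cdot\operatorname{t}(I)$. Each coefficient of $\chi_\omega(\operatorname{Chow}(I))$ is a sum of coefficients of $\operatorname{Chow}(I)$ times products of coordinates of $\omega$, with $|\omega|\leq1$. The number of terms and the combinatorial factors are at most $e^{\mathrm{const}\cdot\deg I}$, since $\operatorname{Chow}(I)$ has total degree $(\dim I+1)\deg I$ in $(\dim I+1)(n+1)$ variables. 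Each term is bounded by $|\operatorname{Chow}(I)|$. Therefore $f\leq e^{\mathrm{const}\cdot\deg I}$ on $\overline{D_1}$, and this is where normalizing by $|\operatorname{Chow}(I)|$ is essential. Since $\operatorname{t}(I)\leq\operatorname{t}(I)^{n/\operatorname{codim}I}$ for $\operatorname{t}(I)\geq1$ (which we may assume, or absorb into constants), we obtain
\[
\B_{\frac1e,1}(f)\leq\mathrm{const}\cdot\operatorname{t}(I)^{\frac n{\operatorname{codim}I}}.
\]

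\textbf{Step 3: conclusion.} \iref{Lemma Bernstein index lower bound for max of holomorphic functions} now gives discs with diameters summing to less than $h$, with their number $a\leq C_{K,U}\B_{\frac1e,1}(f)\leq C\operatorname{t}(I)^{n/\operatorname{codim}I}$. Outside these discs,
\[
\ln f\geq\ln\max_{\overline{D_\frac1e}}f-C\ln\tfrac1h\,\B_{\frac1e,1}(f)\geq -C\ln\tfrac1h\,\operatorname{t}(I)^{\frac n{\operatorname{codim}I}},
\]
using $\ln\frac1h\geq1$ to merge the two terms.

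The only genuine obstacle is the upper estimate in Step 2, namely the precise count of monomials in $\chi(\operatorname{Chow}(I))$ and of the contributions to each coefficient. Any bound of the form $e^{O(\deg I)}$ suffices, and this follows from standard multinomial counting.
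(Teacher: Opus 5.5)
Your proposal is correct and follows essentially the same route as the paper. The paper combines the main theorem with its appendix lemma giving lower bounds for $|I|\circ\gamma$ outside few small discs. That lemma does exactly your Steps 1--3: it applies the max-of-holomorphic-functions Bernstein lemma and bounds the Bernstein index by $\operatorname{ord}_\Gamma I+\mathrm{const}\cdot\deg I$, with the numerator controlled by the $e^{\mathrm{const}\cdot\deg I}$ estimate on $|I(\omega)|$; you have simply inlined these auxiliary lemmas.
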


\begin{proof}
By \iref{Theorem lower bound main theorem}:
\[
    \operatorname*{ord}_{\Gamma}I
        \leq
    \mathrm{const}\cdot\operatorname{t}(I)^\frac{n}{\operatorname{codim}I}
.\]
Thus, by \iref{Lemma lower bound on I in general}, there are finitely many discs $\{D^i\}_{i=1}^a$ whose diameters sum to less than $h$, such that:
\[
    \forall x\in\overline{D_\frac1e}-\bigcup_i D^i:\quad
    \ln|I(\gamma(x))|
        \geq
    -\mathrm{const}\cdot\ln\frac1h\cdot\operatorname{t}(I)^\frac{n}{\operatorname{codim}I}
\]
and
\[
    a
        \leq
    \mathrm{const}\cdot\operatorname{t}(I)^\frac{n}{\operatorname{codim}I}
.\]
\end{proof}

\begin{Cor}[see \iref{Corollary introduction; main corollary}]\label{Corollary the lower bound main theorem; main corollary}
With $\xi,\gamma,K$ as before, there exists a constant $C$ depending only on $\xi,\gamma,K$ such that for any homogeneous $p\in K[x_0,...,x_n]$ and any $0<h<\frac1e$, there is a collection of discs $\{D^i\}_{i=1}^a$ whose diameters sum to less than $h$, and with $a\leq C\cdot\operatorname{t}(p)^n$, such that:
\[
    \forall\omega\in\gamma(\overline{D_\frac1e}-\bigcup_i D^i):\quad
    \ln|p(\omega)|
        \geq
    \ln|p|-C\cdot\ln\frac1h\cdot\operatorname{t}(p)^n
.\]
\end{Cor}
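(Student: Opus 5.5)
The plan is to reduce to \iref{Corollary lower bound on I} applied to the principal ideal $I:=(p)$, and then translate the bound on $|I(\omega)|$ back into a bound on $|p(\omega)|$ using \iref{Lemma Nesterenko principal ideal}. We may assume $p$ is nonconstant; if $\deg p=0$, then $|p(\omega)|=|p|$ and there is nothing to prove. Then $(p)$ is a homogeneous, proper, nontrivial ideal. It is unmixed because a principal ideal in a polynomial ring has no embedded components and all its associated primes have codimension $1$. So $\operatorname{codim}(p)=1$ and the exponent $\frac{n}{\operatorname{codim}I}$ equals $n$.

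First I will check that $\operatorname{t}((p))\le\mathrm{const}\cdot\operatorname{t}(p)$. By \iref{Lemma Nesterenko principal ideal}, $\deg(p)=\deg p$ and $\operatorname{h}((p))\le\operatorname{h}(p)+C\deg p$. One should note that $\operatorname{h}(p)$ here is the projective (global) height, which is $\ge0$ by the product formula, so $\operatorname{t}(p)\ge\deg p\ge1$ and all the bounds are compatible. Hence $\operatorname{t}((p))^n\le\mathrm{const}\cdot\operatorname{t}(p)^n$.

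Next I apply \iref{Corollary lower bound on I} to $I=(p)$ with the given $h$. This produces discs whose diameters sum to less than $h$, with $a\le\mathrm{const}\cdot\operatorname{t}(p)^n$, such that $\ln|I(\omega)|\ge-\mathrm{const}\cdot\ln\frac1h\cdot\operatorname{t}(p)^n$ on $\gamma(\overline{D_\frac1e}-\bigcup_iD^i)$. Part (3) of \iref{Lemma Nesterenko principal ideal} gives
\[
\ln|I(\omega)|\le\ln\|p\|_\omega+C\deg p=\ln|p(\omega)|-\ln|p|-\deg p\ln|\omega|+C\deg p .
\]
By \iref{Assumption gamma lands in small affine ball}, $|\omega|=1$ for $\omega\in\Gamma_1$, so the $\ln|\omega|$ term vanishes. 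Rearranging,
\[
\ln|p(\omega)|\ge\ln|p|-\mathrm{const}\cdot\ln\frac1h\cdot\operatorname{t}(p)^n-C\deg p .
\]
Since $\ln\frac1h>1$ and $\deg p\le\operatorname{t}(p)^n$, the last term is absorbed by enlarging the constant, which gives the claim.

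The only delicate point is the bookkeeping. One has to confirm that $(p)$ satisfies the hypotheses of the main theorem, namely that it is unmixed and of codimension $1$. One also has to make sure the normalization $|p|$, which is the archimedean absolute value at the fixed embedding, matches the one in \iref{Lemma Nesterenko principal ideal}(3). Both follow directly from the definitions, so I do not expect a genuine obstacle. The real content is in \iref{Corollary lower bound on I} and the main theorem.
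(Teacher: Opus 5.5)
Your proposal is correct and is essentially the paper's proof: pass to $I=(p)$, use the principal-ideal comparison (Nesterenko's Proposition 4.8) to get $\operatorname{t}(I)\le\mathrm{const}\cdot\operatorname{t}(p)$ and $\ln|I(\omega)|\le\ln\|p\|_\omega+\mathrm{const}\cdot\deg p$, apply the disc-exclusion corollary for ideals with $\operatorname{codim}I=1$, and use $|\omega|=1$ on $\Gamma_1$ to absorb the $\deg p$ term. Your extra checks are correct and fill in details the paper leaves implicit: the constant case, the unmixedness of $(p)$, and $\operatorname{h}(p)\ge 0$ (so that $\deg p\le\operatorname{t}(p)$).
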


\begin{proof}
Let $I\subseteq K[x_0,...,x_n]$ be the ideal generated by $p$. 
By \iref{Lemma Nesterenko principal ideal},
\begin{align*}
    \forall\omega\in\mathbb C^{n+1}-\{0\}&:\\
        \ln|I(\omega)|
            &\leq
        \ln|p(\omega)|+\ln\left(|p|^{-1}\right)+\ln\left(|\omega|^{-\deg p}\right)+\mathrm{const}\cdot\deg p
\end{align*}
and
\[
    \operatorname{t}(I)
        \leq
    \mathrm{const}\cdot\operatorname{t}(p)
.\]
So, by \iref{Corollary lower bound on I}, there are finitely many discs $\{D^i\}_{i=1}^a$ whose diameters sum to less than $h$, and with $a\leq\mathrm{const}\cdot\operatorname{t}(I)^n$, such that (recall that $\operatorname{codim}I=1$):
\begin{align*}
    \forall\omega\in\gamma(\overline{D_\frac1e}-\bigcup_i D^i):\quad
    \ln|I(\omega)|
        &\geq
    -\mathrm{const}\cdot\ln\frac1h\cdot\operatorname{t}(I)^n\\
        &\geq
    -\mathrm{const}\cdot\ln\frac1h\cdot\operatorname{t}(p)^n
\end{align*}
Since $|\omega|=1$ for any $\omega\in\Gamma$, we obtain:
\begin{align*}
    \forall\omega\in\gamma(\overline{D_\frac1e}-\bigcup_i D^i)&:\\
    \ln|p(\omega)|&+\ln\left(|p|^{-1}\right)+\mathrm{const}\cdot\deg p
        \geq
    -\mathrm{const}\cdot\ln\frac1h\cdot\operatorname{t}(p)^n
.\end{align*}
So,
\begin{align*}
    \forall\omega\in\gamma(\overline{D_\frac1e}-\bigcup_i D^i):&\\
    \ln|p(\omega)|
        &\geq
    \ln|p|
    -\mathrm{const}\cdot\ln\frac1h\cdot\operatorname{t}(p)^n\\
\end{align*}
\end{proof}

\begin{Cor}[see \iref{Theorem introduction; main theorem}]\label{Corollary the lower bound main theorem; main remark}
With $\gamma,p$ as before:
\[
    \max_{\omega\in\Gamma}\ln|p(\omega)|
        \geq
    \ln|p|-C\cdot\operatorname{t}(p)^n
,\]
for some constant $C$ depending only on $\xi,\gamma,K$.
\end{Cor}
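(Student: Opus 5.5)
This follows directly from the main theorem, \iref{Theorem lower bound main theorem}, applied to the principal ideal $I:=(p)$, together with \iref{Lemma Nesterenko principal ideal}. We may assume $p\neq0$, since otherwise the statement is vacuous. Then $I=(p)$ is a homogeneous unmixed ideal of codimension $1$: every associated prime of a principal ideal in a polynomial ring has height one. If $p$ is a nonzero constant the statement is trivial, since then $p(\omega)=p$ and $\ln|p(\omega)|=\ln|p|$. So we may assume $\deg p\ge 1$, which makes $I$ proper and nontrivial.

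The main theorem then gives
\[
    \operatorname*{ord}_\Gamma I\le \mathrm{const}\cdot\operatorname{t}(I)^n .
\]
In other words, there is a point $\omega_0\in\Gamma$ with
\[
    \ln|I(\omega_0)|\ge -\mathrm{const}\cdot \operatorname{t}(I)^n .
\]
Part (2) of \iref{Lemma Nesterenko principal ideal}, together with $\deg I=\deg p$, gives $\operatorname{t}(I)\le \operatorname{t}(p)+C\deg p\le \mathrm{const}\cdot\operatorname{t}(p)$. Here we use $\deg p\le\operatorname{t}(p)$, which holds because $\operatorname{h}(p)\ge0$ by the product formula.

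Next we apply part (3) of \iref{Lemma Nesterenko principal ideal} at the point $\omega_0$:
\[
    \ln|I(\omega_0)|\le \ln\|p\|_{\omega_0}+C\deg p = \ln|p(\omega_0)|-\ln|p|-\deg p\cdot\ln|\omega_0|+C\deg p .
\]
By assumption~\iref{Assumption gamma lands in small affine ball}, the first coordinate of $\omega_0$ is $1$ and the other coordinates have absolute value at most $\frac12$, so $|\omega_0|=1$ and the term $\ln|\omega_0|$ vanishes. Combining this with the lower bound on $\ln|I(\omega_0)|$ yields
\[
    \max_{\omega\in\Gamma}\ln|p(\omega)|\ \ge\ \ln|p(\omega_0)|\ \ge\ \ln|p|-\mathrm{const}\cdot\operatorname{t}(p)^n-C\deg p\ \ge\ \ln|p|-\mathrm{const}\cdot\operatorname{t}(p)^n ,
\]
using $\deg p\le \operatorname{t}(p)\le\operatorname{t}(p)^n$ when $\operatorname{t}(p)\ge 1$, which holds since $\deg p\ge1$.

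There is no genuine obstacle here. The only points needing care are the normalization $|\omega|=1$ on $\Gamma$, the verification that $(p)$ is unmixed of codimension $1$, and the absorption of the linear error terms into $\operatorname{t}(p)^n$. An alternative route is to read the result off \iref{Corollary the lower bound main theorem; main corollary} with any fixed $h$, say $h=\frac1{e^2}$, since the discs there cannot cover all of $\overline{D_{1/e}}$. The direct argument above avoids that detour.
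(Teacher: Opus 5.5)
Your proof is correct, and it is a more direct deduction than the paper's.

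\textbf{The paper's route.} The paper proves this corollary in one line: it invokes \iref{Corollary the lower bound main theorem; main corollary} with $h=0.01$ and takes any point of $\overline{D_{1/e}}$ lying outside the exceptional discs. That corollary uses the same inputs you do: \iref{Theorem lower bound main theorem} for $I=(p)$, parts (2)--(3) of \iref{Lemma Nesterenko principal ideal}, and $|\omega|=1$ on $\Gamma$. It combines them with \iref{Corollary lower bound on I}, which rests on the Bernstein-index covering lemma \iref{Lemma lower bound on I in general}. That lemma upgrades the bound on $\max_\Gamma|I|$ to a pointwise lower bound on $|I(\gamma(x))|$ outside small discs.

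\textbf{What your route does differently.} You notice that the statement only concerns the maximum over $\Gamma$, which is exactly what $\operatorname{ord}_\Gamma I$ controls. So you apply part (3) of the principal-ideal lemma at a single point $\omega_0$ where $|I|$ is maximal. This skips the value-distribution (disc-covering) step entirely; the Bernstein-index lemmas are needed only inside the proof of the main theorem, not in this deduction from it. You also treat the edge cases explicitly: constant $p$, properness and unmixedness of $(p)$, and $\operatorname{h}(p)\ge 0$ giving $\deg p\le\operatorname{t}(p)\le\operatorname{t}(p)^n$. The paper leaves these implicit.

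\textbf{What the paper's route buys.} It is shorter on the page only because the stronger pointwise corollary had to be proved anyway for the intended applications. Your closing remark, that the result can be read off the disc corollary with a fixed $h$, is exactly the paper's argument.
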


\begin{proof}
This follows from \iref{Corollary the lower bound main theorem; main corollary} by choosing $h=0.01$.
\end{proof}

\begin{Cor}\label{Corollary number of zeroes with multiplicity}
With $\gamma,p$ as before, we have:
\[
    \B_{\frac1e,1}(p\circ\gamma)
        \leq
    C\cdot\operatorname{t}(p)^n
,\]
for some constant $C$ depending only on $\xi,\gamma,K$.
Thus, the number of zeros of $p\circ\gamma$ in $\overline{D_\frac1e}$ (counted with multiplicities) is at most
\[
    \#\mathrm{Zeroes}_\frac1e(p\circ\gamma)\leq C'\cdot\operatorname{t}(p)^n
\]
for some constant $C'$ depending only on $\xi,\gamma,K$.
\end{Cor}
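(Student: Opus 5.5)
The plan is to bound the Bernstein index directly, since the zero count then follows from \iref{Lemma bound on number of zeroes of f given bernstein index}. Write $f:=p\circ\gamma$, holomorphic on a neighbourhood of $\overline{D_1}$. By definition,
\[
    \B_{\frac1e,1}(f)=\ln\max_{z\in\overline{D_1}}|p(\gamma(z))|-\ln\max_{z\in\overline{D_\frac1e}}|p(\gamma(z))|.
\]
We treat the two terms separately: an upper bound for the first (the numerator) and a lower bound for the second (the denominator).

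\textbf{Numerator.} By \iref{Assumption gamma lands in small affine ball}, every $\gamma(z)$ has first coordinate $1$ and all other coordinates of absolute value at most $\frac12$, so $|\gamma(z)|=1$. Bounding each monomial of $p$ by $|p|$, we get $|p(\gamma(z))|\le N\cdot|p|$, where $N=\binom{\deg p+n}{n}\le 2^{\deg p+n}$ is the number of monomials. Hence
\[
    \ln\max_{\overline{D_1}}|f|\le\ln|p|+\mathrm{const}\cdot\deg p .
\]

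\textbf{Denominator.} \iref{Corollary the lower bound main theorem; main remark} gives directly
\[
    \ln\max_{\Gamma}|p|\ge\ln|p|-C\cdot\operatorname{t}(p)^n ,
\]
and $\Gamma=\gamma(\overline{D_\frac1e})$.

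\textbf{Combining.} Subtracting, the $\ln|p|$ terms cancel, and
\[
    \B_{\frac1e,1}(f)\le\mathrm{const}\cdot\deg p+C\operatorname{t}(p)^n\le C''\operatorname{t}(p)^n .
\]
The last step uses $\deg p\le\operatorname{t}(p)$. This in turn needs $\operatorname{h}(p)\ge0$, which holds by the product formula after scaling some coefficient to $1$. It also needs $\operatorname{t}(p)\ge1$ whenever $\deg p\ge1$, since then $\deg p\le\operatorname{t}(p)\le\operatorname{t}(p)^n$. If $\deg p=0$, then $p$ is a nonzero constant and $\B=0$, so there is nothing to prove. Finally, \iref{Lemma bound on number of zeroes of f given bernstein index} with $K=\overline{D_\frac1e}$ and $U=D_1$ turns the bound on $\B_{\frac1e,1}(f)$ into the bound $C'\operatorname{t}(p)^n$ on the number of zeroes in $\overline{D_\frac1e}$, counted with multiplicity.

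\textbf{Main obstacle.} The only delicate point is the normalization mismatch. The main remark is stated with the archimedean quantity $|p|$, and the numerator bound must use the same $|p|$ so that the two cancel. Both bounds are phrased in terms of the fixed embedding's $|\cdot|$, so they match. The remaining care is in checking the homogeneous/projective conventions: in particular, that $p(\gamma(z))$ is evaluated at the affine representative with first coordinate $1$, which is exactly the representative used in \iref{Corollary the lower bound main theorem; main corollary}.
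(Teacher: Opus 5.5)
Your proposal is correct and follows the paper's proof. As the paper does, you bound the numerator of $\B_{\frac1e,1}(p\circ\gamma)$ by $\ln|p|+\mathrm{const}\cdot\deg p$ using $|\gamma(z)|=1$, bound the denominator from below via the main remark (the $h=0.01$ case of the main corollary), and then convert the Bernstein-index bound into a zero count with the Ilyashenko--Yakovenko lemma; your checks that $\operatorname{h}(p)\ge 0$ (so $\deg p\le\operatorname{t}(p)\le\operatorname{t}(p)^n$ when $\deg p\ge1$) and of the constant case just make explicit what the paper's two-line proof leaves implicit.
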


\begin{proof}
The first inequality follows from \iref{Corollary the lower bound main theorem; main remark} together with the bound $\max_{\omega\in\Gamma_1}|p(\omega)|\leq|p|\cdot\mathrm{const}^{\deg p}$. The bound on the number of zeros follows from \iref{Lemma bound on number of zeroes of f given bernstein index}.
\end{proof}

\section{Proof of the main theorem}\label{Section lower bound main theorem}

\subsection{Outline of the induction}

We prove \iref{Theorem lower bound main theorem} by induction on $m$, the dimension of the ideal. In each of the following cases, we establish the theorem with a constant depending only on $\xi,\gamma,K$ and the case under consideration:
\begin{enumerate}
\item \emph{Base case 1:} we prove the claim for homogeneous prime ideals of dimension $0$, establishing a constant $c_0^\mathrm{prime}$.
\item \emph{Base case 2:} we prove the claim for homogeneous prime ideals of any dimension larger than $0$ that contain some $\{Q,\xi(Q),\xi(\xi(Q)),...\}$ for some nonzero homogeneous $Q\in K[x_0,...,x_n]$, establishing a constant $c^\mathrm{invariant}$.
\item \emph{Intermediate reduction in dimension $m$:} assuming the claim holds for all homogeneous prime ideals of dimension $m$ (for some $0\leq m\leq n-1$) with some constant $c_m^\mathrm{prime}$, we prove it for all homogeneous unmixed ideals of dimension $m$, establishing a constant $c_m$.
\item \emph{Induction step from dimension $m-1$ to dimension $m$:} assuming the claim holds for all homogeneous unmixed ideals of dimension $m-1$ (for some $0\leq m-1\leq n-2$) with some constant $c_{m-1}$, we prove it for all homogeneous prime ideals of dimension $m$, establishing a constant $c_m^\mathrm{prime}$.
\end{enumerate}

Together, these cases give, for any $I$:
\[
    \operatorname*{ord}_{\Gamma}I
        \leq
    \max(c_0,...,c_{n-1})\cdot\operatorname{t}(I)^\frac{n}{\operatorname{codim}I}
,\]
which establishes the theorem.

\subsection{Base case: prime \texorpdfstring{$\mathfrak p$}{p} with \texorpdfstring{$\dim\mathfrak p=0$}{dim p=0}}

\subsubsection{Main ingredient}

For this case, we will use the following lemma:

\begin{Lem}\label{Lemma aux E ln(x-omega)>const}
Let $\tau:\overline{D_{a,r}}\rightarrow\{1\}\times\overline{D_\frac12}^n$ be of the form
\[
    \tau(z):=(1,z,\tau'(z))
,\]
for some $\tau':\overline{D_{a,r}}\to\overline{D_\frac12}^{n-1}$, where $\overline{D_{a,r}}$ is the disc in the complex plane centered at $a\in\mathbb C$ with radius $r\in\mathbb R_{>0}$, and $|a|+r\leq\frac12$. Then:
\[
    \forall\omega\in\mathbb C^{n+1}-\{0\}:\quad
    \int_{z\in D_{a,r}}\min(0,\ln\|\tau(z)-\omega\|)dA
        \geq
    -\frac\pi2
,\]
where $dA$ is the area measure.
\end{Lem}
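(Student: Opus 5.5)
The plan is to reduce the projective distance to a one-dimensional Euclidean distance in the $z$-coordinate, and then evaluate an explicit logarithmic integral.

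\emph{Reduction.} Fix $\omega=(\omega_0,\ldots,\omega_n)$ and write $\tau(z)=(1,z,\tau'(z))$. In the definition of $\|\tau(z)-\omega\|$, keep only the pair of indices $(i,j)=(0,1)$:
\[
\|\tau(z)-\omega\|\geq |1\cdot\omega_1-z\,\omega_0|\cdot|\tau(z)|^{-1}|\omega|^{-1}.
\]
Every coordinate of $\tau(z)$ has absolute value at most $1$ and the first one equals $1$, so $|\tau(z)|=1$. This gives
\[
\|\tau(z)-\omega\|\ge |\omega_0|\,|z-c|/|\omega|
\]
whenever $\omega_0\ne0$, where $c:=\omega_1/\omega_0$.

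\emph{Case analysis on $\omega$.} Two subcases need separate treatment.
\begin{itemize}
\item If $|\omega_0|$ is not small relative to $|\omega|$, the bound above gives $\|\tau(z)-\omega\|\gtrsim |z-c|$ up to a bounded factor.
\item If $|\omega_0|$ is small relative to $|\omega|$, then $\omega$ is far from the affine chart $\{1\}\times\overline{D_\frac12}^n$. In that case the projective distance should be bounded below by an absolute constant. To see this, pick an index $k$ with $|\omega_k|=|\omega|$ and use the pair $(0,k)$: the quantity $|\omega_k-\tau_k(z)\omega_0|$ is at least $|\omega|-\frac12|\omega_0|$.
\end{itemize}
More cleanly, the pair $(0,k)$ yields
\[
\|\tau(z)-\omega\|\ge \frac{|\omega_k-\tau_k(z)\omega_0|}{|\omega|}.
\]
Taking the maximum over $k$ gives a lower bound by the sup-distance from $\tau(z)$ to $\omega/|\omega|$ in the normalization where the largest coordinate is $1$. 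When $|\omega_0|\ge|\omega_k|$ for all $k$, this reduces to $\max_k|\tau_k(z)-\omega_k/\omega_0|\ge|z-c|$.

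So in every case I expect
\[
\|\tau(z)-\omega\|\ge\min\bigl(\tfrac12,\,|z-c|\bigr)
\]
for a suitable $c\in\mathbb C$. I anticipate this step, tracking the constants carefully enough that the final bound is exactly $-\frac\pi2$, to be the main obstacle. If the constant $\frac12$ does not come out directly, I would instead aim for $\|\tau(z)-\omega\|\ge|z-c|$ in the relevant region, using $\min(0,\cdot)$ to discard the large values.

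\emph{Integral estimate.} With the reduction in hand,
\[
\int_{D_{a,r}}\min(0,\ln\|\tau(z)-\omega\|)\,dA\ \ge\ \int_{D_{a,r}}\min(0,\ln|z-c|)\,dA .
\]
The integrand $\min(0,\ln|z-c|)$ is nonpositive and supported on the unit disc around $c$. Enlarging the domain of integration to all of $\mathbb C$ can only decrease the value, so the right-hand side is at least
\[
\int_{|w|<1}\ln|w|\,dA=2\pi\int_0^1 \rho\ln\rho\,d\rho=2\pi\cdot(-\tfrac14)=-\tfrac\pi2,
\]
which is exactly the claimed bound. Note that this argument never uses the hypothesis $|a|+r\le\frac12$ except to guarantee that $\tau$ lands in the chart.
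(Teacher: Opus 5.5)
Your two pointwise estimates are the paper's. When $|\omega_0|=|\omega|$, the index pair $(0,1)$ gives $\|\tau(z)-\omega\|\ge|z-c|$ with $c=\omega_1/\omega_0$. When $\max_i|\omega_i|$ is attained at some $k\neq0$, the pair $(0,k)$ gives $\|\tau(z)-\omega\|\ge\frac12$. Your final evaluation $\int_{|w|<1}\ln|w|\,dA=-\frac\pi2$ is also the paper's.

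The step that does not hold as written is the merging of the two cases. The combined bound $\|\tau(z)-\omega\|\ge\min(\frac12,|z-c|)$ does not imply the pointwise comparison $\min(0,\ln\|\tau(z)-\omega\|)\ge\min(0,\ln|z-c|)$, which is what your integral estimate uses. In the second case $c$ is unspecified. If $|z-c|\ge1$ on $D_{a,r}$, the right-hand side is $0$, while the left-hand side can be negative. So the comparison is justified only when $|\omega_0|=|\omega|$.

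The repair is to handle the second case on its own, as the paper does. There the integrand is at least $-\ln2$ on a disc of area $\pi r^2\le\frac\pi4$, so the integral is at least $-\frac{\pi\ln2}{4}>-\frac\pi2$. Alternatively, choose $c:=a$ in that case. Then $|z-c|<r\le\frac12$ on $D_{a,r}$, and your comparison does hold.

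Either way, the bound $r\le\frac12$ is genuinely needed to control the area over which the constant $-\ln2$ is integrated. Your closing remark that $|a|+r\le\frac12$ serves only to place $\tau$ in the chart therefore understates its role.
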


\begin{proof}
Write $\omega=:(\omega_0,...,\omega_n)$. If $\max_i|w_i|$ is attained at some $i_\omega\neq 0$, then, since $|\tau(z)|=1$ and $|\tau(z)_{i_\omega}|\leq\frac12$, we have:
\begin{align*}
    \forall z \in \overline{D_{a, r}}:\quad
    \|\tau(z) - \omega\| 
    &\geq \left|1 \cdot \omega_{i_\omega} - \tau(z)_{i_\omega} \cdot \omega_0\right| \cdot |\omega_{i_\omega}|^{-1}
    \\&\geq \left(|\omega_{i_\omega}| - \left|\frac{\omega_0}{2}\right|\right) \cdot |\omega_{i_\omega}|^{-1} 
    \geq \frac12
,\end{align*}
and since $r\leq\frac12$, the integral is at least $-0.55$.

Otherwise, $\max_i|\omega_i|$ is attained at $\omega_0$. By renormalizing $\omega$, we may assume that $\omega_0=1$ and $|\omega|=1$. Then, by \iref{Remark projective vs affine distance}, we have:
\[
    \forall z\in D_{a,r}:\quad
        \|\tau(z)-\omega\|
            =
        \|\tau(z)-\omega\|_\infty
            \geq
        |z-\omega_1|
    .\]
Now we can calculate:
\begin{align*}
    \int_{z\in D_{a, r}} \min(0,\ln \|\tau(z) - \omega\|)dA
    &\geq \int_{z\in D_{a, r}}\min(0,\ln |z - \omega_1|)dA\\
    &=\int_{x\in D_{a-\omega_1, r}} \min(0,\ln|x|)dA
,\end{align*}
and by considering only the negative values of $\ln|x|$, we see that this integral is at least:
\[
    \int_{x\in\overline{D_1}}\ln|x|dA=-\frac\pi2
.\]
This completes the proof.
\end{proof}

\subsubsection{Establishing this case}

By \iref{Lemma zero dimensional ln|p(omega)|}, there exist $\omega_1,...,\omega_{\deg\mathfrak p}\in\mathbb C^{n+1}-\{0\}$ such that
\[
    \forall\omega\in\mathbb C^{n+1}-\{0\}:\quad
    \ln|\mathfrak p(\omega)|
        \geq
    \sum_i\ln\|\omega-\omega_i\|
        -
    \mathrm{const}\cdot\deg\mathfrak p
.\]

Since $\frac{d\gamma}{dz}\not\equiv 0$, we may assume without loss of generality that there exists a holomorphic trajectory
\begin{align*}
\begin{cases}
    \tau' &: D_{a,r}\to\mathbb C^{n-1}\\
    \tau&: D_{a,r}\to\mathbb C^{n+1}-\{0\}\\
    \tau(z)&:=(1,z,\tau(z)')
\end{cases}
,\end{align*}
for some $a$ and $0<r\leq 1$ ($\tau',\tau,a,r$ depend only on $\Gamma$), satisfying, for any $z$, that $\tau(z)\in\Gamma$.

By \iref{Lemma aux E ln(x-omega)>const}, since $r$ is a constant depending only on $\xi$, for each $i$,
\[
    \operatorname{\mathbb E}_{z\in D_{a,r}}(\ln\|\tau(z)-\omega_i\|)
        \geq
    \operatorname{\mathbb E}_{z\in D_{a,r}}(\min(0,\ln\|\tau(z)-\omega_i\|))
        \geq
-\mathrm{const}
,\]
where $\mathbb E$ is the expectation operator. Thus, by linearity of expectation, we have
\[
    \operatorname{\mathbb E}_{z\in D_{a,r}}
    \ln|\mathfrak p(\tau(z))|
        \geq\\
    -\mathrm{const}\cdot\deg\mathfrak p
,\]
hence there exists some $z_0\in D_{a,r}$ for which, writing $x:=\tau(z_0)\in\Gamma$, we have:
\[
    \ln|\mathfrak p(x)|
        \geq\\
    -\mathrm{const}\cdot\deg\mathfrak p
.\]
Consequently,
\[
    \operatorname*{ord}_{\Gamma}\mathfrak p
        \leq
    \mathrm{const}\cdot\deg\mathfrak p
        \leq
    \mathrm{const}\cdot\operatorname{t}(\mathfrak p)
,\]
and we may choose $c_0^\mathrm{prime}$ to be this constant.

\subsection{Base case: the arithmetic D-property}

Let $\mathfrak p$ be a prime ideal with $\dim\mathfrak p>0$ containing some nonzero $\{Q,\xi(Q),\xi(\xi(Q)),...\}$.
Since $(\xi,\Gamma)$ satisfies the arithmetic D-property over $K$,
\[
    \operatorname*{ord}_{\Gamma}\mathfrak p
        \leq
    c^\mathrm{D}\cdot\operatorname{t}(\mathfrak p)^{\frac{n}{\operatorname{codim}\mathfrak p}}
,\]
and thus we take $c^\mathrm{invariant}$ to be $c^D$.

\subsection{Intermediate reduction: reducing from the unmixed case to the prime case}

\subsubsection{Main ingredient}

The main ingredient for this case is the following proposition:

\begin{Prop}\label{Proposition I = intersection of pi}
Let $I\subseteq K[x_0,...,x_n]$ be a homogeneous unmixed ideal. Consider its reduced primary decomposition, with prime radicals $\mathfrak p_i$ of multiplicity $n_i$. Then:
\[
    \operatorname*{ord}_{\Gamma}I
        \leq
    C_1\cdot\sum_i n_i\operatorname*{ord}_{\Gamma}\mathfrak p_i
        +
    C_2\cdot\deg I
,\]
where $C_1,C_2$ are some constants depending only on $\xi,\gamma,K$.
\end{Prop}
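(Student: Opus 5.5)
We start from Lemma~\ref{Lemma Nesterenko primary decomposition of ideal}(3): for every $\omega$,
\[
    \sum_i n_i\ln|\mathfrak p_i(\omega)|\leq\ln|I(\omega)|+n^3\deg I .
\]
Taking the maximum over $\omega\in\Gamma$ on the right gives
\[
    -\operatorname*{ord}_\Gamma I\geq\max_{\omega\in\Gamma}\sum_i n_i\ln|\mathfrak p_i(\omega)|-n^3\deg I .
\]
So it suffices to show that the maximum over $\Gamma$ of the sum is at least $-C_1\sum_i n_i\operatorname*{ord}_\Gamma\mathfrak p_i-\mathrm{const}\cdot\deg I$. The difficulty, noted in the sketch, is that each $|\mathfrak p_i|$ may peak at a different point of $\Gamma$. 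The plan is to replace the maximum by an average over $\overline{D_{\frac1e}}$, where the sum splits by linearity, and then bound each average from below using the Bernstein index.

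\textbf{Step 1.} We realize each $|\mathfrak p_i(\gamma(z))|$ as (a constant times) a maximum of moduli of holomorphic functions. Since $|\gamma(z)|=1$ by Assumption~\ref{Assumption gamma lands in small affine ball}, we have $|\mathfrak p_i(\gamma(z))|=c_i\max_\alpha|g_{i,\alpha}(z)|$. Here $g_{i,\alpha}(z)$ are the coefficients of $\chi_{\gamma(z)}(\operatorname{Chow}(\mathfrak p_i))$ as a polynomial in the $s^{(i)}_{j,k}$, which are holomorphic in $z$ on a neighbourhood of $\overline{D_1}$, and $c_i=|\operatorname{Chow}(\mathfrak p_i)|^{-1}$. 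Set $f_i:=\max_\alpha|g_{i,\alpha}|$. By Lemma~\ref{Lemma average is like maximum minus Bernstein index} applied to $f_i$ (which is not identically zero, since otherwise $\operatorname*{ord}_\Gamma\mathfrak p_i=\infty$ and the claim is trivial),
\[
    \mathbb E_{z\in\overline{D_{\frac1e}}}\ln|\mathfrak p_i(\gamma(z))|\geq-\operatorname*{ord}_\Gamma\mathfrak p_i-\mathrm{const}\cdot\B_{\frac1e,1}(f_i).
\]

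\textbf{Step 2: bounding the Bernstein index.} We have
\[
    \B_{\frac1e,1}(f_i)=\ln\max_{\overline{D_1}}|\mathfrak p_i(\gamma)|+\operatorname*{ord}_\Gamma\mathfrak p_i .
\]
We claim that $\ln|\mathfrak p(\omega)|\leq\mathrm{const}\cdot\deg\mathfrak p$ for all $\omega$. This should be a standard consequence of the definition: evaluating $\chi_\omega(\operatorname{Chow})$ with $|\omega|=1$ and normalizing by $|\operatorname{Chow}|$ gives a bound of the form (number of monomials)$\times$(entry sizes)$^{\deg}$. The number of monomials and the entry sizes are exponential in $(\dim+1)\deg\mathfrak p$ with constants depending only on $n$. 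Granting this, $\B_{\frac1e,1}(f_i)\leq\operatorname*{ord}_\Gamma\mathfrak p_i+\mathrm{const}\cdot\deg\mathfrak p_i$.

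\textbf{Step 3: conclusion.} Summing with weights $n_i$, using the linearity of expectation and the fact that the maximum over $\overline{D_{\frac1e}}$ dominates the average, we get
\[
    \max_{\omega\in\Gamma}\sum_i n_i\ln|\mathfrak p_i(\omega)|\geq-(1+\mathrm{const})\sum_i n_i\operatorname*{ord}_\Gamma\mathfrak p_i-\mathrm{const}\sum_i n_i\deg\mathfrak p_i .
\]
By Lemma~\ref{Lemma Nesterenko primary decomposition of ideal}(1), $\sum_i n_i\deg\mathfrak p_i=\deg I$. Combining with the display at the start yields the proposition, with $C_1=1+\mathrm{const}$ and $C_2=\mathrm{const}+n^3$.

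The main obstacle is Step 2. It requires a clean uniform upper bound on $|\mathfrak p(\omega)|$ on $\Gamma_1$ that is linear in $\deg\mathfrak p$, so that the Bernstein index is controlled by the order itself. It also requires checking that Lemma~\ref{Lemma average is like maximum minus Bernstein index} is legitimately applicable to the maximum over the (finitely many, $\mathfrak p$-dependent) coefficient functions. The constant in that lemma is absolute and does not depend on $k$, so this should be fine.
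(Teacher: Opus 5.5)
Your proposal is correct and follows the paper's proof essentially step for step. Both start from Lemma~\ref{Lemma Nesterenko primary decomposition of ideal}(3), replace the maximum over $\Gamma$ by the average over $\overline{D_{\frac1e}}$, apply Lemma~\ref{Lemma average is like maximum minus Bernstein index} to each $|\mathfrak p_i|\circ\gamma$ viewed as a maximum of holomorphic functions, bound the Bernstein index by $\operatorname*{ord}_\Gamma\mathfrak p_i+\mathrm{const}\cdot\deg\mathfrak p_i$ (this is Lemma~\ref{Lemma Bernstein index of I}, which rests on the appendix bound $\ln|I(\omega)|\leq\mathrm{const}\cdot\deg I$ proved by the monomial-counting argument you sketch), and finish with $\sum_i n_i\deg\mathfrak p_i=\deg I$.
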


\begin{proof}
By \iref{Lemma Nesterenko primary decomposition of ideal}, we have
\[
\forall\omega\in\mathbb C^{n+1}-\{0\}:
\sum_i n_i \ln|\mathfrak p_i(\omega)|
    \leq
\mathrm{const}\cdot\deg I+\ln|I(\omega)|
.\]
Thus,
\[
    \max_{\omega\in\Gamma}\ln|I(\omega)|
        \geq
    \max_{\omega\in\Gamma}\left(\sum_i n_i \ln|\mathfrak p_i(\omega)|\right)
        -
    \mathrm{const}\cdot\deg I
.\]

Any measurable function $f:X\rightarrow[-\infty,\infty)$ satisfies $\max_{x\in X} f\geq\operatorname{\mathbb E}_{x\in X}(f)$, so, regarding $\overline{D_\frac1e}$ as a probability space with the uniform measure, we obtain:
\begingroup\small
\[
    \max_{\omega\in\Gamma}\left(\sum_i n_i \ln|\mathfrak p_i(\omega)|\right)
        \geq
    \operatorname{\mathbb E}_{z\in D_\frac1e}\left(\sum_i n_i \ln|\mathfrak p_i(\gamma(z))|\right)
        =
    \sum_i n_i\operatorname{\mathbb E}_{z\in D_\frac1e}\left(\ln|\mathfrak p_i(\gamma(z))|\right)
.\]
\endgroup

Since each $|\mathfrak p_i|\circ\gamma$ is defined as a maximum of holomorphic functions over $\overline{D_1}$ (because $|\gamma(z)|=1$), we can apply \iref{Lemma average is like maximum minus Bernstein index} to obtain:
\begin{align*}
    \operatorname{\mathbb E}_{z\in\overline{D_\frac1e}}\ln|\mathfrak p_i(\gamma(z))|
        &\geq
    \ln\max_{z\in\overline{D_\frac1e}}|\mathfrak p_i(\gamma(z))|-\mathrm{const}\cdot\B_{\frac1e,1}(|\mathfrak p_i|\circ\gamma)
        \\&=
    -\operatorname*{ord}_{\Gamma}\mathfrak p_i-\mathrm{const}\cdot\B_{\frac1e,1}(|\mathfrak p_i|\circ\gamma)
.\end{align*}
By \iref{Lemma Bernstein index of I},
\[
    \B_{\frac1e,1}(|\mathfrak p_i|\circ\gamma)
        \leq
    \operatorname*{ord}_{\Gamma}\mathfrak p_i
    +
    \mathrm{const}\cdot\deg \mathfrak p_i
.\]
Combining these equations gives:
\begin{align*}
    \max_{\omega \in \Gamma} \ln |I(\omega)|
    &\geq \sum_i n_i \left(
    -\operatorname*{ord}_{\Gamma} \mathfrak{p}_i 
    -\mathrm{const}\cdot\operatorname*{ord}_{\Gamma} \mathfrak{p}_i 
    -\mathrm{const} \cdot \deg \mathfrak{p}_i
    \right) - \mathrm{const} \cdot \deg I \\
    &= -\mathrm{const} \sum_i n_i \operatorname*{ord}_{\Gamma} \mathfrak{p}_i 
    - \mathrm{const} \sum_i n_i \deg \mathfrak{p}_i 
    - \mathrm{const} \cdot \deg I
.\end{align*}

Now,
\[
    \sum_i n_i \deg\mathfrak p_i=\deg I
.\]
and so,
\[
    \max_{\omega\in\Gamma}\ln|I(\omega)|
        \geq
    -\mathrm{const}\sum_i n_i\operatorname*{ord}_{\Gamma}\mathfrak p_i
    -
    \mathrm{const}\cdot\deg I
.\]
That is,
\[
\operatorname*{ord}_{\Gamma}I
    \leq
 \mathrm{const}\cdot\sum_i n_i\operatorname*{ord}_{\Gamma}\mathfrak p_i
    +
\mathrm{const}\cdot\deg I
 .\]
 \end{proof}

\subsubsection{Establishing this case}

We are given a homogeneous unmixed ideal $I\subseteq K[x_0,...,x_n]$. We consider its reduced primary decomposition with prime radicals $\mathfrak p_i$ of multiplicities $n_i$.
Recall that in this case we assume the main theorem holds for all \textbf{prime} ideals of dimension $m:=\dim I$, with constant $c_m^\mathrm{prime}$.

By \iref{Proposition I = intersection of pi} and the induction hypothesis for the prime case applied to each $\mathfrak p_i$, we have
\[
    \operatorname*{ord}_{\Gamma}I
        \leq
    \mathrm{const}\cdot c_m^\mathrm{prime}\cdot\sum_i n_i\cdot\operatorname{t}(\mathfrak p_i)^\frac{n}{\operatorname{codim}I}
        +
\mathrm{const}\cdot\deg I
,\]
and by \iref{Lemma Nesterenko primary decomposition of ideal}, we have
\[
    \sum_i n_i\operatorname{t}(\mathfrak p_i)
        \leq
    \mathrm{const}\cdot\operatorname{t}(I)
.\]
Using these equations and the fact that $x^a+y^a\leq (x+y)^a$ for $x,y\geq 0,\,a\geq 1$, we obtain:
\begin{align*}
    \operatorname*{ord}_{\Gamma} I
    &\leq \mathrm{const} \cdot c_m^\mathrm{prime}\cdot\left(\sum_i n_i \operatorname{t}(\mathfrak{p}_i)\right)^{\frac{n}{\operatorname{codim} I}}
    + \mathrm{const} \cdot \deg I \\
    &\leq \mathrm{const} \cdot(c_m^\mathrm{prime} + 1)\cdot\operatorname{t}(I)^{\frac{n}{\operatorname{codim} I}}
,\end{align*}
and thus we take $c_m$ to be $\mathrm{const}\cdot(c_m^\mathrm{prime}+1)$.

\subsection{The induction step: prime \texorpdfstring{$\mathfrak p$}{p} with \texorpdfstring{$\dim\mathfrak p>0$}{dim p>0}}

\subsubsection{Main ingredient}

For this case, we use the following proposition:

\begin{Prop}\label{Proposition construction of G}
 Let $\mathfrak p\subseteq K[x_0,...,x_n]$ be a homogeneous prime ideal that does not contain any nonzero $\{Q,\xi(Q),\xi(\xi(Q)),...\}$. Then there exists a homogeneous polynomial $G\in K[x_0,...,x_n]$ that satisfies the following properties:
\begin{enumerate}
    \item $G\not\in\mathfrak p$.
    \item $\operatorname{t}(G)\leq C\cdot\operatorname{t}(\mathfrak p)^\frac1{\operatorname{codim}\mathfrak p}$.
    \item  $\|G\|_{\gamma(z)}
        \leq
    e^{C\cdot\operatorname{t}(\mathfrak{p})^{\frac1{\operatorname{codim} \mathfrak{p}}}}\cdot(\rho_{\gamma(z), \mathfrak{p}} + \rho_{\gamma(z), \mathfrak{p}}^{0.6})$ for all $z$ in some large measurable subset $A$ of $D_\frac1e$ (more precisely, $\operatorname{Area}(A)\geq 0.56\cdot\operatorname{Area}(\overline{D_\frac1e})$, where $\operatorname{Area}$ is used in the usual Euclidean sense),
\end{enumerate}
where $C$ is some constant depending only on $\xi,\gamma,K$.
\end{Prop}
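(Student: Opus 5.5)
We construct $G$ from Proposition~\ref{Proposition existence of polynomial of small t and small mult in ideal}. Let $P\in\mathfrak p-\{0\}$ have minimal $\operatorname{t}$, so that $\operatorname{t}(P)\leq\mathrm{const}\cdot\operatorname{t}(\mathfrak p)^{\frac1{\operatorname{codim}\mathfrak p}}$. Let $i\leq\mathrm{const}$ be the smallest index with $\xi^i(P)\notin\mathfrak p$; note $i\geq1$ since $P\in\mathfrak p$. We set $G:=\xi^i(P)$. Then (1) is immediate. For (2), apply the bound $\operatorname{t}(\xi^a(f))\leq\operatorname{t}(f)+B\,a\ln(\deg f+Ba)$ (the corollary cited in the modified Lemma~13 argument) with $a=i\leq\mathrm{const}$. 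This gives $\operatorname{t}(G)\leq\mathrm{const}\cdot\operatorname{t}(P)$.

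For (3) we work along the trajectory. Set $f_j:=\xi^j(P)\circ\gamma$. By the tangency assumption, $\frac{d}{dz}(Q\circ\gamma)=a_z^{-1}(\xi Q)\circ\gamma$ with $a_z$ holomorphic. Assumption~\ref{Assumption gamma non degenerate derivative} together with compactness should give $|a_z|$ bounded above and below. Hence $G\circ\gamma$ is a combination of $f_0,f_0',\dots,f_0^{(i)}$ with bounded holomorphic coefficients, and more usefully $\xi^jP\circ\gamma=a_z\,(\xi^{j-1}P\circ\gamma)'$.

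Fix $z$ and let $x\in\operatorname{V}(\mathfrak p)$ be a point realizing $\rho:=\rho_{\gamma(z),\mathfrak p}$. Each $\xi^jP$ with $j<i$ vanishes at $x$. Each such polynomial has $\operatorname{t}\leq\mathrm{const}\cdot\operatorname{t}(P)$, so by a Lipschitz estimate in the projective metric, $\|\xi^jP\|_{\gamma(z)}\leq e^{\mathrm{const}\cdot\operatorname{t}(P)}\rho$. This bounds $|\xi^{i-1}P(\gamma(z))|\leq e^{\mathrm{const}\,\operatorname{t}(P)}\rho$ on all of $D_{\frac1e}$, using $|\gamma|=1$ and $|\xi^{i-1}P|\leq e^{\mathrm{const}\,\operatorname{t}(P)}$.

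It remains to bound $G\circ\gamma=a_z\,(\xi^{i-1}P\circ\gamma)'$. The derivative is where Proposition~\ref{Proposition derivative-function ratio} enters. Applied to $g:=\xi^{i-1}P\circ\gamma$, it gives a set $A$ of area at least $0.56\operatorname{Area}(\overline{D_\frac1e})$ on which $|g'|\leq\mathrm{const}\cdot\B_{\frac1e,1}(g)\,|g|$. We must then bound $\B_{\frac1e,1}(g)$ by $e^{\mathrm{const}\,\operatorname{t}(P)}$ times something harmless.

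This step is the main obstacle, because $\B(g)$ could a priori be as large as $\operatorname{ord}_\Gamma(\xi^{i-1}P)$, which is exactly what we are trying to control. Two ideas suggest themselves. The first is to note that the numerator of $\B$ is at most $e^{\mathrm{const}\,\operatorname{t}(P)}$, so $\B(g)\leq\mathrm{const}\,\operatorname{t}(P)+\ln(1/\max_{\Gamma}|g|)$. On $A$ we then get $|g'|\lesssim(\mathrm{const}\,\operatorname{t}(P)+\ln\frac1{|g|})|g|$. The second is to use that $u\ln\frac1u\leq\mathrm{const}\cdot u^{0.6}$ for $u\leq1$. Combining these with $|g|\leq e^{\mathrm{const}\,\operatorname{t}(P)}\rho$ yields $|G(\gamma(z))|\leq e^{\mathrm{const}\,\operatorname{t}(P)}(\rho+\rho^{0.6})$ on $A$; this is the source of the exponent $0.6$.

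If $\max_\Gamma|g|$ is much smaller than $|g(z)|$ at the point in question the bound is only better, so the worst case is handled by the $u\ln(1/u)$ estimate. Finally, dividing by $|G|\cdot|\gamma(z)|^{\deg G}$ only helps up to $e^{\mathrm{const}\,\operatorname{t}(G)}$, since $|G|\geq$ a height-controlled quantity via the product formula. The case $g\equiv0$ cannot occur, by the remark after Definition~\ref{Definition arithmetic D-property}. Renaming constants via $\operatorname{t}(P)\leq\mathrm{const}\cdot\operatorname{t}(\mathfrak p)^{1/\operatorname{codim}\mathfrak p}$ then gives (3).
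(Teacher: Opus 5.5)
Your proposal is correct and is essentially the paper's proof. With $F=\xi^{i-1}(P)$ and $G=\xi(F)$, you bound $\|F\|_{\gamma(z)}\le e^{\mathrm{const}\cdot\deg F}\rho_{\gamma(z),\mathfrak p}$ via a nearest zero of $\mathfrak p$; on the set $A$ from Proposition~\ref{Proposition derivative-function ratio} you bound the Bernstein index of $F\circ\gamma$ by $\mathrm{const}\cdot\operatorname{t}(P)+\ln\frac{1}{|F(\gamma(z))|}$, use $u\ln\frac1u\leq u^{0.6}$, and control $|G|$ from below by the product formula, which is exactly the content of \iref{Lemma f(omega)<|f|*rho}, \iref{Lemma bound on xi(f)(omega)} and \iref{Lemma |xi(f)|>e^-(const*t(f))|f|}. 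Two harmless side remarks: only an upper bound on $|a_z|$ is used (a lower bound can fail where $\xi$ vanishes along $\gamma$), and the case $F\circ\gamma\equiv 0$ needs no appeal to the D-property, since then $G\circ\gamma=a_z\,(F\circ\gamma)'\equiv 0$ and property (3) holds trivially.
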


\begin{proof}
By \iref{Proposition existence of polynomial of small t and small mult in ideal}, $\exists P\in\mathfrak p$ such that:
\[
    \operatorname{t}( P)
        \leq
    \mathrm{const}\cdot\operatorname{t}(\mathfrak p)^\frac1{\operatorname{codim}\mathfrak p}
\]
and
\[
    \operatorname{mult}_{\mathfrak p}^{\xi} P
        \leq
    \mathrm{const}
\]
($\operatorname{mult}_{\mathfrak p}^{\xi} P$ equals the number of times we need to differentiate $P$ along the vector field $\xi$ for it to leave $\mathfrak p$).
\begin{itemize}
    \item We define $F$ to be the derivative of order $\operatorname{mult}_{\mathfrak p}^{\xi} P-1$ of $ P$ along $\xi$ (since $ P\in\mathfrak p$, we have $\operatorname{mult}_{\mathfrak p}^{\xi} P\geq 1$).
    \item We define $G$ to be $\xi(F)$. Since $G$ is the derivative of order $\operatorname{mult}_{\mathfrak p}^{\xi} P$ of $ P$ along the vector field $\xi$, we have $G\not\in\mathfrak p$.
\end{itemize}

We note that $F\not\equiv 0$ since $G\not\in\mathfrak p$. It follows from \iref{Corollary t(xi^a(f))<t(f)+a*ln(t(f)+a)} that $\operatorname{t}(F)
            \leq
            \mathrm{const}\cdot\operatorname{t}(\mathfrak p)^\frac1{\operatorname{codim}\mathfrak p}
        $ and $\operatorname{t}(G)
            \leq
            \mathrm{const}\cdot\operatorname{t}(\mathfrak p)^\frac1{\operatorname{codim}\mathfrak p}
        $ and that for any $x$ which is a zero of $\mathfrak p$, we have $F(x)=0$. Thus, by \iref{Lemma bound on xi(f)(omega)} and \iref{Lemma f(omega)<|f|*rho} we have
\begin{align*}
    \forall \omega \in \gamma(A),x\in \operatorname{V}(\mathfrak p): \quad
    \|G\|_\omega
    &\leq e^{\mathrm{const} \cdot \operatorname{t}(F)}\cdot (\|F\|_\omega + \|F\|_\omega^{0.6})\\
    &\leq e^{\mathrm{const} \cdot\operatorname{t}(\mathfrak p)^{\frac1{\operatorname{codim}\mathfrak p}}}\cdot(\|\omega-x\|+\|\omega-x\|^{0.6})
,\end{align*}
and the conclusion follows.
\end{proof}

\subsubsection{Establishing this case}

This is the main case.
For this case, we make two assumptions:
\begin{enumerate}
\item We assume that $\mathfrak p$ does not contain any nonzero $\{Q,\xi(Q),\xi(\xi(Q)),...\}$. Otherwise, one of the base cases gives $
    \operatorname*{ord}_{\Gamma}\mathfrak p
        \leq
    c^\mathrm{invariant}\operatorname{t}(\mathfrak p)^{\frac{n}{\operatorname{codim}\mathfrak p}}
$, so the theorem holds provided we choose $c_m^\mathrm{prime}$ to be at least $c^\mathrm{invariant}$.
    \item We assume that $
    \operatorname*{ord}_{\Gamma}\mathfrak p
        >
    \alpha\cdot\operatorname{t}(\mathfrak p)^{1+\frac1{\operatorname{codim}\mathfrak p}}
$ for some constant $\alpha$ to be determined later. Otherwise, since $1+\frac1{\operatorname{codim}\mathfrak p}\leq\frac{n}{\operatorname{codim}\mathfrak p}$, the theorem holds provided we choose $c_m^\mathrm{prime}$ to be at least $\alpha$. We do not specify $\alpha$ explicitly. Instead, throughout this subsubsection we impose conditions requiring $\alpha$ to exceed constants that may depend on the fixed parameters, and finally choose any $\alpha$ satisfying all these conditions.

This assumption, together with \iref{Lemma upper bound on rho}, implies that
\begin{equation}\label{Equation upper bound on rho}
\forall\omega\in\Gamma:\,
        \rho_{\omega,\mathfrak p}
            <
        e^{-\frac{\alpha}{2n}\operatorname{t}(\mathfrak p)^{\frac1{\operatorname{codim}\mathfrak p}}}
\end{equation}
\end{enumerate}
for sufficiently large $\alpha$.

We now give the following construction:

\begin{Construction}[Construction of $G$]\label{Construction of G}
We construct a polynomial $G\in K[x_0,...,x_n]$ that satisfies the following properties:
\begin{enumerate}
    \item $G\not\in\mathfrak p$.
    \item \label{Property t(G) bound} $\operatorname{t}(G)\leq\mathrm{const}\cdot\operatorname{t}(\mathfrak p)^\frac1{\operatorname{codim}\mathfrak p}$.
    \item \label{Property G<rho^0.5} $\|G\|_{\gamma(z)}\leq\rho_{\gamma(z),\mathfrak p}^{0.5}$ for any $z$ in some large measurable subset $A$ of $D_\frac1e$ (more precisely, $\operatorname{Area}(A)\geq 0.56\cdot\operatorname{Area}(\overline{D_\frac1e})$, where $\operatorname{Area}$ is used in the usual Euclidean sense).
\end{enumerate}

$G$ is obtained from \iref{Proposition construction of G}. It follows that $G$ satisfies the first two properties and that:
\begin{equation}\label{Equation upper bound on |G(gamma(z))|}
    \|G_{\gamma(z)}\|
        \leq
    e^{\mathrm{const} \cdot \operatorname{t}(\mathfrak{p})^{\frac1{\operatorname{codim} \mathfrak{p}}}}\cdot (\rho_{\gamma(z), \mathfrak{p}} + \rho_{\gamma(z), \mathfrak{p}}^{0.6})
.\end{equation}
\iref{Property G<rho^0.5} follows from \iref{Equation upper bound on rho}: if $\alpha$ is at least $2n\ln\frac1{0.02}$ (so that $
    \rho_{\omega,\mathfrak p}+\rho_{\omega,\mathfrak p}^{0.6}
        \leq
    \rho_{\omega,\mathfrak p}^{0.55}
$) and $\alpha$ is at least $40n\cdot\mathrm{const}$ (for the $\mathrm{const}$ in \iref{Equation upper bound on |G(gamma(z))|}), then:
\[
    \forall\omega\in\gamma(A):\quad
    \|G_\omega\|
        \leq
    e^{\mathrm{const}\cdot\operatorname{t}(\mathfrak p)^\frac1{\operatorname{codim}\mathfrak p}}\cdot\rho_{\omega,\mathfrak p}^{0.55}
        \leq
    \rho_{\omega,\mathfrak p}^{0.5}
.\]
\end{Construction}

\begin{Construction}[Construction of $J$]\label{Construction of J}
Applying \iref{Corollary Nesterenko corollary intersection of ideal and polynomial} with $\omega:=\gamma(z),\,-S:=\ln|\mathfrak p(\gamma(z))|,\,\eta:=4, P:=G$ (for $z\in A$), we construct an ideal $J$ with the following properties:
    \begin{enumerate}
        \item $J$ is, in a cycle-theoretic sense, the intersection of $\mathfrak p$ and $(G^4)$. In particular, $\dim J=\dim\mathfrak p-1$.
            \item  $\operatorname{t}(J)\leq\mathrm{const}\cdot\operatorname{t}(\mathfrak p)\operatorname{t}(G)$.
        \item $    \forall z\in A:\,
        \ln|J(\gamma(z))|
          \leq
        \ln|\mathfrak p(\gamma(z))|
        +
        \mathrm{const}\cdot\operatorname{t}(\mathfrak p)\operatorname{t}(G)$.
    \end{enumerate}
    
(The second assumption gives $S>0$. By \iref{Property G<rho^0.5}, \iref{Property t(G) bound}, and \iref{Equation upper bound on rho}, if $\alpha$ is sufficiently large that $\frac{\alpha}{4n}\geq 2n[ K:\mathbb Q]\cdot\mathrm{const}$ (for the $\mathrm{const}$ in \iref{Property t(G) bound}), then
$
    \|G\|_{\gamma(z)}
        \leq
    e^{-2n[ K:\mathbb Q]\deg G}
$, verifying the conditions needed for this corollary.)
\end{Construction}

Finally, we explain why the construction of $G, J$ completes this case.
Applying \iref{Lemma lower bound on I in general} with $I:=J,\,h:=\frac1{2e}$, we obtain finitely many discs $\{D^i\}_{i=1}^a$ whose diameters sum to less than $\frac1e$ such that:
\[
    \forall x\in\overline{D_\frac1e}-\bigcup_i D^i:\quad
    \ln|J(\gamma(x))|
        \geq
    -\mathrm{const}\cdot\max(0,\operatorname*{ord}_{\Gamma}J)
        -
    \mathrm{const}\cdot\deg J
.\]

Since $\operatorname{Area}(A)\geq 0.56\cdot\operatorname{Area}\left(\overline{D_\frac1e}\right)$ and $\operatorname{Area}(\overline{D_\frac1e}-\bigcup_i D^i)\geq 0.75\cdot\operatorname{Area}\left(\overline{D_\frac1e}\right)$, the intersection of $A$ and $\overline{D_\frac1e}-\bigcup_i D^i$ is nonempty. For any point $z_0$ in this intersection:
\begingroup\small
\begin{align*}
    -\mathrm{const}\cdot\max(0,\operatorname*{ord}_{\Gamma}J) 
    - \mathrm{const} \cdot \deg J
    &\leq \ln |J(\gamma(z_0))|
    \leq \ln |\mathfrak{p}(\gamma(z_0))| 
    + \mathrm{const} \cdot \operatorname{t}(\mathfrak{p})^{1 + \frac1{\operatorname{codim} \mathfrak{p}}} \\
    &\leq \ln \max_{z \in \overline{D_{\frac1e}}} |\mathfrak{p}(\gamma(z))| 
    + \mathrm{const} \cdot \operatorname{t}(\mathfrak{p})^{1 + \frac1{\operatorname{codim} \mathfrak{p}}} \\
    &= -\operatorname*{ord}_{\Gamma} \mathfrak{p} 
    + \mathrm{const} \cdot \operatorname{t}(\mathfrak{p})^{1 + \frac1{\operatorname{codim} \mathfrak{p}}}
.\end{align*}
\endgroup
So,
\[
    \operatorname*{ord}_{\Gamma}\mathfrak p
        \leq
    \mathrm{const}\cdot\max(0,\operatorname*{ord}_{\Gamma}J)
    +
    \mathrm{const}\cdot\operatorname{t}(\mathfrak p)^{1+\frac1{\operatorname{codim}\mathfrak p}}
    +
    \mathrm{const}\cdot\deg J
.\]
By the induction hypothesis on $J$, the properties of $J$ and $G$ in \iref{Construction of J} and \iref{Construction of G}, and our assumption that $\dim\mathfrak p\geq 1$, the right-hand side is $\leq\mathrm{const}\cdot(c_{m-1}+1)\cdot\operatorname{t}(\mathfrak p)^\frac{n}{\operatorname{codim}\mathfrak p}$.
Thus, we may take $c_m^\mathrm{prime}:=\max(\mathrm{const}\cdot(c_{m-1}+1),c^\mathrm{invariant}, \alpha)$.

\section{Appendix: some calculations}\label{Section appendix calculations}

\subsection{Calculations for the norm of an ideal at a point}

\begin{Lem}\label{Lemma bound on ln|I(omega)|}
Let $I\subseteq K[x_0,...,x_n]$ be a non-trivial homogeneous unmixed ideal. Then
\[
    \forall\omega\in\mathbb C^{n+1}-\{0\}:\quad
        \ln|I(\omega)|
            \leq
        C\cdot\deg I
,\]
for some constant $C$ depending only on $n$.
\end{Lem}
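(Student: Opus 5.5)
The plan is to bound $|\chi_\omega(\operatorname{Chow}(I))|$ directly in terms of $|\operatorname{Chow}(I)|$ and $|\omega|$, using only the triangle inequality on coefficients. Since $|I(\lambda\omega)|=|I(\omega)|$, I would normalize so that $|\omega|=1$. Write $F:=\operatorname{Chow}(I)$ and $r:=\dim I+1$. Then $F$ is multihomogeneous in the $r$ groups of variables $u_{i,\cdot}$, of degree $\deg I$ in each group, so its total degree is $r\deg I\le (n+1)\deg I$. The target inequality is $|\chi_\omega(F)|\le e^{C\deg I}|F|$, which after dividing by $|F|$ and using $|\omega|=1$ is exactly the claim.

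To obtain it, I substitute $u_i=S^{(i)}\omega$. Each coordinate $(S^{(i)}\omega)_j=\sum_k \pm s^{(i)}_{j,k}\omega_k$ is a linear form in the $s$-variables. It has at most $n$ terms, and each coefficient has absolute value at most $|\omega|=1$. Take a monomial $\prod u_{i,j}^{e_{i,j}}$ of $F$ of total degree $N=r\deg I$. Expanding it in the $s$-variables gives a polynomial whose coefficients are bounded by the product of the $\ell^1$-norms of the linear forms. That product is at most $n^{N}$, because the $\ell^1$-norm of the coefficient vector is submultiplicative under products of polynomials.

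The next step is to sum over the monomials of $F$, of which there are at most $\binom{N+r(n+1)-1}{r(n+1)-1}\le 2^{N+r(n+1)}$. Each has a coefficient of absolute value at most $|F|$. This gives
\[
|\chi_\omega(F)|\le |F|\cdot n^{N}\cdot 2^{N+(n+1)^2}.
\]
Since $N\le(n+1)\deg I$, this yields $\ln|I(\omega)|\le (n+1)\ln(2n)\deg I+(n+1)^2\ln 2$.

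The only real obstacle is the additive term $(n+1)^2\ln2$ coming from the monomial count, since the statement asks for a purely multiplicative bound $C\deg I$. It is handled by noting that $I$ is non-trivial, so $\deg I\ge1$, and the constant term can therefore be absorbed into $C\deg I$ with $C$ depending only on $n$. Alternatively one can count monomials more carefully as $(N+1)^{r(n+1)}\le e^{\mathrm{const}\cdot N}$. Nothing arithmetic (heights or other places) enters, because only the archimedean absolute value $|\cdot|$ appears in $|I(\omega)|$.
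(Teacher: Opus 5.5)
Your proof is correct and is essentially the paper's argument. Like the paper, you expand $\operatorname{Chow}(I)(S^{(1)}\omega,\ldots,S^{(\dim I+1)}\omega)$, bound each substituted linear form (at most $n$ terms, each with coefficient at most $|\omega|$), multiply by a crude count of the monomials, and absorb the constants into $\mathrm{const}^{\deg I}$ using $\deg I\geq 1$. The paper's count $((\dim I+1)(n+1))^{\deg p_I+1}$ plays the role of your binomial bound and implicitly also relies on $\deg I\geq 1$, while your $\ell^1$-submultiplicativity step just makes its ``expanding the brackets'' precise.
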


\begin{proof}
Let $p_I(u_{1,0},...,u_{\dim I+1,n})$ be the Chow form of $I$.
Recall the following:
\begin{enumerate}
    \item The degree of $p_I$ is $(\dim I+1)\cdot\deg I\leq\mathrm{const}\cdot\deg I$.
    \item The definition of $\chi(p_I)(\omega)$:
\[
    \chi(p_I)(\omega)
        :=
    p_I(S^{(1)}\omega,...,S^{(\dim I+1)}\omega)
    \in
     K[s^{(i)}_{j,k} | j<k]
.\]
where $S^{(i)}:=
    \begin{pmatrix}
    0 & s^{(i)}_{0,1} & \cdots & s^{(i)}_{0,n} \\
    -s^{(i)}_{0,1} & 0 & \ddots & \vdots \\
    \vdots & \ddots & 0 & s^{(i)}_{n-1,n} \\
    -s^{(i)}_{0,n} & \cdots & -s^{(i)}_{n-1,n} & 0
    \end{pmatrix}
    $ is a skew-symmetric $(n+1)\times(n+1)$ matrix whose entries are variables, and $\omega$ is viewed as a column vector.
    \item The definition of $|I(\omega)|$:
\[
    |I(\omega)|
        :=
    |\chi(p_I)(\omega)|\cdot|p_I|^{-1}\cdot|\omega|^{-(\dim I + 1)\deg I}
.\]
\end{enumerate}

We now seek an upper bound for $|p_I(S^{(1)}\omega,...,S^{(\dim I+1)}\omega)|$:
\begin{itemize}
    \item $p_I$ is a polynomial in $(\dim I+1)(n+1)$ variables, so it has at most $((\dim I+1)(n+1))^{\deg p_I+1}$ terms.
    \item Substituting $\{S^{(i)}\omega\}_i$ into $p_I$ replaces each variable of $p_I$ with a bilinear expression of the form $\pm s_{i_1}\omega_{i_1}\pm...+\pm_{i_n}\omega_{i_n}$ (that is, $n$ summands).
    \item After substitution, each term becomes a product of $\deg p_I$ of these bilinear expressions.
    \item Expanding the brackets gives
\begin{align*}
    |p_I(S^{(1)}\omega,...,S^{(\dim I+1)}\omega)|
        &\leq
    ((\dim I+1)(n+1))^{\deg p_I+1}\cdot|p_I|\cdot n^{\deg p_I}\cdot|\omega|^{\deg p_I}\\
    &\leq
    \mathrm{const}^{\deg I}\cdot|p_I|\cdot|\omega|^{(\dim I+1)\deg I}
.\end{align*}
\end{itemize}
Consequently,
\[
    |I(\omega)|
        \leq
    \mathrm{const}^{\deg I}
.\]
\end{proof}

\begin{Lem}\label{Lemma Chow form of single point}
Let $\mathfrak p_x\subseteq\overline{\mathbb Q}[x_0,...,x_n]$ be a homogeneous prime ideal that has a single zero at some point $x=:
\begin{pmatrix}
    x^0 \\
    \vdots \\
    x^n
\end{pmatrix}
\in\mathbb C^{n+1}-\{0\}$. Then $\forall\omega\in\mathbb C^{n+1}-\{0\}$:
\[
    |\mathfrak p_x(\omega)|
        =
    \|\omega-x\|
.\]
\end{Lem}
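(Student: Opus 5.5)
The plan is to compute the Chow form of $\mathfrak p_x$ explicitly and then evaluate the normalized quantity $|I(\omega)|$ directly from Definition~\ref{definition of deg I, h(I), t(I), I(omega)}.

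Since $\dim\mathfrak p_x=0$, the Chow form involves a single set of variables $u_{1,0},\dots,u_{1,n}$. It is the linear form
\[
L(u)=u_{1,0}x^0+\dots+u_{1,n}x^n.
\]
Indeed, the ideal $\bar{\mathfrak p}_x$ consists of those $p(u)$ that vanish whenever the hyperplane $L_1=0$ meets $\operatorname{V}(\mathfrak p_x)=\{\lambda x\}$. This happens exactly when $L(u)=0$, so $\bar{\mathfrak p}_x$ is generated by this irreducible linear form, up to a scalar.

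By the scaling invariance noted after Definition~\ref{definition of deg I, h(I), t(I), I(omega)}, we may normalize both the Chow form and $\omega$ freely. We have $\deg\mathfrak p_x=1$ and $|\operatorname{Chow}(\mathfrak p_x)|=\max_i|x^i|=|x|$.

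Next, substitute $u=S\omega$, where $S$ is the skew-symmetric matrix of variables. This gives
\[
\chi_\omega(L)=x^{T}S\omega=\sum_{j<k}s_{j,k}\,(x^j\omega_k-x^k\omega_j).
\]
Here we use $S_{jk}=s_{j,k}$ and $S_{kj}=-s_{j,k}$ for $j<k$. This is a linear form in the independent variables $s_{j,k}$, whose coefficients are exactly the $2\times2$ minors $x^j\omega_k-x^k\omega_j$. Its absolute value as a polynomial is therefore
\[
\max_{j<k}|x^j\omega_k-x^k\omega_j|.
\]
This equals the maximum over $i\neq j$ appearing in the definition of $\|\omega-x\|$, since swapping $i$ and $j$ only changes the sign.

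Dividing by $|\operatorname{Chow}(\mathfrak p_x)|=|x|$ and by $|\omega|^{(\dim+1)\deg}=|\omega|$ gives
\[
\max_{j<k}|x^j\omega_k-x^k\omega_j|\cdot|x|^{-1}|\omega|^{-1}=\|\omega-x\|,
\]
which is the claimed identity.

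The only step needing care is the first one: identifying $\operatorname{Chow}(\mathfrak p_x)$ with $L$ directly from the paper's definition of $\bar I$ via the condition $x_i^{M}p\in(I,L_1)$. One inclusion is a membership computation. Since $x$ has some nonzero coordinate, say $x^0\neq0$, we have $\mathfrak p_x=(x^0x_i-x^ix_0)_i$. Modulo $\mathfrak p_x$, we get $x^0L_1\equiv x_0L(u)$, so $x_0L(u)\in(\mathfrak p_x,L_1)$. The same argument at the other nonzero coordinates, combined with suitable powers of the $x_i$, handles the remaining variables.

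For the converse inclusion, suppose $p\in\bar I$ and specialize $x_k\mapsto\lambda x^k$. Then $p(u)$ vanishes on the hypersurface $L(u)=0$, so $L$ divides $p$.

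If $x$ has nonalgebraic or non-$K$ coordinates, the scalar normalization still makes sense, because the statement is over $\overline{\mathbb Q}$ and the absolute value used is the archimedean one.
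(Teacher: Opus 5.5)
Your proposal is correct and follows the paper's proof. Both identify $\operatorname{Chow}(\mathfrak p_x)$ with the linear form $x^0u_0+\dots+x^nu_n$, substitute $u=S\omega$ to get $x^{T}S\omega=\sum_{j<k}(x^j\omega_k-x^k\omega_j)s_{j,k}$, and divide by $|x|\cdot|\omega|$. Your check of the Chow form directly from the definition of $\bar I$, which the paper only asserts, is a good addition; for the coordinates with $x^i=0$ you simply have $x_i\in\mathfrak p_x$, so $M=1$ works for every $i$.
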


\begin{proof}
Since $\mathfrak p_x$ has a unique zero, it is the ideal generated by all homogeneous polynomials that vanish at $x$. Thus, its Chow form is
\[
    F_{\mathfrak p_x}(u)
        =
    x^0u_0+...+x^nu_n
        =
    x^t\cdot u
.\]
Thus,
\[
    |\mathfrak p_x(\omega)|
        =
    |F_{\mathfrak p_x}(S\omega)|\cdot|F_{\mathfrak p_x}|^{-1}\cdot|\omega|^{-\deg F_{\mathfrak p_x}}
        =
    |x^tS\omega|\cdot|x|^{-1}\cdot|\omega|^{-1}
,\]
where $S$ is the skew-symmetric matrix in the variables $\{s_{i,j}|0\leq i<j\leq n\}$:
\[
    S
        :=
    \begin{pmatrix}
        0                    &   s_{0,1}       &    s_{0,2}    &   \cdots          &    s_{0,n}\\
        -s_{0,1}    &   0                     &   s_{1,2}     &   \cdots          &    s_{1,n}\\
        \vdots         & \ddots           &   \ddots       &   \ddots          &    \vdots\\
        -s_{0,n-1}& -s_{1,n-1} &   \ddots       &   0                      &    s_{n,n-1}\\
        -s_{0,n}    & -s_{1,n}      &   \cdots       &   -s_{n-1,n} &    0
    \end{pmatrix}
.\]

Now,
\[
    x^tS\omega
        =
    \sum_{0\leq i<j\leq n}(x^i\omega_j-\omega_ix^j)s_{i,j}
.\]
Hence,
\[
    |\mathfrak p_x(\omega)|
        =
    \max_{0\leq i<j\leq n}|x_i\omega_j-\omega_ix_j|\cdot|x|^{-1}\cdot|\omega|^{-1}
        =
    \|\omega-x\|
.\]
\end{proof}

\begin{Lem}\label{Lemma zero dimensional ln|p(omega)|}
Let $\mathfrak p$ be a $0$-dimensional homogeneous prime ideal over $K$. Then $\mathfrak p$ has $\deg\mathfrak p$ unique zeros in $\overline{{\mathbb Q}}\mathbb{P}^n$, which we denote by $\omega_1,...,\omega_{\deg\mathfrak p}$ (we choose representatives in $\overline{\mathbb Q}^{n+1}-\{0\}$), and it satisfies
\[
    \forall\omega\in\mathbb C^{n+1}-\{0\}:\quad
    \ln|\mathfrak p(\omega)|
        \geq
    \sum_i \ln\|\omega-\omega_i\|
        -
    C\cdot\deg\mathfrak p
,\]
for some constant $C$ depending only on $n$.
\end{Lem}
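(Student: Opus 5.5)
The plan is to compute the Chow form of $\mathfrak p$ explicitly over $\overline{\mathbb Q}$ and reduce to \iref{Lemma Chow form of single point}. Over $\overline{\mathbb Q}$, the ideal generated by $\mathfrak p$ decomposes as in \iref{Remark decomposition of prime ideal over K} into the conjugate prime ideals $\mathfrak p_{\omega_i}$. These correspond to the distinct Galois-conjugate points $\omega_1,\dots,\omega_{\deg\mathfrak p}$, each with multiplicity one, since $\mathfrak p$ is prime and hence reduced and $K$ is perfect. Consequently, up to a scalar in $K^*$,
\[
\operatorname{Chow}(\mathfrak p)=\prod_{i=1}^{\deg\mathfrak p}\operatorname{Chow}(\mathfrak p_{\omega_i})=\prod_i \omega_i^t u .
\]
This also shows that $\deg\mathfrak p$ equals the number of points.

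Next I evaluate $|\mathfrak p(\omega)|$. Substituting $u=S\omega$ gives $\chi_\omega(\operatorname{Chow}(\mathfrak p))=\prod_i \omega_i^tS\omega$, which is a product of linear forms $L_i$ in the variables $s_{j,k}$. By \iref{Lemma Chow form of single point}, each factor satisfies $|L_i|=\|\omega-\omega_i\|\,|\omega_i|\,|\omega|$. Therefore
\[
\ln|\mathfrak p(\omega)| = \ln\Bigl|\prod_i L_i\Bigr| - \ln\Bigl|\prod_i \omega_i^t u\Bigr| - \deg\mathfrak p\,\ln|\omega| .
\]
It then suffices to prove two Gelfond-type inequalities for the sup-norm of coefficients $|\cdot|$ of a product of $N=\deg\mathfrak p$ linear forms, each in at most $\binom{n+1}{2}$ variables:
\[
\Bigl|\prod_i L_i\Bigr|\ \ge\ e^{-CN}\prod_i|L_i|,
\qquad
\Bigl|\prod_i \omega_i^tu\Bigr|\ \le\ e^{CN}\prod_i|\omega_i| .
\]
The upper bound is trivial: expanding the product gives at most $(n+1)^N$ terms per monomial, so the coefficients are bounded by $(n+1)^N\prod|\omega_i|$. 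Combining the two bounds gives
\[
\ln|\mathfrak p(\omega)|\ge \sum_i\ln\|\omega-\omega_i\| - CN .
\]

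The main obstacle is the lower bound, namely Gelfond's inequality for products of polynomials in a bounded number $M$ of variables. It gives $|fg|\ge e^{-c_M(\deg f+\deg g)}|f||g|$, and for linear factors iteration yields $e^{-c_M N}$ with $c_M$ depending only on $n$. I will derive it via the Mahler measure. The Mahler measure is multiplicative, and for a polynomial $F$ of total degree $d$ in $M$ variables,
\[
e^{-c_M d}\,|F|\ \le\ \mathrm M(F)\ \le\ \binom{d+M}{M}^{1/2}|F| .
\]
Apply this with $F=\prod L_i$, and with $F=L_i$ for each $i$. The factor $\binom{N+M}{M}^{1/2}$ is $e^{O(N)}$, so multiplicativity of $\mathrm M$ gives the lower bound with $C$ depending only on $n$. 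Alternatively, one can cite the standard inequality from \cite[Chapter 3, \S4]{NesterenkoPhilippon2001IntroductionToAlgebraicIndependenceTheory} directly.

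A minor point is the normalization: the Chow form over $K$ differs from $\prod_i\omega_i^tu$ by a scalar. However, $|I(\omega)|$ is invariant under scaling the Chow form, so this does not matter.
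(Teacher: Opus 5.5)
Your proposal is correct and shares the paper's skeleton: pass to $\overline{\mathbb Q}$, where $\mathfrak p$ splits into the conjugate points $\omega_i$, each with multiplicity one, and use Lemma~\ref{Lemma Chow form of single point} to identify each factor with $\|\omega-\omega_i\|$.

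The difference is how you control the defect of multiplicativity. The paper simply invokes part (3) of Lemma~\ref{Lemma Nesterenko primary decomposition of ideal}, applied to the decomposition of $\mathfrak p\,\overline{\mathbb Q}[x_0,\dots,x_n]$ over the auxiliary field $K(\omega_1,\dots,\omega_{\deg\mathfrak p})$. There, the field-independent error term $n^3\deg I$ is what makes the constant depend only on $n$. You instead write $\operatorname{Chow}(\mathfrak p)=\prod_i\omega_i^tu$ explicitly and prove the two-sided estimate yourself via the Mahler measure. The paper's route is a one-line citation; yours is self-contained and shows directly why $C$ depends on $n$ alone.

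One side remark of yours needs correcting. Iterating the two-factor Gelfond inequality $|fg|\ge e^{-c_M(\deg f+\deg g)}|f||g|$ over $N$ linear factors loses $e^{-c_M(2+3+\dots+N)}$, which is quadratic in $N$. The linear loss $e^{-CN}$ comes only from applying multiplicativity of $\mathrm{M}$ to all $N$ factors at once. That is exactly what your displayed Mahler-measure argument does, so the proof itself is unaffected.
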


\begin{proof}
Let $\mathfrak p^{\overline{\mathbb Q}}$ be the ideal that $\mathfrak p$ generates over $\overline{\mathbb Q}$. Note that $\mathfrak p$ and $\mathfrak p^{\overline{\mathbb Q}}$ have the same Chow form (and thus the same degree, with $|\mathfrak p(\omega)|=|\mathfrak p^{\overline{\mathbb Q}}(\omega)|$).
By \iref{Remark decomposition of prime ideal over K}, $\mathfrak p^{\overline{\mathbb Q}}$ decomposes as
\[
    \mathfrak p^{\overline{\mathbb Q}}=\bigcap_{i=1}^{\deg\mathfrak p}\mathfrak p_i^{\overline{\mathbb Q}}
,\]
where the $\mathfrak p_i^{\overline{\mathbb Q}}$ are prime ideals over $\overline{\mathbb Q}$ of dimension $0$ and are conjugates of each other under $\operatorname{Gal}(\overline{\mathbb Q},  K)$. Each $\mathfrak p_i^{\overline{\mathbb Q}}$ also has degree $1$ and a single zero in $\overline{\mathbb Q}\mathbb P^n$, which we denote by $\omega_i$.

Applying \iref{Lemma Nesterenko primary decomposition of ideal} to the decomposition of $\mathfrak p^{\overline{\mathbb Q}}$ (which takes place over an appropriate number field $K[\omega_1,...,\omega_{\deg p}]$), we obtain, for any $\omega\in\mathbb C^{n+1}-\{0\}$,
\[
    \sum_i\ln|\mathfrak p_i^{\overline{\mathbb Q}}(\omega)|
        \leq
    \ln|\mathfrak p^{\overline{\mathbb Q}}(\omega)|+n^3\cdot\deg\mathfrak p^{\overline{\mathbb Q}}
.\]
By \iref{Lemma Chow form of single point},
\[
    |\mathfrak p_i^{\overline{\mathbb Q}}(\omega)|=\|\omega-\omega_i\|
,\]
and hence,
\begin{align*}
    \forall\omega\in\mathbb C^{n+1}-\{0\}:&
    \\&\ln|\mathfrak p(\omega)|+\mathrm{const}\cdot\deg\mathfrak p
        \geq
    \sum_i\ln|\mathfrak p_i^{\overline{\mathbb Q}}(\omega)|
        =
    \sum_i\ln\|\omega-\omega_i\|
.\end{align*}
\end{proof}

\begin{Lem}[Upper bound on $\rho$]\label{Lemma upper bound on rho}
Let $I\subseteq K[x_0,...,x_n]$ be a homogeneous unmixed ideal and let $\omega_0\in\mathbb C^{n+1}-\{0\}$ satisfy
\[
    \ln|I(\omega_0)|
        <
    -2([ K:\mathbb Q]+3)n^4\cdot\operatorname{t}(I)
.\]
Then
\[
        \exists x_{\omega_0} \in \operatorname{V}(I):\quad
       \|x_{\omega_0}-\omega_0\|
            <
        e^{\frac{\ln|I(\omega_0)|}{2n\deg I}}
.\]
\end{Lem}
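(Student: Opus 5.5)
This follows directly from \iref{Lemma Nesterenko zero of ideal close to omega}, applied with $\omega:=\omega_0$. That lemma produces some $x_{\omega_0}\in\operatorname{V}(I)$ with
\[
    \deg I\cdot\ln\|\omega_0-x_{\omega_0}\|
        \leq
    \frac{1}{1+\dim I}\ln|I(\omega_0)|+\frac{1}{1+\dim I}\operatorname{h}(I)+([K:\mathbb Q]+3)n^3\deg I
.\]
We then show that the hypothesis forces the right-hand side to be strictly less than $\frac{1}{2n}\ln|I(\omega_0)|$. Dividing by $\deg I$ (which is $\geq 1$ since $I$ is nontrivial and proper) then gives the claim.

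For the comparison we use two facts: $\ln|I(\omega_0)|<0$, and $1\leq 1+\dim I\leq n$ since $I$ is proper. Hence
\[
    \frac{1}{1+\dim I}\ln|I(\omega_0)|\leq\frac1n\ln|I(\omega_0)|=\frac{1}{2n}\ln|I(\omega_0)|+\frac{1}{2n}\ln|I(\omega_0)|
.\]
It therefore suffices to absorb the error terms into the second half:
\[
    \frac{1}{2n}\ln|I(\omega_0)|+\operatorname{h}(I)+([K:\mathbb Q]+3)n^3\deg I<0
.\]
Here we bounded $\frac1{1+\dim I}\operatorname{h}(I)\leq\operatorname{h}(I)$, which is valid assuming $\operatorname{h}(I)\geq0$. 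If $\operatorname{h}(I)$ can be negative, we use $\frac{1}{1+\dim I}\operatorname{h}(I)\leq|\operatorname{h}(I)|$ and note that $\operatorname{h}(I)\geq-\mathrm{const}\cdot\deg I$ for Chow forms, a standard fact which I would check or absorb into the constant.

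The error terms are at most $([K:\mathbb Q]+3)n^3(\deg I+\operatorname{h}(I))=([K:\mathbb Q]+3)n^3\operatorname{t}(I)$, because $([K:\mathbb Q]+3)n^3\geq1$. The hypothesis gives
\[
    \frac{1}{2n}\ln|I(\omega_0)|<-([K:\mathbb Q]+3)n^3\operatorname{t}(I)
,\]
so the sum is strictly negative. Dividing by $\deg I$ yields $\ln\|\omega_0-x_{\omega_0}\|<\frac{\ln|I(\omega_0)|}{2n\deg I}$, and exponentiating gives the stated bound.

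The only delicate step is this bookkeeping. The factor $\frac{1}{1+\dim I}$ must be lower-bounded by $\frac1n$, which is where properness of $I$ enters and where the sign of $\ln|I(\omega_0)|$ matters. We also need care that $\operatorname{h}(I)$ is controlled by $\operatorname{t}(I)$ with the right sign. The rest is immediate from the cited lemma.
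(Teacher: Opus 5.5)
Your proposal is correct and follows the paper's argument: apply Lemma~\ref{Lemma Nesterenko zero of ideal close to omega} at $\omega_0$, use $\ln|I(\omega_0)|<0$ and $1+\dim I\leq n$ to replace $\frac{1}{1+\dim I}$ by $\frac1n$, absorb the remaining terms into $([K:\mathbb Q]+3)n^3\operatorname{t}(I)$, and compare with the hypothesis. Your hedge about the sign of $\operatorname{h}(I)$ is unnecessary: for any nonzero polynomial with a nonzero coefficient $a$, the product formula gives $\operatorname{H}\geq\prod_v|a|_v=1$, so $\operatorname{h}(I)\geq 0$ always.
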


\begin{proof}
This follows by direct computation from \iref{Lemma Nesterenko zero of ideal close to omega}:
\begingroup\small
\begin{align*}
    \exists x_{\omega_0} \in \operatorname{V}(I):&
    \\&\deg I\cdot \ln \|\omega_0 - x_{\omega_0}\| 
        \leq
    \frac{\ln|I(\omega_0)|}{n}
    + ([ K:\mathbb Q] + 3) n^3\cdot\operatorname{t}(I)
        <
    \frac{\ln|I(\omega_0)|}{2n}
.\end{align*}
\endgroup
\end{proof}

\subsection{Calculations for the Bernstein index of an ideal}\label{Subsection order of primary decomposition}

\begin{Lem}
\label{Lemma Bernstein index of I}
Let $I\subseteq K[x_0,...,x_n]$ be a non-trivial homogeneous unmixed ideal. Then
\[
    \B_{\frac1e,1}(|I|\circ\gamma)
        \leq
    \operatorname*{ord}_{\Gamma}I
    +
    C\cdot\deg I
,\]
where $C$ is a constant depending only on $n$.
\end{Lem}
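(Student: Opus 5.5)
By definition,
\[
    \B_{\frac1e,1}(|I|\circ\gamma)
        =
    \ln\max_{z\in\overline{D_1}}|I(\gamma(z))|
        -
    \ln\max_{z\in\overline{D_\frac1e}}|I(\gamma(z))|
.\]
The plan is to bound the two terms separately. The subtracted term is, by Definition~\ref{Definition order}, exactly $\ln\max_{\omega\in\Gamma}|I(\omega)|=-\operatorname*{ord}_{\Gamma}I$, so it contributes precisely $+\operatorname*{ord}_\Gamma I$. It then remains to show that
\[
    \ln\max_{z\in\overline{D_1}}|I(\gamma(z))|\leq C\cdot\deg I .
\]

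For this I will invoke \iref{Lemma bound on ln|I(omega)|}, which gives $\ln|I(\omega)|\leq C\cdot\deg I$ for every $\omega\in\mathbb C^{n+1}-\{0\}$, with $C$ depending only on $n$. Apply it at each $\omega=\gamma(z)$, $z\in\overline{D_1}$. This is legitimate because $\gamma$ takes values in $\{1\}\times\overline{D_\frac12}^n\subseteq\mathbb C^{n+1}-\{0\}$ by Assumption~\ref{Assumption gamma lands in small affine ball}. Taking the maximum over $\overline{D_1}$ yields the desired bound. Adding the two pieces gives
\[
    \B_{\frac1e,1}(|I|\circ\gamma)\leq\operatorname*{ord}_\Gamma I+C\cdot\deg I .
\]

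The only point needing a moment's care is that the Bernstein index is well defined here. We need $|I|\circ\gamma\not\equiv 0$ on $\overline{D_\frac1e}$, so that $\operatorname*{ord}_\Gamma I<\infty$. If instead $\operatorname*{ord}_\Gamma I=\infty$, the right-hand side is $+\infty$ and the inequality holds trivially under the convention $\ln 0=-\infty$. We also need continuity of $|I|\circ\gamma$ on $\overline{D_1}$. This is clear, since $|I(\gamma(z))|$ is the maximum of the absolute values of the finitely many coefficients of $\chi_{\gamma(z)}(\operatorname{Chow}(I))$, each polynomial in $\gamma(z)$, and $|\gamma(z)|=1$. No real obstacle is expected; the lemma is essentially a rewriting of the definitions combined with the uniform upper bound of \iref{Lemma bound on ln|I(omega)|}.
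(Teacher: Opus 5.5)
Your proposal is correct and matches the paper's proof: both write $\B_{\frac1e,1}(|I|\circ\gamma)$ as $\ln\max_{\Gamma_1}|I|-\ln\max_{\Gamma}|I|$, identify the second term with $-\operatorname*{ord}_\Gamma I$, and bound the first by $C\cdot\deg I$ using \iref{Lemma bound on ln|I(omega)|}. Your extra remarks on well-definedness (the degenerate case $\operatorname*{ord}_\Gamma I=\infty$ and continuity) are harmless additions that the paper leaves implicit.
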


\begin{proof}
Using \iref{Lemma bound on ln|I(omega)|} and the fact that $\operatorname*{ord}_{\Gamma}I:=-\ln\max_{\omega\in\Gamma}|I(\omega)|$, we obtain
\[
    \B_{\frac1e,1}(|I|\circ\gamma)
        :=
    \ln\max_{\omega\in\Gamma_1}|I(\omega)|
    -
    \ln\max_{\omega\in\Gamma}|I(\omega)|
        \leq
    \mathrm{const}\cdot\deg I
    +
    \operatorname*{ord}_{\Gamma}I
.\]
\end{proof}

\begin{Rem}\label{Remark Bernstein index of p}
The same proof applies to a polynomial $p\in K[x_0,...,x_n]$:
\[
    \B_{\frac1e,1}(p\circ\gamma)
        \leq
    \operatorname*{ord}_{\Gamma}p
    +
    C\cdot\operatorname{t}(p)
.\]
where $C$ is a constant depending only on $\Gamma$.
Use the simple bound $\ln|p(\omega)|\leq(\mathrm{const}+\max(0,\ln|\omega|))\cdot\operatorname{t}(p)$ in place of \iref{Lemma bound on ln|I(omega)|}.
\end{Rem}

\begin{Lem}\label{Lemma lower bound on I in general}
There exists a constant $C>0$ depending only on $n$ with the following property:
Let $I\subseteq K[x_0,...,x_n]$ be a non-trivial homogeneous unmixed ideal, and let $0<h<\frac1e$.
Then there are finitely many discs $\{D^i\}_{i=1}^a$ with sum of diameters less than $h$ and with $a\leq C\cdot(\max(0,\operatorname*{ord}_{\Gamma}I)+\deg I)$ such that
\[
    \forall x\in\overline{D_\frac1e}-\bigcup_i D^i:\quad
    \ln|I(\gamma(x))|
        \geq
-C\cdot\ln\frac1h\cdot(\max(0,\operatorname*{ord}_{\Gamma}I)+\deg I)
.\]
\end{Lem}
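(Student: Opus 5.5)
The plan is to apply \iref{Lemma Bernstein index lower bound for max of holomorphic functions} to the function $f:=|I|\circ\gamma$ on $\overline{D_1}$, with $K=\overline{D_\frac1e}$ and $U=D_1$. The first step is to check that $f$ is a maximum of finitely many holomorphic functions. By \iref{Assumption gamma lands in small affine ball}, $|\gamma(z)|=1$ for all $z\in\overline{D_1}$, so
\[
|I(\gamma(z))|=|\chi_{\gamma(z)}(\operatorname{Chow}(I))|\cdot|\operatorname{Chow}(I)|^{-1}.
\]
The norm $|\chi_{\gamma(z)}(\operatorname{Chow}(I))|$ is the maximum, over the monomials in the variables $s^{(i)}_{j,k}$, of the absolute values of their coefficients. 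Each coefficient is a polynomial in $\gamma(z)$ and hence holomorphic on a neighbourhood of $\overline{D_1}$. Dividing by the constant $|\operatorname{Chow}(I)|$ keeps every term holomorphic. This is the same observation already used in the proof of \iref{Proposition I = intersection of pi}.

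Next I bound the Bernstein index. By \iref{Lemma Bernstein index of I},
\[
\B_{\frac1e,1}(f)\leq\operatorname{ord}_\Gamma I+\mathrm{const}\cdot\deg I\leq \max(0,\operatorname{ord}_\Gamma I)+\mathrm{const}\cdot\deg I .
\]
\iref{Lemma Bernstein index lower bound for max of holomorphic functions} then gives discs $\{D^i\}_{i=1}^a$ whose diameters sum to less than $h$, with $a\leq \mathrm{const}\cdot\B_{\frac1e,1}(f)$, which already yields the required bound on $a$. The same lemma gives, for $x\in\overline{D_\frac1e}-\bigcup_i D^i$,
\[
\ln|I(\gamma(x))|\geq \ln\max_{z\in\overline{D_\frac1e}}|I(\gamma(z))|-\mathrm{const}\cdot\ln\tfrac1h\cdot\B_{\frac1e,1}(f).
\]

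Finally, the first term equals $-\operatorname{ord}_\Gamma I\geq-\max(0,\operatorname{ord}_\Gamma I)$. Since $h<\frac1e$, we have $\ln\frac1h>1$, so this term is absorbed into $\mathrm{const}\cdot\ln\frac1h\cdot(\max(0,\operatorname{ord}_\Gamma I)+\deg I)$. Combining gives the claimed inequality with a single constant $C$.

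The only delicate point is the dependence of the constants. The constant in \iref{Lemma Bernstein index lower bound for max of holomorphic functions} depends only on the fixed discs, and the one in \iref{Lemma Bernstein index of I} only on $n$, so $C$ depends only on $n$ as stated, given the fixed geometry. I do not expect any real obstacle beyond verifying the max-of-holomorphic structure in the first step.
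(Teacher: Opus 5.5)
Your proposal is correct and follows the paper's proof essentially step by step. You use $|\gamma(z)|=1$ to see that $|I|\circ\gamma$ is a maximum of moduli of holomorphic functions, apply Lemma~\ref{Lemma Bernstein index lower bound for max of holomorphic functions}, and bound $\B_{\frac1e,1}(|I|\circ\gamma)$ via Lemma~\ref{Lemma Bernstein index of I}. Your explicit use of $\max(0,\operatorname*{ord}_\Gamma I)$ and of $\ln\frac1h>1$ to absorb the leading $-\operatorname*{ord}_\Gamma I$ term only spells out a step that the paper leaves implicit.
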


\begin{proof}
Consider $\omega\in\Gamma_1$. We have assumed that $\max_i\omega_i=\omega_0=1$. Thus, as a function on $\Gamma_1$, $|I(\omega)|$ is by definition the maximum of the absolute values of polynomials in the coordinates $\{w_i\}_{i\geq 1}$. Recall the definition $(|I|\circ\gamma)(z):=|I(\gamma(z))|$.

Thus, applying \iref{Lemma Bernstein index lower bound for max of holomorphic functions} with $f:=|I|\circ\gamma$, we obtain finitely many discs $\{D^i\}_{i=1}^a$ with sum of diameters less than $h$ and with $a\leq\mathrm{const}\cdot\B_{\frac1e,1}(|I|\circ\gamma)$ such that
\[
    \forall x\in\overline{D_\frac1e}-\bigcup_i D^i:
        \quad
    |I(\gamma(x))|
        \geq
    \frac{
        \max_{z\in\overline{D_\frac1e}}|I(\gamma(z))|
    }{
        e^{
            \B_{\frac1e,1}(|I|\circ\gamma)\cdot\mathrm{const}\cdot\ln\frac1h
        }
    }
.\]
Using \iref{Lemma Bernstein index of I} and $\operatorname*{ord}_{\Gamma}I:=-\ln\max_{z\in\overline{D_\frac1e}}|I(\gamma(z))|$, we obtain
\[
    a
        \leq
    \mathrm{const}\cdot\operatorname*{ord}_{\Gamma}I
    +
    \mathrm{const}\cdot\deg I
.\]
and
\begin{align*}
    \forall x \in \overline{D_{\frac1e}} - \bigcup_i D^i&: \quad\\
    \ln |I(\gamma(x))| &\geq \ln \max_{z \in \overline{D_{\frac1e}}} |I(\gamma(z))|
    - \B_{\frac1e, 1}(|I| \circ \gamma)\cdot\mathrm{const}\cdot\ln\frac1{h}\\
    &\geq - \operatorname*{ord}_{\Gamma} I
    - \left( \operatorname*{ord}_{\Gamma} I + \mathrm{const} \cdot \deg I \right) \cdot\mathrm{const}\cdot\ln\frac1{h} \\
    &= - \left(1+\mathrm{const}\cdot\ln\frac1{h}\right)\operatorname*{ord}_{\Gamma}I-\mathrm{const}\cdot\ln\frac1{h}\cdot\deg I
.\end{align*}
\end{proof}

\begin{Rem}\label{Remark lower bound on p in general}
The same proof applies to a polynomial $p\in K[x_0,...,x_n]$ and any $0<h<\frac1e$:
There exists a constant $C>0$ depending only on $\Gamma$ with the following property:
There are finitely many discs $\{D^i\}_{i=1}^a$ with sum of diameters less than $h$ and with $a\leq C\cdot(\max(0,\operatorname*{ord}_{\Gamma}p)+\operatorname{t}(p))$ such that
\[
    \forall x\in\overline{D_\frac1e}-\bigcup_i D^i:\quad
    \ln|p(\gamma(x))|
        \geq
-C\cdot\ln\frac1h\cdot(\max(0,\operatorname*{ord}_{\Gamma}p)+\operatorname{t}(p))
.\]
Use \iref{Remark Bernstein index of p} in place of \iref{Lemma Bernstein index of I}.
\end{Rem}

\subsection{Calculations for functions and derivatives}

\begin{Lem}[Effect of differentiation on $|\cdot|_v$, $\deg$, and $\operatorname{H}$]\label{Lemma |xi(f)|_v, deg, H}
Let $f\in K[x_0,...,x_n]$ be a homogeneous polynomial such that $\xi(f)\not\equiv 0$, and let $|\cdot|_v\in\mathcal V$. Let $A\in\mathbb N$ be such that $\hat{\xi}:=A\cdot\xi$ has only algebraic integer coefficients. Then there exists $C$ depending only on $\xi,K,A$ such that:
\begin{enumerate}
\item For non-Archimedean $v$: $|\hat{\xi}(f)|_v\leq |f|_v$.
\item For Archimedean $v$: $|\hat{\xi}(f)|_v\leq C\cdot\deg f\cdot|f|_v$.
\item $\deg\xi(f)\leq\deg f+C$.
\item $\operatorname{H}(\xi(f))\leq C\cdot(\deg f)^C\cdot\operatorname{H}(f)$.
\end{enumerate}
\end{Lem}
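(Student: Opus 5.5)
The plan is to expand $\hat\xi(f)$ explicitly and read off each of the four bounds. Write $\hat\xi=\sum_{j=0}^n \hat p_j\,\partial_{x_j}$, where the $\hat p_j=A p_j$ are homogeneous of a common degree $\delta$ and have algebraic integer coefficients in $K$. Then $\hat\xi(f)=\sum_j \hat p_j\,\partial_{x_j}f$. Items (1) and (2) are coefficientwise estimates on this sum; item (3) is immediate; item (4) follows by taking the product over all places.

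For (1), fix a non-Archimedean $v$. Each coefficient of $\partial_{x_j}f$ is $i_j a_{i}$ with $i_j\in\mathbb N$, and $|i_j|_v\le 1$, so $|\partial_{x_j}f|_v\le|f|_v$. The coefficients of $\hat p_j$ are algebraic integers, hence have $|\cdot|_v\le1$. Each coefficient of a product $\hat p_j\partial_{x_j}f$ is a sum of products of coefficients, and the ultrametric inequality bounds it by $|\hat p_j|_v|\partial_{x_j}f|_v\le|f|_v$. Summing over $j$ keeps the bound, again by the ultrametric inequality.

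For (2), fix an Archimedean $v$. Here $|i_j|_v\le \deg f$ (up to the normalization of $|\cdot|_v$ in $\mathcal V$, which is at worst a fixed power depending on $K$, absorbed into $C$). Each coefficient of $\hat p_j\partial_{x_j}f$ is a sum of at most $N:=\binom{\delta+n}{n}$ products, one factor from each polynomial. Summing over $j$ gives $|\hat\xi(f)|_v\le (n+1)N\max_j|\hat p_j|_v\cdot\deg f\cdot|f|_v$, and the prefactor depends only on $\xi,K,A$.

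For (3), $\deg\xi(f)=\deg f+\delta-1$, since $\xi(f)\ne0$ is homogeneous; take $C\ge\delta$.

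For (4), use $\xi(f)=A^{-1}\hat\xi(f)$ and the product formula: $\operatorname{H}(\xi(f))=\operatorname{H}(\hat\xi(f))$, because $\operatorname{H}$ is invariant under scaling by $K^\times$. Now multiply the local bounds over all $v\in\mathcal V$. The non-Archimedean factors contribute at most $\prod_v|f|_v$, and there are only $[K:\mathbb Q]$ (or fewer) Archimedean places, each contributing the extra factor $C\deg f$. This gives $\operatorname{H}(\hat\xi(f))\le (C\deg f)^{[K:\mathbb Q]}\operatorname{H}(f)$, which after enlarging $C$ is at most $C(\deg f)^C\operatorname{H}(f)$; if $\deg f=0$ then $\xi(f)=0$, so we may assume $\deg f\ge1$.

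The only point needing care is the normalization of the absolute values in $\mathcal V$ following Nesterenko--Philippon. If the Archimedean ones are normalized as $|x|^{[K_v:\mathbb R]/[K:\mathbb Q]}$, then the constant in (2) becomes $(C\deg f)^{\text{exponent}\le1}$, so it is still bounded by $C\deg f$. Similarly, the integrality bound in (1) holds under any normalization, since the normalization exponents are positive. I expect no other obstacle.
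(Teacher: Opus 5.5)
Your proposal is correct and follows the same route as the paper. Both expand $\hat\xi(f)=\sum_j \hat p_j\,\partial_{x_j}f$ coefficientwise, use integrality of the $\hat p_j$ and the ultrametric inequality at finite places, bound the (constantly many) terms and the factor $\deg f$ from differentiation at the finitely many Archimedean places, and then multiply over all places using $\operatorname{H}(\xi(f))=\operatorname{H}(\hat\xi(f))$. You additionally make explicit details the paper leaves implicit: the term count $\binom{\delta+n}{n}$, the exact degree $\deg f+\delta-1$, the normalization of $|\cdot|_v$, and the trivial case $\deg f=0$.
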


\begin{Rem}
If we choose $A$ to be the smallest such $A$, then $C$ depends only on $\xi,K$.
\end{Rem}

\begin{proof}
\hfill
\begin{enumerate}
\item Since the coefficients of $\hat{\xi}$ are algebraic integers, each step in calculating $\hat{\xi}(f)$ can only decrease the norm (when it is non-Archimedean).
\item Calculating $\hat{\xi}(f)$ involves adding a constant number of terms of the form $a_{i_0,...,i_n,j}x_0^{i_0}\cdots x_n^{i_n}\frac{\partial f}{\partial x_j}$. Since there are only a constant number of Archimedean norms, each such term has norm at most $\mathrm{const}\cdot\deg f\cdot|f|_v$.
\item This is clear.
\item Since $\operatorname{H}(\xi(f))=\operatorname{H}(\hat{\xi}(f))$, the previous bounds give $\operatorname{H}(\hat{\xi}(f)):=\prod_v|\hat{\xi}(f)|_v\leq(\mathrm{const}\cdot\deg f)^\mathrm{const}\cdot\operatorname{H}(f)$.
\end{enumerate}
\end{proof}

\begin{Cor}[Effect of high-order differentiation on $\operatorname{t}$]\label{Corollary t(xi^a(f))<t(f)+a*ln(t(f)+a)}
Let $f\in K[x_0,...,x_n]$ be a homogeneous polynomial and let $a\in\mathbb N^{\geq 1}$ be such that $\xi^a(f)\not\equiv 0$. Then
\[
    \operatorname{t}(\xi^a(f))
        \leq
    \operatorname{t}(f)+C\cdot a\cdot\ln(\deg f+C\cdot a)
.\]
where $C$ is a constant depending only on $\xi,K$.
\end{Cor}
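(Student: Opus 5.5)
We iterate Lemma~\ref{Lemma |xi(f)|_v, deg, H}, applying it $a$ times to the successive derivatives $f_0:=f$ and $f_k:=\xi^k(f)$ for $1\le k\le a$. Each $f_k$ is nonzero: if some $f_k$ vanished for $k\le a$, then all later derivatives would vanish too, contradicting $\xi^a(f)\not\equiv 0$. Hence the lemma applies at every step, and we may track degree and height separately.

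\emph{Degree.} Part~(3) of the lemma gives $\deg f_{k+1}\le \deg f_k+C$. By induction,
\[
\deg f_k\le \deg f+Ck \quad\text{for all } k\le a,
\]
and in particular $\deg \xi^a(f)\le \deg f+Ca$.

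\emph{Height.} Part~(4) gives
\[
\operatorname{h}(f_{k+1})\le \operatorname{h}(f_k)+\ln C+C\ln \deg f_k .
\]
We must be careful when $\deg f_k=0$. In that case $\xi(f_k)=0$, so this cannot occur for $k<a$ unless it contradicts nonvanishing. More simply, we replace $\deg f_k$ by $\max(1,\deg f_k)$ inside the logarithm. Summing over $k=0,\dots,a-1$ and inserting the degree bound yields
\[
\operatorname{h}(\xi^a(f))\le \operatorname{h}(f)+a\ln C+Ca\ln(\deg f+Ca+1).
\]

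\emph{Combining.} Adding the two estimates gives
\[
\operatorname{t}(\xi^a(f))\le \operatorname{t}(f)+Ca+a\ln C+Ca\ln(\deg f+Ca+1).
\]
It remains to absorb the linear terms $Ca+a\ln C$ and the $+1$ into a single expression of the form $C'a\ln(\deg f+C'a)$. Enlarging the constant so that $C'\ge e$ makes $\ln(\deg f+C'a)\ge 1$, so the linear terms are dominated. We also have $\ln(\deg f+Ca+1)\le \ln(\deg f+C'a)$ once $C'\ge C+1$, using $a\ge 1$.

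The only delicate point is this bookkeeping. The logarithm must stay bounded below by $1$, and the degenerate case $\deg f=0$ must be handled. Both are resolved by enlarging $C$, and the resulting constant depends only on $\xi$ and $K$.
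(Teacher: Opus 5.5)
Your proposal is correct and is essentially the paper's argument. The paper's proof is just ``direct calculation'': it iterates the single-step lemma on the effect of $\xi$ on $\deg$ and $\operatorname{H}$ $a$ times, exactly as you do with parts (3) and (4), and your observation that $\deg \xi^k(f)\ge 1$ for $k<a$ and your absorption of the linear terms into $C'a\ln(\deg f+C'a)$ make explicit the bookkeeping the paper leaves to the reader.
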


\begin{proof}
This follows by direct calculation.
\end{proof}

\begin{Lem}\label{Lemma |xi(f)|>e^-(const*t(f))|f|}
Let $f\in K[x_0,...,x_n]$ be a homogeneous polynomial such that $\xi(f)\not\equiv 0$. Then
\[
    |\xi(f)|
        \geq
    e^{-C\cdot\operatorname{t}(f)}|f|
,\]
where $C$ is a constant depending only on $\xi,K$.
\end{Lem}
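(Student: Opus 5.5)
Write $\hat\xi:=A\xi$ with $A\in\mathbb N$ chosen so that $\hat\xi$ has algebraic-integer coefficients, as in \iref{Lemma |xi(f)|_v, deg, H}. Since $|\xi(f)|=A^{-1}|\hat\xi(f)|$ and $A$ is a constant, it suffices to show
\[
|\hat\xi(f)|\geq e^{-C\operatorname{t}(f)}|f| .
\]
The idea is to use the product formula on the polynomial $\hat\xi(f)$, which is nonzero by assumption. This gives a lower bound for the archimedean norm of $\hat\xi(f)$ in terms of the other places.

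Let $\mathcal V_\infty$ denote the set of archimedean places. The product formula gives $\operatorname{H}(\hat\xi(f))=\prod_v|\hat\xi(f)|_v\geq1$? That is false in general; only $\operatorname{H}$ is scale-invariant. The useful statement is the following. Pick a nonzero coefficient $c$ of $\hat\xi(f)$. The product formula gives $\prod_v|c|_v=1$. Since $|\hat\xi(f)|_v\geq|c|_v$ for every $v$,
\[
\prod_v|\hat\xi(f)|_v\geq1 .
\]
Hence
\[
|\hat\xi(f)|_{\operatorname{id}}\geq\prod_{v\neq\operatorname{id}}|\hat\xi(f)|_v^{-1}.
\]
Now bound each remaining factor from above using \iref{Lemma |xi(f)|_v, deg, H}:
\begin{itemize}
\item for non-archimedean $v$, $|\hat\xi(f)|_v\leq|f|_v$;
\item for the other archimedean $v$, $|\hat\xi(f)|_v\leq\mathrm{const}\cdot\deg f\cdot|f|_v$.
\end{itemize}
Multiplying these gives
\[
|\hat\xi(f)|\geq(\mathrm{const}\cdot\deg f)^{-\mathrm{const}}\prod_{v\neq\operatorname{id}}|f|_v^{-1}=(\mathrm{const}\cdot\deg f)^{-\mathrm{const}}\,\frac{|f|}{\operatorname{H}(f)}.
\]
Here the local degrees are absorbed into the normalization of $\mathcal V$ in \cite{NesterenkoPhilippon2001IntroductionToAlgebraicIndependenceTheory}, and the number of archimedean places is at most $[K:\mathbb Q]$.

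Finally, $\operatorname{H}(f)=e^{\operatorname{h}(f)}$ and $(\mathrm{const}\cdot\deg f)^{\mathrm{const}}\leq e^{\mathrm{const}\cdot\deg f}$. Together these give $|\hat\xi(f)|\geq e^{-\mathrm{const}\cdot\operatorname{t}(f)}|f|$, and dividing by $A$ finishes the proof.

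The main obstacle is the bookkeeping of the normalization of the absolute values in $\mathcal V$. Some of them may be powers $|\cdot|^{d_v}$ of the standard ones, and the distinguished place $|\cdot|_{\operatorname{id}}$ appears with its own exponent. If $|\cdot|_{\operatorname{id}}$ carries an exponent $d\leq[K:\mathbb Q]$, then the inequality above bounds $|\hat\xi(f)|^{d}$ rather than $|\hat\xi(f)|$. In that case I would write $|\cdot|_{\operatorname{id}}=|\cdot|^{d}$ and also separate $|f|^{d}$ from $\operatorname{H}(f)$. This yields
\[
|\hat\xi(f)|\geq\bigl(\mathrm{const}\cdot\deg f\cdot\operatorname{H}(f)\bigr)^{-\mathrm{const}}\,|f| ,
\]
which still gives the claim with a constant depending on $K$.

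We also need the bound $\operatorname{H}(f)\geq|f|_{\operatorname{id}}\cdot\prod_{v\neq\operatorname{id}}|f|_v$. This is immediate, since it is the definition of $\operatorname{H}$.
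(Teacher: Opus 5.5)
Your argument is correct and is essentially the paper's proof: apply the product formula to a nonzero coefficient of $\hat\xi(f)$, bound the non-identity places by the per-place estimates of the differentiation lemma, and recombine via $\prod_{v\neq\operatorname{id}}|f|_v=\operatorname{H}(f)/|f|$. The paper keeps $|A g_i|$ at the identity place, so it bounds every nonzero coefficient and then takes the max, which is the same argument. There are two small slips in the write-up. First, the aside calling $\prod_v|\hat\xi(f)|_v\geq 1$ ``false in general'' is wrong, since you prove exactly this in the next line. Second, the normalization remark needs a lower bound $d\geq 1/[K:\mathbb Q]$ on the exponent of $|\cdot|_{\operatorname{id}}$, not $d\leq[K:\mathbb Q]$: with that lower bound, taking $1/d$-th powers (together with $\operatorname{H}(f)\geq 1$) costs only a $K$-dependent constant.
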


\begin{proof}
Let
\begin{itemize}
    \item $g:=\xi(f)$.
    \item $|\cdot|_\mathbb C=:|\cdot|$ be the usual Euclidean absolute value on $\mathbb C$.
    \item $f_i$ be the nonzero coefficients of $f$.
    \item $g_i$ be the nonzero coefficients of $g$.
\end{itemize}

Recall that
\[
    \operatorname{H}(f):=\prod_v|f|_v=|f|_\mathbb C\prod_{v\neq|\cdot|_\mathbb C}|f|_v
.\]
Using \iref{Lemma |xi(f)|_v, deg, H}, we obtain
\begin{align*}
    \forall i : \quad
    1 &= \prod_v |A\cdot g_i|_v 
      = |A\cdot g_i|_\mathbb{C} \cdot \prod_{v \neq |\cdot|_\mathbb{C}} |A\cdot g_i|_v \\
      &\leq |A\cdot g_i|_\mathbb{C} \cdot \prod_{v \neq |\cdot|_\mathbb{C}} |A\cdot g|_v
\leq |A\cdot g_i|_\mathbb{C} \cdot \mathrm{const} \cdot (\deg f)^{\mathrm{const}} \prod_{v \neq |\cdot|_\mathbb{C}} |f|_v \\
      &=|A\cdot g_i|_\mathbb{C} \cdot \mathrm{const} \cdot (\deg f)^{\mathrm{const}} \cdot \frac{\operatorname{H}(f)}{|f|_\mathbb{C}}
      \leq|g_i|_\mathbb{C} \cdot \frac{e^{\mathrm{const} \cdot \operatorname{t}(f)}}{|f|_\mathbb{C}}
.\end{align*}
Hence,
\[
    |g|
        =
    \max_i|g_i|_\mathbb C
        \geq
    e^{-\mathrm{const}\cdot\operatorname{t}(f)}|f|
.\]
\end{proof}

\begin{Lem}\label{Lemma f(omega)<|f|*rho}
Let $f\in\mathbb C[x_0,...,x_n] -\{0\}$ be homogeneous and let $x,\omega\in\mathbb C^{n+1}-\{0\}$ be such that $f(x)=0$. Then
\[
    \|f\|_\omega
        \leq
    e^{C\cdot\deg f}\cdot\|\omega-x\|
,\]
where $C$ is a constant depending only on $n$.
\end{Lem}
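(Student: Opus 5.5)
We have to show $|f(\omega)|\cdot|f|^{-1}|\omega|^{-\deg f}\le e^{C\deg f}\|\omega-x\|$ whenever $f(x)=0$. Both sides are invariant under rescaling $\omega$ and $x$, so first normalize $|\omega|=|x|=1$. Choose an index $j$ with $|x_j|=1$, and rescale $x$ by a unimodular constant so that $x_j=1$. Put $\lambda:=\omega_j$ (so $|\lambda|\le 1$) and compare $\omega$ with $\lambda x$. For every $i$ we have $\omega_i x_j-\omega_j x_i=\omega_i-\lambda x_i$, and therefore
\[
|\omega-\lambda x|_\infty\le \max_{i}|\omega_i x_j-\omega_j x_i|\le \|\omega-x\|
\]
after the normalization $|\omega|=|x|=1$. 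Homogeneity gives $f(\lambda x)=\lambda^{\deg f}f(x)=0$. So it suffices to show that $|f(\omega)-f(\lambda x)|\le e^{C\deg f}|f|\cdot|\omega-\lambda x|_\infty$ for two points $\omega,\lambda x$ in the closed unit polydisc.

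This is a mean value (Lipschitz) estimate on the polydisc. Take the segment $y(t)=\lambda x+t(\omega-\lambda x)$, $t\in[0,1]$; it stays in the unit polydisc by convexity. Then
\[
f(\omega)-f(\lambda x)=\int_0^1\sum_i \partial_i f(y(t))\,(\omega_i-\lambda x_i)\,dt .
\]
Each $\partial_i f$ has coefficients bounded by $\deg f\cdot|f|$, and at most $\binom{\deg f+n}{n}\le 2^{\deg f+n}$ monomials. So on the unit polydisc $|\partial_i f|\le \deg f\cdot 2^{\deg f+n}|f|$. Summing over the $n+1$ indices gives
\[
|f(\omega)|\le (n+1)\deg f\, 2^{\deg f+n}|f|\,\|\omega-x\|\le e^{C\deg f}|f|\,\|\omega-x\|,
\]
with $C$ depending only on $n$, using $\deg f\ge 1$. (If $\deg f=0$, then $f$ is a nonzero constant and cannot vanish at $x$, so this case does not arise.) Dividing by $|f|\,|\omega|^{\deg f}=|f|$ gives the claim.

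The only delicate point is the passage from the projective distance $\|\omega-x\|$ to an affine distance between representatives. This is handled by choosing the representative $\lambda x$ with the $j$-th coordinate matched to $\omega$, and by checking that the normalizations $|x|=|\omega|=1$ make the cross-term bound $\max_i|\omega_i-\lambda x_i|\le\|\omega-x\|$ valid without extra factors. The derivative estimate itself is routine.
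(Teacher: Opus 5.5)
Your proof is correct and essentially the paper's argument: after your normalization, the representatives $\omega$ and $\lambda x$ (with $x_j=1$, $\lambda=\omega_j$) are exactly the paper's $a=x_{i_{\mathrm{max}}}\omega$ and $b=\omega_{i_{\mathrm{max}}}x$, and the key facts $f(b)=0$ and $|a-b|_\infty\le\|\omega-x\|\cdot|\omega|\cdot|x|$ are the same. The only difference is that you bound $|f(a)-f(b)|$ by integrating the gradient along a segment in the unit polydisc, whereas the paper expands $f(b+(a-b))$ binomially; both lose only a factor $e^{C\deg f}$.
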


\begin{proof}
Let $i_\mathrm{max}$ be the maximal index of $x$. Define $a:=x_{i_\mathrm{max}}\cdot\omega$ and $b:=\omega_{i_\mathrm{max}}\cdot x$. We use the substitution $f(a)=f(b+(a-b))$.

We write each term of $f(a)$ as a coefficient times a product $[b_{i_1}+(a_{i_1}-b_{i_1})]\cdots[b_{i_{\deg f}}+(a_{i_{\deg f}}-b_{i_{\deg f}})]$ and expand the brackets to obtain $2^{\deg f}$ summands from each term of $f$. The summands that contain any $(a_i-b_i)$ are bounded by $|f|\cdot\|a-b\|_\infty\cdot(\|a\|_\infty+\|b\|_\infty)^{\deg f-1}$, and the summands that contain only $b_i$'s give exactly $f(b)=0$. Overall, we obtain
\begin{align*}
    |x|^{\deg f}|f(\omega)|
        &=
    |f(a)|
        =
    |f(a+(b-a))|\\
        &\leq
   (n+1)^{\deg f}\cdot 2^{\deg f}\cdot|f|\cdot\|a-b\|_\infty\cdot(\|a\|_\infty+\|b\|_\infty)^{\deg f-1}
.\end{align*}

Note that
\[
    \|a-b\|_\infty
        =
    \max_j\{|x_{i_\mathrm{max}}\cdot\omega_j-\omega_{i_\mathrm{max}}\cdot x_j|\}
        \leq\
    \|\omega-x\|\cdot |\omega|\cdot|x|
,\]
and that
\[
    \|a\|_\infty+\|b\|_\infty
        =
    |x_{i_\mathrm{max}}|\cdot|\omega|+|\omega_{i_\mathrm{max}}|\cdot|x|
        =
    2\cdot|x|\cdot|\omega|
,\]
and hence,
\[
    \|f\|_\omega
        \leq
    e^{\mathrm{const}\cdot\deg f}\cdot\|\omega-x\|
.\]
\end{proof}

\begin{Lem}\label{Lemma bound on xi(f)(omega)}
Let $f\in K[x_0,...,x_n]$ be a homogeneous polynomial such that $\xi(f)\not\equiv 0$. Then there exists $A\subseteq\overline{D_\frac1e}$ with $\operatorname{Area}(A)\geq 0.56\cdot\operatorname{Area}\left(\overline{D_\frac1e}\right)$ such that
\begin{align*}
    \forall\omega\in\gamma(A)&:\\
    \|\xi&(f)\|_\omega
        \leq
    e^{C\cdot\operatorname{t}(f)}\cdot\left(\|f\|_\omega+\|f\|_\omega^{0.6}\right)
,\end{align*}
where $C$ is a constant depending only on $\xi,\gamma,K$.
\end{Lem}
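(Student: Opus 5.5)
Along $\gamma$ the derivative of $f$ is, up to the factor $a_z$, the function $\xi(f)$. Since $\xi$ is homogeneous of degree $\delta$, Euler's identity lets us write $\xi(f)(\gamma(z))$ in terms of $\frac{d}{dz}(f\circ\gamma)(z)$ and $f(\gamma(z))$ times explicit holomorphic factors. We have $\gamma(z)\in\{1\}\times\overline{D_{1/2}}^n$, so the zeroth coordinate of $\gamma$ is constant. From the tangency condition $a_z\gamma'(z)=\xi_{\gamma(z)}$ we get $p_0(\gamma(z))=0$ and
\[
\xi(f)(\gamma(z))=\sum_j p_j(\gamma(z))\,\partial_j f(\gamma(z))=a_z\,(f\circ\gamma)'(z).
\]
By \iref{Assumption gamma non degenerate derivative} and compactness, $|a_z|\le\mathrm{const}$ on $\overline{D_1}$. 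Also $|\gamma(z)|=1$, so $\|g\|_{\gamma(z)}=|g(\gamma(z))|/|g|$ for every $g$. It therefore suffices to bound $|(f\circ\gamma)'(z)|$ on a large set $A$.

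Apply \iref{Proposition derivative-function ratio} to $F:=f\circ\gamma/|f|$, which is holomorphic on a neighbourhood of $\overline{D_1}$. If $F\equiv0$ then $\xi(f)\circ\gamma\equiv0$ and there is nothing to prove. Otherwise we get $A$ with $\operatorname{Area}(A)\ge0.56\operatorname{Area}(\overline{D_{1/e}})$ and $|F'(z)|\le\mathrm{const}\cdot\B_{\frac1e,1}(F)|F(z)|$ on $A$. Combining this with \iref{Lemma |xi(f)|>e^-(const*t(f))|f|}, which gives $|f|/|\xi(f)|\le e^{\mathrm{const}\cdot\operatorname{t}(f)}$, we obtain for $\omega=\gamma(z)$, $z\in A$:
\[
\|\xi(f)\|_\omega\le \mathrm{const}\cdot e^{\mathrm{const}\cdot\operatorname{t}(f)}\,\B_{\frac1e,1}(F)\,\|f\|_\omega .
\]

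It remains to control $\B_{\frac1e,1}(F)$, and this is the main obstacle: a priori it involves $\operatorname{ord}_\Gamma f$, which could be huge. By \iref{Remark Bernstein index of p},
\[
\B_{\frac1e,1}(F)\le \operatorname{ord}_\Gamma f+\mathrm{const}\cdot\operatorname{t}(f),
\]
where $\operatorname{ord}_\Gamma f$ is computed for the normalized $F$, so that $\max_{\Gamma_1}|F|\le e^{\mathrm{const}\cdot\deg f}$. We split into two cases.

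\textbf{Case 1: $\operatorname{ord}_\Gamma F\le \operatorname{t}(f)$.} Then $\B_{\frac1e,1}(F)\le\mathrm{const}\cdot\operatorname{t}(f)\le e^{\operatorname{t}(f)}$, and the display gives the bound $e^{C\operatorname{t}(f)}\|f\|_\omega$.

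\textbf{Case 2: $F$ is small on all of $\Gamma$.} Here we bound the derivative directly by Cauchy estimates instead of the Bernstein ratio. Let $M:=\max_{\overline{D_{1/e}}}|F|$ and $M_1:=\max_{\overline{D_1}}|F|\le e^{\mathrm{const}\cdot\deg f}$. Then $\B_{\frac1e,1}(F)=\ln(M_1/M)$ and
\[
\B_{\frac1e,1}(F)\,|F(z)|\le \ln(M_1/M)\,|F(z)|.
\]
Using $\ln(1/x)\le \mathrm{const}\cdot x^{-0.4}$ with $x=M/M_1$ and $|F(z)|\le M$, this is at most
\[
\mathrm{const}\cdot M_1^{0.4}M^{-0.4}|F(z)|\le e^{\mathrm{const}\cdot\deg f}\,|F(z)|^{0.6}.
\]
In fact this same inequality handles both cases uniformly, giving for $z\in A$
\[
\|\xi(f)\|_{\gamma(z)}\le e^{C\operatorname{t}(f)}\bigl(\|f\|_{\gamma(z)}+\|f\|_{\gamma(z)}^{0.6}\bigr).
\]
The remaining care is in the normalization constants, namely checking that $\max|F|$ on $\overline{D_1}$ is $e^{O(\deg f)}$ because $|\gamma|=1$. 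That check is routine.
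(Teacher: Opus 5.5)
Your proof is correct and is essentially the paper's argument: write $\xi(f)(\gamma(z))=a_z(f\circ\gamma)'(z)$ with $|a_z|$ bounded, and apply Proposition~\ref{Proposition derivative-function ratio} to obtain the set $A$. Then control $\B_{\frac1e,1}(f\circ\gamma)$ using $\max_{\Gamma_1}|f|\le e^{\mathrm{const}\cdot\deg f}|f|$ (from $|\gamma|=1$) together with $\max_{\Gamma}|f|\ge|f(\omega)|$, absorb the resulting logarithm into the power $0.6$, and pass from $|f|$ to $|\xi(f)|$ via the lemma $|\xi(f)|\ge e^{-C\operatorname{t}(f)}|f|$. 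Your case split is unnecessary, since, as you note yourself, the uniform estimate $\B_{\frac1e,1}(F)\,|F(z)|\le e^{\mathrm{const}\cdot\deg f}|F(z)|^{0.6}$ covers everything, and your explicit treatment of $f\circ\gamma\equiv 0$ fills a degenerate case that the paper leaves implicit.
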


\begin{proof}
By definition, $\xi_{\gamma(z)}=a_z\frac{d\gamma}{dz}(z)$, where $a_z$ depends holomorphically on $z$, so
\[
    \forall z \in \overline{D_1} : \quad
    \xi(f)(\gamma(z))
        =
    \xi_{\gamma(z)}(f) 
        =
    a_z\cdot(f \circ \gamma)'(z)
.\]
Since $a_z$ is holomorphic in $z$, $|a_z|$ attains a maximum on $\overline{D_\frac1e}$. Thus,
\[
    \forall z\in\overline{D_1}:\quad
    |\xi(f)(\gamma(z))|
        \leq
    \mathrm{const}\cdot|(f\circ\gamma)'(z)|
.\]

Furthermore, by \iref{Proposition derivative-function ratio}, there exists $A\subseteq\overline{D_\frac1e}$ with $\operatorname{Area}(A)\geq 0.56\cdot\operatorname{Area}\left(\overline{D_\frac1e}\right)$ such that
\[
    \forall z\in A:\quad
    |(f\circ\gamma)'(z)|
        \leq
    \mathrm{const}\cdot\B_{\frac1e,1}(f\circ\gamma)|(f\circ\gamma)(z)|
.\]
We therefore obtain
\[
    \forall\omega\in\gamma(A):\quad
    |\xi(f)(\omega)|
        \leq
    \mathrm{const}\cdot\B_{\frac1e,1}(f\circ\gamma)|f(\omega)|
.\]

Using the definition of the Bernstein index and the bounds $
    \ln\max_{x\in\Gamma_1}|f(x)|
        \leq
    \mathrm{const}\cdot\deg f+\ln|f|
$ and $
    \forall\omega\in\gamma(A):\;
    \ln\max_{x\in\Gamma}|f(x)|\geq\ln|f(\omega)|
$
we obtain
\begin{align*}
    \forall\omega\in\gamma(A)&:\\
    &\frac{|\xi(f)(\omega)|}{|f|}
        \leq
    \begin{cases}
    \mathrm{const}\cdot\left(\deg f+\max\left(0,\ln\frac{|f|}{|f(\omega)|}\right)\right)\frac{|f(\omega)|}{|f|} & \mathrm{if }f(\omega)\neq 0\\
    0                                  &\mathrm{if }f(\omega)=0
\end{cases}
,\end{align*}
Finally, since $
    \forall x>0:\,
    x\ln\frac1x
        \leq
    x^{0.6}
$ and $|\omega|=1$, \iref{Lemma |xi(f)|>e^-(const*t(f))|f|} gives
\begin{align*}
    \forall\omega\in\gamma(A)&:\\
    \|\xi&(f)\|_\omega
        \leq
    e^{\mathrm{const}\cdot\operatorname{t}(f)}\cdot\left(\|f\|_\omega+\|f\|_\omega^{0.6}\right)
.\end{align*}
\end{proof}

\bibliographystyle{plain}
\bibliography{references}
\end{document}